% !TeX spellcheck = en_US 
\documentclass[12pt,reqno]{amsart}

\usepackage{amsmath, amsfonts, amssymb, amsthm, mathrsfs}

\usepackage[utf8,utf8x]{inputenc}
\usepackage[pagebackref=true]{hyperref}
\hypersetup{%
	pdftitle = {},
	pdfkeywords = {},
	pdfsubject = {},
	pdfauthor = {},
	colorlinks = true,
	linktoc = page,
	%citecolor = magenta,
	citecolor = purple,
	linkcolor = blue 
}

\usepackage{xcolor}
\usepackage{caption}
\usepackage{tikz}
\usetikzlibrary{calc}
\usetikzlibrary{intersections}
\usetikzlibrary{cd}
\usetikzlibrary{patterns}

\tikzcdset{scale cd/.style={every label/.append style={scale=#1},
		cells={nodes={scale=#1}}}}

\usepackage{enumerate}
\usepackage{mfirstuc}

\usepackage{todonotes}

\newcommand{\CC }{\mathbb{C}}
\newcommand{\RR }{\mathbb{R}}

\newcommand{\ZZ }{\mathbb{Z}}

\newcommand{\Ac }{\mathcal{A}}

\newcommand{\Bc }{\mathcal{B}}

\newcommand{\Ec }{\mathcal{E}}

\newcommand{\Vc }{\mathcal{V}}

\newcommand{\Cc }{\mathcal{C}}

\newcommand{\Lc }{\mathcal{L}}

\newcommand{\OM }{\mathcal{M}}
\newcommand{\Fc }{\mathcal{F}}

\newcommand{\Tc }{\mathcal{T}}
\newcommand{\Sc }{\mathcal{S}}

\newcommand{\Zc }{\mathcal{Z}}

\newcommand{\isom }{\simeq}

% topological spaces
\newcommand{\Xf }{\mathfrak{X}}

\newcommand{\Ff }{\mathfrak{F}}

\DeclareMathOperator{\sgn}{sgn}
\DeclareMathOperator{\mtc}{\underline{M}}

% categories

\DeclareMathOperator{\Zero}{\mathbf{0}}

\newcommand\wt[1]{\widetilde{#1}}

\DeclareMathOperator{\rk}{rk}

\DeclareMathOperator{\vts}{vert}

%\usepackage{mathabx}

%% special symbols for linear extension
% On the tope poset

%\DeclareMathOperator{\leT}{\sqsubset}

%\DeclareMathOperator{\leTneq}{\sqsubsetneq}

% on the dual covector cells

\DeclareMathOperator{\leLc}{\triangleleft\mspace{-1mu}\cdot}

% Categories:
\DeclareMathOperator{\Pos}{\mathtt{Pos}}

\DeclareMathOperator{\Top}{\mathtt{Top}}

\DeclareMathOperator{\HoFib}{HoFib}

\def\dual{\vee}

\DeclareMathOperator{\sd}{sd}
\DeclareMathOperator{\rksdS}{rksd\mathcal{S}}
\DeclareMathOperator{\subdiv}{\triangleleft}

\newcommand\afr{\mathfrak{a}}
\newcommand\bfr{\mathfrak{b}}

%%%%%%%%%%%%%%%%%%%%%%%%%%%%%%%%%%%%%%%%%
%%%%% theorem related 
%%%%%%%%%%%%%%%%%%%%%%%%%%%%%%%%%%%%%%%%%

\numberwithin{equation}{section}

\theoremstyle{plain}
\newtheorem{lemma}[equation]{Lemma}
\newtheorem{theorem}[equation]{Theorem}

\newtheorem{corollary}[equation]{Corollary}
\newtheorem{proposition}[equation]{Proposition}
\theoremstyle{definition}
\newtheorem{definition}[equation]{Definition}
\newtheorem{remark}[equation]{Remark}

\newtheorem{example}[equation]{Example}
\newtheorem{problem}[equation]{Problem}

%%%%%%%%%%%%%%%%%%%%%%%%%%%%%%%%%%%%%%%%%%%%%%%%%%%%%%%%%%%%%%%%%%%%%%%%%%%%%%%%%%%%%%%%%%%%%%%%%%%%%%%%
%%% Draw great circle arcs on a sphere to illustrate the oriented matroid complex
%%%%%%%%%%%%%%%%%%%%%%%%%%%%%%%%%%%%%%%%%%%%%%%%%%%%%%%%%%%%%%%%%%%%%%%%%%%%%%%%%%%%%%%%%%%%%%%%%%%%%%%%

% required packages
\usepackage{pgfplots, tikz-3dplot, ifthen}

% number of samples for drawing arcs
\def\samples{15}

% great circle arc between the points A and B
\newcommand{\GCArcAB}[7]{
	\pgfmathsetmacro{\Ax}{#1}
	\pgfmathsetmacro{\Ay}{#2}
	\pgfmathsetmacro{\Az}{#3}
	\pgfmathsetmacro{\Bx}{#4}
	\pgfmathsetmacro{\By}{#5}
	\pgfmathsetmacro{\Bz}{#6}
	
	\pgfmathsetmacro{\nA}{sqrt(\Ax*\Ax+\Ay*\Ay+\Az*\Az)}
	\pgfmathsetmacro{\nB}{sqrt(\Bx*\Bx+\By*\By+\Bz*\Bz)}
	
	\pgfmathsetmacro{\nAx}{\Ax/\nA}
	\pgfmathsetmacro{\nAy}{\Ay/\nA}
	\pgfmathsetmacro{\nAz}{\Az/\nA}
	\pgfmathsetmacro{\nBx}{\Bx/\nB}
	\pgfmathsetmacro{\nBy}{\By/\nB}
	\pgfmathsetmacro{\nBz}{\Bz/\nB}
	
	\pgfmathsetmacro{\a}{acos(\nAx*\nBx+\nAy*\nBy+\nAz*\nBz)}
	
	\draw[#7] plot[variable=\t, domain=1:0, samples=\samples]
	({\nAx*sin((1-\t)*\a)/sin(\a) +\nBx*sin(\t*\a)/sin(\a)},
	{\nAy*sin((1-\t)*\a)/sin(\a) + \nBy*sin(\t*\a)/sin(\a)},
	{\nAz*sin((1-\t)*\a)/sin(\a) + \nBz*sin(\t*\a)/sin(\a)});
}

% midpoint on gc-arc between the points A and B

% draw great circle arc between A and B and distinguish between the front (solid) and back (opacity=0.25) part
\newcommand{\GCArcABfb}[7]{
	\pgfmathsetmacro{\Ax}{#1}
	\pgfmathsetmacro{\Ay}{#2}
	\pgfmathsetmacro{\Az}{#3}
	\pgfmathsetmacro{\Bx}{#4}
	\pgfmathsetmacro{\By}{#5}
	\pgfmathsetmacro{\Bz}{#6}
	
	\pgfmathsetmacro{\nA}{sqrt(\Ax*\Ax+\Ay*\Ay+\Az*\Az)}
	\pgfmathsetmacro{\nB}{sqrt(\Bx*\Bx+\By*\By+\Bz*\Bz)}
	
	\pgfmathsetmacro{\nAx}{\Ax/\nA}
	\pgfmathsetmacro{\nAy}{\Ay/\nA}
	\pgfmathsetmacro{\nAz}{\Az/\nA}
	\pgfmathsetmacro{\nBx}{\Bx/\nB}
	\pgfmathsetmacro{\nBy}{\By/\nB}
	\pgfmathsetmacro{\nBz}{\Bz/\nB}
	
	\let\p\tdplotmainphi
	\let\t\tdplotmaintheta
	\pgfmathsetmacro\azx{sin(\p)*sin(\t)}
	\pgfmathsetmacro\azy{-cos(\p)*sin(\t)}
	\pgfmathsetmacro\azz{cos(\t)}
	
	\pgfmathsetmacro{\sAZ}{\nAx*\azx+\nAy*\azy+\nAz*\azz}
	\pgfmathsetmacro{\sBZ}{\nBx*\azx+\nBy*\azy+\nBz*\azz}
	\pgfmathsetmacro{\tt}{\sBZ/(\sBZ-\sAZ)}
	
	\pgfmathsetmacro{\af}{\sAZ>=0}
	\pgfmathsetmacro{\ab}{\sAZ<0}
	\pgfmathsetmacro{\bf}{\sBZ>=0}
	\pgfmathsetmacro{\bb}{\sBZ<0}
	\pgfmathsetmacro{\abf}{\af+\bf>1}
	\pgfmathsetmacro{\abb}{\ab+\bb>1}

	\ifthenelse{\abf=1}{
		\GCArcAB{\Ax}{\Ay}{\Az}{\Bx}{\By}{\Bz}{#7}
	}{
		\ifthenelse{\abb=1}{%
			\GCArcAB{\Ax}{\Ay}{\Az}{\Bx}{\By}{\Bz}{opacity=0.25, #7}
		}{
			\pgfmathsetmacro{\Mx}{\tt*\nAx+(1-\tt)*\nBx}
			\pgfmathsetmacro{\My}{\tt*\nAy+(1-\tt)*\nBy}
			\pgfmathsetmacro{\Mz}{\tt*\nAz+(1-\tt)*\nBz}
			\pgfmathsetmacro{\nM}{sqrt(\Mx*\Mx+\My*\My+\Mz*\Mz)}
			\pgfmathsetmacro{\nMx}{\Mx/\nM}
			\pgfmathsetmacro{\nMy}{\My/\nM}
			\pgfmathsetmacro{\nMz}{\Mz/\nM}
			
			\ifthenelse{\af>0}{
				\pgfmathsetmacro{\aAM}{acos(\nAx*\nMx+\nAy*\nMy+\nAz*\nMz)}
				
				\draw[#7] plot[variable=\t, domain=1:0, samples=\samples]
				({\nAx*sin((1-\t)*\aAM)/sin(\aAM) +\nMx*sin(\t*\aAM)/sin(\aAM)},
				{\nAy*sin((1-\t)*\aAM)/sin(\aAM) + \nMy*sin(\t*\aAM)/sin(\aAM)},
				{\nAz*sin((1-\t)*\aAM)/sin(\aAM) + \nMz*sin(\t*\aAM)/sin(\aAM)});
				
				\pgfmathsetmacro{\aBM}{acos(\nBx*\nMx+\nBy*\nMy+\nBz*\nMz)}
				
				\draw[opacity=0.25, #7] plot[variable=\t, domain=1:0, samples=\samples]
				({\nBx*sin((1-\t)*\aBM)/sin(\aBM) +\nMx*sin(\t*\aBM)/sin(\aBM)},
				{\nBy*sin((1-\t)*\aBM)/sin(\aBM) + \nMy*sin(\t*\aBM)/sin(\aBM)},
				{\nBz*sin((1-\t)*\aBM)/sin(\aBM) + \nMz*sin(\t*\aBM)/sin(\aBM)});
			}{
				\pgfmathsetmacro{\aAM}{acos(\nAx*\nMx+\nAy*\nMy+\nAz*\nMz)}
				
				\draw[opacity=0.25, #7] plot[variable=\t, domain=1:0, samples=\samples]
				({\nAx*sin((1-\t)*\aAM)/sin(\aAM) +\nMx*sin(\t*\aAM)/sin(\aAM)},
				{\nAy*sin((1-\t)*\aAM)/sin(\aAM) + \nMy*sin(\t*\aAM)/sin(\aAM)},
				{\nAz*sin((1-\t)*\aAM)/sin(\aAM) + \nMz*sin(\t*\aAM)/sin(\aAM)});
				
				\pgfmathsetmacro{\aBM}{acos(\nBx*\nMx+\nBy*\nMy+\nBz*\nMz)}
				
				\draw[#7] plot[variable=\t, domain=1:0, samples=\samples]
				({\nBx*sin((1-\t)*\aBM)/sin(\aBM) +\nMx*sin(\t*\aBM)/sin(\aBM)},
				{\nBy*sin((1-\t)*\aBM)/sin(\aBM) + \nMy*sin(\t*\aBM)/sin(\aBM)},
				{\nBz*sin((1-\t)*\aBM)/sin(\aBM) + \nMz*sin(\t*\aBM)/sin(\aBM)});					
			}				
		}
	}		
}

% draw a point P on the sphere and distinguish between the front (solid) and back (opacity=0.25) part
\newcommand{\POnSfb}[4]{
	\pgfmathsetmacro{\Px}{#1}
	\pgfmathsetmacro{\Py}{#2}
	\pgfmathsetmacro{\Pz}{#3}
	\pgfmathsetmacro{\nP}{sqrt(\Px*\Px+\Py*\Py+\Pz*\Pz)}
	\pgfmathsetmacro{\nPx}{\Px/\nP}
	\pgfmathsetmacro{\nPy}{\Py/\nP}
	\pgfmathsetmacro{\nPz}{\Pz/\nP}
	
	\let\p\tdplotmainphi
	\let\t\tdplotmaintheta
	\pgfmathsetmacro\azx{sin(\p)*sin(\t)}
	\pgfmathsetmacro\azy{-cos(\p)*sin(\t)}
	\pgfmathsetmacro\azz{cos(\t)}
	
	\pgfmathsetmacro{\sPZ}{\nPx*\azx+\nPy*\azy+\nPz*\azz}
	
	\pgfmathsetmacro{\Pf}{\sPZ>=0}
	
	\ifthenelse{\Pf>0}{%
		\filldraw[#4] ({\nPx},{\nPy},{\nPz}) circle[radius=0.5pt];
	}{%
		\filldraw[opacity=0.25, #4] ({\nPx},{\nPy},{\nPz}) circle[radius=0.5pt];
	}
}

%%%%%%%%%%%%%%%%%%%%%%%%%%%%%%%%%%%%%%%%%%%%%%%%%%%%%%%%%%%%%%%%%%%%%%%%%%%%%%%
%%%%%%%%%%%%%%%%%%%%%%%%%%%%%%%%%%%%%%%%%%%%%%%%%%%%%%%%%%%%%%%%%%%%%%%%%%%%%%%

%%%%%%%%%%%%%%%%%%%%%%%%%%%%%%%%%%%%%%%%%%%%%%%%%%%%%%%%%%%%%%%%%%%%%%%%%%%%%

\title[Milnor fibrations and oriented matroids]
{Milnor fibrations and oriented matroids}

%%%%%%%%%%%%%%%%%%%%%%%%%%%%%%%%%%%%%%%%%%%%%%%%%%%%%%%%%%%%%%%%%%%%%%%%%%%%%

\author[P.~M\"ucksch]{Paul M\"ucksch}
\address
{Institut für Algebra, Zahlentheorie und Diskrete Mathematik, 
	Fakultät für Mathematik und Physik, 
	Leibniz Universität Hannover, 
	Welfengarten 1, D-30167 Hannover, Germany}
\email{muecksch@math.uni-hannover.de}

\author{Masahiko Yoshinaga}
\address{Masahiko Yoshinaga,
	Department of Mathematics,
	Graduate School of Science,
	The University of Osaka,
	Toyonaka, Osaka 560-0043, Japan}
\email{yoshinaga@math.sci.osaka-u.ac.jp}

%%%%%%%%%%%%%%%%%%%%%%%%%%%%%%%%%%%%%%%%%%%%%%%%%%%%%%%%%%%%%%%%%%%%

\begin{document}
	
	\begin{abstract}
		We introduce a combinatorial model for the Milnor fibration of a complexified real arrangement
		using oriented matroids.
		It is a poset quasi-fibration, whose domain is a subdivision of the Salvetti complex
		stemming from a natural subdivision of the dual oriented matroid complex.
		This yields a concrete finite regular CW complex which
		is homotopy equivalent to the Milnor fiber of the complexified real arrangement 
		and implies that the homotopy type of the Milnor fiber
		of a complexified real arrangement only depends on the underlying combinatorial structure given
		by its oriented matroid. 
		Moreover, our construction works for any oriented matroid, disregarding realizability,
		so we obtain a notion of a combinatorial Milnor fibration for any oriented matroid.
	\end{abstract}
	
	%%%%%%%%%%%%%%%%%%%%%%%%%%%%%%%%%%%%%%%%%%%%%%%%%%%%%%%%%%%%%%%%%%%%%%%%%%%%%%%%%%%%%%%
	
	\keywords{Milnor fiber, Oriented matroid, Salvetti complex, poset quasi-fibration}
	\subjclass[2020]{Primary: 32S55, 52C40, 52C35; Secondary: 32S22, 55R10, 57Q70}
	
	\maketitle
	
	%%%%%%%%%%%%%%%%%%%%%%%%%%%%%%%%%%%%%%%%%%%%%%%%%%%%%%%%%%%%%%%%%%%%%%%%%%%%%%%%%%%%%%%
	
	\section{Introduction}
	\label{sec:Introduction}
	
	For a homogeneous polynomial $f \in \CC[x_1,\ldots,x_\ell]$ the evaluation of $f$
	restricted to the hypersurface complement 
	$\Xf(f) = \CC^\ell\setminus f^{-1}(0)$
	yields a locally trivial fibration
	\[
	f|_{\Xf(f)}:\Xf(f) \to \CC^\times, v\mapsto f(v),
	\]
	known as the \emph{Milnor fibration} of $f$,
	by a classical result of Milnor \cite{Milnor1968_SingularPoints}. 
	Its typical fiber $f^{-1}(1)$ is called the \emph{Milnor fiber} of $f$.
	If the singularity of $f$ at the origin is isolated, then $f^{-1}(1)$ has the homotopy type of a wedge of spheres,
	again thanks to Milnor \cite{Milnor1968_SingularPoints}.
	
	In contrast, if the origin is not an isolated singularity, then much less is known about the topology of the Milnor fiber
	and its study is an important topic at the crossroads of algebraic geometry, singularity theory and algebraic topology (see e.g.\ \cite{DP2003_Hypersurf, NS2007_Motivic, NS2012_Milnorboundary}).
	
	A typical and important instance of a most non-isolated singularity is given by a hyperplane arrangement $\Ac$,
	i.e., a finite collection of (linear) hyperplanes in $V = \CC^\ell$. 
	For a choice of defining linear forms $\alpha_H \in (\CC^\ell)^*$ ($H =\ker(\alpha_H) \in \Ac$), denote by $Q = \prod_{H \in \Ac}\alpha_H \in \CC[x_1,\ldots,x_\ell]$
	the corresponding defining polynomial of $\Ac$ and let $\Xf = V \setminus \bigcup\limits_{H \in \Ac}H$ be the arrangement complement.
	The Milnor fibration of $\Ac$ is 
	\[
	Q|_{\Xf}:\Xf \to \CC^\times, v \mapsto Q(v),
	\]
	and its Milnor fiber we denote by $\Ff := Q^{-1}(1)$.
	
	Regarding the total space $\Xf$ of the fibration,
	thanks to the seminal work of Orlik and Solomon \cite{OrlikSolomon1980_OSAlgebra}, 
	its cohomology can be described completely in combinatorial terms using the intersection lattice of $\Ac$.
	Moreover, for complexified real arrangements, 
	the foundational work of Salvetti \cite{Salvetti1987_SalCpx} provided a combinatorial model in the form of the \emph{Salvetti complex}, 
	a finite regular CW complex whose homotopy type depends only on the oriented matroid of the arrangement (cf.\ Subsection \ref{ssec:SalvettiCpx}). 
	This construction has proven immensely influential and has been extended and refined in various ways 
	(see e.g., \cite{GelfandRybnikov1990_AlgTopInvOfOMs, BjoeZie1992_CombStrat, DelucchiFalk2017_EquivModelCompl}).
	However, extending such combinatorial models to Milnor fibers has posed persistent challenges. 
	While there has been substantial progress in understanding topological invariants and monodromy of Milnor fibers 
	(e.g., see \cite{CS1995_MilnorFib, FY2007_Multinets, BDS2011_FirstMilnor, Suciu2014_MilnorFibrations, 
		DS2014_Multinets, PS2017_modular, Yoshinaga2020_Milnor2Torsion}), existing approaches often rely on algebraic or Morse-theoretic techniques.
	So far, a concrete model for the homotopy type of $\Ff$ is available only in the special cases of real reflection arrangements, thanks to Brady, Falk, and Watt
	\cite{BFW2018_MilnorCoxeterNCP} and the generic case due to Orlik and Randell \cite{OrRan1993_MilnorFGeneric}.
	Despite decades of research, a full combinatorial understanding of the Milnor fiber 
	-- analogous to the combinatorial models available for the complement of an arrangement -- has remained elusive.
	
	In this paper, we provide a purely combinatorial model of the Milnor fibration for any real hyperplane arrangement. 
	Our construction is based on a new notion of poset quasi-fibration (see Definition \ref{def:pqf}),
	%	 recently introduced by the first author in \cite{Mue24_ModFlatsOMs}, 
	which serves as a combinatorial analogue of a topological fibration. 
	The total space of our poset quasi-fibration is built from a regular CW complex that is a natural PL subdivision of the Salvetti complex,
	called the \emph{tope-rank subdivision} (see Definition \ref{def:TopeRankSubdivSalvetti} and Theorem \ref{thm:rksdSubdivMap}). 
	This subdivision, in turn, arises from a canonical refinement of the dual complex of the oriented matroid associated to the arrangement 
	(see Definition \ref{def:TopeRankSubdivCovectors}, Theorem \ref{thm:rkBsdLcCWposet} and Corollary \ref{coro:HomeoRkBSubdiv}).
	
	Firstly, Theorem \ref{thm:MilnorOMPosetQuasiFib} shows that our new combinatorial map indeed has the desired
	homotopy theoretic properties. The proof uses Forman's discrete Mores theory \cite{Forman1998_DiscrMorse}.
	Moreover, it fits nicely into a homotopy commutative square with the geometric Milnor fibration (see Theorem \ref{thm:MilnorOMHomotopyEquivToMilnorGeom}).
	As a consequence we obtain our main result, Theorem \ref{thm:MilnorFiberHtEquiv}: the poset fiber of this poset quasi-fibration map
	-- a concrete finite subcomplex of the rank-subdivided Salvetti complex --
	is homotopy equivalent to the geometric Milnor fiber of the complexified arrangement.
	From this, we derive an important conclusion: 
	the homotopy type of the Milnor fiber depends solely on the combinatorial structure encoded by the oriented matroid. 
	This not only affirms a long-standing intuition in the field but also provides a concrete finite model 
	that enables the computation of topological invariants of the Milnor fiber from oriented matroid data.
	
	Moreover, our construction applies beyond realizable arrangements: 
	it works for any oriented matroid, regardless of whether it arises from a geometric configuration of hyperplanes. 
	This leads naturally to the notion of a combinatorial Milnor fibration for arbitrary oriented matroids, 
	suggesting a new direction in which to generalize classical topological notions to a broader combinatorial context. 
	In this way, our work opens the door to the study of ``non-realizable Milnor fibers''
	-- topological spaces associated to combinatorial objects that have no geometric realization, yet behave analogously to classical Milnor fibers.
	
	To our knowledge, this is the first fully combinatorial model of the Milnor fibration 
	that simultaneously captures its homotopy type and admits extension to non-realizable configurations. 
	It thus provides a novel framework that unifies and extends previous work on both the topology of hyperplane complements 
	and the subtle structure of their Milnor fibers.
	
	\medskip\noindent
	The article is organized as follows.
	In Section \ref{sec:Preliminaries} we recall all combinatorial, topological and geometric notions and results
	which we require later on.
	The next Section \ref{sec:TopeRankSubdiv} is devoted to the construction of our new subdivision of the dual oriented matroid complex
	and the Salvetti complex.
	Section \ref{sec:MilnorFibOM} then provides the definition of our combinatorial Milnor fibration for oriented matroids
	and gives the main results of our work, in particular, capturing the homotopy type of the Milnor fiber for any complexified real arrangement.

	%%%%%%%%%%%%%%%%%%%%%%%%%%%%%%%%%%%%%%%%%%%%%%%%%%%%%%%%%%%%%%%%%%%%%%%%%%%%%%%%%%%%%%%%%%%%%%%%%%%%
	%%%%%%%%%%%%%%%%%%%%%%%%%%%%%%%%%%%%%%%%%%%%%%%%%%%%%%%%%%%%%%%%%%%%%%%%%%%%%%%%%%%%%%%%%%%%%%%%%%%%

	\section{Preliminaries}
	\label{sec:Preliminaries}
	
	In this preliminary section we review all the objects and notions required 
	for our study.
	
	\subsection{Posets, regular CW complexes and PL topology}
	\label{ssec:PosetTopology}
	
	We introduce some notation and collect basic information on the combinatorial topology of posets, regular CW complexes
	and facts from piecewise-linear (PL) topology.
	Throughout this paper, all \emph{partially ordered sets} (or \emph{posets} for short) and all complexes are assumed to be finite.
	A concise reference for us is \cite[App.~4.7]{BLSWZ1999_OrientedMatroids}.
	
	We start by recalling some basic poset terminology.
	Let $P = (P,\leq)$ be a poset. Its \emph{dual} or \emph{opposite} poset $P^\dual$
	has the same ground set as $P$ but its order relation $\leq^\dual$ 
	is reversed: $x \leq^\dual y :\iff y \leq x$.
	
	To emphasize the order relation of a particular poset $P$ we occasionally use the notation $\leq_P$.
	
	We denote by $\lessdot$ the \emph{cover relations} of $P$.
	That is $x \lessdot y$ if and only if $x < y$, and for all $z \in P$:
	$x < z \leq y$ implies $z=y$.
	
	A subset $I \subseteq P$ is called an \emph{order ideal} if $y \in I$ and $x \in P$ with
	$x \leq y$ implies $x \in I$.
	Dually, $F \subseteq P$ is called an \emph{order filter} if $F^\dual$ is an order ideal in $P^\dual$.
	We write $P_{\leq y} = \{x \in P \mid x \leq y\}$ for the \emph{principal} order ideal generated by $y \in P$
	and similarly $P_{\geq x} = \{ y \in P \mid x \leq y\}$ for the principal order filter
	generated by $x \in P$.
	
	A map $f:P \to Q$ between two posets is \emph{order preserving} or a \emph{poset map}
	if for all $x \leq_P y$ $(x,y \in P)$ we have $f(x) \leq_Q f(y)$.
	If $q \in Q$ then we write $(f\downarrow q) := f^{-1}(Q_{\leq q})$ for the \emph{poset fibers} of $f$.
	
	\bigskip
	%\noindent
	We assume familiarity with the notion of \emph{CW complexes}, e.g.\ see \cite{Hatcher2002_AT}.
	%	 {\color{red} (M: Are "cell complex" and "CW-complex" interchangeable?)} 
	%	 \begin{PEdit} No, you are right. I corrected this accordingly.\end{PEdit}
	Important for our context are the following special instances.
	
	\begin{definition}[{\cite[Def.~4.7.4]{BLSWZ1999_OrientedMatroids}}]
		\label{def:regCWcpx}
		A finite \emph{regular cell complex} $\Sigma$ is a finite collection of balls $\sigma$ in a Hausdorff space
		$|\Sigma| = \bigcup\limits_{\sigma \in \Sigma}\sigma$ such that
		\begin{enumerate}[(i)]
			\item the interiors $\mathring{\sigma}$ partition $|\Sigma|$ and
			
			\item for all $\sigma \in \Sigma$ their boundary $\partial\sigma$ is a union of some members of $|\Sigma|$.
		\end{enumerate}
	\end{definition}

	A finite regular cell complex $\Sigma$ is determined up to homeomorphism by its \emph{face poset} $\Fc(\Sigma)$
	whose elements are the cells $\sigma \in \Sigma$ ordered by inclusion (\cite[Prop.~4.7.8]{BLSWZ1999_OrientedMatroids}).
	%Let $\Sigma$ be a regular cell complex and $\sigma,\tau \in \Sigma$.
	%If $\tau \subseteq \sigma$ then $\tau$ is called a \emph{face}
	%of $\sigma$ and we simply write $\tau \leq \sigma$.
	%In what follows, we thus regard a regular cell complex $\Sigma$ itself as a poset.
	Cells of dimension zero are referred to as \emph{vertices} and we write $\vts(\sigma)$
	for the set of vertices of $\Sigma$ contained in $\sigma$.
	
	Conversely, to every poset $P$ is associated its \emph{order complex}
	$\Delta(P)$. 
	It is the simplicial complex with 
	%	vertex set the ground set of $P$ and 
	$n$-simplices given by all chains of length $n$ in $P$, i.e.
	\[
	\Delta(P)_n := \{\{x_0,\ldots,x_n\} \mid x_i \in P\text{ with }x_0 < x_1 < \ldots < x_n\}.
	\]
	
	For a simplicial complex $\Delta$ we denote its topological realization by $|\Delta|$.
	The composition of forming the order complex and taking its realization yields the 
	\emph{simplicial realization} functor 
	\[
	|\Delta(-)|:\Pos \to \Top
	\]
	from the category $\Pos$ of posets and order preserving maps
	to the category $\Top$ of topological spaces and continuous maps. 
	A regular cell complex $\Sigma$ is naturally homeomorphic to the image of its face poset under this functor: $|\Delta(\Fc(\Sigma))| \cong |\Sigma|$.
	
	We call an order preserving map $f:P \to Q$ a homotopy equivalence
	provided $|\Delta(f)|$ is a homotopy equivalence.
	
	%Since the functor $|\Delta(-)|$ has a left adjoint, it preserves all finite limits.
	
	Maps between finite regular cell complexes are given as order preserving maps
	between their face posets.	
	A poset map between the face posets of
	regular cell complexes which is a homotopy equivalence in the above sense
	is indeed a homotopy equivalence of their defining topological spaces.
	
	A regular cell complex $\Sigma$ is called \emph{pure} if
	all maximal cells have the same dimension.
	If for all $\sigma, \tau \in \Sigma$ with nonempty intersection
	we have $\sigma\cap\tau \in \Sigma$ then $\Sigma$ has the \emph{intersection property}.
	
	A subset $\Gamma \subseteq \Sigma$ is called a \emph{(closed) subcomplex}
	if with a cell $\sigma \in \Gamma$ all of its faces also belong to $\Gamma$,
	i.e.\ $\Gamma$ is an order ideal of $\Sigma$.
	For a subset of cells $S \subseteq \Sigma$
	we denote by $\Sigma(S) := \{\tau \in \Sigma \mid \tau \subseteq \sigma$ for a $\sigma \in S\}$ the subcomplex
	of all faces of cells in $S$.
	In poset terminology, $\Fc(\Sigma(S)) = \bigcup_{\sigma \in S} \Fc(\Sigma)_{\leq \sigma} $ is the order ideal generated by $S$.
	For a cell $\sigma \in \Sigma$ we denote by $\partial\sigma$ the subcomplex consisting of
	all proper faces of $\sigma$.
	
	We have the following special class of posets which exactly correspond to regular cell complexes.
	
	\begin{definition}[{\cite{Bjoerner1984_CWPosets}}]
		\label{def:CW-poset}
		Let $P$ be a graded poset with grading $\rho:P \to \ZZ_{\geq 0}$.
		Then $P$ is a \emph{CW-poset} if for all $x \in P$ the order complex $|\Delta(P_{<x})|$ is homeomorphic to
		a $(\rho(x)-1)$-sphere. 
	\end{definition}
	
	\begin{lemma}[{\cite{McCrory1975_ConeCpxs}, \cite{Bjoerner1984_CWPosets}}]
		A poset $P$ is a CW-poset if and only if it is the face poset of a regular cell complex.
	\end{lemma}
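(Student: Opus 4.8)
The plan is to prove the two implications separately, with the forward direction (face poset of a regular cell complex $\Rightarrow$ CW-poset) being a matter of unwinding definitions, and the reverse direction (CW-poset $\Rightarrow$ face poset) requiring an inductive cell-attaching construction; the reverse direction is where the real work lies.

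For the forward direction, let $\Sigma$ be a finite regular cell complex and put $P = \Fc(\Sigma)$. I would first check that $\rho(\sigma) := \dim\sigma$ is a grading of $P$. For a $d$-cell $\sigma$, condition (ii) of Definition \ref{def:regCWcpx} together with the partition condition (i) identifies $\partial\sigma$ with the topological boundary of the closed $d$-ball $\sigma$, so $\partial\sigma$ is a subcomplex homeomorphic to $S^{d-1}$. An easy argument using invariance of domain shows that such a boundary complex is pure of dimension $d-1$ (a maximal cell of dimension $e$ would have open relative interior inside $|\partial\sigma|$, forcing $e = d-1$); by induction on dimension this makes every cover relation $\tau \lessdot \sigma$ satisfy $\dim\tau = \dim\sigma - 1$, so $\rho$ is indeed a grading. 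Finally, for $\sigma \in P$ one has $P_{<\sigma} = \Fc(\partial\sigma)$, so the natural homeomorphism $|\Delta(\Fc(\partial\sigma))| \cong |\partial\sigma|$ recorded above gives $|\Delta(P_{<\sigma})| \cong |\partial\sigma| \cong S^{d-1} = S^{\rho(\sigma)-1}$, which is exactly the CW-poset condition.

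For the reverse direction, let $P$ be a CW-poset with grading $\rho$, and let $N = \max_{x \in P}\rho(x)$. I would construct regular cell complexes $\Sigma_0 \subseteq \Sigma_1 \subseteq \cdots \subseteq \Sigma_N$ with $\Fc(\Sigma_k) = P_{\leq k} := \{x \in P \mid \rho(x) \leq k\}$ and each $\Sigma_{k-1}$ a subcomplex of $\Sigma_k$; then $\Sigma := \Sigma_N$ does the job. The base case $\Sigma_0$ is the discrete complex on $\rho^{-1}(0)$ (using the convention $S^{-1} = \emptyset$, matching $P_{<x} = \emptyset$ for minimal $x$). For the inductive step, suppose $\Sigma_{k-1}$ is built. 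Since $\rho$ is a grading, for each $x$ with $\rho(x) = k$ the principal order ideal $P_{<x}$ lies in $P_{\leq k-1}$ and is an order ideal there, hence equals $\Fc(\Gamma_x)$ for a subcomplex $\Gamma_x \subseteq \Sigma_{k-1}$; moreover $|\Gamma_x| \cong |\Delta(\Fc(\Gamma_x))| = |\Delta(P_{<x})| \cong S^{k-1}$ by the CW-poset hypothesis. Fix a standard closed $k$-ball $D_x$ and a homeomorphism $\partial D_x \xrightarrow{\sim} |\Gamma_x|$, and form $\Sigma_k$ by attaching each $D_x$ to $|\Sigma_{k-1}|$ along this map. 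Because each attaching map is a homeomorphism onto a subcomplex, the composite $D_x \hookrightarrow |\Sigma_k|$ is a continuous injection from a compact space into a Hausdorff space, hence an embedding; thus its image is a genuine closed $k$-ball whose interior is disjoint from everything else and whose boundary is the union of the cells of $\Gamma_x$. Conditions (i) and (ii) of Definition \ref{def:regCWcpx} follow, and a direct comparison of incidences shows $\Fc(\Sigma_k) \cong P_{\leq k}$, completing the induction.

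The main obstacle is precisely the attaching step in the reverse direction: one must ensure that gluing in a single $k$-ball along $|\Gamma_x|$ yields a \emph{regular} CW complex with the prescribed face incidences, i.e.\ that the new closed cell is an embedded ball and that its proper faces are exactly the cells of $\Gamma_x$. This is exactly what the sphericity clause in the definition of a CW-poset buys us — it provides the homeomorphism $\partial D_x \cong |\Gamma_x|$, and compactness upgrades the resulting quotient map to an embedding. A secondary point requiring care is that $\rho$ must genuinely be the grading by cell dimension in the forward direction (so that the boundary complexes are pure and the dimensions behave correctly), and, in the reverse direction, that $P_{\leq k}$ is an order ideal so that the subcomplexes $\Sigma_{k-1}$ and $\Gamma_x$ are well defined; both reduce to gradedness together with the inductive control on the spheres $|\Delta(P_{<x})|$.
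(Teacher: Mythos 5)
Your proof is correct, and the forward direction is done exactly as in the cited sources: identify $\rho$ with cell dimension, argue purity of $\partial\sigma$ via invariance of domain to see that $\rho$ is a grading, then invoke $|\Delta(\Fc(\partial\sigma))| \cong |\partial\sigma| \cong S^{\dim\sigma-1}$. The reverse direction, however, takes a genuinely different route from the argument in Bj\"orner \cite{Bjoerner1984_CWPosets} and McCrory \cite{McCrory1975_ConeCpxs} that the paper is citing. You proceed by abstract cell attachment: inductively glue a fresh standard $k$-ball $D_x$ onto $|\Sigma_{k-1}|$ along a chosen homeomorphism $\partial D_x \cong |\Gamma_x| \cong |\Delta(P_{<x})| \cong S^{k-1}$, and then use compactness of $D_x$ and Hausdorffness of the adjunction space to promote the characteristic map to an embedding, checking regularity and the face incidences by hand. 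The cited proofs instead realize the complex all at once inside the fixed space $|\Delta(P)|$: the closed cell indexed by $x$ is simply $|\Delta(P_{\leq x})|$, which is the cone on $|\Delta(P_{<x})| \cong S^{\rho(x)-1}$ and hence a $\rho(x)$-ball (cf.\ Theorem~\ref{thm:PLBallsAndSpheres}(iii) and Lemma~\ref{lem:ConeCWPoset}), and the partition and boundary conditions of Definition~\ref{def:regCWcpx} are automatic because every cell is already a subcomplex of a single ambient simplicial complex. The trade-off is that your construction makes the role of the sphericity hypothesis very explicit (it is literally the datum you attach along), but forces you to verify that the adjunction stays regular, Hausdorff, and has the prescribed incidences; the cone-inside-$|\Delta(P)|$ approach gets all of that for free and sidesteps quotient topology entirely, at the cost of a less tactile, cell-by-cell picture. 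Both arguments are correct; since the paper only states the lemma with references and gives no proof, yours stands as a legitimate alternative.
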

	
	\begin{definition}
		\label{def:SubdivPL homeo}
		\begin{enumerate}[(a)]
			\item
			Let $\Sigma$ and $\Gamma$ be two regular cell complexes. Then
			$\Gamma$ is a \emph{subdivision} of $\Sigma$ if $|\Gamma| = |\Sigma|$ and
			every closed cell of $\Gamma$ is a subset of some closed cell of $\Sigma$.
			In that case we write $\Gamma \subdiv \Sigma$. 
			
			\item 
			Let $\Delta$ and $\Delta'$ be two simplicial complexes. Then $\Delta$
			is \emph{PL homeomorphic} to $\Delta'$ if there are simplicial subdivisions $\Gamma \subdiv \Delta$,
			$\Gamma' \subdiv \Delta'$ and a simplicial isomorphism $\Gamma \to \Gamma'$
			or in other words, $\Delta$ and $\Delta'$ are PL homeomorphic if and only if they have common simplicial subdivisions.
			
			\item 
			Let $P$ and $Q$ be two posets. Then $P$ is PL homeomorphic to $Q$ if
			$\Delta(P)$ is PL homeomorphic to $\Delta(Q)$.
			
			\item
			Let $\Sigma$ and $\Gamma$ be two regular cell complexes. 
			Then $\Sigma$ is PL homeomorphic to $\Gamma$ if $\Fc(\Sigma)$ is PL homeomorphic to $\Fc(\Gamma)$.
			
			\item
			A simplicial complex $\Delta$ is called a
			\begin{enumerate}[(i)]
				\item \emph{PL $d$-ball} if $\Delta$ is PL homeomorphic to the $d$-simplex,
				\item \emph{PL $d$-sphere} if $\Delta$ is PL homeomorphic to the boundary of the $(d+1)$-simplex.
			\end{enumerate}
			More generally, a regular cell complex $\Sigma$ is a
			\begin{enumerate}[(i)]
				\setcounter{enumii}{2}
				\item \emph{PL $d$-ball} if $\Delta(\Fc(\Sigma))$ is PL homeomorphic to the $d$-simplex,
				\item \emph{PL $d$-sphere} if $\Delta(\Fc(\Sigma))$ is PL homeomorphic to the boundary of the $(d+1)$-simplex.
			\end{enumerate}	
		\end{enumerate}
	\end{definition}
	
	\begin{theorem}[{\cite[Thm.~4.7.21(i),(ii),(v)]{BLSWZ1999_OrientedMatroids}}]\hfill
		\label{thm:PLBallsAndSpheres}
		\begin{enumerate}[(i)]
			\item The union of two PL $d$-balls, whose intersection is a PL $(d-1)$-ball lying in the boundary of each, is a PL $d$-ball.
			\item The union of two PL $d$-balls, which intersect along their entire boundaries, is a PL $d$-sphere.
			\item The cone over a PL $d$-sphere is a PL $(d+1)$-ball.
		\end{enumerate}
	\end{theorem}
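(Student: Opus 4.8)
Here is how I would obtain these three classical PL facts, treating them in the order $(iii)\Rightarrow(ii)\Rightarrow(i)$ and using throughout only Definition~\ref{def:SubdivPL homeo} together with two elementary observations — that forming the cone $w\ast(-)$ (the join with one new vertex $w$) is compatible with passing to simplicial subdivisions, to simplicial isomorphisms, and hence to PL homeomorphisms, and that $\partial(B\ast S)=(\partial B)\ast S$ for a PL ball $B$ and a PL sphere $S$ — plus, for (i) only, one genuinely nontrivial ingredient from regular neighbourhood theory: a PL $(d-1)$-ball $C$ contained in a PL $(d-1)$-sphere $\Sigma$ is unknotted, i.e.\ $(\Sigma,C)$ is PL homeomorphic to $(\partial\Delta^{d},F)$ with $F$ a facet of the $d$-simplex $\Delta^{d}$. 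For (iii), given a PL $d$-sphere $S$ and the apex $w$ of $C(S)$, I would choose simplicial subdivisions $\Gamma\subdiv\Delta(\Fc(S))$ and $\Gamma'\subdiv\Delta(\Fc(\partial\Delta^{d+1}))$ admitting a simplicial isomorphism $\Gamma\cong\Gamma'$, and then cone: $w\ast\Gamma$ is a subdivision of $C(S)$, while $w\ast\Gamma'$ is a subdivision of $w\ast\partial\Delta^{d+1}$, which is itself a subdivision of the solid simplex $\Delta^{d+1}$ (the stellar subdivision of its top face at an interior point). Extending the isomorphism by $w\mapsto w$ gives $w\ast\Gamma\cong w\ast\Gamma'$, so $C(S)$ and $\Delta^{d+1}$ have isomorphic simplicial subdivisions and $C(S)$ is a PL $(d+1)$-ball.

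For (ii): by (iii) applied to its boundary, any PL $d$-ball $B$ is PL homeomorphic to the cone $C(\partial B)$, via a homeomorphism restricting to a PL self-homeomorphism of $\partial B$. Given $B_1,B_2$ glued along all of $\partial B_1=\partial B_2=:\Sigma$, I would post-compose one of the two homeomorphisms $B_i\to C(\Sigma)$ with the cone-extension of a suitable PL self-homeomorphism of $\Sigma$ so that the two maps agree on $\Sigma$; pasting them then yields a PL homeomorphism $B_1\cup B_2\cong C(\Sigma)\cup_\Sigma C(\Sigma)$, the suspension $\{a,b\}\ast\Sigma$. This is a PL $d$-sphere because $\Delta^1\ast\Sigma=a\ast b\ast\Sigma=a\ast C(\Sigma)$ is the cone over the PL $d$-ball $C(\Sigma)$, hence a PL $(d+1)$-ball, and its boundary is $(\partial\Delta^1)\ast\Sigma=\{a,b\}\ast\Sigma$ — the boundary of a PL ball being a PL sphere.

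For (i): given $B_1\cap B_2=C$, a PL $(d-1)$-ball lying in $\partial B_1\cap\partial B_2$, the unknotting input supplies PL homeomorphisms $\phi_i\colon B_i\to\Delta^d$ with $\phi_i(C)=F$. Writing $\Delta^d=v\ast F$ with $v$ the vertex opposite $F$, I would extend the PL self-homeomorphism $\phi_2\phi_1^{-1}|_F$ of $F$ to a PL self-homeomorphism $G$ of $\Delta^d$ by coning from $v$, and replace $\phi_2$ by $G^{-1}\phi_2$, so that now $\phi_1|_C=\phi_2|_C$. Then, letting $\rho$ be the reflection carrying $\Delta^d$ onto a second copy $\Delta^d_-$ across the hyperplane spanned by $F$, the maps $\phi_1$ and $\rho\circ\phi_2$ agree on $C$ and paste to a PL homeomorphism $B_1\cup_C B_2\to\Delta^d\cup_F\Delta^d_-$; since the target is a bounded, full-dimensional polytope (cut out by $d+1$ linear inequalities), it is a PL $d$-ball, and hence so is $B_1\cup B_2$.

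The only step carrying real weight is the unknotting input used in (i) — equivalently, that the closure of the complement of a PL ball in an equidimensional PL sphere is again a PL ball and that all such ball-splittings of the sphere are PL equivalent. This is precisely where regular neighbourhood theory enters, and I would simply invoke it (see \cite[App.~4.7]{BLSWZ1999_OrientedMatroids} and the references therein) rather than reprove it; everything else — the compatibility of coning with subdivision, the suspension computation in (ii), and the polytope observation in (i) — is formal.
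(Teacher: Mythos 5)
The paper offers no proof of this theorem; it is quoted verbatim from \cite[Thm.~4.7.21]{BLSWZ1999_OrientedMatroids}, which in turn defers to the standard PL topology references. So there is nothing internal to compare against, and the question is only whether your sketch is sound.

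It is, and it is organised around the right two ingredients: that coning commutes with simplicial subdivision (this gives (iii) directly, and via the Alexander-trick adjustment of attaching maps gives (ii) from (iii) through the suspension computation $\partial(\Delta^1 \ast \Sigma) = S^0 \ast \Sigma$), and, for (i), the unknotting of a codimension-zero PL ball in a PL sphere, equivalently Newman's theorem that the closed complement of a PL $(d-1)$-ball in a PL $(d-1)$-sphere is again a PL $(d-1)$-ball, with the decomposition unique up to PL homeomorphism of the pair. You correctly flag this as the single nonformal input and invoke rather than reprove it, which is what any reasonable treatment does.

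Two points deserve tightening. First, the unknotting statement you quote is the sphere-pair version $(\Sigma,C) \cong (\partial\Delta^{d},F)$, whereas the pasting step actually uses the ball-pair version $(B_i,C) \cong (\Delta^{d},F)$; the upgrade from the former to the latter is the same cone-extension of a boundary homeomorphism that you already use in (ii), but it should be stated since it is not free. Second, the pasted target $\Delta^{d} \cup_F \rho(\Delta^{d})$ is a PL $d$-ball because it is the bipyramid over $F$, a convex $d$-polytope, and this convexity hinges on $\rho$ being the reflection across $\aff(F)$; an arbitrary second simplex glued along $F$ need not give a convex union (though it would still be a PL ball, by a slightly longer regular-neighbourhood argument). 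Both gaps are routine to fill and the overall structure is the standard one.
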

	
	An important fact about PL spheres is that they admit \emph{dual cell structures}.
	
	\begin{lemma}[{\cite[Prop.~4.7.26(iii),(iv)]{BLSWZ1999_OrientedMatroids}}]
		\label{lem:PL sphereDual}
		Let $\Sigma$ be a regular cell complex which is a PL $d$-sphere. Then:
		\begin{enumerate}[(i)]
			\item 
			every closed cell $\sigma \in \Sigma$ is a PL ball.
			
			\item 
			there is a regular cell complex $\Sigma^\dual$, also a PL $d$-sphere,
			with $|\Sigma| = |\Sigma^\dual|$ and $\Fc(\Sigma^\dual) = \Fc(\Sigma)^\dual$.
		\end{enumerate}
	\end{lemma}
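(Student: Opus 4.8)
The plan is to build everything from the barycentric subdivision $\mathrm{sd}\,\Sigma := \Delta(\Fc(\Sigma))$, which by hypothesis is a simplicial PL $d$-sphere. First I would record two structural consequences of this: $\Fc(\Sigma)$ is graded by dimension and $\Sigma$ is pure of dimension $d$ (maximal chains of cells have $d+1$ elements, since $\mathrm{sd}\,\Sigma$ is a pure $d$-dimensional complex); and, writing $\hat\sigma$ for the vertex of $\mathrm{sd}\,\Sigma$ corresponding to a cell $\sigma$ of dimension $k$, a routine inspection of chains in the face poset gives the join decomposition
\[
\mathrm{lk}_{\mathrm{sd}\,\Sigma}(\hat\sigma) \;=\; \Delta\!\left(\Fc(\Sigma)_{<\sigma}\right) * \Delta\!\left(\Fc(\Sigma)_{>\sigma}\right),
\]
a $(d-1)$-dimensional complex whose two factors have dimensions $k-1$ and $d-k-1$.

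The technical heart of the argument — and the step I expect to be the main obstacle — is to promote this to the statement that, for every cell $\sigma$ of dimension $k$, the complex $\Delta(\Fc(\Sigma)_{<\sigma})$ is a PL $(k-1)$-sphere and $\Delta(\Fc(\Sigma)_{>\sigma})$ is a PL $(d-k-1)$-sphere. For this I would invoke two standard facts from PL topology (available in the PL appendix of \cite{BLSWZ1999_OrientedMatroids}, and not formal consequences of Theorem \ref{thm:PLBallsAndSpheres} alone): that a PL homeomorphism carries vertex links to PL homeomorphic complexes, so that $\mathrm{lk}_{\mathrm{sd}\,\Sigma}(\hat\sigma)$ is a PL $(d-1)$-sphere; and that a join $A*B$ of nonempty complexes which is a PL sphere has each factor a PL sphere, with $\dim A + \dim B + 1 = \dim(A*B)$. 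Feeding the displayed join into the second fact together with the dimension count then forces the claim; the degenerate cases $k=0$ and $k=d$, in which one factor is empty and the link equals the other, are handled separately and directly.

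Granting this, part (i) is immediate: the closed cell $\bar\sigma$, with its induced structure, is the regular cell complex with face poset $\Fc(\Sigma)_{\leq\sigma}$, and since this poset has a maximum its order complex is the cone $\hat\sigma * \Delta(\Fc(\Sigma)_{<\sigma})$ over a PL $(k-1)$-sphere, hence a PL $k$-ball by Theorem \ref{thm:PLBallsAndSpheres}(iii) (for $k=0$ the cell is just a point). For part (ii) I would define, inside $|\Sigma| = |\mathrm{sd}\,\Sigma|$, the \emph{dual cell} $\sigma^\dual := \left|\Delta(\Fc(\Sigma)_{\geq\sigma})\right|$, the union of the closed simplices of $\mathrm{sd}\,\Sigma$ spanned by chains of cells all containing $\sigma$, and set $\Sigma^\dual := \{\sigma^\dual \mid \sigma \in \Sigma\}$. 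Dually to the above, $\Delta(\Fc(\Sigma)_{\geq\sigma}) = \hat\sigma * \Delta(\Fc(\Sigma)_{>\sigma})$ is the cone over a PL $(d-k-1)$-sphere, so $\sigma^\dual$ is a PL $(d-k)$-ball whose manifold boundary is the base $\left|\Delta(\Fc(\Sigma)_{>\sigma})\right| = \bigcup_{\tau > \sigma}\tau^\dual$, a union of members of $\Sigma^\dual$. Since every point of $|\mathrm{sd}\,\Sigma|$ lies in the relative interior of a unique simplex, spanned by a chain of cells with a unique minimal element $\sigma_0$, one checks that such a point lies in $\mathring{\sigma_0^\dual}$ and in no other relative interior; hence the $\mathring{\sigma^\dual}$ partition $|\mathrm{sd}\,\Sigma| = |\Sigma|$, while $\bigcup_\sigma \sigma^\dual = |\Sigma|$. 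This is precisely Definition \ref{def:regCWcpx}, so $\Sigma^\dual$ is a regular cell complex with $|\Sigma^\dual| = |\Sigma|$.

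It then remains to compare face posets and to see that $\Sigma^\dual$ is again a PL $d$-sphere. Since the vertex $\hat\tau$ lies in $\sigma^\dual$ exactly when $\tau \geq \sigma$, one has $\tau^\dual \subseteq \sigma^\dual$ if and only if $\tau \geq_\Sigma \sigma$, so the bijection $\sigma^\dual \mapsto \sigma$ is an isomorphism of posets $\Fc(\Sigma^\dual) \to \Fc(\Sigma)^\dual$. Finally, the order complex of a poset coincides with that of its opposite, so $\Delta(\Fc(\Sigma^\dual)) = \Delta(\Fc(\Sigma)^\dual) = \Delta(\Fc(\Sigma)) = \mathrm{sd}\,\Sigma$, which is a PL $d$-sphere; hence $\Sigma^\dual$ is a PL $d$-sphere by Definition \ref{def:SubdivPL homeo}, completing the plan. (It would also follow, though it is not needed here, that applying the construction twice returns $\Sigma$ under the natural identification.)
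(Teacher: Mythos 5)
The paper does not prove this lemma but cites it from \cite[Prop.~4.7.26(iii),(iv)]{BLSWZ1999_OrientedMatroids}. Your reconstruction is correct and follows the same line as the cited source: it rests on the identity
\[
\mathrm{lk}_{\,\mathrm{sd}\,\Sigma}(\hat\sigma)=\Delta(\Fc(\Sigma)_{<\sigma})*\Delta(\Fc(\Sigma)_{>\sigma}),
\]
together with the two PL facts that vertex (indeed simplex) links in a PL sphere are PL spheres and that a join of nonempty complexes which is a PL sphere has PL-sphere factors; from these, parts (i) and (ii) fall out via the cone and partition arguments exactly as you describe. Two small remarks: the join fact is genuine (not a corollary of Theorem~\ref{thm:PLBallsAndSpheres}) and is where the real PL input lives --- the clean way to see it is to take a maximal simplex $\alpha$ of $A$, note $\mathrm{lk}_{A*B}(\alpha)=B$, and invoke the simplex-link version of the sphere-link fact, which also forces $A$ to be pure and, by symmetry, a sphere; and in the partition/boundary check for $\Sigma^\dual$ it is worth saying explicitly that $\mathring{\sigma^\dual}$ means the manifold interior of the PL ball $\sigma^\dual$, which you correctly identify as the open cone $\lvert\hat\sigma*\Delta(\Fc(\Sigma)_{>\sigma})\rvert\setminus\lvert\Delta(\Fc(\Sigma)_{>\sigma})\rvert$. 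Otherwise the argument is complete and accurate.
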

	
	\begin{definition}[Cones]
		\label{def:ConePoset}
		For a poset $P$ denote by 
		\begin{enumerate}[(i)]
			\item $P\ast v$ the poset where one maximal element $v$ is added to $P$ with $x < v$ for all $x\in P$, and
			\item $CP$ the \emph{cone poset} over $P$ which
			is defined as $P \times I \sqcup \{v\}$ where $I$ is the poset $0 < 1$ with 
			the usual product order on $P \times I$ and $v \leq (x,1)$ for all $x \in P$.	
		\end{enumerate}
	\end{definition}
	
	\begin{lemma}
		\label{lem:ConeCWPoset}
		If $P$ is the face poset of a regular cell complex $\Sigma$ then so is $CP$
		for which we write $C\Sigma$. Moreover, $CP$ is PL homeomorphic to $P\ast v$.
	\end{lemma}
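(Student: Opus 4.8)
The plan is to verify directly that $CP$ is a CW-poset in the sense of Definition \ref{def:CW-poset}, so that it is the face poset of a regular cell complex $C\Sigma$ by Björner's characterization \cite{Bjoerner1984_CWPosets}, and then to obtain the PL homeomorphism $CP \to P\ast v$ of Definition \ref{def:SubdivPL homeo} by a direct decomposition of the order complex $\Delta(CP)$. First I would equip $CP$ with the candidate rank function $\rho'$ given by $\rho'(v)=0$, $\rho'((x,0))=\rho(x)$ and $\rho'((x,1))=\rho(x)+1$, where $\rho$ is the grading of $P$. Inspecting the cover relations of $CP$ — those coming from a cover $x\lessdot y$ in $P$, realized both inside $P\times\{0\}$ and inside $P\times\{1\}$; those of the form $(x,0)\lessdot(x,1)$; and those of the form $v\lessdot(x,1)$ with $x$ a minimal element of $P$ (so $\rho(x)=0$) — one checks that $\rho'$ increases by exactly $1$ across each cover and vanishes on every minimal element, so $CP$ is graded by $\rho'$.

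Next I would check the sphericity condition $|\Delta((CP)_{<w})|\cong S^{\rho'(w)-1}$ for each $w\in CP$. For $w=v$ the lower interval is empty and $\rho'(v)-1=-1$. For $w=(x,0)$ one has $(CP)_{<(x,0)}=P_{<x}\times\{0\}\cong P_{<x}$, whose realization is $S^{\rho(x)-1}$ since $P$ is itself a CW-poset. The decisive case is $w=(x,1)$: here $(CP)_{<(x,1)}$ is the union of the two order ideals $P_{\leq x}\times\{0\}$ and $(P_{<x}\times I)\cup\{v\}$, the latter being the copy inside $CP$ of the cone poset $C(P_{<x})$ from Definition \ref{def:ConePoset}, and these ideals meet exactly in $P_{<x}\times\{0\}\cong P_{<x}$. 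Hence $|\Delta((CP)_{<(x,1)})|$ is the union of $|\Delta(P_{\leq x})|$ and $|\Delta(C(P_{<x}))|$ glued along $|\Delta(P_{<x})|$. Now $|\Delta(P_{\leq x})|$ is the cone with apex $x$ over $|\Delta(P_{<x})|\cong S^{\rho(x)-1}$, hence a (topological) $\rho(x)$-ball whose boundary is $|\Delta(P_{<x})|$; and, by the general statement proved in the next paragraph applied to $Q=P_{<x}$, $|\Delta(C(P_{<x}))|$ is also a $\rho(x)$-ball whose boundary is $|\Delta(P_{<x})|$. Since two $d$-balls glued along their common boundary $(d-1)$-sphere form a $d$-sphere, this gives $|\Delta((CP)_{<(x,1)})|\cong S^{\rho(x)}=S^{\rho'((x,1))-1}$, completing the verification that $CP$ is a CW-poset.

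It remains to prove, for an arbitrary poset $Q$, that $|\Delta(CQ)|$ is PL homeomorphic to $|\Delta(Q\ast v)|$ — that is, to the topological cone over $|\Delta(Q)|$ — by a homeomorphism carrying $|\Delta(Q\times\{0\})|$ onto the base of the cone. The idea is that $\Delta(CQ)$ splits as the union of $\Delta(Q\times I)$ and the simplicial cone $\{v\}\ast\Delta(Q\times\{1\})$, glued along $\Delta(Q\times\{1\})$, since in $CQ$ the element $v$ is comparable only to the members of $Q\times\{1\}$. The order complex $\Delta(Q\times I)$ is the standard ``staircase'' triangulation of the prism $|\Delta(Q)|\times[0,1]$ — each chain in $Q\times I$ spans a Euclidean simplex inside a prism over a simplex of $\Delta(Q)$ — hence PL homeomorphic to $|\Delta(Q)|\times[0,1]$ compatibly with both ends; attaching the cone $\{v\}\ast|\Delta(Q)|$ along the top then absorbs the cylinder into a single cone, $(|\Delta(Q)|\times[0,1])\cup_{|\Delta(Q)|\times\{1\}}\mathrm{Cone}(|\Delta(Q)|)\cong\mathrm{Cone}(|\Delta(Q)|)$ as PL spaces, sending $|\Delta(Q)|\times\{0\}$ to the base. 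Taking $Q=P$ yields the PL homeomorphism $CP\to P\ast v$; taking $Q=P_{<x}$ supplies the sub-claim used above, because $\mathrm{Cone}(S^{\rho(x)-1})$ is a $\rho(x)$-ball whose boundary is the base sphere. The step I expect to be most delicate is the bookkeeping at the PL/topological interface: as the lower intervals $|\Delta(P_{<x})|$ are only known to be topological spheres, $|\Delta(P_{\leq x})|$ and $|\Delta(C(P_{<x}))|$ are merely topological balls, so the gluing of two balls along their common boundary must be justified topologically (via the Alexander trick) rather than by Theorem \ref{thm:PLBallsAndSpheres}(ii); likewise the prism-triangulation fact $|\Delta(Q\times I)|\cong|\Delta(Q)|\times[0,1]$ needs to be recorded explicitly, and one must keep track throughout that the relevant homeomorphisms send the ``bottom'' subcomplex onto a manifold boundary.
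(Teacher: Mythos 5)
Your argument is correct and, unlike the paper (which simply defers to \cite[Prop.~1.1(ii)]{McCrory1975_ConeCpxs} without reproducing a proof), supplies a complete direct verification. The two key checks hold: (i) in the decomposition of $(CP)_{<(x,1)}$ as $(P_{\leq x}\times\{0\})\cup\bigl((P_{<x}\times I)\cup\{v\}\bigr)$, the element $(x,0)$ is incomparable to $v$ and to every $(y,1)$ with $y<x$, so every chain of $(CP)_{<(x,1)}$ lies entirely inside one of the two ideals and hence $\Delta\bigl((CP)_{<(x,1)}\bigr)$ really is the union of the two order complexes along $\Delta(P_{<x}\times\{0\})$ — without this incomparability the decomposition of the order complex would fail; (ii) the prism fact $|\Delta(Q\times I)|\cong_{\mathrm{PL}}|\Delta(Q)|\times[0,1]$, compatibly with both ends, is the standard staircase triangulation and is what lets you absorb the cylinder into the cone. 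You are also right to flag that the lower intervals of a CW-poset are only topological spheres in general, so the ball-plus-ball-equals-sphere step must be done topologically (Alexander trick) rather than via Theorem~\ref{thm:PLBallsAndSpheres}(ii); this is a genuine subtlety and your treatment of it is the correct one. The only stylistic point is that the PL-homeomorphism claim of the lemma is, per Definition~\ref{def:SubdivPL homeo}(b), a statement about common simplicial subdivisions, so one should at least remark that the composite $|\Delta(CP)|\to|\Delta(P)|\times[0,1]\cup\mathrm{Cone}(|\Delta(P)|)\to|\Delta(P\ast v)|$ is piecewise-linear with respect to suitable refinements of both triangulations — but this is immediate from the explicit staircase construction you invoke.
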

	\begin{proof}
		See eg.\ \cite[Prop.1.1(ii)]{McCrory1975_ConeCpxs}.
	\end{proof}
	
	\begin{lemma}[{\cite[Prop.~4.7.13]{BLSWZ1999_OrientedMatroids}}]
		\label{lem:IntersectionProperty}
		If $\Sigma$ has the intersection property, then
		for $\sigma,\tau \in \Sigma$ we have
		\begin{enumerate}[(i)]
			\item $\sigma = \tau$ if and only if $\vts(\sigma) = \vts(\tau)$,
			\item $\sigma \subseteq \tau$ if and only if $\vts(\sigma) \subseteq \vts(\tau)$.
		\end{enumerate}
		Consequently, $\Fc(\Sigma)$ is isomorphic to $\{\vts(\sigma)\mid \sigma \in \Sigma\}$ ordered by inclusion.
	\end{lemma}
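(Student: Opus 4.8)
Since this is \cite[Prop.~4.7.13]{BLSWZ1999_OrientedMatroids} one could simply cite it; here I sketch a direct argument. The plan is to reduce everything to the single implication ``$\vts(\sigma)\subseteq\vts(\tau)\Rightarrow\sigma\subseteq\tau$'' of part~(ii). Granting this, statement (i), the reverse implication in (ii), and the ``consequently'' clause all follow formally: if $\vts(\sigma)=\vts(\tau)$, applying the implication both ways gives $\sigma\subseteq\tau\subseteq\sigma$, so $\sigma=\tau$; and then $\sigma\mapsto\vts(\sigma)$ is a bijection of $\Fc(\Sigma)$ onto $\{\vts(\sigma)\mid\sigma\in\Sigma\}$ which, once more by (ii), is an isomorphism of posets. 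So I would concentrate on the implication ``$\vts(\sigma)\subseteq\vts(\tau)\Rightarrow\sigma\subseteq\tau$''.

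For this I would argue by contradiction, fixing a counterexample $(\sigma,\tau)$ with $\dim\sigma$ as small as possible, and using the standard structure of a regular cell complex $\Sigma$: each cell $\sigma$ is a closed $d$-ball with $d=\dim\sigma$ whose subcomplex $\partial\sigma$ of proper faces supports the boundary sphere $S^{d-1}$; proper faces have strictly smaller dimension; and $\vts(\sigma)$ is precisely the set of $0$-cells $\le\sigma$, all of which lie in $\partial\sigma$ as soon as $d\ge1$, so in particular $\vts(\sigma)=\vts(\partial\sigma)$ and $\vts(\sigma)\neq\varnothing$. The case $\dim\sigma=0$ is immediate (then $\sigma$ is a vertex of $\tau$, hence a face of $\tau$), so $d\ge1$. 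The key step is to examine $\rho:=\sigma\cap\tau$: since $\vts(\sigma)\subseteq\vts(\tau)$ and $\vts(\sigma)\subseteq\sigma$, every vertex of $\sigma$ already lies in $\rho$, so $\rho\neq\varnothing$ and the intersection property promotes $\rho$ to a cell, which is then a face of both $\sigma$ and $\tau$; moreover $\rho\neq\sigma$ (otherwise $\sigma\subseteq\tau$), so $\rho$ is a \emph{proper} face of $\sigma$, i.e.\ a cell of $\partial\sigma$, and comparing vertex sets forces $\vts(\rho)=\vts(\sigma)=\vts(\partial\sigma)$.

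If $d=1$ this is already absurd, since $\partial\sigma=S^{0}$ has two vertices while a proper face of a $1$-cell has at most one. If $d\ge2$, then not every cell of $\partial\sigma$ can be contained in $\rho$: otherwise $|\partial\sigma|$ would coincide with the closed ball $\rho$, contradicting that $|\partial\sigma|\cong S^{d-1}$ is not contractible. So there is a cell $\sigma'\in\partial\sigma$ with $\sigma'\not\subseteq\rho$, and then $\vts(\sigma')\subseteq\vts(\partial\sigma)=\vts(\rho)$ while $\dim\sigma'<\dim\sigma$, so $(\sigma',\rho)$ is a counterexample to the implication with strictly smaller $\dim\sigma$ -- contradicting minimality. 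This proves the implication, hence the lemma. The only genuinely delicate point is the topological input just used -- that the boundary sphere of a cell of dimension $\ge2$ is not exhausted by any single proper face, and that the one-dimensional base case must be dispatched separately by counting its two endpoints; everything else is bookkeeping with the axioms of Definition~\ref{def:regCWcpx}.
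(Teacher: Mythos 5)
The paper gives no proof of this lemma; it is stated purely as a citation to \cite[Prop.~4.7.13]{BLSWZ1999_OrientedMatroids}, so there is no in-paper argument to compare against. Your direct argument is correct. The reduction to the single implication $\vts(\sigma)\subseteq\vts(\tau)\Rightarrow\sigma\subseteq\tau$ is clean, and the induction-by-minimal-dimension argument is sound: passing to $\rho=\sigma\cap\tau$ (a cell, by the intersection property), observing that a counterexample forces $\rho$ to be a \emph{proper} face carrying \emph{all} vertices of $\sigma$, dispatching $d=1$ by counting endpoints of $S^0$, and for $d\geq 2$ using that the boundary sphere $|\partial\sigma|\cong S^{d-1}$ cannot coincide with a single closed proper face (ball $\neq$ sphere) to extract a lower-dimensional counterexample $(\sigma',\rho)$ --- all of this is valid. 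You do lean implicitly on two standard facts about regular cell complexes that are not spelled out: that $\partial\sigma$ is a subcomplex (not merely a union of cells, as Definition \ref{def:regCWcpx} literally states), and that a closed cell contained in another closed cell is automatically a face of it (via the interior-partition property). These are indeed the ``bookkeeping with the axioms'' you allude to, and a fully self-contained proof would state them, but neither is a gap in substance. Overall this is a correct and pleasantly elementary replacement for the reference.
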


	%%%%%%%%%%%%%%%%%%%%%%%%%%%%%%%%%%%%%%%%%%%%%%%%%%%%%%%%%%%%%%%%%%%%%%%%%%%%%%%%%%%%%%%%%%%%%%%%%%%%
	
	\subsection{Combinatorial models of fibrations}
	\label{ssec:PosetQuasiFib}
	
	Firstly, we record the following theorem from Quillen's foundational
	work \cite{Quillen1973_KTheory1} on algebraic $K$-Theory, originally expressed in terms
	of small categories and functors.
	We state it in its special incarnation in terms of posets and order preserving maps.
	
	\begin{theorem}[Theorem B for posets {\cite[pp.~89]{Quillen1973_KTheory1}}]
		\label{thm:TheoremB}
		Let $f:P \to Q$ be an order preserving map between posets.
		If for all $a \leq b$ $(a,b \in Q)$ the inclusion
		$(f\downarrow a) \hookrightarrow (f\downarrow b)$ is a homotopy equivalence,
		then for $x \in P$ with $f(x)=a$ the homotopy fiber
		$\HoFib(|\Delta(f)|,a)$ of $|\Delta(f)|$ is homotopy equivalent to $|\Delta(f\downarrow a)|$.
		
		Consequently, we have a long exact homotopy sequence		
		\begin{center}
			\begin{tikzcd}
				&\ldots \ar[r] &\pi_{i+1}(|\Delta(Q)|,a) \ar[r] &\pi_{i}(|\Delta(f\downarrow a)|,x) \\
				&	\ar[r, "\pi_i(|\Delta(j)|)"] &\pi_{i}(|\Delta(P)|,x) \ar[r, "\pi_i(|\Delta(f)|)"] &\pi_{i}(|\Delta(Q)|,a) \ar[r] &\ldots
			\end{tikzcd}
		\end{center}
		where $j:(f\downarrow a) \hookrightarrow P$ is the natural inclusion.
	\end{theorem}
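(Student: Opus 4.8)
This statement is Quillen's Theorem~B \cite{Quillen1973_KTheory1} specialised to the situation where the source and target categories are posets, so that the nerve is the order complex and the comma categories become the poset fibers $(f\downarrow a)$; the plan is to run Quillen's bisimplicial argument in this concrete setting. The hypothesis that $(f\downarrow a)\hookrightarrow(f\downarrow b)$ is a homotopy equivalence for all $a\le b$ will enter by making an auxiliary map of simplicial spaces ``homotopy cartesian in each simplicial degree,'' which then forces its homotopy fiber over a vertex to be computed in degree~$0$, where it is manifestly $|\Delta(f\downarrow a)|$. To set this up, let $S$ be the bisimplicial set with
\[
S_{p,q}=\bigl\{(p_0\le\cdots\le p_p,\ q_0\le\cdots\le q_q)\ \big|\ p_i\in P,\ q_j\in Q,\ f(p_p)\le q_0\bigr\}
\]
and the evident operators in each variable (equivalently, $S$ is the nerve of the poset of ``arrows $f(p)\le q$''). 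Forgetting the $Q$-chain gives a bisimplicial map $S\to N(P)$, constant in the $q$-direction; forgetting the $P$-chain gives a bisimplicial map $\psi\colon S\to N(Q)$, constant in the $p$-direction.

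First I would reduce to computing the homotopy fiber of $\psi$. Realising $S$ in the $q$-direction, for a fixed $P$-chain the admissible $Q$-chains form the nerve of the principal filter $Q_{\ge f(p_p)}$, which has a least element and hence is contractible; therefore $S\to N(P)$ realises to a homotopy equivalence $\phi\colon|S|\xrightarrow{\sim}|\Delta(P)|$. A routine simplicial homotopy built from the concatenated chains $f(p_0)\le\cdots\le f(p_p)\le q_0\le\cdots\le q_q$ --- legitimate precisely because $f(p_p)\le q_0$ --- shows that $|\Delta(f)|\circ\phi\simeq\psi$. Hence $\HoFib(|\Delta(f)|,a)\simeq\HoFib(\psi,a)$ and it suffices to treat $\psi$.

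Next, realise $S$ in the $p$-direction instead. This gives a map of simplicial spaces $X_\bullet\to Y_\bullet$ with $X_q=\coprod_{q_0\le\cdots\le q_q}|\Delta(f\downarrow q_0)|$, $Y_q=N_q(Q)$ discrete, $|X_\bullet|=|S|$, $|Y_\bullet|=|\Delta(Q)|$, and $\psi$ the map on realisations. Each simplicial operator of $X_\bullet$, restricted to the summand indexed by a $Q$-chain, is the realisation of an inclusion $(f\downarrow q_0)\hookrightarrow(f\downarrow q_0')$ for some $q_0\le q_0'$ (or an identity); by hypothesis these are homotopy equivalences, so for every arrow $[m]\to[n]$ of the simplex category the square formed by $X_n\to X_m$, $Y_n\to Y_m$, $X_n\to Y_n$, $X_m\to Y_m$ is homotopy cartesian. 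By the standard lemma on realisations of such maps of simplicial spaces --- that then $X_n\to|X_\bullet|$ and $Y_n\to|Y_\bullet|$ form a homotopy cartesian square for every $n$, proved by a Dold--Thom-style induction over skeleta --- applied with $n=0$ and using $X_0=\coprod_{q_0\in Q}|\Delta(f\downarrow q_0)|$, $Y_0=Q$, we get $\HoFib(\psi,a)\simeq|\Delta(f\downarrow a)|$.

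Combining the two reductions yields $\HoFib(|\Delta(f)|,a)\simeq|\Delta(f\downarrow a)|$; tracing the identifications shows the equivalence is compatible with the natural maps to $|\Delta(P)|$, i.e.\ with $j\colon(f\downarrow a)\hookrightarrow P$, since the summand $|\Delta(f\downarrow q_0)|$ of $X_0$ maps to $|S|\simeq|\Delta(P)|$ by the realisation of $(f\downarrow q_0)\hookrightarrow P$. The long exact sequence is then the long exact homotopy sequence of the homotopy fibration $|\Delta(f\downarrow a)|\to|\Delta(P)|\xrightarrow{|\Delta(f)|}|\Delta(Q)|$, the composite $(f\downarrow a)\to P\to Q$ being nullhomotopic because $Q_{\le a}$ has a top element, so $|\Delta(Q_{\le a})|$ is a cone. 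The main obstacle is the simplicial-space lemma in the previous paragraph --- equivalently, the fact that $\psi$ behaves like a quasi-fibration --- which is where the genuine topological input sits and is the heart of Quillen's original argument; the rest is bookkeeping with nerves of posets together with the elementary principle that order-comparable poset maps (more generally, maps related by a natural transformation) become homotopic after applying $|\Delta(-)|$. Alternatively one can simply invoke \cite{Quillen1973_KTheory1} directly, a poset being a small category whose nerve is its order complex; the above merely records how each ingredient specialises.
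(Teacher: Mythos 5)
The paper does not prove this result: it is cited verbatim as Quillen's Theorem~B specialised to posets, and used as a black box (e.g.\ in the proof of Lemma~\ref{lem:CombFiberHtEquiv}). Your sketch is a correct outline of Quillen's bisimplicial argument in this special case: the auxiliary bisimplicial set $S$ with $S_{p,q}$ the compatible pairs of chains subject to $f(p_p)\le q_0$, the two forgetful projections, the levelwise contractibility of $N(Q_{\ge f(p_p)})$ (it has a least element) making $|S|\to|\Delta(P)|$ a weak equivalence, the identification of the other realisation as a simplicial space with $X_q=\coprod_{q_0\le\cdots\le q_q}|\Delta(f\downarrow q_0)|$ sitting over the discrete $N_q(Q)$, and the appeal to the lemma that a degreewise homotopy cartesian map of (proper) simplicial spaces remains homotopy cartesian after realisation --- which, as you say, is the genuine topological content and is Quillen's key lemma in the proof of Theorem~B. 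One stylistic remark: the homotopy $|\Delta(f)|\circ\phi\simeq\psi$ is produced most transparently at the poset level rather than by an ad hoc prism over concatenated chains --- the diagonal of $S$ is the nerve of the poset of pairs $(p,q)$ with $f(p)\le q$ ordered componentwise, and the two maps to $Q$ sending $(p,q)$ to $f(p)$ and to $q$ respectively are pointwise comparable, hence homotopic after taking order complexes. Your closing observation that one may instead simply invoke \cite{Quillen1973_KTheory1} is precisely what the paper does.
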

	
	The following notion is motivated by the preceding theorem.
	It was firstly used in a combinatorial setting in \cite{Bab1993_CombFlagSpace} (see also \cite[App.~B]{AD2002_CombGrassmanians}) 
	and recently for the study of supersolvable oriented matroids in \cite{Mue24_ModFlatsOMs}.
	
	\begin{definition}
		\label{def:pqf}
		Let $f:P \to Q$ be a poset map such that for all $a \leq b$ $(a,b \in Q)$
		the inclusion $(f\downarrow a) \hookrightarrow (f\downarrow b)$ 
		is a homotopy equivalence.
		Then $f$ is called a \emph{poset quasi-fibration}.
	\end{definition}
	
	\begin{definition}
		\label{def:ModelFibration}
		Let $\varphi:X \to Y$ be a continuous map between topological spaces and $f:P \to Q$ a poset map.
		Then we say that $f$ is a \emph{combinatorial model} for $\varphi$ if we have homotopy equivalences $|\Delta(P)| \isom X$,
		$|\Delta(Q)| \isom Y$ which fit into a (homotopy) commutative diagram:
		\begin{center}
			\begin{tikzcd}[column sep=16mm, scale cd=1]
				{|\Delta(P)|} \ar[r, "{|\Delta(f)|}"]\ar[d,"\isom", swap] & {|\Delta(Q)|} \ar[d,"\isom"]\\
				X \ar[r,"\varphi"] & Y.
			\end{tikzcd}
		\end{center}
	\end{definition}
	
	With the previous notion we have the following Lemma.
	\begin{lemma}
		\label{lem:CombFiberHtEquiv}
		Assume that $\varphi:X \to Y$ is a topological fibration with a combinatorial model $f:P \to Q$.
		Assume further that $f$ is also a poset quasi-fibration.
		Then the fiber $F = \varphi^{-1}(y)$ for $y \in Y$ is homotopy equivalent to any poset fiber $|\Delta(f\downarrow q)|, (q \in Q)$.
	\end{lemma}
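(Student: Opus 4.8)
The plan is to deduce the statement from Theorem~\ref{thm:TheoremB} by comparing homotopy fibers on the two sides of the combinatorial model square. First I would reduce to the connected case: since $|\Delta(Q)| \simeq Y$, assuming $Y$ connected amounts to assuming $Q$ connected, and in general one argues one connected component at a time. Then I would fix homotopy equivalences $h_P \colon |\Delta(P)| \simeq X$ and $h_Q \colon |\Delta(Q)| \simeq Y$ exhibiting $f$ as a combinatorial model for $\varphi$, so that $h_Q \circ |\Delta(f)| \simeq \varphi \circ h_P$, pick any $x \in P$, and set $a := f(x) \in Q$ and $y := h_Q(a) \in Y$; note that $a$ lies in the image of $f$, which will be needed to apply Theorem~\ref{thm:TheoremB}.

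The first step is to use that $\varphi$ is a topological fibration, so the genuine fiber agrees with the homotopy fiber, $F = \varphi^{-1}(y) \simeq \HoFib(\varphi, y)$, and by connectedness of $Y$ this homotopy type is independent of the chosen $y$. The second step is to transport the homotopy fiber across the model square: the homotopy fiber over a point is unchanged if the map is altered by a homotopy, by post-composition with a homotopy equivalence, or by pre-composition with a homotopy equivalence, so the homotopy-commutative square with vertical equivalences $h_P$ and $h_Q$ yields $\HoFib(|\Delta(f)|, a) \simeq \HoFib(\varphi, h_Q(a)) = \HoFib(\varphi, y)$. The third step is to apply Theorem~\ref{thm:TheoremB}: $f$ is a poset quasi-fibration by hypothesis, so the theorem applies at $a = f(x)$ and gives $\HoFib(|\Delta(f)|, a) \simeq |\Delta(f\downarrow a)|$. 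Concatenating these equivalences produces $F \simeq |\Delta(f\downarrow a)|$.

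It then remains to see that the poset fiber $|\Delta(f\downarrow q)|$ has the same homotopy type for every $q \in Q$, not just for $q = a$. Since $Q$ is connected, any $q$ is joined to $a$ by a zigzag of cover relations, and along each cover $q' \lessdot q''$ the inclusion $(f\downarrow q') \hookrightarrow (f\downarrow q'')$ is a homotopy equivalence by the definition of a poset quasi-fibration; composing these equivalences along the zigzag gives $|\Delta(f\downarrow q)| \simeq |\Delta(f\downarrow a)|$, and combined with $F \simeq |\Delta(f\downarrow a)|$ this finishes the proof. The step I expect to be the main obstacle is the second one: formulating and invoking the homotopy invariance of the homotopy fiber under the model square without circularity, while carefully tracking base points so that Theorem~\ref{thm:TheoremB} is applied at a point of $Q$ in the image of $f$. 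Everything else is either formal or already packaged in the results quoted above.
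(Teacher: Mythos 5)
Your proof is correct, and it is closer in spirit to the "alternative" route the paper mentions in passing (homotopy fibers / homotopy limits) than to the paper's primary argument. The paper's main proof first uses the fibration property of $\varphi$ to \emph{strictify} the homotopy-commutative square (replacing $h_P$ by a homotopic map $h$ so that $\varphi\circ h = h_Q\circ|\Delta(f)|$ on the nose), then gets an induced map $h|\colon F^c\to F$ between the two genuine fibers, compares the two long exact homotopy sequences, applies the five-lemma to see that $h|$ is a $\pi_*$-isomorphism, and concludes by Whitehead. You instead avoid strictification and the five-lemma by invoking the homotopy invariance of the homotopy fiber: the homotopy fiber of a map is unchanged (up to homotopy equivalence) under post- and pre-composition with homotopy equivalences and under homotopy of the map, so the model square transports $\HoFib(|\Delta(f)|,a)$ to $\HoFib(\varphi,y)$; then the fibration hypothesis identifies the latter with the genuine fiber $F$, and Theorem~\ref{thm:TheoremB} identifies the former with $|\Delta(f\downarrow a)|$. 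Both routes are valid; the paper's buys explicitness (an actual map between the fibers) at the cost of invoking Whitehead's theorem, while yours stays at the level of weak equivalences and needs only the standard invariance properties of homotopy fibers, which is arguably cleaner but packs the content into one black-boxed fact. A small point in your favor: you spell out why the statement holds for \emph{every} $q\in Q$, not just $q=a=f(x)$, by chasing along a zigzag of cover relations in the connected poset $Q$ and using the defining property of a poset quasi-fibration; the paper leaves this implicit. Your flagged worry about "circularity" in step two is unfounded — the three invariance properties you list are all standard and independent of the lemma being proved.
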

	\begin{proof}
		Firstly, since $\varphi$ is a fibration, we may assume that the diagram is strictly commutative
		after replacing the homotopy equivalence $|\Delta(P)| \to X$ with another homotopic one, 
		say $h:|\Delta(P)| \to X$ (see \cite[5.29]{Strom11_HomotopyThy}).
		
		Since the map $f$ is a poset quasi-fibration it has a corresponding long
		exact homotopy sequence by Theorem \ref{thm:TheoremB} which includes $|\Delta(f\downarrow q)| =: F^c$ as the homotopy fiber of $|\Delta(f)|$.
		The topological fibration $\varphi$ also has a long exact homotopy sequence.
		%		Since $f$ is a combinatorial model for $\varphi$, 
		From the commutative diagram we get an induced map $h|:F^c \to F$ 
		between the fibers and also a map between these two long exact sequences.
		The five-lemma yields that the maps $\pi_i(h|):\pi_i(F^c) \to \pi_i(F)$ are isomorphisms for all $i$.
		By Whitehead's Theorem, $h|$ is a homotopy equivalence.
		
		Alternatively: Use homotopy limits, i.e.\ homotopy fibers of both maps $|\Delta(f)|$ and $\varphi$ (e.g., see \cite[Thm.~6.76]{Strom11_HomotopyThy}).
	\end{proof}

	\subsection{Oriented Matroids}
	\label{ssec:OrientedMatroids}
	
	We recall some basics from oriented matroid theory.
	An oriented matroid can be regarded as a combinatorial abstraction of
	a real hyperplane arrangement, i.e.\ a finite set of hyperplanes in a finite dimensional real vector space.
	The standard reference is the book by Bj\"orner et al.\ \cite{BLSWZ1999_OrientedMatroids}.
	In our study, we adopt the point of view that oriented matroids constitute a
	special class of regular cell complexes, see Remark~\ref{rem:OMTopRep} below.
	
	First, let us recall some notation for sign vectors.
	Let $E$ be some finite set and consider a sign vector $\sigma \in \{+,-,0\}^E$.
	We denote by $z(\sigma) := \{ e \in E \mid \sigma_e = 0\}$ its \emph{zero set}.
	The \emph{opposite} vector $-\sigma$ is defined by
	\[
	(-\sigma)_e := \begin{cases}
		- & \text{ if } \sigma_e = +, \\
		+ & \text{ if } \sigma_e = -, \\
		0 & \text{ if } \sigma_e = 0.
	\end{cases}
	\]
	The \emph{composition} $\sigma \circ \tau$ of two sign vectors $\sigma, \tau \in \{+,-,0\}^E$  
	is defined by
	\[
	(\sigma \circ \tau)_e := \begin{cases}
		\sigma_e &\text{ if } \sigma_e \neq 0, \\
		\tau_e & \text{ else},
	\end{cases}
	\]
	and their \emph{separating set} $S(\sigma,\tau)$ is defined as
	\[
	S(\sigma,\tau) := \{e \in E \mid \sigma_e = -\tau_e \neq 0\}.
	\]
	If $A \subseteq E$ and $\sigma \in \{+,-,0\}^E$ we write $\sigma|_A := (\sigma_e \mid e \in A) \in \{+,-,0\}^A$
	for its restriction to $A$.
	
	\begin{definition}
		\label{def:OMAxioms}
		An \emph{oriented matroid} $\OM = (E,\Lc)$ is given by a finite
		ground set $E$ and a subset of sign vectors $\Lc \subseteq \{+,-,0\}^E$ called \emph{covectors} of $\OM$,
		subject to satisfying the following axioms:
		\begin{enumerate}[(1)]
			\item $\mathbf{0} := (0,0,\ldots,0) \in \Lc$,
			\item if $\sigma \in \Lc$, then $-\sigma\in \Lc$,
			\item if $\sigma, \tau \in \Lc$, then $\sigma \circ \tau \in \Lc$,
			\item if $\sigma, \tau \in \Lc$ and $e \in S(\sigma,\tau)$
			there exists a $\eta \in \Lc$ such that $\eta_e = 0$ and
			$\eta_f = (\sigma \circ \tau)_f = (\tau \circ \sigma)_f$ for all $f \in E \setminus S(\sigma,\tau)$.
		\end{enumerate}
		
		Let $\leq$ be the partial order on the set $\{+,-,0\}$ defined by $0 < +$, $0 < -$, with
		$+$ and $-$ incomparable. This yields the product partial order on $\{+,-,0\}^E$ in which sign vectors
		are compared component wise.
		The \emph{face poset} or covector poset $(\Lc, \leq)$ of $\OM$ is obtained by considering the induced partial order on the
		subset $\Lc \subseteq \{+,-,0\}^E$.
		
		The \emph{rank} of $\OM$ is defined as the length of a maximal chain in $\Lc$.
		
		The maximal elements in $\Lc$ are called \emph{topes}; they are denoted by $\Tc = \Tc(\OM)$.
	\end{definition}
	
	Note that this is only one of several equivalent ways to define oriented matroids, cf.\ \cite[Ch.~3]{BLSWZ1999_OrientedMatroids}.
	
	An element $e \in E$ is called a \emph{loop} of the oriented matroid $\OM =(E,\Lc)$
	if $\sigma_e = 0$ for all $\sigma \in \Lc$.
	Two elements $e,f \in E$ which are not loops are \emph{parallel} if $\sigma_e = 0$
	if and only if $\sigma_f=0$ for all $\sigma \in \Lc$. 
	%	{\color{red} (M: I think, it is not "implies" but "iff", no?)}
	%	{\color{blue} P: Actually, they are equivalent (see Lem.4.1.10 in \cite{BLSWZ1999_OrientedMatroids}), but ``iff'' is better for a definition.} 
	
	In the rest of the paper, without loss of generality, we always assume an oriented matroid to
	be simple, i.e.\ it contains no loops or parallel elements.
	This is justified by the fact that the covector poset of an oriented matroid and its simplification are isomorphic, 
	cf.\ \cite[Lem.~4.1.11]{BLSWZ1999_OrientedMatroids}.
	
	We have the following important examples of oriented matroids.
	\begin{example}
		\label{ex:OMRealArrangement}
		(a)
		Let $\Ac$ be an arrangement of hyperplanes in a real vector space $V \isom \RR^\ell$.
		For $H \in \Ac$ choose defining linear forms $\alpha_H \in V^*$ with $\ker(\alpha_H) = H$.
		Then $\OM(\Ac) = (\Ac,\Lc(\Ac))$ with 
		\[
		\Lc(\Ac) := \{ (\sgn(\alpha_H(v)) \mid H \in \Ac) \mid v \in V\} \subseteq \{+,-,0\}^\Ac
		\]
		is an oriented matroid.
		
		(b) The dual poset $\Lc^\dual(\Ac)$ is isomorphic to the face lattice
		of the \emph{Zonotope} $\Zc(\Ac)$ associated to $\Ac$, which is defined as the Minkowski sum $\Zc(\Ac) := \sum_{H\in\Ac}[-\alpha_H,\alpha_H] \subseteq V^*$.
	\end{example}
	
	\begin{remark}
		\label{rem:OMTopRep}
		For our purposes, oriented matroids are best thought of as special instances of regular cell complexes.
		This is justified by the topological representation theorem due to Folkman and Lawrence \cite{FolkmanLawrence1978_OMs},
		see also \cite[Thm.~5.2.1]{BLSWZ1999_OrientedMatroids}.
		%	stating that every oriented matroid can be realized by an arrangement of pseudospheres.
		Its core implication that the reduced covector poset $\Lc\setminus\{\Zero\}$ 
		is the face poset of a regular cell decomposition of a sphere induced by
		an arrangement of tamely embedded codimension one subspheres suffices for our study (see also Theorem~\ref{thm:OMCovectorShellable}).
		Moreover, $\Lc\setminus\{\Zero\}$ is a PL sphere, so $\Lc^\dual$ represents a PL regular cell decomposition
		of a ball by Lemma \ref{lem:PL sphereDual}(ii) (cf.\ Figure \ref{fig:A_OMcpxs}).
	\end{remark}
	
	\bigskip
	\noindent
	There is another poset associated to an oriented matroid $(E,\Lc)$.
	\begin{definition}
		The \emph{geometric lattice} $L(\OM)$ of $\OM$ is defined as
		\[
		L(\OM) := \{ z(\sigma) \mid \sigma \in \Lc \} \subseteq 2^E,
		\]
		with partial order given by inclusion.
		Elements of $L(\OM)$ are called \emph{flats}.
		For two elements $X,Y \in L(\OM) $, their join is $X \vee Y = \inf\{Z \in L(\OM) \mid X\cup Y \subseteq Z\}$,
		their meet is $X \wedge Y = X \cap Y$.
		
		Moreover, for $X,Y \in L=L(\OM)$ we write $L_X := L_{\leq X}$ for the lattice of the \emph{localization} at $X$,
		$L^Y := L_{\geq Y}$ for the lattice of the \emph{contraction} to $Y$ 
		and $L_X^Y := [X,Y] := L_{\leq X} \cap L_{\geq Y}$ for an interval.
	\end{definition}
	
	The lattice $L=L(\OM)$ is graded by \emph{rank}, i.e.\
	the rank of $X \in L$ is the length of a maximal chain in $L_{\leq X}$.
	
	Note that the map $z:\Lc \to L$ is a cover and rank preserving, order reversing surjection \cite[Prop.~4.1.13]{BLSWZ1999_OrientedMatroids}.
	The associated order preserving map $\Lc^\vee \to L$ is also denoted by $z$.
	
	%%%%%%%%%%%%%%%%%%%%%%%%%%%%%%%%%%%%%%%%%%%%%%%%%%%%%%%%%%%%%%%%%%%%%%%%%%%%%%%%%%%%%%%%%%%%%%%%%%%%

	%\subsection{Topes}
	\bigskip
	\noindent
	Recall that we denote the topes of an oriented matroid $\OM = (E,\Lc)$,
	i.e.\ the maximal elements of $\Lc$ by $\Tc$.
	As remarked earlier, without loss of generality, we assume that $\OM$ is simple.
	
	\begin{definition}
		\label{def:TopePoset}
		Let $B \in \Tc$.
		We define a partial order on $\Tc$ by
		\[
		R \leq_B T :\iff S(B,R) \subseteq S(B,T).
		\]
		The resulting ranked poset $\Tc_B = (\Tc,\leq_B)$  with rank function $\rk_B(T) := |S(B,T)|$
		is called the \emph{tope poset} with respect to $B$.
	\end{definition}
	
	%%%%%%%%%%%%%%%%%%%%%%%%%%%%%%%%%%%%%%%%%%%%%%%%%%%%%%%%%%%%%%%%%%%%%%%%%%%%%%%%%%%%%%%%%%%%%%%%%%%%
	
	\subsection{Shellability and subcomplexes}
	\label{ssec:Shellability}
	
	We briefly elaborate on shellability of regular cell complexes, cf.\ \cite[App.~4.7]{BLSWZ1999_OrientedMatroids}.

	\begin{definition}
		\label{def:ShellableCpx}
		Let $\Sigma$ be a pure $d$-dimensional regular cell complex.
		A linear ordering $\sigma_1,\sigma_2,\ldots,\sigma_t$ of its maximal cells is called
		a \emph{shelling} if either $d=0$, or $d\geq 1$ and the following conditions are satisfied:
		\begin{enumerate}[(i)]
			\item $\delta\sigma_j \cap (\bigcup_{i=1}^{j-1}\delta\sigma_i)$ is pure of dimension $d-1$ for $2\leq j \leq t$,
			
			\item $\delta\sigma_j$ has a shelling in which the $(d-1)$-cells of $\delta\sigma_j \cap (\bigcup_{i=1}^{j-1}\delta\sigma_i)$ 
			come first for $2 \leq j \leq t$,
			
			\item $\delta\sigma_1$ has a shelling.
		\end{enumerate} 
		A complex which admits a shelling is called \emph{shellable}.
	\end{definition}
	
	For our investigation, the principal example of a shellable regular cell complex is the
	covector complex $\Lc$ of an oriented matroid.
	More precisely, we have the following theorem.
	\begin{theorem}[{\cite[Thm.~4.3.3]{BLSWZ1999_OrientedMatroids}}]
		\label{thm:OMCovectorShellable}
		The reduced covector poset $\Lc \setminus \{\Zero\}$ of an oriented matroid of rank $r$ is isomorphic to
		the face poset of a shellable and hence PL regular cell decomposition of the $(r-1)$-sphere.
		Furthermore, for any $B \in \Tc$, every linear extension of $\Tc_B$ is a shelling of $\Lc \setminus \{\Zero\}$.
	\end{theorem}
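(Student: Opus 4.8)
\medskip\noindent\emph{Proof strategy.}
The first assertion is precisely the topological representation theorem of Folkman and Lawrence recalled in Remark~\ref{rem:OMTopRep} (see also \cite[Thm.~5.2.1]{BLSWZ1999_OrientedMatroids}): $\Lc\setminus\{\Zero\}$ is the face poset of a regular cell decomposition of $S^{r-1}$ cut out by an arrangement of tamely embedded codimension-one pseudospheres, which is automatically PL. So the whole content is the shelling claim, and since having a shelling follows from exhibiting one, the plan is to prove the stronger second assertion by induction on $r$. The case $r=1$ is trivial: $\Lc\setminus\{\Zero\}=\{+,-\}$ is a $0$-sphere, any ordering of its two maximal cells is a shelling by the $d=0$ clause of Definition~\ref{def:ShellableCpx}, and $\Tc_B=\{B\lessdot -B\}$ has a unique linear extension. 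For $r\geq 2$, fix $B\in\Tc$ and an arbitrary linear extension $B=T_1,\dots,T_t$ of $\Tc_B$; since $\rk_B$ is the rank function of $\Tc_B$ we have $|S(B,T_1)|\leq\cdots\leq|S(B,T_t)|$, and $T_t=-B$ is the unique maximum of $\Tc_B$, with $S(B,-B)=E$. The maximal cells of $\Lc\setminus\{\Zero\}$ are exactly the topes, all of dimension $r-1$, so one must verify conditions (i)--(iii) of Definition~\ref{def:ShellableCpx} for the ordering $T_1,\dots,T_t$.

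The heart of the argument is an explicit description of the attaching region. For $e\in E$ let $-_eT$ denote the sign vector obtained from a tope $T$ by reversing its $e$-coordinate; applying the elimination axiom to $T$ and $-_eT$, which separate only in $e$, one sees that the sign vector equal to $T$ except for a $0$ in coordinate $e$ lies in $\Lc$ exactly when $-_eT\in\Lc$, in which case it is the covector of a facet $F_e(T)$ of $T$, the tope $-_eT$ is the unique tope $\neq T$ above it, and $S(B,-_eT)=S(B,T)\setminus\{e\}$ whenever $e\in S(B,T)$. The claim is that, for $2\leq j\leq t$,
\[
\partial T_j\cap\bigcup_{i<j}\partial T_i\;=\;\Gamma_j\;:=\;\bigcup\bigl\{\,\overline{F_e(T_j)}\ :\ e\in S(B,T_j),\ -_eT_j\in\Lc\,\bigr\},
\]
the union of the closed facets of $T_j$ facing $B$. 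The inclusion $\supseteq$ is immediate: for such $e$ we have $|S(B,-_eT_j)|=|S(B,T_j)|-1$, so $-_eT_j=T_i$ with $i<j$ and $F_e(T_j)\subseteq\overline{T_j}\cap\overline{T_i}$; moreover $\Gamma_j\neq\emptyset$ for $j\geq 2$, as any tope $\neq B$ has a facet whose opposite tope is strictly $\leq_B$-smaller (the first edge of a shortest tope-path to $B$). For $\subseteq$, let $c$ be a cell of $\partial T_j$ lying in $\partial T_i$ for some $i<j$, so $c<T_j$, $c<T_i$ and $S(B,T_i)\subsetneq S(B,T_j)$; using that the topes $T$ with $c\leq T$ are exactly the topes of the restriction of $\OM$ to $z(c)$, hence form an isometric subgraph of the tope graph, together with the identity $S(T_i,T_j)=S(B,T_j)\setminus S(B,T_i)$, a shortest tope-path from $T_j$ to $T_i$ among them begins by crossing a facet $F_{e'}(T_j)$ with $e'\in S(T_i,T_j)\subseteq S(B,T_j)$; since $c$ lies below both $T_j$ and $-_{e'}T_j$ one gets $c\leq F_{e'}(T_j)$, so $c\in\Gamma_j$.

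Granting the claim, condition (i) holds because $\Gamma_j$ is a nonempty union of $(r-2)$-cells and their faces, hence pure of dimension $d-1=r-2$ (for $j=t$ one has $S(B,T_t)=E$, so $\Gamma_t$ is all of $\partial T_t$, the whole $(r-2)$-sphere). Conditions (ii) and (iii) reduce to: the regular cell $(r-2)$-sphere $\partial T_j$ admits a shelling whose initial segment is a shelling of the subcomplex $\Gamma_j$ (for $j=1$, $\Gamma_1=\emptyset$, so this is plain shellability of $\partial B$). I expect this to be the main obstacle. Concretely, one must show that $\partial T_j$ is the union of the two $(r-2)$-balls $\Gamma_j$ and $\Gamma_j':=\overline{\partial T_j\setminus\Gamma_j}$ glued along their common boundary $(r-3)$-sphere, that both are shellable, and that a shelling of $\Gamma_j$ extends across this horizon through $\Gamma_j'$. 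Over $\RR$ this is exactly a Bruggesser--Mani line shelling of the boundary of a polyhedron, sweeping a generic line out of the region visible from $B$; in the oriented matroid setting it is the genuinely combinatorial replacement for the visibility/line-shelling lemma and must be obtained by a separate sweeping argument for the pseudosphere arrangement carried by $\partial T_j$, and it does not follow directly from the rank induction, since $\partial T_j$ is in general not itself the covector complex of an oriented matroid. Once this is established, conditions (i)--(iii) hold, $T_1,\dots,T_t$ is a shelling, and the induction closes.
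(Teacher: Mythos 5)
The paper itself does not prove this theorem; it cites \cite[Thm.~4.3.3]{BLSWZ1999_OrientedMatroids} as a black box, together with the companion Proposition~\ref{prop:ShellingTopeSubcpx}, so there is no in-paper argument to compare against and your proposal has to be judged on its own terms. Two local problems first. The parenthetical ``$|S(B,T_1)|\leq\cdots\leq|S(B,T_t)|$'' is false: a linear extension of a graded poset need not list elements in nondecreasing rank (one may interleave incomparable topes of ranks $1$ and $2$, say). You never use this, so it is harmless, but it should not be asserted. More seriously, in the $\subseteq$ half of your description of $\Gamma_j$ the deduction ``$i<j$, so $S(B,T_i)\subsetneq S(B,T_j)$'' is unjustified for the same reason: $i<j$ in a linear extension only rules out $T_j<_B T_i$. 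To repair it, first replace $T_i$ by the gate $c\circ B$ of the star $\Tc(c):=\{T\in\Tc\mid c\leq T\}$: gatedness gives $c\circ B\leq_B T'$ for every $T'\in\Tc(c)$, and $c\circ B\neq T_j$ (else it would equal $T_i$ as well), so $c\circ B<_B T_j$ and $c\circ B=T_{i'}$ with $i'<j$; now run your gallery argument with $T_{i'}$ in place of $T_i$, where $S(B,T_{i'})\subseteq S(B,T_j)$ is guaranteed and hence $S(T_{i'},T_j)=S(B,T_j)\setminus S(B,T_{i'})$ as you use.

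The real gap is condition~(ii), which you explicitly leave open and speculate would need a bespoke pseudosphere-sweeping argument. As written, the proposal is therefore incomplete. But this step is delivered precisely by Proposition~\ref{prop:ShellingTopeSubcpx}, which is already available to you: since $T_j\leq_{T_j}B\leq_{T_j}-T_j$ in the graded poset $\Tc_{T_j}$, there is a maximal chain $T_j=R_0\lessdot\cdots\lessdot R_k=B\lessdot\cdots\lessdot R_n=-T_j$, whose associated total order on $E$ lists the elements of $S(B,T_j)$ first; the induced shelling of $\partial T_j=\Lc_{<T_j}\setminus\{\Zero\}$ thus begins with exactly the facets $F_e(T_j)$ with $e\in S(B,T_j)$, i.e.\ the facets of $\Gamma_j$, which is what~(ii) asks for (and for $j=1$ it gives~(iii) directly). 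Your aside that $\partial T_j$ is generally not itself an oriented-matroid covector complex is correct and is exactly the reason to invoke Proposition~\ref{prop:ShellingTopeSubcpx} rather than the rank induction at this step. With that substitution your outline closes, and it then matches the structure of the argument in \cite{BLSWZ1999_OrientedMatroids}.
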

	
	Moreover, we have
	\begin{proposition}[{\cite[Prop.~4.3.1]{BLSWZ1999_OrientedMatroids}}]
		\label{prop:ShellingTopeSubcpx}
		Let $R \in \Tc$ be a tope. Then $\Lc_{< R} \setminus \{\Zero\}$ is a shellable regular cell decomposition of the $(r-2)$-sphere.
		Furthermore, every maximal chain $R = B_0 < R_1 < \ldots < R_n = -R$ in the tope poset $\Tc_R$ yields a
		total order $<$ on the ground set of $E$ and thus a total order on the facets of $\Lc_{< R} \setminus \{\Zero\}$ by restriction which is a shelling.
	\end{proposition}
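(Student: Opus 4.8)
The plan is to first identify $\Lc_{<R}\setminus\{\Zero\}$ topologically, then translate a maximal chain of $\Tc_R$ into a linear order on the facets, and finally verify that this order is a shelling by an induction on the rank. For the first step, Theorem~\ref{thm:OMCovectorShellable} presents $\Lc\setminus\{\Zero\}$ as the face poset of a PL regular cell decomposition $\Sigma$ of the $(r-1)$-sphere, in which the tope $R$ is a maximal cell of dimension $r-1$. By Lemma~\ref{lem:PL sphereDual}(i) the closed cell $\bar R$ (whose face poset is $\Lc_{\le R}\setminus\{\Zero\}$) is a PL $(r-1)$-ball, so its boundary subcomplex, whose face poset is exactly $\Lc_{<R}\setminus\{\Zero\}$, is a PL $(r-2)$-sphere; in particular it is pure of dimension $r-2$ and has the intersection property (Lemma~\ref{lem:IntersectionProperty}), so that it makes sense to speak of its shellings. (Alternatively, sphericity may be recovered a posteriori from the shelling produced below, via Theorem~\ref{thm:PLBallsAndSpheres}(i),(ii) and the usual induction over a shelling.)

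For the second step, note that since $\OM$ is simple we have $S(R,-R)=E$, and since $\Tc_R$ is ranked by $\rk_R(T)=|S(R,T)|$, a maximal chain $R=B_0\lessdot B_1\lessdot\cdots\lessdot B_n=-R$ has $n=|E|$ and determines a unique enumeration $e_1,\dots,e_n$ of $E$ by $S(R,B_i)=\{e_1,\dots,e_i\}$ --- this is the total order on $E$. The facets of $\Lc_{<R}\setminus\{\Zero\}$ are precisely the covectors $\sigma\lessdot R$; since the cover-preserving map $z\colon\Lc^\dual\to L$ carries such a $\sigma$ to an atom of $L$, simplicity forces $z(\sigma)=\{e\}$ for a unique $e\in E$, with $\sigma$ agreeing with $R$ off $e$, and more generally a cell $\sigma<R$ lies in the facet indexed by $e$ exactly when $e\in z(\sigma)$. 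Writing $E_R\subseteq E$ for the set of wall-elements $e$ arising this way, the restriction of $e_1<\cdots<e_n$ to $E_R$ orders the facets as $\sigma^{(1)},\dots,\sigma^{(k)}$, $k=|E_R|$. Since $\Lc_{<R}\setminus\{\Zero\}$ is a manifold, each $\tau\lessdot\sigma^{(j)}$ lies in exactly two facets --- the one indexed by $e=z(\sigma^{(j)})$ and one indexed by a further wall-element $f$ --- so that $\partial\sigma^{(j)}\cap\bigcup_{i<j}\partial\sigma^{(i)}$ is the subcomplex generated by those $\tau\lessdot\sigma^{(j)}$ whose second wall-element precedes $e$ in the order on $E$.

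For the third step I would induct on $r$, the cases $r\le 2$ being immediate. To control $\partial\sigma^{(j)}$ and its facets, restrict all covectors to $E\setminus\{e\}$, where $\{e\}=z(\sigma^{(j)})$: this identifies $\bar\sigma^{(j)}$ with the closed cell of the tope $\sigma^{(j)}|_{E\setminus\{e\}}$ of the contraction $\OM/e$ (which has rank $r-1$; replace it by its simplification if necessary), hence identifies $\partial\sigma^{(j)}$ with $\Lc(\OM/e)_{<\sigma^{(j)}|_{E\setminus\{e\}}}\setminus\{\Zero\}$. The inductive hypothesis then makes $\partial\sigma^{(j)}$ a shellable $(r-3)$-sphere, which at once gives condition~(iii) of Definition~\ref{def:ShellableCpx}; conditions~(i) and~(ii) will follow provided the maximal chain of the tope poset of $\OM/e$ at $\sigma^{(j)}|_{E\setminus\{e\}}$ can be chosen so that the shelling of $\partial\sigma^{(j)}$ it induces lists first exactly the facets corresponding to wall-elements of $R$ preceding $e$ --- i.e.\ precisely the ``already built'' faces $\partial\sigma^{(j)}\cap\bigcup_{i<j}\partial\sigma^{(i)}$ identified above.

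The hard part is exactly this matching of chains under contraction: one must show that the given maximal chain $B_0,B_1,\dots,B_n$ of $\Tc_R$, after deleting the entry $e=e_{i_j}$ from the enumeration $e_1<\cdots<e_n$, yields the data of a maximal chain of the tope poset of $\OM/e$ at $\sigma^{(j)}|_{E\setminus\{e\}}$ whose induced order on $E\setminus\{e\}$ keeps $\{e_1,\dots,e_{i_j-1}\}$ before $\{e_{i_j+1},\dots,e_n\}$, so that the inductive hypothesis delivers a shelling of $\partial\sigma^{(j)}$ with the already-built faces first (which also forces them to be nonempty and pure, giving condition~(i)). Conceptually this is the assertion that the Bruggesser--Mani line shelling of $\partial R$ carried by the combinatorial sweep $B_0,B_1,\dots,B_n$ across the pseudohyperplanes $e_1,\dots,e_n$ restricts, on each wall, to a line shelling of that wall; turning this picture into a rigorous argument --- bookkeeping, via the covector composition and elimination axioms, which lower covectors survive each contraction --- is the technical core of the proof.
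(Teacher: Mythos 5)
This proposition is stated in the paper as a background result with a citation to \cite[Prop.~4.3.1]{BLSWZ1999_OrientedMatroids}; the paper gives no proof. So I can only assess your proposal on its own merits.

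Your set-up is correct and well organized. Identifying $\Lc_{<R}\setminus\{\Zero\}$ as a PL $(r-2)$-sphere via Theorem~\ref{thm:OMCovectorShellable} and Lemma~\ref{lem:PL sphereDual} is right. The reading-off of a linear order $e_1<\cdots<e_n$ on $E$ from the maximal chain, the bijection (using simplicity) between facets of $\Lc_{<R}\setminus\{\Zero\}$ and the walls $e\in E_R\subseteq E$ of $R$, and the observation that each codimension-two cell of $\Lc_{<R}\setminus\{\Zero\}$ lies in exactly two facets (by pseudomanifoldness of the sphere) are all correct, and the strategy of inducting on rank by contracting along each wall is the natural approach.

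But there is a genuine gap, which you identify yourself without closing: you never actually produce, for each facet $\sigma^{(j)}$ with $z(\sigma^{(j)})=\{e\}$, a maximal chain of the tope poset of $\OM/e$ based at $\sigma^{(j)}|_{E\setminus\{e\}}$ whose induced ordering of $E\setminus\{e\}$ places $\{e_1,\dots,e_{i_j-1}\}$ before $\{e_{i_j+1},\dots,e_n\}$. Note this is not a matter of simply ``deleting the entry $e$'' from the list $B_0,\dots,B_n$: the topes $B_i$ of $\OM$ do not restrict to topes of $\OM/e$ (they do not vanish on $e$), so the desired chain must be constructed by a separate argument, presumably tracking along the sweep the unique covector on the wall $e$ that is adjacent to the current pair $B_i\lessdot B_{i+1}$. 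Without this lemma, condition~(ii) of Definition~\ref{def:ShellableCpx} (and with it (i)) is not verified, so the induction does not go through. Since the stated proposition is precisely a theorem in \cite{BLSWZ1999_OrientedMatroids} whose proof handles this point, your outline is consistent with what a full proof looks like, but as written it stops exactly where the real work begins.
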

	
	We list the following useful fact, cf.\ \cite[Prop.~4.7.26]{BLSWZ1999_OrientedMatroids}.
	\begin{proposition}
		\label{prop:ShellingSphereSubsequenceBall}
		Let $\Sigma$ be a regular cell decomposition of the $d$-sphere which has a shelling
		$\sigma_1,\ldots,\sigma_t$.
		Then for $k <t$ the subcomplex $\Sigma(\{\sigma_1,\ldots,\sigma_k\})$
		is a shellable and hence PL $d$-ball with shelling $\sigma_1,\ldots,\sigma_k$.
	\end{proposition}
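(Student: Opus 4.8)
The plan is to prove both assertions simultaneously by a double induction: an outer induction on the dimension $d$ and, for fixed $d$, an inner induction on the number $k$ of attached facets. Write $\Sigma_k := \Sigma(\{\sigma_1,\ldots,\sigma_k\})$. The case $d=0$ is immediate, so assume $d\ge 1$ and that the Proposition is already known in all dimensions $<d$. The shellability of $\Sigma_k$ is essentially formal: since $\Sigma$ is a shellable decomposition of the $d$-sphere it is pure, so its maximal cells are exactly the $d$-cells $\sigma_1,\ldots,\sigma_t$; hence the maximal cells of $\Sigma_k$ are precisely $\sigma_1,\ldots,\sigma_k$ and $\Sigma_k$ is pure $d$-dimensional. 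Conditions (i)--(iii) of Definition \ref{def:ShellableCpx} for the ordering $\sigma_1,\ldots,\sigma_k$ of the facets of $\Sigma_k$ refer only to the cells $\sigma_j$ with $j\le k$ and to the complexes $\delta\sigma_j\cap(\bigcup_{i<j}\delta\sigma_i)$, which are computed identically inside $\Sigma_k$ or inside $\Sigma$; so they are inherited verbatim from the shelling of $\Sigma$, and $\sigma_1,\ldots,\sigma_k$ is a shelling of $\Sigma_k$.

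For the PL-ball statement, the base case $k=1$ is that $|\Sigma_1|$ is the closed cell $\sigma_1$, which is a PL $d$-ball by Lemma \ref{lem:PL sphereDual}(i). For the inductive step fix $2\le k<t$, so that $|\Sigma_k| = |\Sigma_{k-1}|\cup\sigma_k$. By the inner induction hypothesis $|\Sigma_{k-1}|$ is a PL $d$-ball, and $\sigma_k$ is a PL $d$-ball by Lemma \ref{lem:PL sphereDual}(i); moreover $\delta\sigma_k$ is a regular cell decomposition of the $(d-1)$-sphere (it is the subcomplex of proper faces of $\sigma_k$, and $|\delta\sigma_k|\cong S^{d-1}$ by the CW-poset property of Definition \ref{def:CW-poset}), which by condition (ii) carries a shelling. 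Put $\Gamma_k := \Sigma_{k-1}\cap\Sigma(\{\sigma_k\})$. Since the open cell $\mathring{\sigma}_k$ is disjoint from $|\Sigma_{k-1}|$ ($\sigma_k$ being a maximal cell not a face of any $\sigma_i$ with $i<k$), we get $|\Gamma_k| = |\Sigma_{k-1}|\cap|\delta\sigma_k|$, and no point of $|\Gamma_k|$ can be interior to $|\Sigma_{k-1}|$, so $|\Gamma_k|\subseteq\partial|\Sigma_{k-1}|$; also $\Gamma_k\subseteq\delta\sigma_k$. By condition (i), $\Gamma_k$ is pure of dimension $d-1$, hence equals the subcomplex of $\delta\sigma_k$ generated by a set of $(d-1)$-cells which, by condition (ii), forms an initial segment of a shelling of $\delta\sigma_k$. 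Provided this segment is \emph{proper}, the outer induction hypothesis --- the Proposition in dimension $d-1$, applied to the shelled $(d-1)$-sphere $\delta\sigma_k$ --- yields that $\Gamma_k$ is a PL $(d-1)$-ball. Then $|\Sigma_k| = |\Sigma_{k-1}|\cup\sigma_k$ is the union of two PL $d$-balls whose intersection $|\Gamma_k|$ is a PL $(d-1)$-ball lying in the boundary of each, so $\Sigma_k$ is a PL $d$-ball by Theorem \ref{thm:PLBallsAndSpheres}(i).

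The step I expect to be the genuine obstacle is showing that this initial segment is proper, equivalently $\Gamma_k\subsetneq\delta\sigma_k$, and this is exactly where the hypothesis $k<t$ is used. I would argue by contradiction: if $\Gamma_k=\delta\sigma_k$, then $|\delta\sigma_k| = |\Gamma_k|\subseteq\partial|\Sigma_{k-1}|$; since $|\delta\sigma_k|$ and $\partial|\Sigma_{k-1}|$ are $(d-1)$-spheres (boundaries of the PL $d$-balls $\sigma_k$ and $|\Sigma_{k-1}|$) and a $(d-1)$-sphere is not a proper subspace of a $(d-1)$-sphere, we get $|\delta\sigma_k| = \partial|\Sigma_{k-1}|$. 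Hence $|\Sigma_k| = |\Sigma_{k-1}|\cup\sigma_k$ is the union of two PL $d$-balls meeting along their entire common boundary, so it is a PL $d$-sphere by Theorem \ref{thm:PLBallsAndSpheres}(ii). But $|\Sigma_k|$ is a proper subcomplex of $\Sigma$ --- it misses the nonempty open cell $\mathring{\sigma}_{k+1}$, which exists precisely because $k<t$ --- hence $|\Sigma_k|\subsetneq|\Sigma|\cong S^d$, contradicting that a $d$-sphere cannot properly embed in a $d$-sphere. This contradiction completes the induction. The only ingredients used beyond the excerpt are the standard PL/manifold facts that the boundary of a PL $d$-ball is a PL $(d-1)$-sphere and that a $d$-sphere admits no proper embedding into a $d$-sphere.
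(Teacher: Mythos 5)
The paper does not prove this Proposition itself but cites it from \cite[Prop.~4.7.26]{BLSWZ1999_OrientedMatroids}, so there is no in-paper proof to compare against; your argument is the standard Danaraj--Klee induction and is essentially correct. In particular, your identification of $\Gamma_k = \Sigma_{k-1}\cap\Sigma(\{\sigma_k\})$ with the complex $\delta\sigma_k\cap\bigcup_{i<k}\delta\sigma_i$ appearing in shelling conditions (i)--(ii), your observation that the real content is the properness $\Gamma_k\subsetneq\delta\sigma_k$, and the ``a $d$-sphere cannot properly embed in a $d$-sphere'' contradiction using Theorem~\ref{thm:PLBallsAndSpheres}(ii) are all exactly the points that carry the proof. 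The inheritance of the shelling by initial segments is also correctly argued.

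One subtlety deserves flagging: you invoke Lemma~\ref{lem:PL sphereDual}(i) to conclude that the closed cells $\sigma_1$ and $\sigma_k$ are PL $d$-balls, but that lemma requires $\Sigma$ to already be a \emph{PL} $d$-sphere, whereas the Proposition's hypothesis nominally only gives a regular cell decomposition of the topological $d$-sphere. The cited source does assume the sphere is PL (and in the paper every $\Sigma$ to which this Proposition is applied is a covector complex, hence PL by Remark~\ref{rem:OMTopRep}), so under that reading your appeal is legitimate. If you want a self-contained argument that also yields the Danaraj--Klee fact that a shellable regular cell decomposition of $S^d$ is automatically PL, strengthen the induction hypothesis to include ``the full shelling exhibits $\Sigma$ as a PL $d$-sphere,'' and replace each appeal to Lemma~\ref{lem:PL sphereDual}(i) by: $\delta\sigma_j$ is a shelled regular cell decomposition of $S^{d-1}$ by shelling condition (ii) or (iii), hence a PL $(d-1)$-sphere by the outer induction hypothesis, so $\Sigma(\{\sigma_j\}) = \Fc(\delta\sigma_j)\ast\sigma_j$ is PL homeomorphic to the cone $C\delta\sigma_j$ by Lemma~\ref{lem:ConeCWPoset} and is therefore a PL $d$-ball by Theorem~\ref{thm:PLBallsAndSpheres}(iii). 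Your argument then runs verbatim, and the case $k=t$, where $\Gamma_t=\delta\sigma_t$ is forced by the pseudomanifold property of $S^d$, supplies the PL-sphere clause of the strengthened induction.
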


	\begin{proposition}[{\cite[Thm.~4.1.14]{BLSWZ1999_OrientedMatroids}}]
		\label{prop:OMCpxIntersectionProperty}
		The regular cell complex $\Lc\setminus\{\Zero\}$ has the intersection property and so does $\Lc^\dual$.
		Equivalently, both $\Lc$ and $\Lc^\dual$ adjoint with an additional unique maximal respectively minimal element are atomic lattices. 
	\end{proposition}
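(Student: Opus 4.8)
The plan is to reduce both assertions to known structural facts about the covector poset $\Lc$ and then invoke the general Lemma \ref{lem:IntersectionProperty} about regular cell complexes with the intersection property. First I would establish the intersection property for $\Lc \setminus \{\Zero\}$ directly from the covector axioms. Given two covectors $\sigma, \tau \in \Lc$ whose corresponding closed cells meet, I want to produce their greatest common lower bound inside $\Lc$ in the componentwise partial order and check it is again a covector. The natural candidate is the sign vector $\rho$ defined componentwise by $\rho_e = \sigma_e$ if $\sigma_e = \tau_e$ and $\rho_e = 0$ otherwise; this is the meet of $\sigma$ and $\tau$ in $\{+,-,0\}^E$. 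The key point is that $\rho \in \Lc$: if the closed cells of $\sigma$ and $\tau$ genuinely intersect, then $\sigma$ and $\tau$ admit a common lower covector, and repeated application of axiom (4) (elimination) on the coordinates of the separating set $S(\sigma,\tau)$, together with closure under composition (axiom (3)), yields that $\rho$ lies in $\Lc$. Here I would lean on Theorem \ref{thm:OMCovectorShellable}: since $\Lc \setminus \{\Zero\}$ is the face poset of a regular cell decomposition of a sphere coming from the topological representation theorem (Remark \ref{rem:OMTopRep}), geometric intersection of closed pseudospheres is exactly captured by this sign-vector meet operation, so $\sigma \cap \tau$ as cells equals the cell indexed by $\rho$.

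Next I would transfer the statement to the dual complex $\Lc^\dual$. The subtlety is that $\Lc^\dual$ is obtained by reversing the order, so its cells correspond to the dual cell structure of the PL sphere $\Lc \setminus \{\Zero\}$ guaranteed by Lemma \ref{lem:PL sphereDual}(ii). For the intersection property of $\Lc^\dual$ I would argue that in the dual cell decomposition of a PL sphere the closed cells again meet in common faces; alternatively, and more combinatorially, I would note that intervals $[\sigma, \tau]$ in $\Lc$ are themselves the covector posets (with top element adjoined) of minors of $\OM$, so the lattice-theoretic meet structure needed on the dual side again follows from the covector axioms applied to the relevant contraction/localization. In either formulation, $\Lc^\dual$ inherits the intersection property.

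Finally, for the "equivalently" clause I would invoke Lemma \ref{lem:IntersectionProperty}: once a regular cell complex $\Sigma$ has the intersection property, $\Fc(\Sigma)$ is isomorphic to the poset of vertex sets $\{\vts(\sigma) \mid \sigma \in \Sigma\}$ ordered by inclusion, which is meet-closed (the meet being set intersection, realized as an actual cell by the intersection property). Adjoining a top element then produces a lattice, and it is atomic because every cell is the join of its vertices — precisely the content of Lemma \ref{lem:IntersectionProperty}(ii). Applying this to $\Sigma = \Lc \setminus \{\Zero\}$ gives that $\Lc$ with an adjoined maximal element is an atomic lattice, and applying it to the dual cell complex gives the corresponding statement for $\Lc^\dual$ with an adjoined minimal element. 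The main obstacle I anticipate is the first step: verifying cleanly that the componentwise meet of two "compatible" covectors is again a covector, i.e.\ that geometric cell intersection in the pseudosphere arrangement coincides with the sign-vector meet — this is where the elimination axiom must be used carefully, and where citing the topological representation theorem rather than a bare axiomatic manipulation is the cleanest route.
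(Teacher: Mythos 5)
The paper cites this proposition to \cite[Thm.~4.1.14]{BLSWZ1999_OrientedMatroids} and supplies no proof, so there is no in-paper argument to compare against; I will judge your proposal on its own. Your choice of the componentwise meet $\rho$ as the candidate for $\sigma\cap\tau$ is a reasonable one, but the argument that $\rho\in\Lc$ does not work as stated. Elimination (axiom (4)) applied to $\sigma,\tau$ at some $e\in S(\sigma,\tau)$ yields a covector $\xi$ with $\xi_f=(\sigma\circ\tau)_f$ for $f\notin S(\sigma,\tau)$; at any $f$ with $\sigma_f=0\neq\tau_f$ (or symmetrically) this forces $\xi_f=\tau_f\neq 0=\rho_f$, so $\xi$ is not below $\sigma$, and composing such eliminants will not produce $\rho$ either --- elimination interpolates between $\sigma$ and $\tau$, it does not descend below both. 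More tellingly, your sketch never actually invokes the hypothesis that the closed cells meet, yet the conclusion is false without it: in the uniform rank-$3$ oriented matroid on $\{1,2,3,4\}$ realized by $x,y,z,x+y+z$, the topes $T=(+,+,+,+)$ and $T'=(-,-,+,-)$ have componentwise meet $(0,0,+,0)$, which is not a covector (and indeed the closed cells $\overline{T}$ and $\overline{T'}$ are disjoint). Any correct combinatorial proof must use the existence of a nonzero common lower bound $\eta$ in an essential way; the fallback via the topological representation theorem is legitimate but is essentially re-citing the reference, since the intersection property of the strata is part of what that machinery packages, and in \cite{BLSWZ1999_OrientedMatroids} Theorem~4.1.14 is in fact proved combinatorially \emph{before} the representation theorem.

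A cleaner elementary route, handling both the primal and dual claims and atomicity symmetrically, is to show directly that the big face lattice $\hat{\Lc}:=\Lc\cup\{\hat 1\}$ is a lattice. Joins exist from the axioms alone: $\sigma\vee\tau=\sigma\circ\tau$ when $S(\sigma,\tau)=\emptyset$ (one checks this is the least upper bound), and $\sigma\vee\tau=\hat 1$ otherwise (no covector $\mu$ can satisfy $\mu_e=\sigma_e=-\tau_e\neq 0$). A finite bounded poset with all joins has all meets, so $\hat{\Lc}$ is a lattice. Meets in $\hat{\Lc}$ furnish the unique maximal common face of any two cells of $\Lc\setminus\{\Zero\}$ that meet, which is the intersection property of the sphere; joins furnish the intersection property of $\Lc^\dual$, since common faces in $\Lc^\dual$ are common upper bounds in $\Lc$. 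Atomicity of $\hat{\Lc}$ is the conformal decomposition of a covector into the cocircuits below it, and atomicity of $\Lc^\dual\cup\{\hat 0\}$ follows from the fact that each covector is the meet in $\hat{\Lc}$ of the topes above it. Your closing appeal to Lemma~\ref{lem:IntersectionProperty} is in the right spirit, but note that part (ii) of that lemma only says face containment is detected by vertex sets; to conclude that a cell is the \emph{join} of its vertices you need the lattice structure already in hand, so this step should come after, not in place of, the lattice argument.
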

	
	%%%%%%%%%%%%%%%%%%%%%%%%%%%%%%%%%%%%%%%%%%%%%%%%%%%%%%%%%%%%%%%%%%%%%%%%%%%%%%%%%%%%%%%%%%%%%%%%%%%%
	
	\subsection{The Salvetti complex of an oriented matroid}
	\label{ssec:SalvettiCpx}
	
	We start by defining a certain poset which was first constructed by
	Salvetti \cite{Salvetti1987_SalCpx} for an oriented matroid $\OM(\Ac)$
	associated to a real hyperplane arrangement $\Ac$.
	Gel'fand and Rybnikov \cite{GelfandRybnikov1990_AlgTopInvOfOMs} observed,
	that the same construction applies more generally to oriented matroids.
	
	\begin{definition}
		\label{def:SalvettiCpxOM}
		Let $\Lc$ be the poset of covectors of an oriented matroid $\OM$.
		Then the \emph{Salvetti poset} $\Sc = \Sc(\OM)$ is defined
		as
		\[
		\Sc := \{ (\sigma,T) \mid T \in \Tc\text{ and } \sigma \in \Lc_{\leq T} \} \subseteq \Lc \times \Tc,
		\]
		with partial order
		\[
		(\sigma, T) \leq_\Sc (\tau, R) :\iff \sigma \geq_\Lc \tau \text{ and } \sigma \circ R = T. 
		\]
	\end{definition}
	
	The following theorem due to Salvetti \cite{Salvetti1987_SalCpx} 
	is a fundamental result in the homotopy theory of complements
	of complexified real arrangements. 
	
	\begin{theorem}[{\cite{Salvetti1987_SalCpx}}]
		\label{thm:SalvettiHoEquiv}
		The Salvetti poset is the face poset of a regular cell complex which is also denoted by $\Sc$.
		If $\OM=\OM(\Ac)$ is the oriented matroid associated to a hyperplane arrangement $\Ac$
		in a real vector space $V$,
		then the Salvetti complex of $\OM$ (or $\Ac$) is homotopy equivalent to the complement of the complexified arrangement:
		\[
		|\Sc| \cong V\otimes\CC \setminus \left( \bigcup\limits_{H \in \Ac} H\otimes\CC \right).
		\]
	\end{theorem}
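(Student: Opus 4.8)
The statement has two parts; I would treat them in turn.

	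\emph{$\Sc$ is a CW-poset.} The plan is to verify Björner's criterion (Definition \ref{def:CW-poset}) for the candidate grading $\rho(\sigma,T):=r-\rk_\Lc(\sigma)$, where $r=\rk(\OM)$ and $\rk_\Lc$ denotes height in $\Lc$ (so $\rk_\Lc(\mathbf 0)=0$ and $\rk_\Lc(T)=r$ for $T\in\Tc$; the minimal elements of $\Sc$ are exactly the pairs $(T,T)$, with $\rho=0$). The crux is to describe the principal order ideal $\Sc_{\leq(\sigma,T)}$. From Definition \ref{def:SalvettiCpxOM}, $(\tau,R)\leq_\Sc(\sigma,T)$ forces $R=\tau\circ T$ and $\tau\geq_\Lc\sigma$, and conversely every $\tau\in\Lc_{\geq\sigma}$ yields such an element; since $\tau\circ(\tau'\circ T)=\tau\circ T$ whenever $\tau\geq_\Lc\tau'$, the assignment $(\tau,R)\mapsto\tau$ is a poset isomorphism
	\[
	\Sc_{\leq(\sigma,T)}\;\xrightarrow{\ \sim\ }\;(\Lc_{\geq\sigma})^\dual .
	\]
	Now $\Lc_{\geq\sigma}$ is the covector poset of an oriented matroid of rank $d:=\rho(\sigma,T)$ (the localization of $\OM$ at the flat $z(\sigma)$; cf.\ \cite{BLSWZ1999_OrientedMatroids}), so by Remark \ref{rem:OMTopRep} its opposite is the face poset of a PL regular cell decomposition of a $d$-ball with unique top cell $(\sigma,T)$. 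Hence $\Sc_{<(\sigma,T)}$ is the boundary of this ball, a PL (thus topological) $(d-1)$-sphere by Theorem \ref{thm:OMCovectorShellable} and Lemma \ref{lem:PL sphereDual}(ii), and the CW-grading of this ball restricts to $\rho$; since every cover relation of $\Sc$ lies inside some principal ideal, $\rho$ is a rank function. Therefore $\Sc$ is a CW-poset and hence the face poset of a regular cell complex, by \cite{Bjoerner1984_CWPosets}. (For $\OM=\OM(\Ac)$ this is also visible geometrically, the closed cells being genuine polytopes; the combinatorial argument is what delivers the general case of Gel'fand--Rybnikov \cite{GelfandRybnikov1990_AlgTopInvOfOMs}.)

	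\emph{The homotopy equivalence in the realizable case.} Write $V\otimes\CC=V\oplus\ii V$ and $M:=V\otimes\CC\setminus\bigcup_{H\in\Ac}H\otimes\CC$, so $x+\ii y\in M$ precisely when $(\alpha_H(x),\alpha_H(y))\neq(0,0)$ for all $H\in\Ac$. Following Salvetti \cite{Salvetti1987_SalCpx}, I would assign to $x+\ii y\in M$ the real face $\sigma:=(\sgn\alpha_H(x))_H\in\Lc$ together with the tope $T$ which agrees with $\sigma$ off $z(\sigma)$ and equals $-(\sgn\alpha_H(y))_{H\in z(\sigma)}$ there (well defined since $\alpha_H(y)\neq 0$ on $z(\sigma)$); up to this sign convention $(\sigma,T)=(\sigma,\sigma\circ\tau)$ with $\tau=(\sgn\alpha_H(-y))_H$. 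This rule is locally constant on the relatively open pieces of the polyhedral stratification of $M$ cut out by the $H$ together with their imaginary shadows; the closure order of these pieces reflects $\leq_\Sc$, and each piece is linearly contractible. One then builds an explicit deformation retraction of $M$ onto the regular CW subcomplex spanned by chosen base points of the strata (radial contractions inside each real face, fitting together across faces) and checks that its face poset is $\Sc$, with attaching maps matching $\leq_\Sc$. Equivalently, one may invoke the nerve/homotopy-colimit formalism: $M$ is the homotopy colimit over $\Sc$ of the diagram of strata, whose values are contractible, so $M\simeq|\Delta(\Sc)|=|\Sc|$.

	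\emph{Main obstacle.} The first part is essentially bookkeeping once the interval identification $\Sc_{\leq(\sigma,T)}\cong(\Lc_{\geq\sigma})^\dual$ and the localization fact are at hand. The real work is the second part: producing the explicit deformation retraction of $M$ (equivalently, verifying contractibility of the relevant strata/fibers and the accompanying gluing data) and matching its cell structure with $\Sc$ cell by cell, incidences included. Pinning down the signs in the assignment $x+\ii y\mapsto(\sigma,T)$ and ensuring the retraction extends coherently over the lower-dimensional strata is the delicate point -- this is where Salvetti's original construction, or the Björner--Ziegler combinatorial stratification, carries the load.
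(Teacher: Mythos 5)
The paper does not prove Theorem~\ref{thm:SalvettiHoEquiv}; it cites Salvetti \cite{Salvetti1987_SalCpx}, with the CW-poset claim for general oriented matroids going back to Gel'fand--Rybnikov \cite{GelfandRybnikov1990_AlgTopInvOfOMs}, so there is no in-paper argument to compare against and your plan has to be judged on its own. For the CW-poset part the plan is correct: the identification $\Sc_{\leq(\sigma,T)}\cong(\Lc_{\geq\sigma})^\dual$ is exactly the right key lemma (it generalizes the paper's Lemma~\ref{lem:SalPrincipialIdealCovectors} from $\sigma=\Zero$ to arbitrary $\sigma$), and together with the fact that $\Lc_{\geq\sigma}$ is the covector poset of a rank $\rho(\sigma,T)$ localization, Theorem~\ref{thm:OMCovectorShellable} and Lemma~\ref{lem:PL sphereDual}(ii) give the required sphere link, so Björner's criterion applies. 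For the homotopy equivalence you are outlining Salvetti's argument and its Björner--Ziegler reformulation; the paper's own later proof of the relative version (Proposition~\ref{prop:RelSalvetti}) uses precisely the Björner--Ziegler stratification $s:\CC\to\{i,j,+,-,0\}$, which encodes the same data as your assignment $x+\ii y\mapsto(\sigma,T)$ up to sign convention, so the approach is aligned. One caution: the nerve/homotopy-colimit shortcut at the end is not as immediate as stated -- the relatively open strata do not literally give a diagram over $\Sc$ whose homotopy colimit is the complement; one has to pass to open stars of the strata (or to closed strata with cofibration control), and it is exactly there that Salvetti's explicit deformation retraction and the Björner--Ziegler machinery do the work, as you already flag as the main obstacle.
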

	
	The subcomplex of $\Sc$ consisting of all the faces of a maximal cell can be identified with the dual covector complex.
	\begin{lemma}
		\label{lem:SalPrincipialIdealCovectors}
		Let $(\Zero,T) \in \Sc$ be a maximal element of $\Sc$.
		Then $\Sc_{\leq (\Zero,T)} \isom \Lc^\dual$.
	\end{lemma}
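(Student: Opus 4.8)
The plan is to exhibit an explicit order isomorphism between the principal order ideal $\Sc_{\leq(\Zero,T)}$ and the dual covector poset $\Lc^\dual$. Recall that by definition an element $(\sigma,R)\in\Sc$ satisfies $(\sigma,R)\leq_\Sc(\Zero,T)$ precisely when $\sigma\geq_\Lc\Zero$ (which holds automatically for every covector) and $\sigma\circ T = R$. Thus the first observation is that the second coordinate of any element of $\Sc_{\leq(\Zero,T)}$ is completely determined by the first: $R = \sigma\circ T$. In particular the projection $\pi:\Sc_{\leq(\Zero,T)}\to\Lc$, $(\sigma,R)\mapsto\sigma$, is injective, and its image is contained in $\Lc_{\leq T}$ since $\sigma\in\Lc_{\leq R}$ forces $\sigma\leq_\Lc\sigma\circ T = R\leq_\Lc T$ — wait, one must check this carefully: the pairs in $\Sc$ are required to satisfy $\sigma\in\Lc_{\leq T}$ only for the specific tope $T$ appearing in the pair, so I would instead note directly that if $(\sigma,R)\in\Sc$ then $\sigma\leq_\Lc R$, and combined with $\sigma\circ T=R$ we get $\sigma\leq_\Lc R = \sigma\circ T$, whence $\sigma\in\Lc_{\leq\sigma\circ T}$. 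So the image of $\pi$ lands in $\{\sigma\in\Lc\mid \sigma\leq_\Lc\sigma\circ T\}$, and conversely for any such $\sigma$ the pair $(\sigma,\sigma\circ T)$ is a legitimate element of $\Sc$ lying below $(\Zero,T)$. The candidate isomorphism is therefore $\pi$ from $\Sc_{\leq(\Zero,T)}$ onto this subset of $\Lc$, equipped with the \emph{reversed} order inherited from $\Lc$.

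Next I would identify the image set. The claim is that $\sigma\leq_\Lc\sigma\circ T$ holds for \emph{every} covector $\sigma\in\Lc$: indeed, coordinatewise, if $\sigma_e\neq 0$ then $(\sigma\circ T)_e=\sigma_e$, and if $\sigma_e=0$ then $\sigma_e=0\leq(\sigma\circ T)_e$ in the sign order; so $\sigma\leq\sigma\circ T$ automatically. Hence the image of $\pi$ is all of $\Lc$, and $\pi:\Sc_{\leq(\Zero,T)}\to\Lc$ is a bijection whose inverse sends $\sigma$ to $(\sigma,\sigma\circ T)$.

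Finally I would verify that $\pi$ reverses the order, i.e.\ that it is an isomorphism onto $\Lc^\dual$. Take $(\sigma,R),(\tau,S)\in\Sc_{\leq(\Zero,T)}$ with $R=\sigma\circ T$, $S=\tau\circ T$. By the definition of $\leq_\Sc$ we have $(\sigma,R)\leq_\Sc(\tau,S)$ iff $\sigma\geq_\Lc\tau$ and $\sigma\circ S = R$. The first condition is exactly $\tau\leq^\dual\sigma$ in $\Lc^\dual$; the point to check is that whenever $\sigma\geq_\Lc\tau$ the compatibility $\sigma\circ S = R$ is automatic. This is a short sign-vector computation: from $\sigma\geq\tau$ we get $\sigma\circ\tau=\sigma$, so $\sigma\circ S=\sigma\circ(\tau\circ T)=(\sigma\circ\tau)\circ T=\sigma\circ T=R$, using associativity of composition. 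Therefore $(\sigma,R)\leq_\Sc(\tau,S)$ iff $\sigma\geq_\Lc\tau$ iff $\pi(\sigma,R)\leq_{\Lc^\dual}\pi(\tau,S)$, so $\pi$ is the desired poset isomorphism $\Sc_{\leq(\Zero,T)}\isom\Lc^\dual$.

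The routine sign-vector identities (associativity of $\circ$ and $\sigma\geq\tau\Rightarrow\sigma\circ\tau=\sigma$) are the only computational content; the main conceptual point — and the only place where one could slip — is realizing that the second coordinate is redundant on this ideal, so that $\Sc_{\leq(\Zero,T)}$ is really just a relabeled copy of $\Lc$ with its order turned upside down. I expect no serious obstacle; the proof is essentially a matter of unwinding Definition \ref{def:SalvettiCpxOM}.
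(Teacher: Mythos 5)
Your proof is correct and is essentially the same as the paper's one-line argument: the paper simply states that $(\sigma,R)\mapsto\sigma$ and $\sigma\mapsto(\sigma,\sigma\circ T)$ are mutually inverse order-preserving maps, and your proposal fills in exactly the verification details (that the second coordinate is determined by the first, that $\sigma\leq_\Lc\sigma\circ T$ always holds so every $\sigma\in\Lc$ appears, and that the compatibility condition $\sigma\circ S=R$ is automatic via $\sigma\geq\tau\Rightarrow\sigma\circ\tau=\sigma$).
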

	\begin{proof}
		The maps $\Sc_{\leq (\Zero,T)} \to \Lc^\dual, (\sigma,R) \mapsto \sigma$ 
		and $\Lc^\dual \to \Sc_{\leq (\Zero,T)}, \sigma \mapsto (\sigma,\sigma\circ T)$ are 
		mutually inverse and order preserving.
	\end{proof}
	
	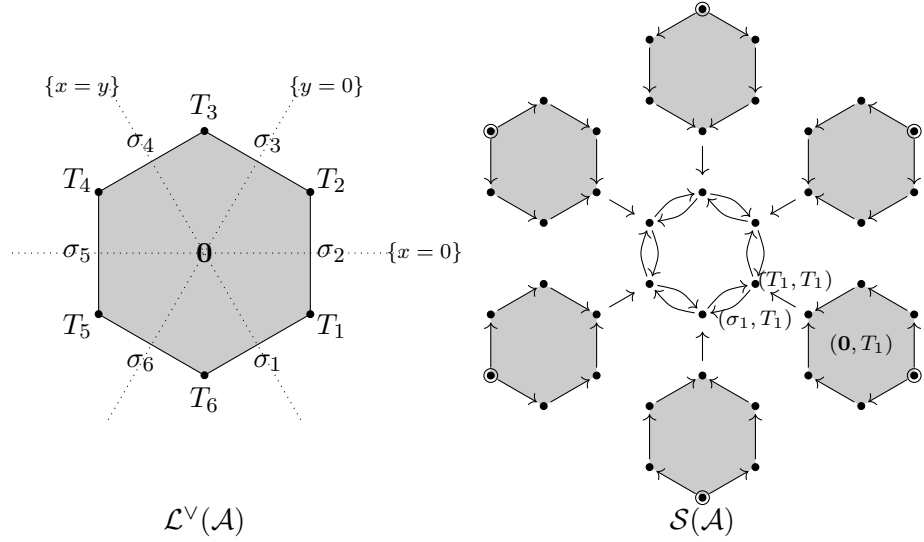
\begin{figure}
		\begin{tikzpicture}[scale=0.65]
			\draw[dotted] (4.,0.) -- (-4.,0.);  % H_1 
			\node at (4.5,0.) {\tiny $\{x=0\}$}; 		
			\draw[dotted] (2.,3.4641016151377548) -- (-2.,-3.4641016151377548);  % H_2 
			\node at (2.5,3.4) {\tiny $\{y=0\}$}; 		
			\draw[dotted] (-2.,3.4641016151377548) -- (2.,-3.4641016151377548);  % H_3 
			\node at (-2.5,3.4) {\tiny $\{x=y\}$}; 		
			
			\node (l1) at (0.0,-2.) {};
			\node (l2) at (1.7320508075688774,-1.) {};
			\node (l3) at (1.7320508075688774,1.) {};
			\node (l4) at (0.0,2.) {};
			\node (l5) at (-1.7320508075688774,1.) {};
			\node (l6) at (-1.7320508075688774,-1.) {};
			\node (zero) at (0.0,0.) {};
			
			\node at ($1.5*(l1)$) {\small$T_6$};
			\node at ($1.5*(l2)$) {\small$T_1$};
			\node at ($1.5*(l3)$) {\small$T_2$};
			\node at ($1.5*(l4)$) {\small$T_3$};
			\node at ($1.5*(l5)$) {\small$T_4$};
			\node at ($1.5*(l6)$) {\small$T_5$};

			\node at ($0.75*(l1)+0.75*(l2)$) {\small$\sigma_1$};
			\node at ($0.75*(l2)+0.75*(l3)$) {\small$\sigma_2$};
			\node at ($0.75*(l3)+0.75*(l4)$) {\small$\sigma_3$};
			\node at ($0.75*(l4)+0.75*(l5)$) {\small$\sigma_4$};
			\node at ($0.75*(l5)+0.75*(l6)$) {\small$\sigma_5$};
			\node at ($0.75*(l6)+0.75*(l1)$) {\small$\sigma_6$};

			\draw ($1.25*(l1)$) -- ($1.25*(l2)$) -- ($1.25*(l3)$) -- ($1.25*(l4)$) -- ($1.25*(l5)$) -- ($1.25*(l6)$) -- ($1.25*(l1)$);
			
			\fill[color=black, opacity=0.2] ($1.25*(l1)$) -- ($1.25*(l2)$) -- ($1.25*(l3)$) -- ($1.25*(l4)$) -- ($1.25*(l5)$) -- ($1.25*(l6)$) -- ($1.25*(l1)$);
			
			\node[color=black] at ($0.25*1.25*(l2) + 0.25*1.25*(l3) + 0.25*1.25*(l5) + 0.25*1.25*(l6)$) {\small$\Zero$};
			
			\filldraw ($1.25*(l1)$) circle[radius=2pt];
			%		\draw ($1.25*(l1)$) circle[radius=4pt];
			\filldraw ($1.25*(l2)$) circle[radius=2pt];
			\filldraw ($1.25*(l3)$) circle[radius=2pt];
			\filldraw ($1.25*(l4)$) circle[radius=2pt];
			\filldraw ($1.25*(l5)$) circle[radius=2pt];
			\filldraw ($1.25*(l6)$) circle[radius=2pt];
			
			\node at (0,-5.5) {\small$\Lc^\dual(\Ac)$};
		\end{tikzpicture}
		\begin{tikzpicture}[scale=0.65]		
			
			\node (l1) at (1.7320508075688774,-1.) {};
			\node (l2) at (1.7320508075688774,1.) {};
			\node (l3) at (0,2.) {};
			\node (l4) at (-1.7320508075688774,1.) {};
			\node (l5) at (-1.7320508075688774,-1.) {};
			\node (l6) at (0,-2.) {};
			\node (zero) at (0,0) {};
			
			%%%%%%%%%%%%%%%%%%%%%%%%%%%%%%%%%%%%%%%%%%%%%%%%
			
			\node (l1_0) at ($0.5*(1.7320508075688774,-1.)$) {};
			\node (l2_0) at ($0.5*(1.7320508075688774,1.)$) {};
			\node (l3_0) at ($0.5*(0,2.)$) {};
			\node (l4_0) at ($0.5*(-1.7320508075688774,1.)$){};
			\node (l5_0) at ($0.5*(-1.7320508075688774,-1.)$) {};
			\node (l6_0) at ($0.5*(0,-2.)$) {};
			
			\draw[->,shorten >=3pt, shorten <=3pt] ($1.25*(l1_0)$) .. controls ($0.5*(l1_0)+0.5*(l2_0)$)  .. ($1.25*(l2_0)$);;
			\draw[->,shorten >=3pt, shorten <=3pt] ($1.25*(l2_0)$) .. controls ($0.75*(l1_0)+0.75*(l2_0)$)  .. ($1.25*(l1_0)$);;
			\draw[->,shorten >=3pt, shorten <=3pt] ($1.25*(l2_0)$) .. controls ($0.5*(l2_0)+0.5*(l3_0)$)  .. ($1.25*(l3_0)$);;
			\draw[->,shorten >=3pt, shorten <=3pt] ($1.25*(l3_0)$) .. controls ($0.75*(l2_0)+0.75*(l3_0)$)  .. ($1.25*(l2_0)$);;
			\draw[->,shorten >=3pt, shorten <=3pt] ($1.25*(l3_0)$) .. controls ($0.5*(l3_0)+0.5*(l4_0)$)  .. ($1.25*(l4_0)$);;
			\draw[->,shorten >=3pt, shorten <=3pt] ($1.25*(l4_0)$) .. controls ($0.75*(l3_0)+0.75*(l4_0)$)  .. ($1.25*(l3_0)$);;
			\draw[->,shorten >=3pt, shorten <=3pt] ($1.25*(l4_0)$) .. controls ($0.5*(l4_0)+0.5*(l5_0)$)  .. ($1.25*(l5_0)$);;
			\draw[->,shorten >=3pt, shorten <=3pt] ($1.25*(l5_0)$) .. controls ($0.75*(l4_0)+0.75*(l5_0)$)  .. ($1.25*(l4_0)$);;
			\draw[->,shorten >=3pt, shorten <=3pt] ($1.25*(l5_0)$) .. controls ($0.5*(l5_0)+0.5*(l6_0)$)  .. ($1.25*(l6_0)$);;
			\draw[->,shorten >=3pt, shorten <=3pt] ($1.25*(l6_0)$) .. controls ($0.75*(l5_0)+0.75*(l6_0)$)  .. ($1.25*(l5_0)$);;
			\draw[->,shorten >=3pt, shorten <=3pt] ($1.25*(l6_0)$) .. controls ($0.5*(l6_0)+0.5*(l1_0)$)  .. ($1.25*(l1_0)$);;
			\draw[->,shorten >=3pt, shorten <=3pt] ($1.25*(l1_0)$) .. controls ($0.75*(l6_0)+0.75*(l1_0)$)  .. ($1.25*(l6_0)$);;
			\filldraw ($1.25*(l1_0)$) circle[radius=2pt];
			\filldraw ($1.25*(l2_0)$) circle[radius=2pt];
			\filldraw ($1.25*(l3_0)$) circle[radius=2pt];
			\filldraw ($1.25*(l4_0)$) circle[radius=2pt];
			\filldraw ($1.25*(l5_0)$) circle[radius=2pt];
			\filldraw ($1.25*(l6_0)$) circle[radius=2pt];
			
			\node at ($1.5*(l1_0)+(0.6,0.2)$) {\tiny{$(T_1,T_1)$}};
			\node at ($0.9*(l1_0) + 0.9*(l6_0)+(0.3,0)$) {\tiny{$(\sigma_1,T_1)$}};
			
			%%%%%%%%%%%%%%%%%%%%%%%%%%%%%%%%%%%%%%%%%%%%%%%%
			
			\node (l1_1) at ($1.5*(l1)+0.5*(1.7320508075688774,-1.)$) {};
			\node (l2_1) at ($1.5*(l1)+0.5*(1.7320508075688774,1.)$) {};
			\node (l3_1) at ($1.5*(l1)+0.5*(0,2.)$) {};
			\node (l4_1) at ($1.5*(l1)+0.5*(-1.7320508075688774,1.)$){};
			\node (l5_1) at ($1.5*(l1)+0.5*(-1.7320508075688774,-1.)$) {};
			\node (l6_1) at ($1.5*(l1)+0.5*(0,-2.)$) {};
			
			\draw[->,shorten >=3pt, shorten <=3pt] ($1.25*(l1_1)$) -- ($1.25*(l2_1)$);
			\draw[->,shorten >=3pt, shorten <=3pt] ($1.25*(l2_1)$) -- ($1.25*(l3_1)$);
			\draw[->,shorten >=3pt, shorten <=3pt] ($1.25*(l3_1)$) -- ($1.25*(l4_1)$);
			\draw[->,shorten >=3pt, shorten <=3pt] ($1.25*(l1_1)$) -- ($1.25*(l6_1)$);
			\draw[->,shorten >=3pt, shorten <=3pt] ($1.25*(l6_1)$) -- ($1.25*(l5_1)$);
			\draw[->,shorten >=3pt, shorten <=3pt] ($1.25*(l5_1)$) -- ($1.25*(l4_1)$);
			\fill[color=black, opacity=0.2] ($1.25*(l1_1)$) -- ($1.25*(l2_1)$) -- ($1.25*(l3_1)$) -- ($1.25*(l4_1)$) -- ($1.25*(l5_1)$) -- ($1.25*(l6_1)$) -- ($1.25*(l1_1)$);
			\filldraw ($1.25*(l1_1)$) circle[radius=2pt];
			\draw ($1.25*(l1_1)$) circle[radius=4pt];
			\filldraw ($1.25*(l2_1)$) circle[radius=2pt];
			\filldraw ($1.25*(l3_1)$) circle[radius=2pt];
			\filldraw ($1.25*(l4_1)$) circle[radius=2pt];
			\filldraw ($1.25*(l5_1)$) circle[radius=2pt];
			\filldraw ($1.25*(l6_1)$) circle[radius=2pt];

			\draw[->] ($1.1*(l1)$) -- ($0.8*(l1)$);
			
			%%%%%%%%%%%%%%%%%%%%%%%%%%%%%%%%%%%%%%%%%%%%%%%%
			
			\node (l1_2) at ($1.5*(l2)+0.5*(1.7320508075688774,-1.)$) {};
			\node (l2_2) at ($1.5*(l2)+0.5*(1.7320508075688774,1.)$) {};
			\node (l3_2) at ($1.5*(l2)+0.5*(0,2.)$) {};
			\node (l4_2) at ($1.5*(l2)+0.5*(-1.7320508075688774,1.)$){};
			\node (l5_2) at ($1.5*(l2)+0.5*(-1.7320508075688774,-1.)$) {};
			\node (l6_2) at ($1.5*(l2)+0.5*(0,-2.)$) {};
			
			\draw[->,shorten >=3pt, shorten <=3pt] ($1.25*(l2_2)$) -- ($1.25*(l3_2)$);
			\draw[->,shorten >=3pt, shorten <=3pt] ($1.25*(l3_2)$) -- ($1.25*(l4_2)$);
			\draw[->,shorten >=3pt, shorten <=3pt] ($1.25*(l4_2)$) -- ($1.25*(l5_2)$);
			\draw[->,shorten >=3pt, shorten <=3pt] ($1.25*(l2_2)$) -- ($1.25*(l1_2)$);
			\draw[->,shorten >=3pt, shorten <=3pt] ($1.25*(l1_2)$) -- ($1.25*(l6_2)$);
			\draw[->,shorten >=3pt, shorten <=3pt] ($1.25*(l6_2)$) -- ($1.25*(l5_2)$);
			\fill[color=black, opacity=0.2] ($1.25*(l1_2)$) -- ($1.25*(l2_2)$) -- ($1.25*(l3_2)$) -- ($1.25*(l4_2)$) -- ($1.25*(l5_2)$) -- ($1.25*(l6_2)$) -- ($1.25*(l1_2)$);
			\filldraw ($1.25*(l1_2)$) circle[radius=2pt];
			\filldraw ($1.25*(l2_2)$) circle[radius=2pt];
			\draw ($1.25*(l2_2)$) circle[radius=4pt];
			\filldraw ($1.25*(l3_2)$) circle[radius=2pt];
			\filldraw ($1.25*(l4_2)$) circle[radius=2pt];
			\filldraw ($1.25*(l5_2)$) circle[radius=2pt];
			\filldraw ($1.25*(l6_2)$) circle[radius=2pt];
			
			\draw[->] ($1.1*(l2)$) -- ($0.8*(l2)$);
			
			%%%%%%%%%%%%%%%%%%%%%%%%%%%%%%%%%%%%%%%%%%%%%%%%
			
			\node (l1_3) at ($1.5*(l3)+0.5*(1.7320508075688774,-1.)$) {};
			\node (l2_3) at ($1.5*(l3)+0.5*(1.7320508075688774,1.)$) {};
			\node (l3_3) at ($1.5*(l3)+0.5*(0,2.)$) {};
			\node (l4_3) at ($1.5*(l3)+0.5*(-1.7320508075688774,1.)$){};
			\node (l5_3) at ($1.5*(l3)+0.5*(-1.7320508075688774,-1.)$) {};
			\node (l6_3) at ($1.5*(l3)+0.5*(0,-2.)$) {};
			
			\draw[->,shorten >=3pt, shorten <=3pt] ($1.25*(l3_3)$) -- ($1.25*(l4_3)$);
			\draw[->,shorten >=3pt, shorten <=3pt] ($1.25*(l4_3)$) -- ($1.25*(l5_3)$);
			\draw[->,shorten >=3pt, shorten <=3pt] ($1.25*(l5_3)$) -- ($1.25*(l6_3)$);
			\draw[->,shorten >=3pt, shorten <=3pt] ($1.25*(l3_3)$) -- ($1.25*(l2_3)$);
			\draw[->,shorten >=3pt, shorten <=3pt] ($1.25*(l2_3)$) -- ($1.25*(l1_3)$);
			\draw[->,shorten >=3pt, shorten <=3pt] ($1.25*(l1_3)$) -- ($1.25*(l6_3)$);
			\fill[color=black, opacity=0.2] ($1.25*(l1_3)$) -- ($1.25*(l2_3)$) -- ($1.25*(l3_3)$) -- ($1.25*(l4_3)$) -- ($1.25*(l5_3)$) -- ($1.25*(l6_3)$) -- ($1.25*(l1_3)$);
			\filldraw ($1.25*(l1_3)$) circle[radius=2pt];
			\filldraw ($1.25*(l2_3)$) circle[radius=2pt];
			\filldraw ($1.25*(l3_3)$) circle[radius=2pt];
			\draw ($1.25*(l3_3)$) circle[radius=4pt];
			\filldraw ($1.25*(l4_3)$) circle[radius=2pt];
			\filldraw ($1.25*(l5_3)$) circle[radius=2pt];
			\filldraw ($1.25*(l6_3)$) circle[radius=2pt];
			
			\draw[->] ($1.1*(l3)$) -- ($0.8*(l3)$);
			
			%%%%%%%%%%%%%%%%%%%%%%%%%%%%%%%%%%%%%%%%%%%%%%%%
			
			\node (l1_4) at ($1.5*(l4)+0.5*(1.7320508075688774,-1.)$) {};
			\node (l2_4) at ($1.5*(l4)+0.5*(1.7320508075688774,1.)$) {};
			\node (l3_4) at ($1.5*(l4)+0.5*(0,2.)$) {};
			\node (l4_4) at ($1.5*(l4)+0.5*(-1.7320508075688774,1.)$){};
			\node (l5_4) at ($1.5*(l4)+0.5*(-1.7320508075688774,-1.)$) {};
			\node (l6_4) at ($1.5*(l4)+0.5*(0,-2.)$) {};

			\draw[->,shorten >=3pt, shorten <=3pt] ($1.25*(l4_4)$) -- ($1.25*(l5_4)$);
			\draw[->,shorten >=3pt, shorten <=3pt] ($1.25*(l5_4)$) -- ($1.25*(l6_4)$);
			\draw[->,shorten >=3pt, shorten <=3pt] ($1.25*(l6_4)$) -- ($1.25*(l1_4)$);
			\draw[->,shorten >=3pt, shorten <=3pt] ($1.25*(l4_4)$) -- ($1.25*(l3_4)$);
			\draw[->,shorten >=3pt, shorten <=3pt] ($1.25*(l3_4)$) -- ($1.25*(l2_4)$);
			\draw[->,shorten >=3pt, shorten <=3pt] ($1.25*(l2_4)$) -- ($1.25*(l1_4)$);
			\fill[color=black, opacity=0.2] ($1.25*(l1_4)$) -- ($1.25*(l2_4)$) -- ($1.25*(l3_4)$) -- ($1.25*(l4_4)$) -- ($1.25*(l5_4)$) -- ($1.25*(l6_4)$) -- ($1.25*(l1_4)$);
			\filldraw ($1.25*(l1_4)$) circle[radius=2pt];
			\filldraw ($1.25*(l2_4)$) circle[radius=2pt];
			\filldraw ($1.25*(l3_4)$) circle[radius=2pt];
			\filldraw ($1.25*(l4_4)$) circle[radius=2pt];
			\draw ($1.25*(l4_4)$) circle[radius=4pt];
			\filldraw ($1.25*(l5_4)$) circle[radius=2pt];
			\filldraw ($1.25*(l6_4)$) circle[radius=2pt];
			
			\draw[->] ($1.1*(l4)$) -- ($0.8*(l4)$);
			
			%%%%%%%%%%%%%%%%%%%%%%%%%%%%%%%%%%%%%%%%%%%%%%%%
			
			\node (l1_5) at ($1.5*(l5)+0.5*(1.7320508075688774,-1.)$) {};
			\node (l2_5) at ($1.5*(l5)+0.5*(1.7320508075688774,1.)$) {};
			\node (l3_5) at ($1.5*(l5)+0.5*(0,2.)$) {};
			\node (l4_5) at ($1.5*(l5)+0.5*(-1.7320508075688774,1.)$){};
			\node (l5_5) at ($1.5*(l5)+0.5*(-1.7320508075688774,-1.)$) {};
			\node (l6_5) at ($1.5*(l5)+0.5*(0,-2.)$) {};
			
			\draw[->,shorten >=3pt, shorten <=3pt] ($1.25*(l5_5)$) -- ($1.25*(l6_5)$);
			\draw[->,shorten >=3pt, shorten <=3pt] ($1.25*(l6_5)$) -- ($1.25*(l1_5)$);
			\draw[->,shorten >=3pt, shorten <=3pt] ($1.25*(l1_5)$) -- ($1.25*(l2_5)$);
			\draw[->,shorten >=3pt, shorten <=3pt] ($1.25*(l5_5)$) -- ($1.25*(l4_5)$);
			\draw[->,shorten >=3pt, shorten <=3pt] ($1.25*(l4_5)$) -- ($1.25*(l3_5)$);
			\draw[->,shorten >=3pt, shorten <=3pt] ($1.25*(l3_5)$) -- ($1.25*(l2_5)$);
			\fill[color=black, opacity=0.2] ($1.25*(l1_5)$) -- ($1.25*(l2_5)$) -- ($1.25*(l3_5)$) -- ($1.25*(l4_5)$) -- ($1.25*(l5_5)$) -- ($1.25*(l6_5)$) -- ($1.25*(l1_5)$);
			\filldraw ($1.25*(l1_5)$) circle[radius=2pt];
			\filldraw ($1.25*(l2_5)$) circle[radius=2pt];
			\filldraw ($1.25*(l3_5)$) circle[radius=2pt];
			\filldraw ($1.25*(l4_5)$) circle[radius=2pt];
			\filldraw ($1.25*(l5_5)$) circle[radius=2pt];
			\draw ($1.25*(l5_5)$) circle[radius=4pt];
			\filldraw ($1.25*(l6_5)$) circle[radius=2pt];
			
			\draw[->] ($1.1*(l5)$) -- ($0.8*(l5)$);
			
			%%%%%%%%%%%%%%%%%%%%%%%%%%%%%%%%%%%%%%%%%%%%%%%%
			
			\node (l1_6) at ($1.5*(l6)+0.5*(1.7320508075688774,-1.)$) {};
			\node (l2_6) at ($1.5*(l6)+0.5*(1.7320508075688774,1.)$) {};
			\node (l3_6) at ($1.5*(l6)+0.5*(0,2.)$) {};
			\node (l4_6) at ($1.5*(l6)+0.5*(-1.7320508075688774,1.)$){};
			\node (l5_6) at ($1.5*(l6)+0.5*(-1.7320508075688774,-1.)$) {};
			\node (l6_6) at ($1.5*(l6)+0.5*(0,-2.)$) {};
			
			\draw[->,shorten >=3pt, shorten <=3pt] ($1.25*(l6_6)$) -- ($1.25*(l1_6)$);
			\draw[->,shorten >=3pt, shorten <=3pt] ($1.25*(l1_6)$) -- ($1.25*(l2_6)$);
			\draw[->,shorten >=3pt, shorten <=3pt] ($1.25*(l2_6)$) -- ($1.25*(l3_6)$);
			\draw[->,shorten >=3pt, shorten <=3pt] ($1.25*(l6_6)$) -- ($1.25*(l5_6)$);
			\draw[->,shorten >=3pt, shorten <=3pt] ($1.25*(l5_6)$) -- ($1.25*(l4_6)$);
			\draw[->,shorten >=3pt, shorten <=3pt] ($1.25*(l4_6)$) -- ($1.25*(l3_6)$);
			\fill[color=black, opacity=0.2] ($1.25*(l1_6)$) -- ($1.25*(l2_6)$) -- ($1.25*(l3_6)$) -- ($1.25*(l4_6)$) -- ($1.25*(l5_6)$) -- ($1.25*(l6_6)$) -- ($1.25*(l1_6)$);
			\filldraw ($1.25*(l1_6)$) circle[radius=2pt];
			\filldraw ($1.25*(l2_6)$) circle[radius=2pt];
			\filldraw ($1.25*(l3_6)$) circle[radius=2pt];
			\filldraw ($1.25*(l4_6)$) circle[radius=2pt];
			\filldraw ($1.25*(l5_6)$) circle[radius=2pt];
			\filldraw ($1.25*(l6_6)$) circle[radius=2pt];
			\draw ($1.25*(l6_6)$) circle[radius=4pt];
			
			\node at ($1.25*1.5*(l1)$) {\tiny{$(\Zero,T_1)$}};
			
			\draw[->] ($1.1*(l6)$) -- ($0.8*(l6)$);

			\node at (0,-5.5) {\small$\Sc(\Ac)$};
			
		\end{tikzpicture}
		\caption{The dual oriented matroid complex $\Lc^\dual(\Ac)$ respectively zonotope and the Salvetti complex $\Sc(\Ac)$ of the arrangement $\Ac = \{\{x=0\},\{y=0\},\{x=y\}\}$.}
		\label{fig:ExSalA2}
	\end{figure}
	The following example illustrates the geometric picture behind the construction of $\Sc$.
	\begin{example}
		\label{ex:SalA2}
		Let $\Ac$ be the arrangement in $\RR^2$ with defining polynomial $Q(\Ac) = xy(x-y)$.
		Consider Figure \ref{fig:ExSalA2}.
		Then the Salvetti complex $\Sc(\Ac)$ has
		\begin{itemize}
			\item 
			one vertex for each of the $6$ chambers or topes of $\Ac$,
			
			\item 
			two edges for each pair of adjacent chambers separated by one line, drawn by two arrows pointing in opposite directions,
			
			\item 
			and one $2$-cell for each chamber (a hexagon, corresponding to the dual covector complex or zonotope of $\Ac$), 
			whose boundary is identified with the edges corresponding to
			the appropriate arrows pointing away from the associated base chamber.
		\end{itemize}
	\end{example}
	
	We list the following further example of the Salvetti complex which will be important for our later considerations.
	\begin{example}
		\label{ex:SalBooleanParam}
		Let $\Bc_n := \{ \{x_1=0\}, \{x_2=0\}, \ldots, \{x_n=0\}\}$ be the Boolean arrangement consisting of the coordinate hyperplanes
		in $\RR^n$. The complement $\Xf(\Bc_n) = (\CC^\times)^n$ is the complex $n$-torus which deformation retracts to $(S^1)^n$
		where $S^1  = \{z \in \CC \mid |z|=1\}$ is the unit circle in $\CC^\times = \CC\setminus\{0\}$.
		The poset $\Lc^\dual = \{0,+,-\}^n$ is isomorphic to the face lattice of the $n$-cube $[0,1]^n$ by identifying $\sigma \in \Lc^\dual$
		with the face $\{x \in [0,1]^n \mid x_i=0$ if $\sigma_i=-$, $x_i=1$ if $\sigma_i=+\} =:\psi(\sigma) \subseteq [0,1]^n$.
		%	Denote this identification map by $\psi$.
		
		By Lemma \ref{lem:SalPrincipialIdealCovectors}, the Salvetti complex $\Sc(\Bc_n)$ is obtained by gluing together $n$-cubes.
		We can describe this explicitly geometrically as follows.
		We have the following concrete parametrization of the cells in $\Sc(\Bc_n)$,
		yielding the cell-decomposition of $(S^1)^n \hookrightarrow (\CC^\times)^n$, by
		$\varphi: \Sc(\Bc_n) \to 2^{(S^1)^n}, (\sigma,T) \mapsto 
		\{(\exp(T_1(t_1-1)\pi i),\ldots,\exp(T_n(t_n-1)\pi i)) \mid (t_1,\ldots,t_n) \in \psi(\sigma)\}$
		and thus an identification $|\Sc(\Bc_n)| = (S^1)^n \subseteq \CC^n$.
		This amounts to a concrete description of the inclusion of $|\Sc(\Bc_n)|$ 
		into $\Xf(\Bc_n)$ which we also denote by $\varphi:|\Sc(\Bc_n)| = (S^1)^n \hookrightarrow (\CC^\times) = \Xf(\Bc_n)$
		for later reference.
	\end{example}
	
	%%%%%%%%%%%%%%%%%%%%%%%%%%%%%%%%%%%%%%%%%%%%%%%%%%%%%%%%%%%%%%%%%%%%%%%%%%%%%%%%%%%%%%%%%%%%%%%%%%%%
	%%%%%%%%%%%%%%%%%%%%%%%%%%%%%%%%%%%%%%%%%%%%%%%%%%%%%%%%%%%%%%%%%%%%%%%%%%%%%%%%%%%%%%%%%%%%%%%%%%%%
	
	\subsection{Discrete Morse theory}
	\label{ssec:DiscMorseTheory}
	
	We review some basic concepts from Forman's discrete Morse theory \cite{Forman1998_DiscrMorse} from the viewpoint
	of acyclic matchings on posets respectively regular cell complexes adopted by Chari \cite{Chari2000_DiscreteMorseCombDecomp}. 
	For a comprehensive exposition of the theory we refer to 
	Kozlov's book \cite{Kozlov2008_CombAlgTop}.
	
	\begin{definition}
		Let $P = (P,\leq)$ be a poset.
		We associate a directed graph $G(\Sigma) = (\Vc,\Ec)$ with vertex set $\Vc = P$ the ground set
		of $P$ and directed edges $\Ec := \{ (a,b) \mid a \lessdot b\}$ its cover relations,
		i.e. the Hasse diagram of the poset.
		
		A subset $\mtc \subseteq \Ec$ is called a \emph{matching} on $P$ if every element of $P$ appears
		in at most one $(a,b) \in \mtc$.
		
		Let $G(P,\mtc) = (\Vc,\Ec')$ be the new directed graph with $\Ec' := \Ec\setminus\mtc \cup \{(b,a) \mid (a,b) \in \mtc\}$, 
		i.e.\ the directed graph constructed from $G(P)$ by inverting all edges in $\mtc$.
		A matching $\mtc$ on $\Sigma$ is called \emph{acyclic} if $G(P,\mtc)$ has no directed cycles.
		
		For an acyclic matching $\mtc$ on $P$ its \emph{critical elements} $C(\mtc) \subseteq P$ are defined
		as
		\[
		C(\mtc) := \{ a \in P \mid a \notin e\text{ for all }e \in \mtc\}.
		\]
		
		In the same way we define acyclic matchings for a regular cell complex $\Sigma$  as matchings on $\Fc(\Sigma)$.
	\end{definition}
	
	We record the following special case of the main theorem of discrete Morse theory,
	cf.\ \cite{Forman1998_DiscrMorse}, \cite{Chari2000_DiscreteMorseCombDecomp}, \cite[Ch.~11]{Kozlov2008_CombAlgTop}.
	
	\begin{theorem}
		\label{thm:MainThmDMTRegularSubcomplex}
		Let $\Sigma$ be a regular cell complex and let $\Gamma \subseteq \Sigma$ be a subcomplex.
		Assume that $\mtc$ is an acyclic matching on $\Fc(\Sigma)$ with $C(\mtc) = \Gamma$.
		Then $|\Gamma|$ is a strong deformation retract of $|\Sigma|$.
		In particular, the inclusion $|\Gamma| \hookrightarrow |\Sigma|$ is a homotopy equivalence.
	\end{theorem}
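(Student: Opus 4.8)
\emph{Proof proposal.} This is the main theorem of discrete Morse theory in the special case where the critical cells happen to form a subcomplex; in that situation the associated Morse complex \emph{is} $\Gamma$, and the homotopy equivalence is witnessed by an honest sequence of elementary collapses rather than an abstract cell-attachment argument. So the plan is to produce a chain
\[
\Sigma = \Sigma_0 \searrow \Sigma_1 \searrow \cdots \searrow \Sigma_N = \Gamma ,
\]
in which each $\Sigma_{i+1}$ arises from $\Sigma_i$ by deleting a maximal cell together with one of its free faces, so that each $|\Sigma_{i+1}|$ is a strong deformation retract of $|\Sigma_i|$; since a composition of strong deformation retractions is again one, this gives the statement, and the ``in particular'' follows.

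I would argue by induction on $|\mtc|$. If $\mtc=\emptyset$ then $\Gamma=C(\mtc)=\Sigma$ and there is nothing to do. For the inductive step the key point is: \textbf{if $\mtc\neq\emptyset$, there is a matched pair $(a,b)\in\mtc$ with $b$ a maximal cell of $\Sigma$ and $a$ a free face of $b$}, i.e.\ $b$ is the only cell of $\Sigma$ properly containing $a$. Granting this, $\Sigma':=\Sigma\setminus\{a,b\}$ is a closed subcomplex (hence a finite regular cell complex), the elementary collapse of $b$ across $a$ exhibits $|\Sigma'|$ as a strong deformation retract of $|\Sigma|$, and $\mtc':=\mtc\setminus\{(a,b)\}$ is an acyclic matching on $\Fc(\Sigma')$ — its modified Hasse diagram is an induced subgraph of $G(\Fc(\Sigma),\mtc)$, hence still has no directed cycles — with $C(\mtc')=\Gamma$, and $\Gamma$ is still an order ideal of $\Fc(\Sigma')$. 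Since $|\mtc'|<|\mtc|$, the inductive hypothesis applies to $(\Sigma',\Gamma,\mtc')$, and we compose the retractions.

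The substantial step — and the one I expect to be the main obstacle — is the boxed claim, where acyclicity and the subcomplex hypothesis are used together. Because $\Gamma=C(\mtc)$ is an order ideal, the set $D$ of matched cells is an order filter carrying a perfect matching. First, some maximal cell of $\Sigma$ lies in $D$: otherwise $\Gamma$, an order ideal containing every maximal cell, would be all of $\Sigma$, forcing $\mtc=\emptyset$. Let $k$ be the largest dimension of an ``upper'' cell $b'$ over all matched pairs $(a',b')$ with $a'\lessdot b'$. One checks that every $k$-dimensional upper cell of a matched pair is already maximal in $\Sigma$: a $(k+1)$-coface would lie in $D$ (it cannot lie in the order ideal $\Gamma$), and, being matched, it would force an upper cell of some pair to have dimension $\geq k+1$; so there is no such coface, and by regularity no coface at all. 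On the nonempty set $\Bc$ of these $k$-dimensional upper cells, define a digraph by $b\to b'$ whenever $b\neq b'$ and the $\mtc$-partner $a$ of $b$ satisfies $a\lessdot b'$; a directed cycle $b_0\to b_1\to\cdots\to b_0$ would interleave with the partner edges to produce a closed directed walk of positive length in $G(\Fc(\Sigma),\mtc)$, contradicting acyclicity, so the digraph has a sink $b$, with partner $a\lessdot b$. Finally, if $a$ had a coface $e\neq b$, then — since $b$ is maximal — one finds a $k$-cell $b'\neq b$ with $a\lessdot b'\leq e$; this $b'$ cannot be critical (as $\Gamma$ is a subcomplex and $a\in D$), so $b'\in D$, and then $b'\in\Bc$, yielding an out-edge $b\to b'$ and contradicting that $b$ is a sink. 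Hence $a$ is a free face of the maximal cell $b$, as needed. This $V$-path bookkeeping is classical, underlying the proofs of \cite{Forman1998_DiscrMorse} and \cite{Chari2000_DiscreteMorseCombDecomp} (cf.\ \cite[Ch.~11]{Kozlov2008_CombAlgTop}), so in the write-up I would either cite it or reproduce the argument sketched here.
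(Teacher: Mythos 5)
The paper does not prove this statement; it records it as a known special case of the main theorem of discrete Morse theory and cites Forman, Chari, and Kozlov. So there is no in-paper argument to compare against, and the question is simply whether your proof is correct. It is, and it is essentially the ``collapsibility'' form of the theorem found in the cited references: since $C(\mtc)=\Gamma$ is an order ideal, the matched cells form an order filter, and an induction on $|\mtc|$ reduces everything to locating, at each step, a matched pair $(a,b)$ with $a$ a free face of the (necessarily maximal) cell $b$, after which one performs an elementary collapse and strips $(a,b)$ from $\mtc$. Your location of such a pair --- take the top-dimensional upper cells of the matching, note that each is already maximal (any coface would be a matched cell of strictly larger dimension, contradicting maximality of $k$), build the auxiliary digraph on these cells via ``my partner covers you,'' observe that a cycle there unwinds to a directed cycle in $G(\Fc(\Sigma),\mtc)$, and take a sink --- is exactly the right bookkeeping, and the verification that a sink's partner is free uses the subcomplex hypothesis in the right place (a $k$-cell $b'$ with $a\lessdot b'$ cannot be critical because $\Gamma$ is an order ideal and $a\notin\Gamma$, and it cannot be a lower cell by maximality of $k$, so it lies in $\Bc$ and gives an out-edge from $b$). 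Two cosmetic remarks: once $a$ is shown to be a free face of $b$, maximality of $b$ is automatic (any cell strictly above $b$ would also strictly contain $a$), so the boxed claim is mildly redundant; and in the inductive step it is worth making explicit that $\Fc(\Sigma\setminus\{a,b\})$ is a graded induced subposet of $\Fc(\Sigma)$, so its Hasse diagram is the induced subgraph and $\mtc\setminus\{(a,b)\}$ remains acyclic with critical set still equal to $\Gamma$.
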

	
	We conclude by recalling the following useful tool to construct acyclic matchings on posets, and hence on cell complexes.
	\begin{theorem}[Patchwork theorem {\cite[Thm.~11.10]{Kozlov2008_CombAlgTop}}]
		\label{thm:Patchwork}
		Let $f:P \to Q$ be a poset map. Assume that for all $q \in Q$ we have an acyclic matching $\mtc(q)$
		on $f^{-1}(q)$. Then $\mtc = \bigcup\limits_{q \in Q}\mtc(q)$ is an acyclic matching on $P$.
	\end{theorem}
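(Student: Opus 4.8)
The plan is to verify directly the two conditions in the definition of an acyclic matching for $\mtc=\bigcup_{q\in Q}\mtc(q)$, exploiting that the fibers $f^{-1}(q)$, $q\in Q$, partition $P$ and that $f$ is order preserving. A preliminary remark used throughout: for $a,b\in f^{-1}(q)$ one has $a\lessdot b$ in the subposet $f^{-1}(q)$ if and only if $a\lessdot b$ in $P$ --- the nontrivial direction because any $c\in P$ with $a<c<b$ satisfies $q=f(a)\le f(c)\le f(b)=q$, hence $c\in f^{-1}(q)$.

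First I would check that $\mtc$ is a matching on $P$. By the preliminary remark each edge of each $\mtc(q)$ is a cover relation of $P$, so $\mtc$ is a set of edges of the Hasse diagram $G(P)$. Moreover any vertex $v\in P$ lies in the unique fiber $f^{-1}(f(v))$, and every edge of $\mtc(q')$ joins two elements of $f^{-1}(q')$; hence the only edges of $\mtc$ incident to $v$ come from $\mtc(f(v))$, which --- being a matching on that fiber --- uses $v$ at most once.

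The main point is acyclicity of the modified directed graph $G(P,\mtc)$, and the key observation is that $f$ is non-decreasing along every directed edge of $G(P,\mtc)$: on an un-reversed (``upward'') edge $a\to b$ with $a\lessdot b$ we have $f(a)\le f(b)$ since $f$ is a poset map, while on a reversed (``downward'') edge $b\to a$ arising from $(a,b)\in\mtc(q)$ we have $f(a)=f(b)=q$ because both endpoints lie in $f^{-1}(q)$. Consequently $f$ is constant along any directed cycle of $G(P,\mtc)$, so such a cycle is entirely contained in a single fiber $f^{-1}(q)$. I would then argue that such a cycle is in fact a directed cycle of $G(f^{-1}(q),\mtc(q))$: its vertices lie in $f^{-1}(q)$; its upward edges are covers of $P$ with both endpoints in $f^{-1}(q)$, hence covers of the subposet $f^{-1}(q)$ by the preliminary remark; each downward edge it traverses comes from a pair $(a,b)\in\mtc$ with $a,b\in f^{-1}(q)$, which therefore belongs to $\mtc(q)$ and not to any $\mtc(q')$ with $q'\ne q$; and an un-reversed upward edge $(a,b)$ of the cycle satisfies $(a,b)\notin\mtc$, hence $(a,b)\notin\mtc(q)$. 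This contradicts the assumed acyclicity of $\mtc(q)$, so no directed cycle exists and $\mtc$ is acyclic.

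I do not expect a genuine obstacle here; the only place requiring care --- and worth spelling out, as above --- is the bookkeeping that identifies cover relations of a fiber with cover relations of $P$ and aligns the two graph structures $G(P,\mtc)$ and $G(f^{-1}(q),\mtc(q))$ edge by edge. If needed downstream one can also record the immediate identity $C(\mtc)=\bigcup_{q\in Q}C(\mtc(q))$, which follows from the partition $P=\bigcup_{q\in Q}f^{-1}(q)$.
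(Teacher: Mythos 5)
The paper does not give its own proof of the Patchwork Theorem; it cites Kozlov \cite[Thm.~11.10]{Kozlov2008_CombAlgTop}, whose argument your proposal reproduces faithfully. Your proof is correct and in the same spirit: the preliminary observation that cover relations inside a fiber $f^{-1}(q)$ are exactly cover relations of $P$ with both endpoints in the fiber (using that $f$ is order preserving, so nothing in $P$ can sit strictly between two elements of the same fiber), the verification that $\mtc$ is a matching because the fibers partition $P$, and the acyclicity argument via the monotonicity of $f$ along directed edges of $G(P,\mtc)$ forcing any directed cycle to live entirely in a single fiber, where it contradicts the assumed acyclicity of $\mtc(q)$. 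The edge-by-edge bookkeeping aligning $G(P,\mtc)$ restricted to the fiber with $G(f^{-1}(q),\mtc(q))$ is exactly the care required, and you have it right. No gaps.
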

	
	%%%%%%%%%%%%%%%%%%%%%%%%%%%%%%%%%%%%%%%%%%%%%%%%%%%%%%%%%%%%%%%%%%%%%%%%%%%%%%%%%%%%%%%%%%%%%%%%%%%%
	%%%%%%%%%%%%%%%%%%%%%%%%%%%%%%%%%%%%%%%%%%%%%%%%%%%%%%%%%%%%%%%%%%%%%%%%%%%%%%%%%%%%%%%%%%%%%%%%%%%%
	
	\section{The tope-rank subdivision}
	\label{sec:TopeRankSubdiv}
	
	Throughout this section assume that $\OM = (E,\Lc)$ is a simple oriented matroid with $|E|=n$ of rank $d\geq 2$.
	
	We define a certain subdivision of the covector complex of an oriented matroid with respect to a base tope.
	This also results in a corresponding subdivision of the Salvetti complex.
	
	The combinatorial underpinnings are as follows.
	
	\begin{definition}
		\label{def:TopeRankSubdivCovectors}
		Let $\sigma \in \Lc^\dual$ be a cell in the dual covector complex and $B \in \Tc$ a tope.
		Recall that we set $\Tc(\sigma) := \Tc \cap \Lc^\dual_{\leq \sigma}$ 
		which can be identified with $\vts(\sigma)$ by Proposition \ref{prop:OMCpxIntersectionProperty} and define
		\begin{itemize}
			\item 
			%	$\Tc_B(\sigma)_k := \{T \in \Tc(\sigma) \mid \rk_B(T) = k\}$,
			$\sigma^B_k := \{T \in \Tc(\sigma) \mid \rk_B(T) = k\}$,
			
			\item 
			%	$\Tc_B(\sigma)_{[k,k+1]} := \Tc_B(\sigma)_k \cup \Tc_B(\sigma)_{k+1}$,
			$\sigma^B_{[k,k+1]} := \sigma^B_k \cup \sigma^B_{k+1}$,
			
			\item
			define the \emph{($B$-)rank subdivision} of $\sigma$ as:
			\begin{align*}
				\rk_B\sd(\sigma) := &\{ \sigma^B_k \mid k \in \rk_B(\Tc(\sigma))\} \\
				&\cup \{\sigma^B_{[k,k+1]} \mid  k \in \rk_B(\Tc(\sigma))\setminus \{\rk_B(\sigma\circ(-B))\}\}.
			\end{align*}
		\end{itemize}
		Then the \emph{($B$-)rank subdivision} of $\Lc^\dual$ is the poset defined by:
		\[
		\rk_B\sd \Lc^\dual := \bigcup\limits_{\sigma \in \Lc^\dual} \rk_B\sd(\sigma) \subseteq 2^\Tc
		\] with partial order by inclusion.
		
		Thanks to Proposition \ref{prop:OMCpxIntersectionProperty}, we have a poset map 
		$p:2^\Tc \to \Lc^\dual, \afr \mapsto \min\{\sigma \in \Lc^\dual \mid \afr \subseteq \Tc(\sigma)\}$.
		Thus, by restriction of $p$ to $\rk_B\sd\Lc^\dual$ we obtain a poset map of the subdivision to the original complex:
		\[
		p_B := p|_{\rk_B\sd\Lc^\dual}: \rk_B\sd\Lc^\dual \to \Lc^\dual, \Tc \supseteq \afr \mapsto \min\{\sigma \in \Lc^\dual \mid \afr \subseteq \Tc(\sigma)\}.
		\]
	\end{definition}

	\begin{figure}
		\begin{tikzpicture}[scale=0.6]
			\draw[dotted] (4.,0.) -- (-4.,0.);  % H_1 
			\node at (4.5,0.) {\tiny $\{x=0\}$}; 		
			\draw[dotted] (2.,3.4641016151377548) -- (-2.,-3.4641016151377548);  % H_2 
			\node at (2.5,3.4) {\tiny $\{y=0\}$}; 		
			\draw[dotted] (-2.,3.4641016151377548) -- (2.,-3.4641016151377548);  % H_3 
			\node at (-2.5,3.4) {\tiny $\{x=y\}$}; 		
			
			\node (l1) at (0.0,-2.) {};
			\node (l2) at (1.7320508075688774,-1.) {};
			\node (l3) at (1.7320508075688774,1.) {};
			\node (l4) at (0.0,2.) {};
			\node (l5) at (-1.7320508075688774,1.) {};
			\node (l6) at (-1.7320508075688774,-1.) {};
			\node (zero) at (0.0,0.) {};
			
			\node at ($(l1)+(0,-0.8)$) {\tiny$B$};
			%		\node[below right] at ($(l2)+(0.25,-0.15)$) {\tiny$(-++)$};
			%		\node[above right] at ($(l3)+(0.25,0.15)$) {\tiny$(--+)$};
			\node at ($(l4)+(0,0.8)$) {\tiny$-B$};
			%		\node[above left] at ($(l5)+(-0.25,0.15)$) {\tiny$(+--)$};
			%		\node[below left] at ($(l6)+(-0.25,-0.15)$) {\tiny$(++-)$};
			%		\node at ($(zero)+(0,0.25)$) {\tiny$(000)$};

			\draw ($1.25*(l1)$) -- ($1.25*(l2)$) -- ($1.25*(l3)$) -- ($1.25*(l4)$) -- ($1.25*(l5)$) -- ($1.25*(l6)$) -- ($1.25*(l1)$);

			\fill[color=red!20!white, opacity=0.2] ($1.25*(l2)$) -- ($1.25*(l3)$) -- ($1.25*(l5)$) -- ($1.25*(l6)$);
			
			\draw[line width=1.25pt, color=black!30!blue] ($1.25*(l2)$) -- ($1.25*(l6)$);
			\draw ($1.25*(l3)$) -- ($1.25*(l5)$);
			%		\draw ($1.25*(l1)$) -- ($1.25*(l6)$);
			
			\node[color=black!40!red] at ($0.25*1.25*(l2) + 0.25*1.25*(l3) + 0.25*1.25*(l5) + 0.25*1.25*(l6)$) {\tiny$\sigma^B_{[1,2]}$};
			\node[color=black!30!blue] at ($0.5*1.25*(l2) + 0.5*1.25*(l6) +(0,0.3)$) {\tiny$\sigma^B_{1}$};
			
			%		\draw[black!30!white, line width=2pt] ($1.25*(l3)$) -- ($1.25*(l4)$);
			
			%		\node[below right] at ($0.625*(l1)+0.625*(l2)$) {\tiny$(0++)$};
			%		\node[above left] at ($0.625*(l4)+0.625*(l5)$) {\tiny$(-0+)$};
			%		\node[right] at ($0.625*(l2)+0.625*(l3)$) {\tiny$(0++)$};		
			%		\node[above right] at ($0.625*(l3)+0.625*(l4)$) {\tiny$(-0+)$};
			%		\node[left] at ($0.625*(l5)+0.625*(l6)$) {\tiny$(0--)$};
			%		\node[below left] at ($0.625*(l6)+0.625*(l1)$) {\tiny$(+0-)$};
			
			\filldraw ($1.25*(l1)$) circle[radius=2pt];
			\filldraw ($1.25*(l2)$) circle[radius=2pt];
			\filldraw ($1.25*(l3)$) circle[radius=2pt];
			\filldraw ($1.25*(l4)$) circle[radius=2pt];
			\filldraw ($1.25*(l5)$) circle[radius=2pt];
			\filldraw ($1.25*(l6)$) circle[radius=2pt];
			
			%		\filldraw ($(zero)$) circle[radius=2pt];
			
			\node at (3.5,-2.5) {\tiny$\rk_B = 0$};
			\node at (3.5,-1.25) {\tiny$\rk_B = 1$};
			\node at (3.5,1.25) {\tiny$\rk_B = 2$};
			\node at (3.5,2.5) {\tiny$\rk_B = 3$};
			
			%		\draw[->] (5,0) -- (6,0);
		\end{tikzpicture}
		\caption{The $B$-rank subdivision $\rk_B\sd\Lc^\dual(\Ac)$ for the arrangement $\Ac$ defined by $Q(\Ac) = xy(x-y)$.}
		\label{fig:TopeRkSubdivCovec}
	\end{figure}
	
	From the preceding definition it is not clear that our new poset $\rk_B\sd\Lc^\dual$ is the face poset of a regular cell complex,
	i.e.\ it is a CW-poset.
	This is the following first main theorem of this section.
	\begin{theorem}
		\label{thm:rkBsdLcCWposet}
		The tope rank subdivision poset $\rk_B\sd\Lc^\dual$ 
		is the face poset of a regular $PL$-cell decomposition of the $d$-ball. 
	\end{theorem}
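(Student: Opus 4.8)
The plan is to prove by induction on the rank $d$ of $\OM$ that $X:=\rk_B\sd\Lc^\dual$ is a CW-poset (Definition \ref{def:CW-poset}) whose associated regular cell complex is a PL $d$-ball, together with two auxiliary statements that feed the induction: each \emph{closed rank-slice} $\overline{\Zero^B_k}:=\{\sigma^B_k\mid\sigma\in\Lc^\dual,\ m_\sigma\le k\le M_\sigma\}$ (with $m_\sigma:=\rk_B(\sigma\circ B)$, $M_\sigma:=\rk_B(\sigma\circ(-B))$) is a PL $(d-1)$-ball for $0<k<n$ and a point for $k\in\{0,n\}$, and each \emph{closed rank-band} $\overline{\Zero^B_{[a,b]}}:=\bigcup_{a\le j<b}\{\text{faces of }\sigma^B_{[j,j+1]}\}$ is a PL $d$-ball. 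Recall that $\Lc^\dual$ has $\Zero$ as its unique maximal element, hence is the face poset of a regular PL-cell decomposition of the $d$-ball with single top cell $\Zero$ and boundary $(d-1)$-sphere $(\Lc\setminus\{\Zero\})^\dual$ (Remark \ref{rem:OMTopRep}, Lemma \ref{lem:PL sphereDual}). Using the intersection property of $\Lc^\dual$ (Proposition \ref{prop:OMCpxIntersectionProperty}) and the fact that $\Tc(\sigma)=[\sigma\circ B,\sigma\circ(-B)]$ is a graded interval of $\Tc_B$, one first checks that every $\afr\in X$ equals, for $\sigma:=p_B(\afr)$, a slab $\sigma^B_{[k,k+1]}$ or an interior slice $\sigma^B_k$ (with $m_\sigma<k<M_\sigma$), that the extreme slices are the vertices $\{\sigma\circ(\pm B)\}$, and that $\rho(\afr):=\dim_{\Lc^\dual}\sigma$ for a slab, $\rho(\afr):=\dim_{\Lc^\dual}\sigma-1$ for an interior slice, $\rho(\{T\}):=0$, is a well-defined grading compatible with cover relations; it then remains to verify that $|\Delta(X_{<\afr})|$ is a PL $(\rho(\afr)-1)$-sphere for each $\afr$. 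The base cases $d\le 2$ are immediate: for $d\le 1$ no subdivision occurs, and for $d=2$ the complex $X$ is the $2n$-gon $\Lc^\dual$ subdivided by the $n-1$ nested chords $\Zero^B_k$, a polytopal decomposition of the $2$-ball.

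\smallskip\noindent\textbf{Localization reduction.} For $\sigma\in\Lc^\dual$ with $\sigma\ne\Zero$, consider the order ideal $X_{\overline\sigma}:=\{\afr\in X\mid\afr\subseteq\Tc(\sigma)\}=\bigcup_{\tau\le_{\Lc^\dual}\sigma}\rk_B\sd(\tau)$. The restriction $\OM|_{z(\sigma)}$ of $\OM$ to the flat $z(\sigma)$ is again simple, of rank $\dim_{\Lc^\dual}\sigma<d$, with base tope $B_\sigma:=B|_{z(\sigma)}$; the bijection $T\mapsto T|_{z(\sigma)}$ identifies $\Tc(\sigma)$ with the topes of $\OM|_{z(\sigma)}$ and shifts $\rk_B$ by the constant $m_\sigma$, inducing a poset isomorphism $X_{\overline\sigma}\isom\rk_{B_\sigma}\sd\Lc^\dual(\OM|_{z(\sigma)})$ commuting with the subdivision maps. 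By the induction hypothesis $X_{\overline\sigma}$ is a regular PL-cell decomposition of a $\dim_{\Lc^\dual}\sigma$-ball subdividing $\overline\sigma$, whose closed rank-slices and rank-bands are PL balls. This settles the CW-poset condition for every $\afr$ with $\sigma:=p_B(\afr)\ne\Zero$: then $X_{<\afr}$ is the boundary complex of the cell $\afr$ inside the PL ball $X_{\overline\sigma}$, and under the isomorphism $\afr$ becomes an interior slice or a slab of the top cell of $\rk_{B_\sigma}\sd\Lc^\dual(\OM|_{z(\sigma)})$, whose closure is a closed rank-slice resp.\ rank-band of the smaller complex, hence a PL ball; so $X_{<\afr}$ is a PL $(\rho(\afr)-1)$-sphere.

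\smallskip\noindent\textbf{The top cell.} It remains to treat the cells $\afr$ with $p_B(\afr)=\Zero$, i.e.\ the slabs $\Zero^B_{[k,k+1]}$ and interior slices $\Zero^B_k$ of the top cell, and to establish the auxiliary statements in rank $d$: equivalently, that the closed band $\overline{\Zero^B_{[j,j+1]}}$ is a PL $d$-ball for $0\le j<n$ and the closed slice $\overline{\Zero^B_j}$ is a PL $(d-1)$-ball for $0<j<n$. I would do this by peeling ranks: $\overline{\Zero^B_{[0,j+1]}}=\overline{\Zero^B_{[0,j]}}\cup\overline{\Zero^B_{[j,j+1]}}$, and these subcomplexes intersect exactly in $\overline{\Zero^B_j}$; granting that all three pieces are PL balls of the stated dimensions and that $\overline{\Zero^B_j}$ lies in the boundary of each band, Theorem \ref{thm:PLBallsAndSpheres}(i) gives that $\overline{\Zero^B_{[0,j+1]}}$ is a PL $d$-ball, and for $j+1=n$ this is $X$. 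The core sub-claims — that the single band $\overline{\Zero^B_{[j,j+1]}}$ and the single slice $\overline{\Zero^B_j}$ are PL balls — I would handle with shellability: the maximal proper faces of the cell $\Zero^B_j$ are the slices $\sigma^B_j$ of the facets $\sigma$ of the zonotope $\Lc^\dual$ met by rank $j$ (each a PL $(d-2)$-ball by the localization reduction), and an ordering of these compatible with a shelling of $\Lc\setminus\{\Zero\}$ coming from a linear extension of $\Tc_B$ — which, together with its reverse (a linear extension of $\Tc_{-B}$), shells $\Lc\setminus\{\Zero\}$ by Theorem \ref{thm:OMCovectorShellable} — exhibits $\partial(\Zero^B_j)$ as a shellable, hence PL, $(d-2)$-sphere, so $\Zero^B_j$ is a PL $(d-1)$-ball, the band being treated the same way. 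Once all CW-poset conditions hold, $X$ is the face poset of a regular cell complex, $p_B$ carries closed cells into closed cells and restricts to a PL subdivision on each closed cell of $\Lc^\dual$, so $X$ subdivides the PL $d$-ball $\Lc^\dual$; being assembled from PL balls via the gluings of Theorem \ref{thm:PLBallsAndSpheres}, $X$ is itself a PL $d$-ball.

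\smallskip\noindent The main obstacle is this last step. Showing that the combinatorial rank-slices $\overline{\Zero^B_j}$ — the substitutes, for a possibly non-realizable oriented matroid, for a hyperplane section of the zonotope — are PL $(d-1)$-balls is trivial in the realizable case, since there a hyperplane section of a polytope is a polytope; the real work is to replace convexity by shellability of the covector complex, which forces one to make the shelling orders of the facet-slices consistent across all of $\overline{\Zero^B_j}$ simultaneously, to track precisely which faces of the zonotope are genuinely cut and which are only touched by rank $j$, and to run the analogous argument for the bands.
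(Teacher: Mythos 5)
Your framework — induction on the rank $d$, a localization reduction for all cells with $p_B(\afr)\neq\Zero$, and rank-peeling for the top cell via Theorem \ref{thm:PLBallsAndSpheres}(i) — is sound, and the localization reduction is a nice observation that is essentially correct: for $\sigma\neq\Zero$ the filter $\Lc_{\geq\sigma}$ is the covector poset of an oriented matroid on ground set $z(\sigma)$, the tope-rank shifts by the constant $m_\sigma=|S(B,\sigma)|$, and $X_{\overline\sigma}$ is thereby identified with the rank subdivision of a smaller complex, settling the CW-poset condition for all non-top cells. But the proposal leaves a genuine gap at exactly the step you flag as the ``main obstacle'': proving that the closed slice $\overline{\Zero^B_j}$ is a PL $(d-1)$-ball and the closed band $\overline{\Zero^B_{[j,j+1]}}$ is a PL $d$-ball for a possibly non-realizable oriented matroid. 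The shellability argument you sketch does not actually engage: the maximal proper faces of $\Zero^B_j$ are slices of the \emph{cocircuits} of $\OM$, i.e.\ the vertices of $\Lc\setminus\{\Zero\}$, whereas the shellings supplied by Theorem \ref{thm:OMCovectorShellable} order the \emph{topes}, i.e.\ the top-dimensional cells of $\Lc\setminus\{\Zero\}$. Turning a tope-shelling into a suitable ordering of the facet-slices cut by rank $j$ and then verifying the attaching conditions of Definition \ref{def:ShellableCpx} is the nontrivial content, which the sketch does not supply and you yourself acknowledge as unresolved.

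The paper avoids this difficulty altogether by working on the primal covector sphere. For each $k$ it assembles a regular cell complex $\Sigma_{[k,k+1]}$ on $|\Lc\setminus\{\Zero\}|$ from two partial cones over the boundary PL $(d-2)$-sphere $\Gamma$ of the shellable half $\Lc(\Tc_{B,\leq k})\setminus\{\Zero\}$ (Proposition \ref{prop:ShellingSphereSubsequenceBall}); the two cones meet exactly in $\Gamma$, so $\Sigma_{[k,k+1]}$ is a PL $(d-1)$-sphere by Theorem \ref{thm:PLBallsAndSpheres}(ii). The intersection property then gives $\Fc(\Sigma_{[k,k+1]}^\dual)\isom\rk_B\sd\Lc^\dual_{<\Zero^B_{[k,k+1]}}$, so the closed band is the cone over a PL sphere (a PL $d$-ball by Theorem \ref{thm:PLBallsAndSpheres}(iii)) and the closed slice is a closed cell of that sphere (a PL $(d-1)$-ball by Lemma \ref{lem:PL sphereDual}(i)); the rank-peeling step that follows is essentially yours. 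So the skeleton of your proposal matches the paper's, but the crucial balls-and-spheres facts are produced by a double-cone construction and dualization on the primal complex, not by a direct shelling of the dual, and that is precisely the piece your proof is missing.
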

	
	\begin{proof}
		
		The proof is in three steps.
		First, for $0\leq k \leq n-1$ we construct a new complex $\Sigma_{[k,k+1]}$ with respect to $B \in \Tc$ from a certain subcomplex of $\Lc\setminus \{\Zero\}$ which
		is the face poset of a regular cell decomposition of the PL $(d-1)$-sphere (Step 1).
		Then, we show that $\Fc(\Sigma_{[k,k+1]}^\dual)$ is isomorphic to $\rk_B\sd\Lc^\dual_{<x}$ for a maximal element $x = \Zero^B_{[k,k+1]} \in \rk_B\sd\Lc^\dual$
		which is also a PL $(d-1)$-sphere thanks to Lemma \ref{lem:PL sphereDual}(ii) (Step 2).
		Finally, the successive application of Theorem \ref{thm:PLBallsAndSpheres} concludes our proof (Step 3).
		
		Let $B \in \Tc$ be a base tope and $k \in \{0,\ldots, n-1\}$.
		
		\textbf{Step 1.} 
		By Theorem \ref{thm:OMCovectorShellable} and Proposition \ref{prop:ShellingSphereSubsequenceBall}, 
		the subcomplex $\Lc(\Tc_{B,\leq k})\setminus \{\Zero\}$ is a shellable ball
		and in particular a PL $(d-1)$-ball, where $\Tc_{B,\leq k}$ denotes the set of topes with $\rk_B \leq k$. 
		Let $\Gamma$ be its boundary subcomplex which is a PL $(d-2)$-sphere (cf.\ the left hand side of Figure \ref{fig:A_sdCpx1}).
		Note that if $\Tc_{B,k}$ is the set of topes of rank $k$ in $\Tc_B$ then for the cone over $\Gamma$ we have
		\[
		C\Gamma = \bigcup\limits_{R \in \Tc_{B,k}} C(\Lc(R)\cap \Gamma),
		\]
		where each smaller cone $C(\Lc(R)\cap \Gamma)$, constructed with the same cone point $v$,
		is a PL $(d-1)$-ball by Proposition \ref{prop:ShellingTopeSubcpx}, Proposition \ref{prop:ShellingSphereSubsequenceBall}. 
		Note that two such distinct balls either have empty intersection or intersect only along their boundaries.
		If $k= 0$, set $\Sigma_k := \Lc(B)$ and for $k\geq 1$, let $\Sigma_k$
		be the regular cell complex with one maximal cell for each $C(\Lc(R)\cap \Gamma)$ ($R \in \Tc_{B,k}$)
		union with their boundary subcomplexes. 
		Then $C\Gamma \triangleleft \Sigma_k$ is a subdivision and thus $\Sigma_k$
		is a PL $(d-1)$-ball with boundary $\Gamma$ by Theorem \ref{thm:PLBallsAndSpheres}(iii).
		
		Similarly, we have
		\[
		C\Gamma = \bigcup\limits_{R \in \Tc_{B,k+1}} C(\Lc(R)\cap \Gamma),
		\]
		and for $k<n-1$ let $\Sigma_{k+1}$ be this partial cone constructed in the same way from the set of topes or rank $k+1$ 
		with one maximal cell for each $C(\Lc(R)\cap \Gamma)$ ($R \in \Tc_{B,k+1}$)
		union with their boundary subcomplexes; for $k=n-1$ let $\Sigma_{k+1} := \Lc(-B)\setminus \{\Zero\}$.
		Then, $\Sigma_{k+1}$ is a PL $(d-1)$-ball with boundary $\Gamma$ again thanks to Theorem \ref{thm:PLBallsAndSpheres}(iii). 
		Let $\Sigma_{[k,k+1]} := \Sigma_{k+1} \cup \Sigma_k$ (cf.\ the right hand side of Figure \ref{fig:A_sdCpx1}). 
		By construction, we have $\Sigma_{k+1} \cap \Sigma_k = \Gamma$, 
		thus $\Sigma_{[k,k+1]}$ is a regular cell complex and a PL $(d-1)$-sphere by Theorem \ref{thm:PLBallsAndSpheres}(ii).

		\textbf{Step 2.}
		Since $\Gamma$ is a closed subcomplex of $\Lc\setminus \{\Zero\}$, it inherits the intersection property from $\Lc\setminus \{\Zero\}$ (see Proposition \ref{prop:OMCpxIntersectionProperty}).
		Consequently, the complex $\Sigma_{[k,k+1]}$ also has the intersection property and so does $\Sigma_{[k,k+1]}^\dual$ 
		which is a PL $(d-1)$-sphere by Lemma \ref{lem:PL sphereDual}(ii).
		Moreover, by our construction, the vertices of $\Sigma_{[k,k+1]}^\dual$ are in one to one correspondence with $\Tc_{B,[k,k+1]}$ the set of topes
		of rank $k$ and $k+1$ in $\Tc_B$ which are exactly the minimal elements contained in $\rk_B\sd\Lc^\dual_{< x}$ where $x=\Zero^B_{[k,k+1]}$.
		By Lemma \ref{lem:IntersectionProperty} we can identify $\Fc(\Sigma_{[k,k+1]}^\dual)$ with the set $\{\vts(\sigma) \mid \sigma \in \Sigma_{[k,k+1]}^\dual\}$
		and under this identification we have $\{\vts(\sigma) \mid \sigma \in \Sigma_{[k,k+1]}^\dual\} = \rk_B\sd\Lc^\dual_{< x}$ as sets of subsets of $\Tc$. 
		%	{\color{red} (Q: Do we mean $\sigma \in\Sigma_{[k,k+1]}^\dual$ or 
			%		``cell of $\Sigma...$''?)}
		%	{\color{blue} (A: Yes, I corrected it.)}
		Thus, $\Fc(\Sigma_{[k,k+1]}^\dual) \isom \rk_B\sd\Lc^\dual_{< x}$.
		
		\textbf{Step 3.}
		Since $\rk_B\sd\Lc^\dual_{\leq \Zero^B_{[k-1,k]}} \cap \rk_B\sd\Lc^\dual_{\leq \Zero^B_{[k,k+1]}} = \rk_B\sd\Lc^\dual_{\leq \Zero^B_{k}}$
		are PL $(d-1)$-balls by Lemma \ref{lem:PL sphereDual}(i) 
		which are contained in the boundary of $\rk_B\sd\Lc^\dual_{\leq \Zero^B_{[k-1,k]}}$ respectively $\rk_B\sd\Lc^\dual_{\leq \Zero^B_{[k,k+1]}}$,
		inductively, by Theorem \ref{thm:PLBallsAndSpheres}(i) we can finally conclude that
		$\rk_B\sd\Lc^\dual$ is the face poset of a regular PL cell decomposition of the $d$-ball.
	\end{proof}
	
	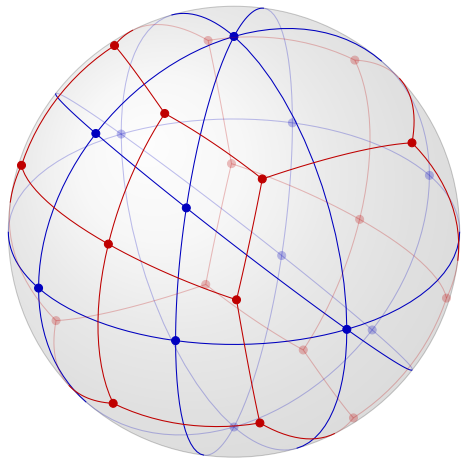
\begin{figure}	
		
		% draw the oriented matroid cpx and its dual of xyz(x+y)(y+z)
		\tdplotsetmaincoords{60}{60}
		\begin{tikzpicture}[scale=3,tdplot_main_coords]
				
				%		 draw coordinate axes
				%		\draw[->, line width=0.2mm] (-1.3,0,0) -- (1.3,0,0);
				%		\draw[->, line width=0.2mm] (0,-1.3,0) -- (0,1.3,0);
				%		\draw[->, line width=0.2mm] (0,0,-1.3) -- (0,0,1.3);
				%		\node at (1.35,0,0) {$x$};
				%		\node at (0,1.35,0) {$y$};
				%		\node at (0,0,1.35) {$z$};
				%		
				% draw the sphere as a shade		
				%		\draw[color = black, opacity = 0.7, line width=0.1mm] (0,0,0) circle (1cm);
				\draw[ball color = gray!40, opacity = 0.2] (0,0,0) circle (1cm);
				
				\def\OMcolor{blue!75!black}
				{\color{\OMcolor}
						% the vertices of the OM-cpx
						\foreach \i/\vx/\vy/\vz in {
								1/0./0./1., 
								2/0./1./0., 
								3/1./0./0., 
								4/-0.70710678118654746/0.70710678118654746/0., 
								5/0./-0.70710678118654746/0.70710678118654746, 
								6/0.57735026918962584/-0.57735026918962584/0.57735026918962584, 
								7/0./0./-1., 
								8/0./-1./0., 
								9/-1./0./0., 
								10/0.70710678118654746/-0.70710678118654746/0., 
								11/0./0.70710678118654746/-0.70710678118654746, 
								12/-0.57735026918962584/0.57735026918962584/-0.57735026918962584	
							}{
								\node (\i) at (\vx,\vy,\vz) {};
								%			\node[anchor=south east] at ($(\i)$) {\tiny{$\i$}};
								%				\filldraw[color=\OMcolor] (\i) circle[radius=0.5pt];
								\POnSfb{\vx}{\vy}{\vz}{}
							}
						%		
						% the edges of the OM-cpx
						\foreach \ax/\ay/\az/\bx/\by/\bz in {
								0./0./1./0./1./0., %[ 1, 2 ] 
								0./0./1./1./0./0., %[ 1, 3 ] 
								0./0./1./-0.70710678118654746/0.70710678118654746/0., %[ 1, 4 ] 
								0./0./1./0./-0.70710678118654746/0.70710678118654746, %[ 1, 5 ] 
								0./0./1./0.57735026918962584/-0.57735026918962584/0.57735026918962584, %[ 1, 6 ] 
								0./0./1./-1./0./0., %[ 1, 9 ] 
								0./1./0./1./0./0., %[ 2, 3 ] 
								0./1./0./-0.70710678118654746/0.70710678118654746/0., %[ 2, 4 ] 
								0./1./0./0./0.70710678118654746/-0.70710678118654746, %[ 2, 11 ] 
								1./0./0./0.57735026918962584/-0.57735026918962584/0.57735026918962584, %[ 3, 6 ] 
								1./0./0./0./0./-1., %[ 3, 7 ] 
								1./0./0./0.70710678118654746/-0.70710678118654746/0., %[ 3, 10 ] 
								1./0./0./0./0.70710678118654746/-0.70710678118654746, %[ 3, 11 ] 
								-0.70710678118654746/0.70710678118654746/0./-1./0./0., %[ 4, 9 ] 
								-0.70710678118654746/0.70710678118654746/0./-0.57735026918962584/0.57735026918962584/-0.57735026918962584, %[ 4, 12 ] 
								0./-0.70710678118654746/0.70710678118654746/0.57735026918962584/-0.57735026918962584/0.57735026918962584, %[ 5, 6 ] 
								0./-0.70710678118654746/0.70710678118654746/0./-1./0., %[ 5, 8 ] 
								0./-0.70710678118654746/0.70710678118654746/-1./0./0., %[ 5, 9 ] 
								0.57735026918962584/-0.57735026918962584/0.57735026918962584/0.70710678118654746/-0.70710678118654746/0., %[ 6, 10 ] 
								0./0./-1./0./-1./0., %[ 7, 8 ] 
								0./0./-1./-1./0./0., %[ 7, 9 ] 
								0./0./-1./0.70710678118654746/-0.70710678118654746/0., %[ 7, 10 ] 
								0./0./-1./0./0.70710678118654746/-0.70710678118654746, %[ 7, 11 ] 
								0./0./-1./-0.57735026918962584/0.57735026918962584/-0.57735026918962584, %[ 7, 12 ] 
								0./-1./0./-1./0./0., %[ 8, 9 ] 
								0./-1./0./0.70710678118654746/-0.70710678118654746/0., %[ 8, 10 ] 
								-1./0./0./-0.57735026918962584/0.57735026918962584/-0.57735026918962584, %[ 9, 12 ] 
								0./0.70710678118654746/-0.70710678118654746/-0.57735026918962584/0.57735026918962584/-0.57735026918962584 %[ 11, 12 ] 
							}{
								\GCArcABfb{\ax}{\ay}{\az}{\bx}{\by}{\bz}{color=\OMcolor}
							}
					}
				
				%%%%%%%%%%%%%%%%%%%%%%%%%%%%%%%%%%%%%%%%%%%%%%%%%%%%%%
				% draw the dual OM-cpx
				
				\def\dualOMcolor{red!75!black}
				{\color{\dualOMcolor}
						% the vertices of the dual OM-cpx
						\foreach \i/\vx/\vy/\vz in {
								1/-0.47596314947796792/-0.33655677059077743/0.8125199200687454, 
								2/0.47596314947796792/0.33655677059077743/-0.8125199200687454, 
								3/0.21513724867401407/-0.47862549063280968/0.85125413593678212, 
								4/-0.21513724867401407/0.47862549063280968/-0.85125413593678212, 
								5/-0.8125199200687454/0.33655677059077743/0.47596314947796792, 
								6/0.8125199200687454/-0.33655677059077743/-0.47596314947796792, 
								7/-0.47596314947796792/-0.8125199200687454/0.33655677059077743, 
								8/0.47596314947796792/0.8125199200687454/-0.33655677059077743, 
								9/0.68455031944529765/-0.25056280708573159/0.68455031944529765, 
								10/-0.68455031944529765/0.25056280708573159/-0.68455031944529765, 
								11/0.36700100107065714/-0.85476343535873778/0.36700100107065714, 
								12/-0.36700100107065714/0.85476343535873778/-0.36700100107065714, 
								13/-0.85125413593678212/0.47862549063280968/-0.21513724867401407, 
								14/0.85125413593678212/-0.47862549063280968/0.21513724867401407, 
								15/-0.33655677059077743/0.8125199200687454/0.47596314947796792, 
								16/0.33655677059077743/-0.8125199200687454/-0.47596314947796792, 
								17/-0.57735026918962573/-0.57735026918962573/-0.57735026918962573, 
								18/0.57735026918962573/0.57735026918962573/0.57735026918962573	
							}{
								\node (\i) at (\vx,\vy,\vz) {};
								%			\node[anchor=south east] at ($(\i)$) {\tiny{$\i$}};
								%				\filldraw (\i) circle[radius=0.5pt];
								\POnSfb{\vx}{\vy}{\vz}{}
							}
						%		
						% the edges of the dual OM-cpx
						\foreach \ax/\ay/\az/\bx/\by/\bz in {
								-0.47596314947796792/-0.33655677059077743/0.8125199200687454/0.21513724867401407/-0.47862549063280968/0.85125413593678212, %[ 1, 3 ] 
								-0.47596314947796792/-0.33655677059077743/0.8125199200687454/-0.8125199200687454/0.33655677059077743/0.47596314947796792, %[ 1, 5 ] 
								-0.47596314947796792/-0.33655677059077743/0.8125199200687454/-0.47596314947796792/-0.8125199200687454/0.33655677059077743, %[ 1, 7 ] 
								0.47596314947796792/0.33655677059077743/-0.8125199200687454/-0.21513724867401407/0.47862549063280968/-0.85125413593678212, %[ 2, 4 ] 
								0.47596314947796792/0.33655677059077743/-0.8125199200687454/0.8125199200687454/-0.33655677059077743/-0.47596314947796792, %[ 2, 6 ] 
								0.47596314947796792/0.33655677059077743/-0.8125199200687454/0.47596314947796792/0.8125199200687454/-0.33655677059077743, %[ 2, 8 ] 
								0.21513724867401407/-0.47862549063280968/0.85125413593678212/0.68455031944529765/-0.25056280708573159/0.68455031944529765, %[ 3, 9 ] 
								0.21513724867401407/-0.47862549063280968/0.85125413593678212/0.36700100107065714/-0.85476343535873778/0.36700100107065714, %[ 3, 11 ] 
								-0.21513724867401407/0.47862549063280968/-0.85125413593678212/-0.68455031944529765/0.25056280708573159/-0.68455031944529765, %[ 4, 10 ] 
								-0.21513724867401407/0.47862549063280968/-0.85125413593678212/-0.36700100107065714/0.85476343535873778/-0.36700100107065714, %[ 4, 12 ] 
								-0.8125199200687454/0.33655677059077743/0.47596314947796792/-0.85125413593678212/0.47862549063280968/-0.21513724867401407, %[ 5, 13 ] 
								-0.8125199200687454/0.33655677059077743/0.47596314947796792/-0.33655677059077743/0.8125199200687454/0.47596314947796792, %[ 5, 15 ] 
								0.8125199200687454/-0.33655677059077743/-0.47596314947796792/0.85125413593678212/-0.47862549063280968/0.21513724867401407, %[ 6, 14 ] 
								0.8125199200687454/-0.33655677059077743/-0.47596314947796792/0.33655677059077743/-0.8125199200687454/-0.47596314947796792, %[ 6, 16 ] 
								-0.47596314947796792/-0.8125199200687454/0.33655677059077743/0.36700100107065714/-0.85476343535873778/0.36700100107065714, %[ 7, 11 ] 
								-0.47596314947796792/-0.8125199200687454/0.33655677059077743/-0.57735026918962573/-0.57735026918962573/-0.57735026918962573, %[ 7, 17 ] 
								0.47596314947796792/0.8125199200687454/-0.33655677059077743/-0.36700100107065714/0.85476343535873778/-0.36700100107065714, %[ 8, 12 ] 
								0.47596314947796792/0.8125199200687454/-0.33655677059077743/0.57735026918962573/0.57735026918962573/0.57735026918962573, %[ 8, 18 ] 
								0.68455031944529765/-0.25056280708573159/0.68455031944529765/0.85125413593678212/-0.47862549063280968/0.21513724867401407, %[ 9, 14 ] 
								0.68455031944529765/-0.25056280708573159/0.68455031944529765/0.57735026918962573/0.57735026918962573/0.57735026918962573, %[ 9, 18 ] 
								-0.68455031944529765/0.25056280708573159/-0.68455031944529765/-0.85125413593678212/0.47862549063280968/-0.21513724867401407, %[ 10, 13 ] 
								-0.68455031944529765/0.25056280708573159/-0.68455031944529765/-0.57735026918962573/-0.57735026918962573/-0.57735026918962573, %[ 10, 17 ] 
								0.36700100107065714/-0.85476343535873778/0.36700100107065714/0.85125413593678212/-0.47862549063280968/0.21513724867401407, %[ 11, 14 ] 
								0.36700100107065714/-0.85476343535873778/0.36700100107065714/0.33655677059077743/-0.8125199200687454/-0.47596314947796792, %[ 11, 16 ] 
								-0.36700100107065714/0.85476343535873778/-0.36700100107065714/-0.85125413593678212/0.47862549063280968/-0.21513724867401407, %[ 12, 13 ] 
								-0.36700100107065714/0.85476343535873778/-0.36700100107065714/-0.33655677059077743/0.8125199200687454/0.47596314947796792, %[ 12, 15 ] 
								-0.33655677059077743/0.8125199200687454/0.47596314947796792/0.57735026918962573/0.57735026918962573/0.57735026918962573, %[ 15, 18 ] 
								0.33655677059077743/-0.8125199200687454/-0.47596314947796792/-0.57735026918962573/-0.57735026918962573/-0.57735026918962573 %[ 16, 17 ]
							}{
								\GCArcABfb{\ax}{\ay}{\az}{\bx}{\by}{\bz}{}
							}
					}	
			\end{tikzpicture}
		
		\caption{The oriented matroid complex $\Lc \setminus \{\Zero\}$ (blue) and its dual $\Lc^\dual$ (red) of the arrangement $\Ac$
			with defining polynomial $xyz(x+y)(y+z)$.}
		\label{fig:A_OMcpxs}
	\end{figure}
	
	\begin{figure}		
		% draw the oriented matroid cpx and its dual of xyz(x+y)(y+z)
		\tdplotsetmaincoords{60}{60}
		\begin{tikzpicture}[scale=2.75,tdplot_main_coords]
				
				% draw coordinate axes
				%		\draw[dotted, thick, ->] (-1.3,0,0) -- (1.3,0,0);
				%		\draw[dotted, thick, ->] (0,-1.3,0) -- (0,1.3,0);
				%		\draw[dotted, thick, ->] (0,0,-1.3) -- (0,0,1.3);
				%		\node at (1.35,0,0) {$x$};
				%		\node at (0,1.35,0) {$y$};
				%		\node at (0,0,1.35) {$z$};
				
				% draw the sphere as a shade		
				\draw[ball color = gray!40, opacity = 0.25] (0,0,0) circle (1cm);
				
				\def\OMcolor{blue!75!black}
				{\color{\OMcolor}
						% the vertices of the OM-cpx
						\foreach \i/\vx/\vy/\vz in {
								%				1/0./0./1., 
								%				2/0./1./0., 
								3/1./0./0., 
								%				4/-0.70710678118654746/0.70710678118654746/0., 
								%				5/0./-0.70710678118654746/0.70710678118654746, 
								6/0.57735026918962584/-0.57735026918962584/0.57735026918962584, 
								7/0./0./-1., 
								8/0./-1./0., 
								%				9/-1./0./0., 
								10/0.70710678118654746/-0.70710678118654746/0., 
								11/0./0.70710678118654746/-0.70710678118654746 
								%				12/-0.57735026918962584/0.57735026918962584/-0.57735026918962584	
							}{
								\node (\i) at (\vx,\vy,\vz) {};
								%				\node[anchor=south east] at ($(\i)$) {\tiny{$\i$}};
								%				\filldraw[color=\OMcolor] (\i) circle[radius=0.5pt];
								\POnSfb{\vx}{\vy}{\vz}{color=\OMcolor}
							}
						
						\POnSfb{1}{0}{0}{}
						%		
						% the edges of the OM-cpx
						\foreach \ax/\ay/\az/\bx/\by/\bz in {
								0./0./1./0./1./0., %[ 1, 2 ] 
								0./0./1./1./0./0., %[ 1, 3 ] 
								0./0./1./-0.70710678118654746/0.70710678118654746/0., %[ 1, 4 ] 
								0./0./1./0./-0.70710678118654746/0.70710678118654746, %[ 1, 5 ] 
								0./0./1./0.57735026918962584/-0.57735026918962584/0.57735026918962584, %[ 1, 6 ] 
								0./0./1./-1./0./0., %[ 1, 9 ] 
								0./1./0./1./0./0., %[ 2, 3 ] 
								0./1./0./-0.70710678118654746/0.70710678118654746/0., %[ 2, 4 ] 
								0./1./0./0./0.70710678118654746/-0.70710678118654746, %[ 2, 11 ] 
								1./0./0./0.57735026918962584/-0.57735026918962584/0.57735026918962584, %[ 3, 6 ] 
								1./0./0./0./0./-1., %[ 3, 7 ] 
								1./0./0./0.70710678118654746/-0.70710678118654746/0., %[ 3, 10 ] 
								1./0./0./0./0.70710678118654746/-0.70710678118654746, %[ 3, 11 ] 
								-0.70710678118654746/0.70710678118654746/0./-1./0./0., %[ 4, 9 ] 
								-0.70710678118654746/0.70710678118654746/0./-0.57735026918962584/0.57735026918962584/-0.57735026918962584, %[ 4, 12 ] 
								0./-0.70710678118654746/0.70710678118654746/0.57735026918962584/-0.57735026918962584/0.57735026918962584, %[ 5, 6 ] 
								0./-0.70710678118654746/0.70710678118654746/0./-1./0., %[ 5, 8 ] 
								0./-0.70710678118654746/0.70710678118654746/-1./0./0., %[ 5, 9 ] 
								0.57735026918962584/-0.57735026918962584/0.57735026918962584/0.70710678118654746/-0.70710678118654746/0., %[ 6, 10 ] 
								0./0./-1./0./-1./0., %[ 7, 8 ] 
								0./0./-1./-1./0./0., %[ 7, 9 ] 
								0./0./-1./0.70710678118654746/-0.70710678118654746/0., %[ 7, 10 ] 
								0./0./-1./0./0.70710678118654746/-0.70710678118654746, %[ 7, 11 ] 
								0./0./-1./-0.57735026918962584/0.57735026918962584/-0.57735026918962584, %[ 7, 12 ] 
								0./-1./0./-1./0./0., %[ 8, 9 ] 
								0./-1./0./0.70710678118654746/-0.70710678118654746/0., %[ 8, 10 ] 
								-1./0./0./-0.57735026918962584/0.57735026918962584/-0.57735026918962584, %[ 9, 12 ] 
								0./0.70710678118654746/-0.70710678118654746/-0.57735026918962584/0.57735026918962584/-0.57735026918962584 %[ 11, 12 ] 
							}{
								\GCArcABfb{\ax}{\ay}{\az}{\bx}{\by}{\bz}{color=\OMcolor}
							}

						\foreach \ax/\ay/\az/\bx/\by/\bz in {
								1./0./0./0.57735026918962584/-0.57735026918962584/0.57735026918962584, %[ 3, 6 ] 
								1./0./0./0./0.70710678118654746/-0.70710678118654746, %[ 3, 11 ]  
								0.57735026918962584/-0.57735026918962584/0.57735026918962584/0.70710678118654746/-0.70710678118654746/0., %[ 6, 10 ] 
								0./0./-1./0./-1./0., %[ 7, 8 ]  
								0./0./-1./0./0.70710678118654746/-0.70710678118654746, %[ 7, 11 ]
								0./-1./0./0.70710678118654746/-0.70710678118654746/0. %[ 8, 10 ] 
							}{
								\GCArcABfb{\ax}{\ay}{\az}{\bx}{\by}{\bz}{line width = 0.5mm, color=\OMcolor}
							}
					}
				
				%%%%%%%%%%%%%%%%%%%%%%%%%%%%%%%%%%%%%%%%%%%%%%%%%%%%%%
				% draw the dual OM-cpx
				
				\def\dualOMcolor{red!75!black}
				{\color{\dualOMcolor}
						% the vertices of the dual OM-cpx
						\foreach \i/\vx/\vy/\vz in {
								1/-0.47596314947796792/-0.33655677059077743/0.8125199200687454, 
								2/0.47596314947796792/0.33655677059077743/-0.8125199200687454, 
								3/0.21513724867401407/-0.47862549063280968/0.85125413593678212, 
								4/-0.21513724867401407/0.47862549063280968/-0.85125413593678212, 
								5/-0.8125199200687454/0.33655677059077743/0.47596314947796792, 
								%				6/0.8125199200687454/-0.33655677059077743/-0.47596314947796792, 
								7/-0.47596314947796792/-0.8125199200687454/0.33655677059077743, 
								8/0.47596314947796792/0.8125199200687454/-0.33655677059077743, 
								9/0.68455031944529765/-0.25056280708573159/0.68455031944529765, 
								10/-0.68455031944529765/0.25056280708573159/-0.68455031944529765, 
								11/0.36700100107065714/-0.85476343535873778/0.36700100107065714, 
								12/-0.36700100107065714/0.85476343535873778/-0.36700100107065714, 
								13/-0.85125413593678212/0.47862549063280968/-0.21513724867401407, 
								14/0.85125413593678212/-0.47862549063280968/0.21513724867401407, 
								15/-0.33655677059077743/0.8125199200687454/0.47596314947796792, 
								16/0.33655677059077743/-0.8125199200687454/-0.47596314947796792, 
								17/-0.57735026918962573/-0.57735026918962573/-0.57735026918962573, 
								18/0.57735026918962573/0.57735026918962573/0.57735026918962573	
							}{
								\node (\i) at (\vx,\vy,\vz) {};
								%				\node[anchor=south east] at ($(\i)$) {\tiny{$\i$}};
								%				\filldraw (\i) circle[radius=0.5pt];
								\POnSfb{\vx}{\vy}{\vz}{}
							}

						\foreach \i/\vx/\vy/\vz in {
								%				1/-0.47596314947796792/-0.33655677059077743/0.8125199200687454, 
								2/0.47596314947796792/0.33655677059077743/-0.8125199200687454, 
								%				3/0.21513724867401407/-0.47862549063280968/0.85125413593678212, 
								4/-0.21513724867401407/0.47862549063280968/-0.85125413593678212, 
								%				5/-0.8125199200687454/0.33655677059077743/0.47596314947796792, 
								%				6/0.8125199200687454/-0.33655677059077743/-0.47596314947796792, 
								%				7/-0.47596314947796792/-0.8125199200687454/0.33655677059077743, 
								8/0.47596314947796792/0.8125199200687454/-0.33655677059077743, 
								9/0.68455031944529765/-0.25056280708573159/0.68455031944529765, 
								%				10/-0.68455031944529765/0.25056280708573159/-0.68455031944529765, 
								11/0.36700100107065714/-0.85476343535873778/0.36700100107065714, 
								%				12/-0.36700100107065714/0.85476343535873778/-0.36700100107065714, 
								%				13/-0.85125413593678212/0.47862549063280968/-0.21513724867401407, 
								14/0.85125413593678212/-0.47862549063280968/0.21513724867401407, 
								%				15/-0.33655677059077743/0.8125199200687454/0.47596314947796792, 
								16/0.33655677059077743/-0.8125199200687454/-0.47596314947796792, 
								17/-0.57735026918962573/-0.57735026918962573/-0.57735026918962573 
								%				18/0.57735026918962573/0.57735026918962573/0.57735026918962573	
							}{
								\node (\i) at (\vx,\vy,\vz) {};
								%			\node[anchor=south east] at ($(\i)$) {\tiny{$\i$}};
								\draw (\i) circle[radius=1pt];
								\POnSfb{\vx}{\vy}{\vz}{}
							}
						%		
						% the edges of the dual OM-cpx
						\foreach \ax/\ay/\az/\bx/\by/\bz in {
								-0.47596314947796792/-0.33655677059077743/0.8125199200687454/0.21513724867401407/-0.47862549063280968/0.85125413593678212, %[ 1, 3 ] 
								-0.47596314947796792/-0.33655677059077743/0.8125199200687454/-0.8125199200687454/0.33655677059077743/0.47596314947796792, %[ 1, 5 ] 
								-0.47596314947796792/-0.33655677059077743/0.8125199200687454/-0.47596314947796792/-0.8125199200687454/0.33655677059077743, %[ 1, 7 ] 
								0.47596314947796792/0.33655677059077743/-0.8125199200687454/-0.21513724867401407/0.47862549063280968/-0.85125413593678212, %[ 2, 4 ] 
								0.47596314947796792/0.33655677059077743/-0.8125199200687454/0.8125199200687454/-0.33655677059077743/-0.47596314947796792, %[ 2, 6 ] 
								0.47596314947796792/0.33655677059077743/-0.8125199200687454/0.47596314947796792/0.8125199200687454/-0.33655677059077743, %[ 2, 8 ] 
								0.21513724867401407/-0.47862549063280968/0.85125413593678212/0.68455031944529765/-0.25056280708573159/0.68455031944529765, %[ 3, 9 ] 
								0.21513724867401407/-0.47862549063280968/0.85125413593678212/0.36700100107065714/-0.85476343535873778/0.36700100107065714, %[ 3, 11 ] 
								-0.21513724867401407/0.47862549063280968/-0.85125413593678212/-0.68455031944529765/0.25056280708573159/-0.68455031944529765, %[ 4, 10 ] 
								-0.21513724867401407/0.47862549063280968/-0.85125413593678212/-0.36700100107065714/0.85476343535873778/-0.36700100107065714, %[ 4, 12 ] 
								-0.8125199200687454/0.33655677059077743/0.47596314947796792/-0.85125413593678212/0.47862549063280968/-0.21513724867401407, %[ 5, 13 ] 
								-0.8125199200687454/0.33655677059077743/0.47596314947796792/-0.33655677059077743/0.8125199200687454/0.47596314947796792, %[ 5, 15 ] 
								0.8125199200687454/-0.33655677059077743/-0.47596314947796792/0.85125413593678212/-0.47862549063280968/0.21513724867401407, %[ 6, 14 ] 
								0.8125199200687454/-0.33655677059077743/-0.47596314947796792/0.33655677059077743/-0.8125199200687454/-0.47596314947796792, %[ 6, 16 ] 
								-0.47596314947796792/-0.8125199200687454/0.33655677059077743/0.36700100107065714/-0.85476343535873778/0.36700100107065714, %[ 7, 11 ] 
								-0.47596314947796792/-0.8125199200687454/0.33655677059077743/-0.57735026918962573/-0.57735026918962573/-0.57735026918962573, %[ 7, 17 ] 
								0.47596314947796792/0.8125199200687454/-0.33655677059077743/-0.36700100107065714/0.85476343535873778/-0.36700100107065714, %[ 8, 12 ] 
								0.47596314947796792/0.8125199200687454/-0.33655677059077743/0.57735026918962573/0.57735026918962573/0.57735026918962573, %[ 8, 18 ] 
								0.68455031944529765/-0.25056280708573159/0.68455031944529765/0.85125413593678212/-0.47862549063280968/0.21513724867401407, %[ 9, 14 ] 
								0.68455031944529765/-0.25056280708573159/0.68455031944529765/0.57735026918962573/0.57735026918962573/0.57735026918962573, %[ 9, 18 ] 
								-0.68455031944529765/0.25056280708573159/-0.68455031944529765/-0.85125413593678212/0.47862549063280968/-0.21513724867401407, %[ 10, 13 ] 
								-0.68455031944529765/0.25056280708573159/-0.68455031944529765/-0.57735026918962573/-0.57735026918962573/-0.57735026918962573, %[ 10, 17 ] 
								0.36700100107065714/-0.85476343535873778/0.36700100107065714/0.85125413593678212/-0.47862549063280968/0.21513724867401407, %[ 11, 14 ] 
								0.36700100107065714/-0.85476343535873778/0.36700100107065714/0.33655677059077743/-0.8125199200687454/-0.47596314947796792, %[ 11, 16 ] 
								-0.36700100107065714/0.85476343535873778/-0.36700100107065714/-0.85125413593678212/0.47862549063280968/-0.21513724867401407, %[ 12, 13 ] 
								-0.36700100107065714/0.85476343535873778/-0.36700100107065714/-0.33655677059077743/0.8125199200687454/0.47596314947796792, %[ 12, 15 ] 
								-0.33655677059077743/0.8125199200687454/0.47596314947796792/0.57735026918962573/0.57735026918962573/0.57735026918962573, %[ 15, 18 ] 
								0.33655677059077743/-0.8125199200687454/-0.47596314947796792/-0.57735026918962573/-0.57735026918962573/-0.57735026918962573 %[ 16, 17 ]
							}{
								\GCArcABfb{\ax}{\ay}{\az}{\bx}{\by}{\bz}{}
							}
						\foreach \ax/\ay/\az/\bx/\by/\bz in {				
								0.47596314947796792/0.33655677059077743/-0.8125199200687454/-0.21513724867401407/0.47862549063280968/-0.85125413593678212, %[ 2, 4 ] 
								0.47596314947796792/0.33655677059077743/-0.8125199200687454/0.47596314947796792/0.8125199200687454/-0.33655677059077743, %[ 2, 8 ] 
								0.68455031944529765/-0.25056280708573159/0.68455031944529765/0.85125413593678212/-0.47862549063280968/0.21513724867401407, %[ 9, 14 ] 
								0.36700100107065714/-0.85476343535873778/0.36700100107065714/0.85125413593678212/-0.47862549063280968/0.21513724867401407, %[ 11, 14 ] 
								0.36700100107065714/-0.85476343535873778/0.36700100107065714/0.33655677059077743/-0.8125199200687454/-0.47596314947796792, %[ 11, 16 ] 
								0.33655677059077743/-0.8125199200687454/-0.47596314947796792/-0.57735026918962573/-0.57735026918962573/-0.57735026918962573 %[ 16, 17 ]
							}{
								\GCArcABfb{\ax}{\ay}{\az}{\bx}{\by}{\bz}{thick}
							}
						\node (B) at (0.8125199200687454,-0.33655677059077743,-0.47596314947796792) {};
						%		\draw[line width = 0.1mm, fill=white] (B) circle[radius=1.5pt];
						%		\node at (B) {\scriptsize{$B$}};
						\draw[line width = 0.1mm, fill=red!50!black] (B) circle[radius=1.5pt];
						\node at (B) {{\color{white}\scriptsize{$B$}}};
					}
				
				\node[tdplot_screen_coords] at (1.25,0) {$\to$};
			\end{tikzpicture}
		%
		% draw the oriented matroid cpx and its dual of xyz(x+y)(y+z)
		\tdplotsetmaincoords{60}{60}
		\begin{tikzpicture}[scale=2.75,tdplot_main_coords]
				
				% draw coordinate axes
				%		\draw[dotted, thick, ->] (-1.3,0,0) -- (1.3,0,0);
				%		\draw[dotted, thick, ->] (0,-1.3,0) -- (0,1.3,0);
				%		\draw[dotted, thick, ->] (0,0,-1.3) -- (0,0,1.3);
				%		\node at (1.35,0,0) {$x$};
				%		\node at (0,1.35,0) {$y$};
				%		\node at (0,0,1.35) {$z$};
				
				% draw the sphere as a shade		
				\draw[ball color = gray!40, opacity = 0.25] (0,0,0) circle (1cm);
				
				\def\OMcolor{blue!75!black}
				\def\Ccolor{red!50!blue}
				{%\color{\OMcolor}
						% the vertices of the OM-cpx
						\foreach \i/\vx/\vy/\vz in {
								11/0.27470724490123433/0.60687867481766011/-0.74581110453675459, 
								3/0.98127077961345444/0.16562279925117029/-0.098370450060014367, 
								6/0.68421145908413106/-0.56955779747676116/0.45547622834938933, 
								10/0.58625691928517709/-0.80913538945506969/0.040034311804323403, 
								8/0.053691776889334054/-0.95491834300865708/-0.29197286051971855, 
								7/0.0072131982308458941/-0.20692549617572864/-0.97833011238728163}{
								\node (\i) at (\vx,\vy,\vz) {};
								%				\node[anchor=south east] at ($(\i)$) {\tiny{$\i$}};
								%				\filldraw[color=\OMcolor] (\i) circle[radius=0.5pt];
								\POnSfb{\vx}{\vy}{\vz}{color=\OMcolor}
							}
						%			%		
						%			% the edges of the OM-cpx
						\foreach \ax/\ay/\az/\bx/\by/\bz in {
								% [ [ 3, 6 ], [ 3, 11 ], [ 6, 10 ], [ 7, 8 ], [ 7, 11 ], [ 8, 10 ] ]
								0.0072131982308458941/-0.20692549617572864/-0.97833011238728163/0.053691776889334054/-0.95491834300865708/-0.29197286051971855, %[ 3, 6 ] 
								0.0072131982308458941/-0.20692549617572864/-0.97833011238728163/0.27470724490123433/0.60687867481766011/-0.74581110453675459, %[ 3, 11 ] 
								0.053691776889334054/-0.95491834300865708/-0.29197286051971855/0.58625691928517709/-0.80913538945506969/0.040034311804323403, %[ 6, 10 ] 
								0.98127077961345444/0.16562279925117029/-0.098370450060014367/0.68421145908413106/-0.56955779747676116/0.45547622834938933, %[ 7, 8 ] 
								0.98127077961345444/0.16562279925117029/-0.098370450060014367/0.27470724490123433/0.60687867481766011/-0.74581110453675459, %[ 7, 11 ] 
								0.68421145908413106/-0.56955779747676116/0.45547622834938933/0.58625691928517709/-0.80913538945506969/0.040034311804323403 %[ 8, 10 ] 
							}{
								\GCArcABfb{\ax}{\ay}{\az}{\bx}{\by}{\bz}{line width = 0.5mm, color=\OMcolor}
							}

						\foreach \i/\vx/\vy/\vz in {
								1/0.756929/-0.434287/-0.488316,
								2/-0.756929/0.434287/0.488316
							}{
								\node (\i) at (\vx,\vy,\vz) {};
								%				\node[anchor=south east] at ($(\i)$) {\tiny{$\i$}};
								%				\filldraw[color=\Ccolor] (\i) circle[radius=0.5pt];
								\POnSfb{\vx}{\vy}{\vz}{color=\Ccolor}
							}
						\foreach \ax/\ay/\az/\bx/\by/\bz in {
								% [ [ 3, 6 ], [ 3, 11 ], [ 6, 10 ], [ 7, 8 ], [ 7, 11 ], [ 8, 10 ] ]
								0.756929/-0.434287/-0.488316/0.0072131982308458941/-0.20692549617572864/-0.97833011238728163, %[ 1, 3 ] 
								0.756929/-0.434287/-0.488316/0.98127077961345444/0.16562279925117029/-0.098370450060014367, %[ 1, 7 ] 
								0.756929/-0.434287/-0.488316/0.58625691928517709/-0.80913538945506969/0.040034311804323403, %[ 1, 10 ] 
								-0.756929/0.434287/0.488316/0.053691776889334054/-0.95491834300865708/-0.2919728605197185,
								-0.756929/0.434287/0.488316/0.0072131982308458941/-0.20692549617572864/-0.97833011238728163,
								-0.756929/0.434287/0.488316/0.27470724490123433/0.60687867481766011/-0.74581110453675459,
								-0.756929/0.434287/0.488316/0.98127077961345444/0.16562279925117029/-0.098370450060014367,
								-0.756929/0.434287/0.488316/0.68421145908413106/-0.56955779747676116/0.45547622834938933
							}{
								\GCArcABfb{\ax}{\ay}{\az}{\bx}{\by}{\bz}{line width = 0.5mm, color=\Ccolor}
							}
					}
				%		
				
				%%%%%%%%%%%%%%%%%%%%%%%%%%%%%%%%%%%%%%%%%%%%%%%%%%%%%%
				% draw a cell of the sd-cpx
				
				{\color{red!75!black}
						% the vertices of the dual OM-cpx
						\foreach \i/\vx/\vy/\vz in {
								%				1/-0.47596314947796792/-0.33655677059077743/0.8125199200687454, 
								2/0.47596314947796792/0.33655677059077743/-0.8125199200687454, 
								%				3/0.21513724867401407/-0.47862549063280968/0.85125413593678212, 
								4/-0.21513724867401407/0.47862549063280968/-0.85125413593678212, 
								%				5/-0.8125199200687454/0.33655677059077743/0.47596314947796792, 
								%				6/0.8125199200687454/-0.33655677059077743/-0.47596314947796792, 
								%				7/-0.47596314947796792/-0.8125199200687454/0.33655677059077743, 
								8/0.47596314947796792/0.8125199200687454/-0.33655677059077743, 
								9/0.68455031944529765/-0.25056280708573159/0.68455031944529765, 
								%				10/-0.68455031944529765/0.25056280708573159/-0.68455031944529765, 
								11/0.36700100107065714/-0.85476343535873778/0.36700100107065714, 
								%				12/-0.36700100107065714/0.85476343535873778/-0.36700100107065714, 
								%				13/-0.85125413593678212/0.47862549063280968/-0.21513724867401407, 
								14/0.85125413593678212/-0.47862549063280968/0.21513724867401407, 
								%				15/-0.33655677059077743/0.8125199200687454/0.47596314947796792, 
								16/0.33655677059077743/-0.8125199200687454/-0.47596314947796792, 
								17/-0.57735026918962573/-0.57735026918962573/-0.57735026918962573 
								%				18/0.57735026918962573/0.57735026918962573/0.57735026918962573	
							}{
								\node (\i) at (\vx,\vy,\vz) {};
								%				\node[anchor=south east] at ($(\i)$) {\tiny{$\i$}};
								%				\filldraw (\i) circle[radius=0.5pt];
								\POnSfb{\vx}{\vy}{\vz}{}
							}
						%		
						% the edges of the dual OM-cpx
						\foreach \ax/\ay/\az/\bx/\by/\bz in {				
								%				-0.47596314947796792/-0.33655677059077743/0.8125199200687454/0.21513724867401407/-0.47862549063280968/0.85125413593678212, %[ 1, 3 ] 
								%				-0.47596314947796792/-0.33655677059077743/0.8125199200687454/-0.8125199200687454/0.33655677059077743/0.47596314947796792, %[ 1, 5 ] 
								%				-0.47596314947796792/-0.33655677059077743/0.8125199200687454/-0.47596314947796792/-0.8125199200687454/0.33655677059077743, %[ 1, 7 ] 
								0.47596314947796792/0.33655677059077743/-0.8125199200687454/-0.21513724867401407/0.47862549063280968/-0.85125413593678212, %[ 2, 4 ] 
								%				0.47596314947796792/0.33655677059077743/-0.8125199200687454/0.8125199200687454/-0.33655677059077743/-0.47596314947796792, %[ 2, 6 ] 
								0.47596314947796792/0.33655677059077743/-0.8125199200687454/0.47596314947796792/0.8125199200687454/-0.33655677059077743, %[ 2, 8 ] 
								0.68455031944529765/-0.25056280708573159/0.68455031944529765/0.85125413593678212/-0.47862549063280968/0.21513724867401407, %[ 9, 14 ] 
								%				0.68455031944529765/-0.25056280708573159/0.68455031944529765/0.57735026918962573/0.57735026918962573/0.57735026918962573, %[ 9, 18 ] 
								%				-0.68455031944529765/0.25056280708573159/-0.68455031944529765/-0.85125413593678212/0.47862549063280968/-0.21513724867401407, %[ 10, 13 ] 
								%				-0.68455031944529765/0.25056280708573159/-0.68455031944529765/-0.57735026918962573/-0.57735026918962573/-0.57735026918962573, %[ 10, 17 ] 
								0.36700100107065714/-0.85476343535873778/0.36700100107065714/0.85125413593678212/-0.47862549063280968/0.21513724867401407, %[ 11, 14 ] 
								0.36700100107065714/-0.85476343535873778/0.36700100107065714/0.33655677059077743/-0.8125199200687454/-0.47596314947796792, %[ 11, 16 ] 
								%				-0.36700100107065714/0.85476343535873778/-0.36700100107065714/-0.85125413593678212/0.47862549063280968/-0.21513724867401407, %[ 12, 13 ] 
								%				-0.36700100107065714/0.85476343535873778/-0.36700100107065714/-0.33655677059077743/0.8125199200687454/0.47596314947796792, %[ 12, 15 ] 
								%				-0.33655677059077743/0.8125199200687454/0.47596314947796792/0.57735026918962573/0.57735026918962573/0.57735026918962573, %[ 15, 18 ] 
								0.33655677059077743/-0.8125199200687454/-0.47596314947796792/-0.57735026918962573/-0.57735026918962573/-0.57735026918962573 %[ 16, 17 ]
							}{
								\GCArcABfb{\ax}{\ay}{\az}{\bx}{\by}{\bz}{thick}
							}
					}	
				
				{\color{green!50!black}
						\foreach \ax/\ay/\az/\bx/\by/\bz in {		
								0.47596314947796792/0.33655677059077743/-0.8125199200687454/0.85125413593678212/-0.47862549063280968/0.21513724867401407, %2,14
								0.47596314947796792/0.33655677059077743/-0.8125199200687454/0.33655677059077743/-0.8125199200687454/-0.47596314947796792, %2,16
								0.85125413593678212/-0.47862549063280968/0.21513724867401407/0.33655677059077743/-0.8125199200687454/-0.47596314947796792, %14,16
								-0.21513724867401407/0.47862549063280968/-0.85125413593678212/0.47596314947796792/0.8125199200687454/-0.33655677059077743, %4,8
								0.47596314947796792/0.8125199200687454/-0.33655677059077743/0.68455031944529765/-0.25056280708573159/0.68455031944529765, %8,9
								0.68455031944529765/-0.25056280708573159/0.68455031944529765/0.36700100107065714/-0.85476343535873778/0.36700100107065714, %9,11
								0.36700100107065714/-0.85476343535873778/0.36700100107065714/-0.57735026918962573/-0.57735026918962573/-0.57735026918962573, %11,17
								-0.21513724867401407/0.47862549063280968/-0.85125413593678212/-0.57735026918962573/-0.57735026918962573/-0.57735026918962573 %4,17
							}{
								\GCArcABfb{\ax}{\ay}{\az}{\bx}{\by}{\bz}{thick}
							}
					}
				
			\end{tikzpicture}	
		\caption{The boundary complex $\Gamma = \partial\Lc(\Tc_{B,\leq 1})\setminus \{\Zero\}$ (thick blue), the double cone construction $\Sigma_{[1,2]}$ 
			from Step 1 of the proof of Theorem \ref{thm:rkBsdLcCWposet} and its dual (red  and green)  homeomorphic to $\rk_B\sd\Lc^\dual_{<\Zero^B_{[1,2]}}$.}
		\label{fig:A_sdCpx1}
	\end{figure}
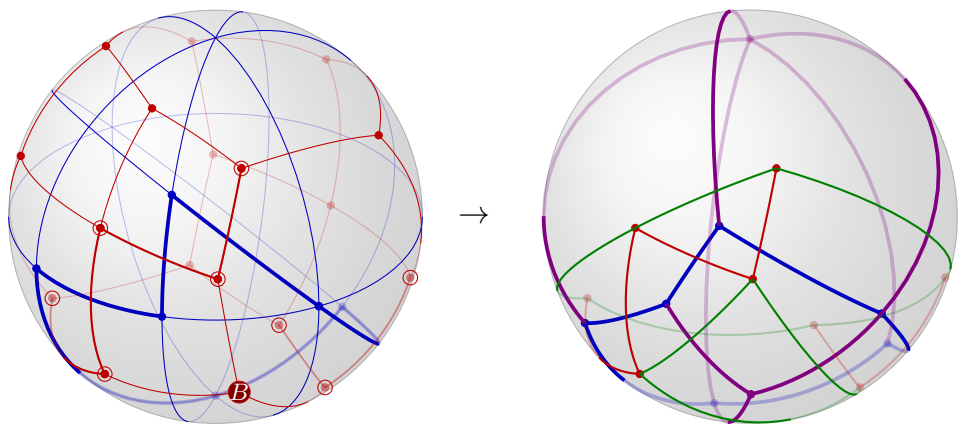

	\begin{figure}	
 draw the oriented matroid cpx and its dual of xyz(x+y)(y+z)
		\tdplotsetmaincoords{60}{60}
		\begin{tikzpicture}[scale=3,tdplot_main_coords]
				
				% draw coordinate axes
				%		\draw[dotted, thick, ->] (-1.3,0,0) -- (1.3,0,0);
				%		\draw[dotted, thick, ->] (0,-1.3,0) -- (0,1.3,0);
				%		\draw[dotted, thick, ->] (0,0,-1.3) -- (0,0,1.3);
				%		\node at (1.35,0,0) {$x$};
				%		\node at (0,1.35,0) {$y$};
				%		\node at (0,0,1.35) {$z$};
				
				% draw the sphere as a shade		
				\draw[ball color = gray!40, opacity = 0.25] (0,0,0) circle (1cm);
				
				%%%%%%%%%%%%%%%%%%%%%%%%%%%%%%%%%%%%%%%%%%%%%%%%%%%%%%
				% draw the dual OM-cpx
				
				\def\dualOMcolor{red!75!black}
				{\color{\dualOMcolor}
						% the vertices of the dual OM-cpx
						\foreach \i/\vx/\vy/\vz in {
								1/-0.47596314947796792/-0.33655677059077743/0.8125199200687454, 
								2/0.47596314947796792/0.33655677059077743/-0.8125199200687454, 
								3/0.21513724867401407/-0.47862549063280968/0.85125413593678212, 
								4/-0.21513724867401407/0.47862549063280968/-0.85125413593678212, 
								5/-0.8125199200687454/0.33655677059077743/0.47596314947796792, 
								%				6/0.8125199200687454/-0.33655677059077743/-0.47596314947796792, 
								7/-0.47596314947796792/-0.8125199200687454/0.33655677059077743, 
								8/0.47596314947796792/0.8125199200687454/-0.33655677059077743, 
								9/0.68455031944529765/-0.25056280708573159/0.68455031944529765, 
								10/-0.68455031944529765/0.25056280708573159/-0.68455031944529765, 
								11/0.36700100107065714/-0.85476343535873778/0.36700100107065714, 
								12/-0.36700100107065714/0.85476343535873778/-0.36700100107065714, 
								13/-0.85125413593678212/0.47862549063280968/-0.21513724867401407, 
								14/0.85125413593678212/-0.47862549063280968/0.21513724867401407, 
								15/-0.33655677059077743/0.8125199200687454/0.47596314947796792, 
								16/0.33655677059077743/-0.8125199200687454/-0.47596314947796792, 
								17/-0.57735026918962573/-0.57735026918962573/-0.57735026918962573, 
								18/0.57735026918962573/0.57735026918962573/0.57735026918962573	
							}{
								\node (\i) at (\vx,\vy,\vz) {};
								%				\node[anchor=south east] at ($(\i)$) {\tiny{$\i$}};
								%				\filldraw (\i) circle[radius=0.5pt];
								\POnSfb{\vx}{\vy}{\vz}{}
							}
						
						{\color{green!50!black}
								\foreach \ax/\ay/\az/\bx/\by/\bz in {		
										0.47596314947796792/0.33655677059077743/-0.8125199200687454/0.85125413593678212/-0.47862549063280968/0.21513724867401407, %2,14
										0.47596314947796792/0.33655677059077743/-0.8125199200687454/0.33655677059077743/-0.8125199200687454/-0.47596314947796792, %2,16
										0.85125413593678212/-0.47862549063280968/0.21513724867401407/0.33655677059077743/-0.8125199200687454/-0.47596314947796792, %14,16
										-0.21513724867401407/0.47862549063280968/-0.85125413593678212/0.47596314947796792/0.8125199200687454/-0.33655677059077743, %4,8
										0.47596314947796792/0.8125199200687454/-0.33655677059077743/0.68455031944529765/-0.25056280708573159/0.68455031944529765, %8,9
										0.68455031944529765/-0.25056280708573159/0.68455031944529765/0.36700100107065714/-0.85476343535873778/0.36700100107065714, %9,11
										0.36700100107065714/-0.85476343535873778/0.36700100107065714/-0.57735026918962573/-0.57735026918962573/-0.57735026918962573, %11,17
										-0.21513724867401407/0.47862549063280968/-0.85125413593678212/-0.57735026918962573/-0.57735026918962573/-0.57735026918962573, %4,17
										0.21513724867401407/-0.47862549063280968/0.85125413593678212/-0.47596314947796792/-0.8125199200687454/0.33655677059077743, %3,7
										-0.47596314947796792/-0.8125199200687454/0.33655677059077743/-0.68455031944529765/0.25056280708573159/-0.68455031944529765, %7,10
										-0.68455031944529765/0.25056280708573159/-0.68455031944529765/-0.36700100107065714/0.85476343535873778/-0.3670010010706571, %10,12
										-0.36700100107065714/0.85476343535873778/-0.36700100107065714/0.57735026918962573/0.57735026918962573/0.57735026918962573, %12,18
										0.21513724867401407/-0.47862549063280968/0.85125413593678212/0.57735026918962573/0.57735026918962573/0.57735026918962573, %3,18
										-0.47596314947796792/-0.33655677059077743/0.8125199200687454/-0.85125413593678212/0.47862549063280968/-0.21513724867401407, %1,13
										-0.85125413593678212/0.47862549063280968/-0.21513724867401407/-0.33655677059077743/0.8125199200687454/0.47596314947796792, %13,15
										-0.47596314947796792/-0.33655677059077743/0.8125199200687454/-0.33655677059077743/0.8125199200687454/0.47596314947796792 %1,15
									}{
										\GCArcABfb{\ax}{\ay}{\az}{\bx}{\by}{\bz}{}
									}
							}
						
						%		
						% the edges of the dual OM-cpx
						\foreach \ax/\ay/\az/\bx/\by/\bz in {
								-0.47596314947796792/-0.33655677059077743/0.8125199200687454/0.21513724867401407/-0.47862549063280968/0.85125413593678212, %[ 1, 3 ] 
								-0.47596314947796792/-0.33655677059077743/0.8125199200687454/-0.8125199200687454/0.33655677059077743/0.47596314947796792, %[ 1, 5 ] 
								-0.47596314947796792/-0.33655677059077743/0.8125199200687454/-0.47596314947796792/-0.8125199200687454/0.33655677059077743, %[ 1, 7 ] 
								0.47596314947796792/0.33655677059077743/-0.8125199200687454/-0.21513724867401407/0.47862549063280968/-0.85125413593678212, %[ 2, 4 ] 
								0.47596314947796792/0.33655677059077743/-0.8125199200687454/0.8125199200687454/-0.33655677059077743/-0.47596314947796792, %[ 2, 6 ] 
								0.47596314947796792/0.33655677059077743/-0.8125199200687454/0.47596314947796792/0.8125199200687454/-0.33655677059077743, %[ 2, 8 ] 
								0.21513724867401407/-0.47862549063280968/0.85125413593678212/0.68455031944529765/-0.25056280708573159/0.68455031944529765, %[ 3, 9 ] 
								0.21513724867401407/-0.47862549063280968/0.85125413593678212/0.36700100107065714/-0.85476343535873778/0.36700100107065714, %[ 3, 11 ] 
								-0.21513724867401407/0.47862549063280968/-0.85125413593678212/-0.68455031944529765/0.25056280708573159/-0.68455031944529765, %[ 4, 10 ] 
								-0.21513724867401407/0.47862549063280968/-0.85125413593678212/-0.36700100107065714/0.85476343535873778/-0.36700100107065714, %[ 4, 12 ] 
								-0.8125199200687454/0.33655677059077743/0.47596314947796792/-0.85125413593678212/0.47862549063280968/-0.21513724867401407, %[ 5, 13 ] 
								-0.8125199200687454/0.33655677059077743/0.47596314947796792/-0.33655677059077743/0.8125199200687454/0.47596314947796792, %[ 5, 15 ] 
								0.8125199200687454/-0.33655677059077743/-0.47596314947796792/0.85125413593678212/-0.47862549063280968/0.21513724867401407, %[ 6, 14 ] 
								0.8125199200687454/-0.33655677059077743/-0.47596314947796792/0.33655677059077743/-0.8125199200687454/-0.47596314947796792, %[ 6, 16 ] 
								-0.47596314947796792/-0.8125199200687454/0.33655677059077743/0.36700100107065714/-0.85476343535873778/0.36700100107065714, %[ 7, 11 ] 
								-0.47596314947796792/-0.8125199200687454/0.33655677059077743/-0.57735026918962573/-0.57735026918962573/-0.57735026918962573, %[ 7, 17 ] 
								0.47596314947796792/0.8125199200687454/-0.33655677059077743/-0.36700100107065714/0.85476343535873778/-0.36700100107065714, %[ 8, 12 ] 
								0.47596314947796792/0.8125199200687454/-0.33655677059077743/0.57735026918962573/0.57735026918962573/0.57735026918962573, %[ 8, 18 ] 
								0.68455031944529765/-0.25056280708573159/0.68455031944529765/0.85125413593678212/-0.47862549063280968/0.21513724867401407, %[ 9, 14 ] 
								0.68455031944529765/-0.25056280708573159/0.68455031944529765/0.57735026918962573/0.57735026918962573/0.57735026918962573, %[ 9, 18 ] 
								-0.68455031944529765/0.25056280708573159/-0.68455031944529765/-0.85125413593678212/0.47862549063280968/-0.21513724867401407, %[ 10, 13 ] 
								-0.68455031944529765/0.25056280708573159/-0.68455031944529765/-0.57735026918962573/-0.57735026918962573/-0.57735026918962573, %[ 10, 17 ] 
								0.36700100107065714/-0.85476343535873778/0.36700100107065714/0.85125413593678212/-0.47862549063280968/0.21513724867401407, %[ 11, 14 ] 
								0.36700100107065714/-0.85476343535873778/0.36700100107065714/0.33655677059077743/-0.8125199200687454/-0.47596314947796792, %[ 11, 16 ] 
								-0.36700100107065714/0.85476343535873778/-0.36700100107065714/-0.85125413593678212/0.47862549063280968/-0.21513724867401407, %[ 12, 13 ] 
								-0.36700100107065714/0.85476343535873778/-0.36700100107065714/-0.33655677059077743/0.8125199200687454/0.47596314947796792, %[ 12, 15 ] 
								-0.33655677059077743/0.8125199200687454/0.47596314947796792/0.57735026918962573/0.57735026918962573/0.57735026918962573, %[ 15, 18 ] 
								0.33655677059077743/-0.8125199200687454/-0.47596314947796792/-0.57735026918962573/-0.57735026918962573/-0.57735026918962573 %[ 16, 17 ]
							}{
								\GCArcABfb{\ax}{\ay}{\az}{\bx}{\by}{\bz}{}
							}
						\foreach \ax/\ay/\az/\bx/\by/\bz in {				
								0.47596314947796792/0.33655677059077743/-0.8125199200687454/-0.21513724867401407/0.47862549063280968/-0.85125413593678212, %[ 2, 4 ] 
								0.47596314947796792/0.33655677059077743/-0.8125199200687454/0.47596314947796792/0.8125199200687454/-0.33655677059077743, %[ 2, 8 ] 
								0.68455031944529765/-0.25056280708573159/0.68455031944529765/0.85125413593678212/-0.47862549063280968/0.21513724867401407, %[ 9, 14 ] 
								0.36700100107065714/-0.85476343535873778/0.36700100107065714/0.85125413593678212/-0.47862549063280968/0.21513724867401407, %[ 11, 14 ] 
								0.36700100107065714/-0.85476343535873778/0.36700100107065714/0.33655677059077743/-0.8125199200687454/-0.47596314947796792, %[ 11, 16 ] 
								0.33655677059077743/-0.8125199200687454/-0.47596314947796792/-0.57735026918962573/-0.57735026918962573/-0.57735026918962573 %[ 16, 17 ]
							}{
								\GCArcABfb{\ax}{\ay}{\az}{\bx}{\by}{\bz}{}
							}
						\node (B) at (0.8125199200687454,-0.33655677059077743,-0.47596314947796792) {};
						%		\draw[line width = 0.1mm, fill=white] (B) circle[radius=1.5pt];
						%		\node at (B) {\scriptsize{$B$}};
						\draw[line width = 0.1mm, fill=red!50!black] (B) circle[radius=1.5pt];
						\node at (B) {{\color{white}\scriptsize{$B$}}};
					}
			\end{tikzpicture}
		\caption{The $B$-rank subdivision complex $\rk_B\sd\Lc^\dual$ of the arrangement $\Ac$
			with defining polynomial $xyz(x+y)(y+z)$.}
		\label{fig:A_sdOMFullCpxs}
	\end{figure}
	
	\begin{example}
		\label{ex:A3_rkBsd_Oonstr}
		Let $\Ac$ be the arrangement with defining polynomial
		$Q(\Ac) = xyz(x+y)(y+z)$.
		The regular PL subdivision of the sphere induced by $\Ac$ whose face poset is $\Lc(\Ac) \setminus \{\Zero\}$ 
		and its dual are displayed in Figure \ref{fig:A_OMcpxs}.
		
		The double cone construction in the first step in the proof of Theorem \ref{thm:rkBsdLcCWposet}
		for the arrangement $\Ac$ with respect to the base chamber $B$ and $k=1$ 
		is illustrated in Figure \ref{fig:A_sdCpx1}.
		The additional edges of the two cones over the boundary complex $\Gamma$ glued together along $\Gamma$ are presented by the purple rays.
		They are dual to the additional green edges in the dual complex $\Sigma_{[1,2]}^\dual$ whose face poset
		is isomorphic to $\rk_B\sd\Lc_{< \Zero^B_{[1,2]}}^\dual$.
		
		The full subdivided complex $\rk_B\sd\Lc^\dual$ with respect to the base chamber $B$ is then displayed in Figure \ref{fig:A_sdOMFullCpxs}.
	\end{example}
	
	\begin{corollary}
		\label{coro:HomeoRkBSubdiv}
		There is a PL homeomorphism $\varphi:|\rk_B\sd\Lc^\dual| \to |\Lc^\dual|$ which restricts to a PL homeomorphism 
		$\varphi|_{\varphi^{-1}(|\Lc^\dual_{\leq x}|)}:\varphi^{-1}(|\Lc^\dual_{\leq x}|) \to |\Lc^\dual_{\leq x}|$ for any $x \in \Lc^\dual$.
		Thus, we can regard $\rk_B\sd\Lc^\dual$ as a PL subdivision of the dual oriented matroid complex $\Lc^\dual$
		via $\rk_B\sd\Lc^\dual \ni x \mapsto \varphi(|x|) \subseteq |p_B(x)|$.
	\end{corollary}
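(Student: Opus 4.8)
\emph{Overall strategy.} By Remark \ref{rem:OMTopRep}, $\Lc^\dual$ is the face poset of a PL $d$-ball, and by Theorem \ref{thm:rkBsdLcCWposet} so is $\rk_B\sd\Lc^\dual$; the two are linked by the order preserving map $p_B$ of Definition \ref{def:TopeRankSubdivCovectors}. The plan is to build the PL homeomorphism $\varphi$ by induction over the skeleta of $\Lc^\dual$, extending it one dimension at a time over each closed cell of $\Lc^\dual$. For $\sigma\in\Lc^\dual$ set $\Gamma_\sigma := p_B^{-1}(\Lc^\dual_{\le\sigma})$; being the preimage of an order ideal under an order preserving map, $\Gamma_\sigma$ is a closed subcomplex of $\rk_B\sd\Lc^\dual$, and from the definition of $p_B$ together with Proposition \ref{prop:OMCpxIntersectionProperty} one checks that
\[
\Gamma_\sigma \;=\; \{\afr\in\rk_B\sd\Lc^\dual \mid \afr\subseteq\Tc(\sigma)\} \;=\; \bigcup_{\tau\le_{\Lc^\dual}\sigma}\rk_B\sd(\tau).
\]
Everything then hinges on the following assertion, to be proved for every $\sigma\in\Lc^\dual$: \emph{$\Gamma_\sigma$ is a regular PL cell decomposition of the $(\dim\sigma)$-ball whose boundary subcomplex is $p_B^{-1}(\Lc^\dual_{<\sigma})$.}

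\emph{Proving the assertion by reduction to Theorem \ref{thm:rkBsdLcCWposet}.} Let $X := z(\sigma)$, a flat of $\OM$; then $\dim\sigma = \rk(X) = \rk(\OM|_X)$, where $\OM|_X$ denotes the restriction of $\OM$ to the ground set $X$ (covectors $\{\rho|_X\mid\rho\in\Lc\}$), which is still simple since loops and parallel pairs of $\OM|_X$ would be loops and parallel pairs of $\OM$. The restriction $\tau\mapsto\tau|_X$ gives a poset isomorphism $\Lc^\dual_{\le\sigma}\to\Lc^\dual(\OM|_X)$ carrying $\sigma$ to the zero covector and $\Tc(\sigma)$ bijectively onto the topes of $\OM|_X$, with $B|_X$ a base tope. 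Splitting separating sets along $X$ yields $\rk_B(T) = \rk^{\OM|_X}_{B|_X}(T|_X) + c$ for all $T\in\Tc(\sigma)$, where the offset $c = |S(B,\sigma)|$ does not depend on $T$; since the recipe defining $\rk_B\sd$ is insensitive to a global additive shift of the rank function, the isomorphism above matches $\Gamma_\sigma$ with $\rk_{B|_X}\sd\Lc^\dual(\OM|_X)$. For $\rk(X)\ge 2$, Theorem \ref{thm:rkBsdLcCWposet} applied to $\OM|_X$ says precisely that $\Gamma_\sigma$ is a regular PL cell decomposition of the $(\dim\sigma)$-ball; the cases $\rk(X)\le 1$ are immediate ($\Gamma_\sigma$ is a single vertex, resp.\ a single edge with its two endpoints, because a rank-one flat of a simple oriented matroid is one element, so $\Tc(\sigma)$ consists of two topes at distance one). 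It remains to identify the boundary of this ball with $p_B^{-1}(\Lc^\dual_{<\sigma})$; equivalently, one must show that in $\rk_{B'}\sd\Lc^\dual(\OM')$ the boundary sphere consists exactly of the cells $\afr$ with $p_{B'}(\afr)\ne\Zero$. This can be extracted from the gluing pattern in the proof of Theorem \ref{thm:rkBsdLcCWposet} (the $d$-balls $\rk_{B'}\sd\Lc^\dual_{\le\Zero^{B'}_{[k,k+1]}}$ are glued along the $(d-1)$-balls $\rk_{B'}\sd\Lc^\dual_{\le\Zero^{B'}_k}$), and is most cleanly incorporated by strengthening Theorem \ref{thm:rkBsdLcCWposet} to record its boundary as well, and running both statements by a single induction on the rank $d$.

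\emph{Constructing $\varphi$ from the assertion.} Begin on the $0$-skeleton: $\Gamma_T = \{\{T\}\}$ for each tope $T$ (topes being minimal in $\Lc^\dual$), so $p_B$ restricts to the bijection $\{T\}\mapsto T$ of vertex sets, and we set $\varphi(\{T\}) := T$. Inductively, suppose $\varphi$ is already a PL homeomorphism from $|p_B^{-1}(\{\tau\mid\dim\tau\le m-1\})|$ onto the $(m-1)$-skeleton of $\Lc^\dual$ which sends each open cell $\mathring{|\afr|}$ into $\mathring{|p_B(\afr)|}$ and restricts on every $\Gamma_\tau$ with $\dim\tau\le m-1$ to a PL homeomorphism $|\Gamma_\tau|\to|\Lc^\dual_{\le\tau}|$. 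For an $m$-cell $\sigma$ of $\Lc^\dual$, $\varphi$ is then defined on $\partial|\Gamma_\sigma| = |p_B^{-1}(\Lc^\dual_{<\sigma})|$ and maps it PL-homeomorphically onto $\partial\sigma$; since $|\Gamma_\sigma|$ and $\sigma$ are both PL $m$-balls, a PL homeomorphism of their boundary spheres extends over the interiors (the Alexander trick, realized through the cone structures of Lemma \ref{lem:ConeCWPoset} and Theorem \ref{thm:PLBallsAndSpheres}(iii)), producing a PL homeomorphism $|\Gamma_\sigma|\to\sigma$ that takes interior to interior. These extensions agree on overlaps, which lie in the $(m-1)$-skeleton, so they patch to $\varphi$ on the $m$-skeleton. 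At $m=d$ this yields $\varphi\colon|\rk_B\sd\Lc^\dual|\to|\Lc^\dual|$. Finally, since $\varphi$ carries $\mathring{|\afr|}$ into $\mathring{|p_B(\afr)|}$ and the open cells partition both realizations, $\varphi^{-1}(|\Lc^\dual_{\le x}|) = |\Gamma_x|$ for every $x\in\Lc^\dual$, so $\varphi$ restricts to the asserted PL homeomorphism $\varphi^{-1}(|\Lc^\dual_{\le x}|)\to|\Lc^\dual_{\le x}|$; the concluding sentence of the corollary, namely that $\rk_B\sd\Lc^\dual$ thereby becomes a PL subdivision of $\Lc^\dual$ via $\afr\mapsto\varphi(|\afr|)\subseteq|p_B(\afr)|$, is then a restatement.

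\emph{Expected main obstacle.} The genuine work lies in the assertion about $\Gamma_\sigma$, and within it in pinning down the boundary: the combinatorial identification $\Gamma_\sigma\isom\rk_{B|_X}\sd\Lc^\dual(\OM|_X)$ is routine once one is careful about restriction to the flat $X$ and the constant shift of $\rk_B$, but the equality $\partial\Gamma_\sigma = p_B^{-1}(\Lc^\dual_{<\sigma})$ is not literally contained in Theorem \ref{thm:rkBsdLcCWposet} and must be tracked through its construction (or built into an induction on the rank). Once this is available, the cell-by-cell extension of $\varphi$ via the Alexander trick and the bookkeeping of compatibility across the skeleta are standard PL topology.
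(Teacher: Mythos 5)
Your proposal is essentially the paper's own argument, organized a bit more explicitly. The paper also argues by induction, reducing to the observation that each facet $\Lc^\dual_{\leq x}$ of $\Lc^\dual$ is the dual covector complex of a smaller-rank oriented matroid (your $\OM|_X$), obtaining PL homeomorphisms on the boundary pieces by the inductive hypothesis, gluing them to a PL homeomorphism of boundary spheres, and then invoking the standard extension of a boundary PL homeomorphism over the ball (the paper cites \cite[Lem.~1.10]{RourkeSanderson1972_PLTop}, you phrase it as the Alexander trick via the cone structure). Your version is marginally more careful in two places where the paper is terse: you build $\varphi$ skeleton by skeleton so that compatibility of the piecewise extensions on overlaps is automatic, whereas the paper simply asserts that the facet maps ``can thus be glued together''; and you explicitly flag, and sketch how to resolve, the needed identification of the boundary subcomplex $\partial\Gamma_\sigma$ with $p_B^{-1}(\Lc^\dual_{<\sigma})$, which the paper uses implicitly. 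These are worthwhile clarifications but do not change the underlying method.
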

	
	\begin{proof}
		We argue by induction on $d=\rk(\OM)$. For $d=1$ we have $\rk_B\sd\Lc^\dual = \Lc^\dual$ and the statement is trivially true.
		
		Now, let $\rk(\Lc) = d \geq 2$.
		Note that for $x \leLc \Zero$, $\Lc^\dual_{\leq x}$ is the dual oriented matroid complex of an oriented matroid of rank $d-1$.
		By the induction hypothesis we have PL homeomorphisms $\varphi_x:|\rk_B\sd(\Lc_{\leq x})| \to |\Lc^\dual_{\leq x}|$
		which restrict to PL homeomorphisms 
		$\varphi_x|_{\varphi_x^{-1}(|\Lc^\dual_{\leq y}|)}:\varphi_x^{-1}(|\Lc^\dual_{\leq y}|) \to |\Lc^\dual_{\leq y}|$ for $y \leq x$
		and can thus be glued together to a PL homeomorphism $\varphi':\partial|\rk_B\sd\Lc| \to \partial|\Lc^\dual|$ between spheres.
		By \cite[Lem.~1.10]{RourkeSanderson1972_PLTop}, $\varphi'$ extends to our desired PL homeomorphism $\varphi:|\rk_B\sd\Lc^\dual| \to |\Lc^\dual|$.
	\end{proof}

	Now, we construct our subdivision of the Salvetti complex.
	\begin{definition}
		\label{def:TopeRankSubdivSalvetti}
		Let $\Sc = \Sc(\OM)$ be the Salvetti complex of $\OM$.
		Then the \emph{tope-rank subdivision} of a cell $(\sigma,T) \in \Sc$ is defined as:
		\[
		\rk\sd(\sigma,T) := \{ (\afr,T) \mid \afr \in \rk_T\sd(\sigma))\},
		\]
		and the \emph{tope-rank subdivision} of $\Sc$ is defined by
		\[
		\rksdS:= \bigcup\limits_{x \in \Sc} \rk\sd(x),
		\]
		with partial order given by
		\[
		(\afr,T) \leq (\bfr,R) :\iff \afr\subseteq \bfr \text{ and } p_T(\afr)\circ R = T.
		\]
		
		We also have a poset map $\wt{p}:\rksdS\to \Sc, (\afr,T) \mapsto (p_T(\afr),T)$.
	\end{definition}

	\begin{example}
		\label{ex:TopeRkSubdivBoolean}
		Recall from Example \ref{ex:SalBooleanParam}
		the explicit parametrization of the cells in $\Sc(\Bc_n)$:
		$\varphi((\sigma,T)) = 
		%	\{(\exp(t_1\pi i),\ldots,\exp(t_n\pi i)) \mid (t_1,\ldots,t_n) \in \psi(\sigma) + (T_11,\ldots,T_n1)\}.$
		\{(\exp(T_1(t_1-1)\pi i),\ldots,\exp(T_n(t_n-1)\pi i)) \mid (t_1,\ldots,t_n) \in \psi(\sigma)\}$
		where
		$\psi(\sigma) = \{x \in [0,1]^n \mid x_i=0$ if $\sigma_i=-$, $x_i=1$ if $\sigma_i=+\} \subseteq [0,1]^n$
		for $\sigma \in \Lc^\dual$.
		This yields the inclusion $\varphi:|\Sc(\Bc_n)| \hookrightarrow \Xf(\Bc_n)$ which is a homotopy equivalence.
		
		The rank subdivision of $\Sc(\Bc_n)$
		can be similarly parametrized.
		
		Firstly, we have the following explicit description of the subdivision of $\sigma \in \Lc^\dual$:
		a cell $\afr = \sigma^B_k \in  \rk_B\sd\Lc^\dual$ is identified with
		$\wt{\psi}(\afr):=\psi(\sigma) \cap \{\sum_{i=1}^n x_i = k\} \subseteq [0,1]^n$;
		a cell $\bfr = \sigma^B_{[k,k+1]}$ is identified with 
		$\wt{\psi}(\bfr):=\psi(\sigma) \cap \{k \leq \sum_{i=1}^n x_i \leq k+1\} \subseteq [0,1]^n$.
		Analogous to Example \ref{ex:SalBooleanParam},
		the parametrization of the tope-rank subdivision is defined by identifying a cell $(\afr,T) \in \rksdS(\Bc_n)$
		with 
		\begin{align*}
			&\wt{\varphi}((\afr,T)) := \\
			&\quad \{(\exp(T_1(t_1-1)\pi i),\ldots,\exp(T_n(t_n-1)\pi i)) \mid (t_1,\ldots,t_n) \in \wt{\psi}(\afr)\}.
		\end{align*}
		
	\end{example}

	\begin{theorem}
		\label{thm:rksdSubdivMap}
		The poset $\rksdS$ is the face poset of a regular cell complex $PL$-homeomorphic to $\Sc$ 
		and if $\Sc = \Sc(\Ac)$ is the Salvetti-complex of a real arrangement $\Ac$,
		then $|\rksdS|$ is homotopy equivalent to the complexified complement $\Xf(\Ac)$. 
	\end{theorem}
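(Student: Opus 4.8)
The second assertion follows at once from the first: once $\rksdS$ is identified with the face poset of a regular cell complex that is $PL$-homeomorphic to $\Sc$, we obtain in particular a homeomorphism $|\rksdS|\isom|\Sc|$, and for $\Sc=\Sc(\Ac)$ the space $|\Sc|$ is homotopy equivalent to $\Xf(\Ac)$ by Theorem~\ref{thm:SalvettiHoEquiv}. So all the content is in the first assertion, which I would prove by realising $\rksdS$ as a $PL$ subdivision of $\Sc$ assembled from the cell-wise subdivisions of Section~\ref{sec:TopeRankSubdiv}. Recall that the maximal cells of $\Sc$ are the $(\Zero,T)$, $T\in\Tc$, with $\Sc_{\leq(\Zero,T)}\isom\Lc^\dual$ via $\sigma\mapsto(\sigma,\sigma\circ T)$ (Lemma~\ref{lem:SalPrincipialIdealCovectors}), a $PL$ $d$-ball by Remark~\ref{rem:OMTopRep}; more generally $\Sc_{\leq(\sigma,R)}$ corresponds under $\sigma'\mapsto(\sigma',\sigma'\circ R)$ to the closed cell $\Lc^\dual_{\leq\sigma}$, which is a $PL$ ball by Lemma~\ref{lem:PL sphereDual}(i). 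The first step is to verify that the subposet of $\rksdS$ lying over $\Sc_{\leq(\Zero,T)}$ through the poset map $\wt p$ is exactly $\rk_T\sd\Lc^\dual$; this rests on the elementary fact that every cell $\afr$ of $\rk_B\sd\Lc^\dual$ already lies in $\rk_B\sd(p_B(\afr))$. Theorem~\ref{thm:rkBsdLcCWposet} together with Corollary~\ref{coro:HomeoRkBSubdiv} then show that this subposet is the face poset of a regular $PL$-cell decomposition of the ball $\Sc_{\leq(\Zero,T)}$ that subdivides it, with a $PL$ homeomorphism restricting compatibly to every closed sub-face.

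The combinatorial core making these cell-wise subdivisions cohere is the following shift identity, to be proved straight from the definitions: for every covector $\sigma$ and every tope $B$,
\[
\rk_B(T') - \rk_{\sigma\circ B}(T') \;=\; |S(B,\sigma)| \qquad \text{for all } T'\in\Tc(\sigma) .
\]
Hence the two rank functions on $\Tc(\sigma)$ differ by a constant, so the rank-classes $\sigma^B_k$, the bands $\sigma^B_{[k,k+1]}$ and the excluded top band all match under the shift, which gives $\rk_B\sd(\sigma)=\rk_{\sigma\circ B}\sd(\sigma)$ as collections of subsets of $\Tc(\sigma)$. Applying this for $B=T_1$ and $B=T_2$: for $T_1,T_2\in\Tc$ the closed cells $\Sc_{\leq(\Zero,T_1)}$ and $\Sc_{\leq(\Zero,T_2)}$ meet along the closed subcomplex corresponding to the order ideal $\{\sigma\in\Lc^\dual\mid\sigma\circ T_1=\sigma\circ T_2\}$, and over such a $\sigma$ both induced subdivisions consist of the cells $(\afr,R)$ with $R=\sigma\circ T_1=\sigma\circ T_2$ and $\afr\in\rk_{T_1}\sd(\sigma)=\rk_{T_2}\sd(\sigma)$. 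So the two subdivisions agree on overlaps.

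With this compatibility in hand, the closed maximal cells cover $\Sc$ and carry mutually compatible regular $PL$-cell subdivisions, hence assemble into a regular cell complex with underlying space $|\Sc|$ whose closed cells each lie in a closed cell of $\Sc$; here one checks that the abstract order of Definition~\ref{def:TopeRankSubdivSalvetti} coincides with face incidence in the assembled complex, using Lemma~\ref{lem:IntersectionProperty}, the order relation of $\Sc$, and the definition of $\wt p$. Finally, to pass from ``$\rksdS$ subdivides $\Sc$'' to ``$\rksdS$ is $PL$-homeomorphic to $\Sc$'' I would argue as in the proof of Corollary~\ref{coro:HomeoRkBSubdiv}: induct on the cells of $\Sc$ ordered by dimension and extend the $PL$ homeomorphism across each cell using \cite[Lem.~1.10]{RourkeSanderson1972_PLTop}, the extension existing because $\rksdS$ restricted to each closed cell $\Sc_{\leq(\sigma,R)}$ is --- via the identification with $\Lc^\dual_{\leq\sigma}$ and Corollary~\ref{coro:HomeoRkBSubdiv} --- a $PL$ ball subdividing the $PL$ ball $\Sc_{\leq(\sigma,R)}$. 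I expect the main obstacle to be precisely this assembling step: reconciling the combinatorially defined partial order on $\rksdS$ with the geometric face-incidence order of the subdivided Salvetti complex, where the varying base topes $\sigma\circ R$ and the projections $p_B,\wt p$ have to be tracked with care. The shift identity above is the short conceptual heart of the argument, but the bookkeeping surrounding it is where most of the work goes.
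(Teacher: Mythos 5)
Your proposal follows the same route as the paper: deduce the homotopy-equivalence statement from the $PL$-homeomorphism statement via Theorem~\ref{thm:SalvettiHoEquiv}, and establish the $PL$-homeomorphism by assembling the cell-wise subdivisions of Theorem~\ref{thm:rkBsdLcCWposet} and Corollary~\ref{coro:HomeoRkBSubdiv} along the maximal cells of $\Sc$ (each a copy of $\Lc^\dual$ by Lemma~\ref{lem:SalPrincipialIdealCovectors}). The paper's own proof is very terse and asserts without justification that the subdivisions ``respect their gluing''; the shift identity you formulate and prove, namely $\rk_B(T')-\rk_{\sigma\circ B}(T')=|S(B,\sigma)|$ for all $T'\in\Tc(\sigma)$ (which follows from the decomposition $S(B,T')=S(\sigma\circ B,T')\sqcup S(B,\sigma)$), and hence $\rk_B\sd(\sigma)=\rk_{\sigma\circ B}\sd(\sigma)$, is precisely the combinatorial fact underwriting that assertion. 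Since the subdivision of a Salvetti cell $(\sigma,R)$ is by definition computed with respect to its own base tope $R=\sigma\circ T$, this identity is exactly what guarantees that the subdivisions induced from two different maximal cells $(\Zero,T_1)$, $(\Zero,T_2)$ agree on their common faces. So your proposal is correct and takes the paper's approach, while making explicit the crux point that the paper leaves to the reader.
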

	\begin{proof}
		The subdivision $\rk_B\sd\Lc^\dual$ of $\Lc^\dual$ which constitute the maximal cells in $\Sc$ by Lemma \ref{lem:SalPrincipialIdealCovectors} respects 
		their gluing in $\Sc$ respectively $\rksdS$. Hence, the statement follows with Theorem \ref{thm:rkBsdLcCWposet},
		Corollary \ref{coro:HomeoRkBSubdiv} and the usual homeomorphisms gluing argument.
		By Theorem \ref{thm:SalvettiHoEquiv}, $|\Sc|$ is homotopy equivalent to $\Xf(\Ac)$ and so is $|\rksdS|$.
	\end{proof}

	%%%%%%%%%%%%%%%%%%%%%%%%%%%%%%%%%%%%%%%%%%%%%%%%%%%%%%%%%%%%%%%%%%%%%%%%%%%%%%%%%%%%%%%%%%%%%%%%%%%%
	%%%%%%%%%%%%%%%%%%%%%%%%%%%%%%%%%%%%%%%%%%%%%%%%%%%%%%%%%%%%%%%%%%%%%%%%%%%%%%%%%%%%%%%%%%%%%%%%%%%%

	\section{Milnor fibration of an oriented matroid}
	\label{sec:MilnorFibOM}
	
	Let $\Cc$ denote the Salvetti complex of the rank $1$ oriented matroid with covectors $\{+,-,0\}$,
	i.e.\ the face poset of $\Cc$ is given by the set $\{(+,+),(0,+),(-,-),(0,-)\}$, a regular cell decomposition of the circle $S^1$.

	\begin{definition}
		\label{def:MilnorFibrationOM}
		Define the map $Q:\Tc \to \{+,-\}, T \mapsto \prod_{e \in E}T_e$,
		and the poset map $\wt{Q}:\rksdS \to \Cc$ by:
		\[
		\wt{Q}((\sigma^T_k,T)) := \begin{cases}
			(+,+) &\text{ if } Q(\sigma^T_k) = \{+\},\\
			(-,-) &\text{ if } Q(\sigma^T_k) = \{-\},
		\end{cases}
		\]
		and
		\[
		\wt{Q}((\sigma^T_{[k,k+1]},T)) := \begin{cases}
			(0,+) &\text{ if } Q(\sigma^T_k) = \{+\},\\
			(0,-) &\text{ if } Q(\sigma^T_k) = \{-\}.
		\end{cases}
		\]
		
		We define the \emph{(combinatorial) Milnor fiber} of $\OM$ by 
		$$\wt{\Ff}(\OM) := \wt{Q}^{-1}((+,+)).$$
		(It is a consequence of the  Theorem \ref{thm:MilnorOMPosetQuasiFib} below that $\wt{Q}^{-1}((+,+))$ and $\wt{Q}^{-1}((-,-))$ are homotopy equivalent.)
	\end{definition}

	\begin{figure}
		\begin{tikzpicture}[scale=0.6]		
			
			\node (l1) at (1.7320508075688774,-1.) {};
			\node (l2) at (1.7320508075688774,1.) {};
			\node (l3) at (0,2.) {};
			\node (l4) at (-1.7320508075688774,1.) {};
			\node (l5) at (-1.7320508075688774,-1.) {};
			\node (l6) at (0,-2.) {};
			\node (zero) at (0,0) {};
			
			%%%%%%%%%%%%%%%%%%%%%%%%%%%%%%%%%%%%%%%%%%%%%%%%
			
			\node (l1_0) at ($0.5*(1.7320508075688774,-1.)$) {};
			\node (l2_0) at ($0.5*(1.7320508075688774,1.)$) {};
			\node (l3_0) at ($0.5*(0,2.)$) {};
			\node (l4_0) at ($0.5*(-1.7320508075688774,1.)$){};
			\node (l5_0) at ($0.5*(-1.7320508075688774,-1.)$) {};
			\node (l6_0) at ($0.5*(0,-2.)$) {};
			
			\draw[->,shorten >=3pt, shorten <=3pt] ($1.25*(l1_0)$) .. controls ($0.5*(l1_0)+0.5*(l2_0)$)  .. ($1.25*(l2_0)$);;
			\draw[->,shorten >=3pt, shorten <=3pt, color=black!50!green] ($1.25*(l2_0)$) .. controls ($0.75*(l1_0)+0.75*(l2_0)$)  .. ($1.25*(l1_0)$);;
			\draw[->,shorten >=3pt, shorten <=3pt, color=black!50!green] ($1.25*(l2_0)$) .. controls ($0.5*(l2_0)+0.5*(l3_0)$)  .. ($1.25*(l3_0)$);;
			\draw[->,shorten >=3pt, shorten <=3pt] ($1.25*(l3_0)$) .. controls ($0.75*(l2_0)+0.75*(l3_0)$)  .. ($1.25*(l2_0)$);;
			\draw[->,shorten >=3pt, shorten <=3pt] ($1.25*(l3_0)$) .. controls ($0.5*(l3_0)+0.5*(l4_0)$)  .. ($1.25*(l4_0)$);;
			\draw[->,shorten >=3pt, shorten <=3pt, color=black!50!green] ($1.25*(l4_0)$) .. controls ($0.75*(l3_0)+0.75*(l4_0)$)  .. ($1.25*(l3_0)$);;
			\draw[->,shorten >=3pt, shorten <=3pt, color=black!50!green] ($1.25*(l4_0)$) .. controls ($0.5*(l4_0)+0.5*(l5_0)$)  .. ($1.25*(l5_0)$);;
			\draw[->,shorten >=3pt, shorten <=3pt] ($1.25*(l5_0)$) .. controls ($0.75*(l4_0)+0.75*(l5_0)$)  .. ($1.25*(l4_0)$);;
			\draw[->,shorten >=3pt, shorten <=3pt] ($1.25*(l5_0)$) .. controls ($0.5*(l5_0)+0.5*(l6_0)$)  .. ($1.25*(l6_0)$);;
			\draw[->,shorten >=3pt, shorten <=3pt, color=black!50!green] ($1.25*(l6_0)$) .. controls ($0.75*(l5_0)+0.75*(l6_0)$)  .. ($1.25*(l5_0)$);;
			\draw[->,shorten >=3pt, shorten <=3pt, color=black!50!green] ($1.25*(l6_0)$) .. controls ($0.5*(l6_0)+0.5*(l1_0)$)  .. ($1.25*(l1_0)$);;
			\draw[->,shorten >=3pt, shorten <=3pt] ($1.25*(l1_0)$) .. controls ($0.75*(l6_0)+0.75*(l1_0)$)  .. ($1.25*(l6_0)$);;
			\filldraw[color=blue] ($1.25*(l1_0)$) circle[radius=2pt];
			\filldraw[color=red] ($1.25*(l2_0)$) circle[radius=2pt];
			\filldraw[color=blue] ($1.25*(l3_0)$) circle[radius=2pt];
			\filldraw[color=red] ($1.25*(l4_0)$) circle[radius=2pt];
			\filldraw[color=blue] ($1.25*(l5_0)$) circle[radius=2pt];
			\filldraw[color=red] ($1.25*(l6_0)$) circle[radius=2pt];
			
			%%%%%%%%%%%%%%%%%%%%%%%%%%%%%%%%%%%%%%%%%%%%%%%%
			
			\node (l1_1) at ($1.5*(l1)+0.5*(1.7320508075688774,-1.)$) {};
			\node (l2_1) at ($1.5*(l1)+0.5*(1.7320508075688774,1.)$) {};
			\node (l3_1) at ($1.5*(l1)+0.5*(0,2.)$) {};
			\node (l4_1) at ($1.5*(l1)+0.5*(-1.7320508075688774,1.)$){};
			\node (l5_1) at ($1.5*(l1)+0.5*(-1.7320508075688774,-1.)$) {};
			\node (l6_1) at ($1.5*(l1)+0.5*(0,-2.)$) {};
			
			%		\draw ($1.25*(l1_1)$) -- ($1.25*(l2_1)$) -- ($1.25*(l3_1)$) -- ($1.25*(l4_1)$) -- ($1.25*(l5_1)$) -- ($1.25*(l6_1)$) -- ($1.25*(l1_1)$);
			\draw[->,shorten >=3pt, shorten <=3pt] ($1.25*(l1_1)$) -- ($1.25*(l2_1)$);
			\draw[->,shorten >=3pt, shorten <=3pt, color=black!50!green] ($1.25*(l2_1)$) -- ($1.25*(l3_1)$);
			\draw[->,shorten >=3pt, shorten <=3pt] ($1.25*(l3_1)$) -- ($1.25*(l4_1)$);
			\draw[->,shorten >=3pt, shorten <=3pt] ($1.25*(l1_1)$) -- ($1.25*(l6_1)$);
			\draw[->,shorten >=3pt, shorten <=3pt, color=black!50!green] ($1.25*(l6_1)$) -- ($1.25*(l5_1)$);
			\draw[->,shorten >=3pt, shorten <=3pt] ($1.25*(l5_1)$) -- ($1.25*(l4_1)$);
			\fill[color=black!50!green, opacity=0.2] ($1.25*(l2_1)$) -- ($1.25*(l3_1)$) -- ($1.25*(l5_1)$) -- ($1.25*(l6_1)$);
			\fill[color=black, opacity=0.2] ($1.25*(l1_1)$) -- ($1.25*(l2_1)$) -- ($1.25*(l6_1)$);
			\fill[color=black, opacity=0.2] ($1.25*(l3_1)$) -- ($1.25*(l4_1)$) -- ($1.25*(l5_1)$);
			\filldraw[color=blue] ($1.25*(l1_1)$) circle[radius=2pt];
			\draw ($1.25*(l1_1)$) circle[radius=4pt];
			\filldraw[color=red] ($1.25*(l2_1)$) circle[radius=2pt];
			\filldraw[color=blue] ($1.25*(l3_1)$) circle[radius=2pt];
			\filldraw[color=red] ($1.25*(l4_1)$) circle[radius=2pt];
			\filldraw[color=blue] ($1.25*(l5_1)$) circle[radius=2pt];
			\filldraw[color=red] ($1.25*(l6_1)$) circle[radius=2pt];
			\draw[color=red] ($1.25*(l6_1)$) -- ($1.25*(l2_1)$);
			\draw[color=blue] ($1.25*(l5_1)$) -- ($1.25*(l3_1)$);

			\draw[->] ($1.1*(l1)$) -- ($0.8*(l1)$);
			
			%%%%%%%%%%%%%%%%%%%%%%%%%%%%%%%%%%%%%%%%%%%%%%%%
			
			\node (l1_2) at ($1.5*(l2)+0.5*(1.7320508075688774,-1.)$) {};
			\node (l2_2) at ($1.5*(l2)+0.5*(1.7320508075688774,1.)$) {};
			\node (l3_2) at ($1.5*(l2)+0.5*(0,2.)$) {};
			\node (l4_2) at ($1.5*(l2)+0.5*(-1.7320508075688774,1.)$){};
			\node (l5_2) at ($1.5*(l2)+0.5*(-1.7320508075688774,-1.)$) {};
			\node (l6_2) at ($1.5*(l2)+0.5*(0,-2.)$) {};
			
			\draw[->,shorten >=3pt, shorten <=3pt, color=black!50!green] ($1.25*(l2_2)$) -- ($1.25*(l3_2)$);
			\draw[->,shorten >=3pt, shorten <=3pt] ($1.25*(l3_2)$) -- ($1.25*(l4_2)$);
			\draw[->,shorten >=3pt, shorten <=3pt, color=black!50!green] ($1.25*(l4_2)$) -- ($1.25*(l5_2)$);
			\draw[->,shorten >=3pt, shorten <=3pt, color=black!50!green] ($1.25*(l2_2)$) -- ($1.25*(l1_2)$);
			\draw[->,shorten >=3pt, shorten <=3pt] ($1.25*(l1_2)$) -- ($1.25*(l6_2)$);
			\draw[->,shorten >=3pt, shorten <=3pt, color=black!50!green] ($1.25*(l6_2)$) -- ($1.25*(l5_2)$);
			\fill[color=black!50!green, opacity=0.2] ($1.25*(l2_2)$) -- ($1.25*(l3_2)$) -- ($1.25*(l1_2)$);
			\fill[color=black!50!green, opacity=0.2] ($1.25*(l4_2)$) -- ($1.25*(l5_2)$) -- ($1.25*(l6_2)$);
			\fill[color=black, opacity=0.2] ($1.25*(l3_2)$) -- ($1.25*(l4_2)$) -- ($1.25*(l6_2)$) -- ($1.25*(l1_2)$);
			\filldraw[color=blue] ($1.25*(l1_2)$) circle[radius=2pt];
			\filldraw[color=red] ($1.25*(l2_2)$) circle[radius=2pt];
			\draw ($1.25*(l2_2)$) circle[radius=4pt];
			\filldraw[color=blue] ($1.25*(l3_2)$) circle[radius=2pt];
			\filldraw[color=red] ($1.25*(l4_2)$) circle[radius=2pt];
			\filldraw[color=blue] ($1.25*(l5_2)$) circle[radius=2pt];
			\filldraw[color=red] ($1.25*(l6_2)$) circle[radius=2pt];
			\draw[color=blue] ($1.25*(l1_2)$) -- ($1.25*(l3_2)$);
			\draw[color=red] ($1.25*(l6_2)$) -- ($1.25*(l4_2)$);
			
			\draw[->] ($1.1*(l2)$) -- ($0.8*(l2)$);
			
			%%%%%%%%%%%%%%%%%%%%%%%%%%%%%%%%%%%%%%%%%%%%%%%%
			
			\node (l1_3) at ($1.5*(l3)+0.5*(1.7320508075688774,-1.)$) {};
			\node (l2_3) at ($1.5*(l3)+0.5*(1.7320508075688774,1.)$) {};
			\node (l3_3) at ($1.5*(l3)+0.5*(0,2.)$) {};
			\node (l4_3) at ($1.5*(l3)+0.5*(-1.7320508075688774,1.)$){};
			\node (l5_3) at ($1.5*(l3)+0.5*(-1.7320508075688774,-1.)$) {};
			\node (l6_3) at ($1.5*(l3)+0.5*(0,-2.)$) {};
			
			%		\draw ($1.25*(l1_3)$) -- ($1.25*(l2_3)$) -- ($1.25*(l3_3)$) -- ($1.25*(l4_3)$) -- ($1.25*(l5_3)$) -- ($1.25*(l6_3)$) -- ($1.25*(l1_3)$);
			\draw[->,shorten >=3pt, shorten <=3pt] ($1.25*(l3_3)$) -- ($1.25*(l4_3)$);
			\draw[->,shorten >=3pt, shorten <=3pt, color=black!50!green] ($1.25*(l4_3)$) -- ($1.25*(l5_3)$);
			\draw[->,shorten >=3pt, shorten <=3pt] ($1.25*(l5_3)$) -- ($1.25*(l6_3)$);
			\draw[->,shorten >=3pt, shorten <=3pt] ($1.25*(l3_3)$) -- ($1.25*(l2_3)$);
			\draw[->,shorten >=3pt, shorten <=3pt, color=black!50!green] ($1.25*(l2_3)$) -- ($1.25*(l1_3)$);
			\draw[->,shorten >=3pt, shorten <=3pt] ($1.25*(l1_3)$) -- ($1.25*(l6_3)$);
			\fill[color=black, opacity=0.2] ($1.25*(l3_3)$) -- ($1.25*(l4_3)$) -- ($1.25*(l2_3)$);
			\fill[color=black, opacity=0.2] ($1.25*(l5_3)$) -- ($1.25*(l6_3)$) -- ($1.25*(l1_3)$);
			\fill[color=black!50!green, opacity=0.2] ($1.25*(l4_3)$) -- ($1.25*(l5_3)$) -- ($1.25*(l1_3)$) -- ($1.25*(l2_3)$);
			\filldraw[color=blue] ($1.25*(l1_3)$) circle[radius=2pt];
			\filldraw[color=red] ($1.25*(l2_3)$) circle[radius=2pt];
			\filldraw[color=blue] ($1.25*(l3_3)$) circle[radius=2pt];
			\draw ($1.25*(l3_3)$) circle[radius=4pt];
			\filldraw[color=red] ($1.25*(l4_3)$) circle[radius=2pt];
			\filldraw[color=blue] ($1.25*(l5_3)$) circle[radius=2pt];
			\filldraw[color=red] ($1.25*(l6_3)$) circle[radius=2pt];
			\draw[color=red] ($1.25*(l2_3)$) -- ($1.25*(l4_3)$);
			\draw[color=blue] ($1.25*(l1_3)$) -- ($1.25*(l5_3)$);
			
			\draw[->] ($1.1*(l3)$) -- ($0.8*(l3)$);
			
			%%%%%%%%%%%%%%%%%%%%%%%%%%%%%%%%%%%%%%%%%%%%%%%%
			
			\node (l1_4) at ($1.5*(l4)+0.5*(1.7320508075688774,-1.)$) {};
			\node (l2_4) at ($1.5*(l4)+0.5*(1.7320508075688774,1.)$) {};
			\node (l3_4) at ($1.5*(l4)+0.5*(0,2.)$) {};
			\node (l4_4) at ($1.5*(l4)+0.5*(-1.7320508075688774,1.)$){};
			\node (l5_4) at ($1.5*(l4)+0.5*(-1.7320508075688774,-1.)$) {};
			\node (l6_4) at ($1.5*(l4)+0.5*(0,-2.)$) {};

			\draw[->,shorten >=3pt, shorten <=3pt, color=black!50!green] ($1.25*(l4_4)$) -- ($1.25*(l5_4)$);
			\draw[->,shorten >=3pt, shorten <=3pt] ($1.25*(l5_4)$) -- ($1.25*(l6_4)$);
			\draw[->,shorten >=3pt, shorten <=3pt, color=black!50!green] ($1.25*(l6_4)$) -- ($1.25*(l1_4)$);
			\draw[->,shorten >=3pt, shorten <=3pt, color=black!50!green] ($1.25*(l4_4)$) -- ($1.25*(l3_4)$);
			\draw[->,shorten >=3pt, shorten <=3pt] ($1.25*(l3_4)$) -- ($1.25*(l2_4)$);
			\draw[->,shorten >=3pt, shorten <=3pt, color=black!50!green] ($1.25*(l2_4)$) -- ($1.25*(l1_4)$);
			\fill[color=black!50!green, opacity=0.2] ($1.25*(l4_4)$) -- ($1.25*(l5_4)$) -- ($1.25*(l3_4)$);
			\fill[color=black!50!green, opacity=0.2] ($1.25*(l6_4)$) -- ($1.25*(l1_4)$) -- ($1.25*(l2_4)$);
			\fill[color=black, opacity=0.2] ($1.25*(l5_4)$) -- ($1.25*(l6_4)$) -- ($1.25*(l2_4)$) -- ($1.25*(l3_4)$);
			\filldraw[color=blue] ($1.25*(l1_4)$) circle[radius=2pt];
			\filldraw[color=red] ($1.25*(l2_4)$) circle[radius=2pt];
			\filldraw[color=blue] ($1.25*(l3_4)$) circle[radius=2pt];
			\filldraw[color=red] ($1.25*(l4_4)$) circle[radius=2pt];
			\draw ($1.25*(l4_4)$) circle[radius=4pt];
			\filldraw[color=blue] ($1.25*(l5_4)$) circle[radius=2pt];
			\filldraw[color=red] ($1.25*(l6_4)$) circle[radius=2pt];
			\draw[color=blue] ($1.25*(l3_4)$) -- ($1.25*(l5_4)$);
			\draw[color=red] ($1.25*(l2_4)$) -- ($1.25*(l6_4)$);
			
			\draw[->] ($1.1*(l4)$) -- ($0.8*(l4)$);
			
			%%%%%%%%%%%%%%%%%%%%%%%%%%%%%%%%%%%%%%%%%%%%%%%%
			
			\node (l1_5) at ($1.5*(l5)+0.5*(1.7320508075688774,-1.)$) {};
			\node (l2_5) at ($1.5*(l5)+0.5*(1.7320508075688774,1.)$) {};
			\node (l3_5) at ($1.5*(l5)+0.5*(0,2.)$) {};
			\node (l4_5) at ($1.5*(l5)+0.5*(-1.7320508075688774,1.)$){};
			\node (l5_5) at ($1.5*(l5)+0.5*(-1.7320508075688774,-1.)$) {};
			\node (l6_5) at ($1.5*(l5)+0.5*(0,-2.)$) {};
			
			%		\draw ($1.25*(l1_5)$) -- ($1.25*(l2_5)$) -- ($1.25*(l3_5)$) -- ($1.25*(l4_5)$) -- ($1.25*(l5_5)$) -- ($1.25*(l6_5)$) -- ($1.25*(l1_5)$);
			\draw[->,shorten >=3pt, shorten <=3pt] ($1.25*(l5_5)$) -- ($1.25*(l6_5)$);
			\draw[->,shorten >=3pt, shorten <=3pt, color=black!50!green] ($1.25*(l6_5)$) -- ($1.25*(l1_5)$);
			\draw[->,shorten >=3pt, shorten <=3pt] ($1.25*(l1_5)$) -- ($1.25*(l2_5)$);
			\draw[->,shorten >=3pt, shorten <=3pt] ($1.25*(l5_5)$) -- ($1.25*(l4_5)$);
			\draw[->,shorten >=3pt, shorten <=3pt, color=black!50!green] ($1.25*(l4_5)$) -- ($1.25*(l3_5)$);
			\draw[->,shorten >=3pt, shorten <=3pt] ($1.25*(l3_5)$) -- ($1.25*(l2_5)$);
			\fill[color=black, opacity=0.2] ($1.25*(l5_5)$) -- ($1.25*(l6_5)$) -- ($1.25*(l4_5)$);
			\fill[color=black, opacity=0.2] ($1.25*(l1_5)$) -- ($1.25*(l2_5)$) -- ($1.25*(l3_5)$);
			\fill[color=black!50!green, opacity=0.2] ($1.25*(l6_5)$) -- ($1.25*(l1_5)$) -- ($1.25*(l3_5)$) -- ($1.25*(l4_5)$);
			\filldraw[color=blue] ($1.25*(l1_5)$) circle[radius=2pt];
			\filldraw[color=red] ($1.25*(l2_5)$) circle[radius=2pt];
			\filldraw[color=blue] ($1.25*(l3_5)$) circle[radius=2pt];
			\filldraw[color=red] ($1.25*(l4_5)$) circle[radius=2pt];
			\filldraw[color=blue] ($1.25*(l5_5)$) circle[radius=2pt];
			\draw ($1.25*(l5_5)$) circle[radius=4pt];
			\filldraw[color=red] ($1.25*(l6_5)$) circle[radius=2pt];
			\draw[color=red] ($1.25*(l4_5)$) -- ($1.25*(l6_5)$);
			\draw[color=blue] ($1.25*(l3_5)$) -- ($1.25*(l1_5)$);
			
			\draw[->] ($1.1*(l5)$) -- ($0.8*(l5)$);
			
			%%%%%%%%%%%%%%%%%%%%%%%%%%%%%%%%%%%%%%%%%%%%%%%%
			
			\node (l1_6) at ($1.5*(l6)+0.5*(1.7320508075688774,-1.)$) {};
			\node (l2_6) at ($1.5*(l6)+0.5*(1.7320508075688774,1.)$) {};
			\node (l3_6) at ($1.5*(l6)+0.5*(0,2.)$) {};
			\node (l4_6) at ($1.5*(l6)+0.5*(-1.7320508075688774,1.)$){};
			\node (l5_6) at ($1.5*(l6)+0.5*(-1.7320508075688774,-1.)$) {};
			\node (l6_6) at ($1.5*(l6)+0.5*(0,-2.)$) {};
			
			%		\draw ($1.25*(l1_6)$) -- ($1.25*(l2_6)$) -- ($1.25*(l3_6)$) -- ($1.25*(l4_6)$) -- ($1.25*(l5_6)$) -- ($1.25*(l6_6)$) -- ($1.25*(l1_6)$);
			\draw[->,shorten >=3pt, shorten <=3pt, color=black!50!green] ($1.25*(l6_6)$) -- ($1.25*(l1_6)$);
			\draw[->,shorten >=3pt, shorten <=3pt] ($1.25*(l1_6)$) -- ($1.25*(l2_6)$);
			\draw[->,shorten >=3pt, shorten <=3pt, color=black!50!green] ($1.25*(l2_6)$) -- ($1.25*(l3_6)$);
			\draw[->,shorten >=3pt, shorten <=3pt, color=black!50!green] ($1.25*(l6_6)$) -- ($1.25*(l5_6)$);
			\draw[->,shorten >=3pt, shorten <=3pt] ($1.25*(l5_6)$) -- ($1.25*(l4_6)$);
			\draw[->,shorten >=3pt, shorten <=3pt, color=black!50!green] ($1.25*(l4_6)$) -- ($1.25*(l3_6)$);
			\fill[color=black!50!green, opacity=0.2] ($1.25*(l6_6)$) -- ($1.25*(l1_6)$) -- ($1.25*(l5_6)$);
			\fill[color=black!50!green, opacity=0.2] ($1.25*(l2_6)$) -- ($1.25*(l3_6)$) -- ($1.25*(l4_6)$);
			\fill[color=black, opacity=0.2] ($1.25*(l1_6)$) -- ($1.25*(l2_6)$) -- ($1.25*(l4_6)$) -- ($1.25*(l5_6)$);
			\filldraw[color=blue] ($1.25*(l1_6)$) circle[radius=2pt];
			\filldraw[color=red] ($1.25*(l2_6)$) circle[radius=2pt];
			\filldraw[color=blue] ($1.25*(l3_6)$) circle[radius=2pt];
			\filldraw[color=red] ($1.25*(l4_6)$) circle[radius=2pt];
			\filldraw[color=blue] ($1.25*(l5_6)$) circle[radius=2pt];
			\filldraw[color=red] ($1.25*(l6_6)$) circle[radius=2pt];
			\draw ($1.25*(l6_6)$) circle[radius=4pt];
			\draw[color=blue] ($1.25*(l5_6)$) -- ($1.25*(l1_6)$);
			\draw[color=red] ($1.25*(l4_6)$) -- ($1.25*(l2_6)$);
			
			\draw[->] ($1.1*(l6)$) -- ($0.8*(l6)$);
			
			\node at (0,-6) {\small$\rksdS$};
			
			\node (t) at (8,0) {};;
			\node[color=blue] (m) at ($(-1,0)+(t)$) {\small$(-,-)$};
			\node[color=red] (p) at ($(1,0)+(t)$) {\small{$(+,+)$}};
			\node[color=black!50!green] at ($(t)+(0,1.25)$) {\small{$(0,+)$}};
			\node[color=black] at ($(t)-(0,1.25)$) {\small{$(0,-)$}};
			
			\draw[->,shorten >=3pt, shorten <=3pt, color=black!50!green] ($(p)+(0,0.25)$) .. controls ($0.75*(p)+(0,0.5)+0.25*(m)+(0,0.5)$) and ($0.25*(p)+(0,0.5)+0.75*(m)+(0,0.5)$)  .. ($(m)+(0,0.25)$);;
			\draw[->,shorten >=3pt, shorten <=3pt] ($(m)-(0,0.25)$) .. controls ($0.75*(m)-(0,0.5)+0.25*(p)-(0,0.5)$) and ($0.25*(m)-(0,0.5)+0.75*(p)-(0,0.5)$)  .. ($(p)-(0,0.25)$);;
			
			\node at ($(t)+(0,-1.75)$) {\small$\Cc$};
			
			\draw[->] (5,0) -- (6,0);
			\node at (5.5,0.5) {\small$\wt{Q}$};
		\end{tikzpicture}
		\caption{The combinatorial Minor fibration of the arrangement $\Ac$ in $\RR^2$ with defining polynomial $xy(x-y)$
			on the tope-rank subdivision of the Salvetti complex.}\label{fig:CombMFib}
	\end{figure}
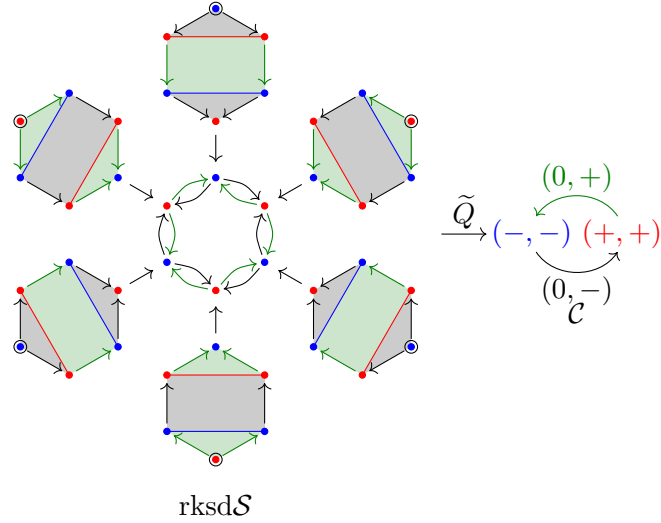
	\begin{example}
		\label{ex:CombMFib}
		Let $\Ac$ be the arrangement in $\RR^2$ with defining polynomial $Q = Q(\Ac) = xy(x-y)$.
		Then the combinatorial Milnor fibration $\wt{Q}$ of $\OM(\Ac)$ is displayed in Figure \ref{fig:CombMFib} where the fibers are colored accordingly.
	\end{example}
	
	\begin{figure}
		\begin{tikzpicture}[scale=1]
			\node (A) at (-4,0) {(\textbf{A})};;
			\node (B) at (0,0) {(\textbf{B})};;
			\node (C) at (4,0) {(\textbf{C})};;
			
			\node at ($(A)+(0,-0.75)$) {\small($0< k <|z(\tau)|-1$)};;
			\node at ($(C)+(0,-0.75)$) {\small where $r=|z(\tau)|$};;

			\node (A1) at ($(A)+(-1,1.0)$) {\small$(\tau^R_k,R)$};
			\node (A2) at ($(A)+(0,3)$) {\small$(\tau^R_{[k,k+1]},R)$};
			\node (A3) at ($(A)+(1,1)$) {\small$(\tau^R_{k+1},R)$};
			
			\node (B1) at ($(B)+(0,1)$) {\small$(\tau^R_1,R)$};
			\node (B2) at ($(B)+(0,3)$) {\small$(\tau^R_{[0,1]},R)$};
			
			\node (C1) at ($(C)+(0,1)$) {\small$(\tau^R_{r-1},R)$};
			\node (C2) at ($(C)+(0,3)$) {\small$(\tau^R_{[r-1,r]},R)$};
			
			\draw (A1) -- (A2) -- (A3);
			\draw (B1) -- (B2); 
			\draw (C1) -- (C2); 
			
		\end{tikzpicture}
		\caption{The three possible cases for a connected component of the fiber 
			$\wt{p}|_{(\wt{Q}\downarrow b)}^{-1}((\tau,R))$, $\dim(\tau)\geq 2$ in the proof of Theorem \ref{thm:MilnorOMPosetQuasiFib}.}
		\label{fig:FibersProofPQF}
	\end{figure}
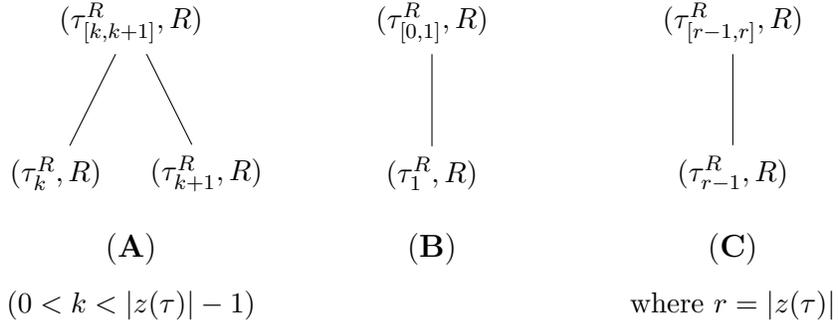
	
	\begin{theorem}
		\label{thm:MilnorOMPosetQuasiFib}
		The map $\wt{Q}:\rksdS \to \Cc$ is a poset quasi-fibration.
	\end{theorem}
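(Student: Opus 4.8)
The plan is to verify the quasi-fibration condition for each relation $a\le b$ in $\Cc$ separately. The poset $\Cc$ is a four-cycle: its minimal elements are the two vertices $(+,+),(-,-)$, its maximal elements the two edges $(0,+),(0,-)$, and every vertex is covered by every edge. So apart from the trivial relations $a=b$ there are only the four coverings from a vertex to an edge, and the two involving $(0,-)$ follow from those involving $(0,+)$ after swapping the two signs. Moreover, for $\epsilon\in\{+,-\}$ the poset fiber $(\wt{Q}\downarrow(\epsilon,\epsilon))=\wt{Q}^{-1}((\epsilon,\epsilon))$ is an order ideal of $\rksdS$, because $(\epsilon,\epsilon)$ is minimal in $\Cc$ and $\wt{Q}$ is order preserving; hence it is a closed subcomplex of $(\wt{Q}\downarrow(0,+))$. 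What remains is to prove that the inclusions $(\wt{Q}\downarrow(+,+))\hookrightarrow(\wt{Q}\downarrow(0,+))$ and $(\wt{Q}\downarrow(-,-))\hookrightarrow(\wt{Q}\downarrow(0,+))$ are homotopy equivalences.

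For this I would use discrete Morse theory: for $\epsilon\in\{+,-\}$ construct an acyclic matching $\mtc_\epsilon$ on $(\wt{Q}\downarrow(0,+))$ whose critical cells are exactly the subcomplex $(\wt{Q}\downarrow(\epsilon,\epsilon))$; then Theorem~\ref{thm:MainThmDMTRegularSubcomplex} exhibits $(\wt{Q}\downarrow(\epsilon,\epsilon))$ as a strong deformation retract of $(\wt{Q}\downarrow(0,+))$, giving the claim. The cells of $(\wt{Q}\downarrow(0,+))$ are precisely the slices $(\sigma^T_k,T)$ --- of type $(+,+)$ if $(-1)^kQ(T)=+$, of type $(-,-)$ otherwise --- together with the slabs $(\sigma^T_{[k,k+1]},T)$ with $(-1)^kQ(T)=+$, which are of type $(0,+)$; there are no type-$(0,-)$ cells left. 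So $\mtc_\epsilon$ must match every $(0,+)$-slab with an adjacent slice of type $(-,-)$ when $\epsilon=+$, and of type $(+,+)$ when $\epsilon=-$, coherently and without directed cycles.

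The key device is the Patchwork Theorem~\ref{thm:Patchwork} applied to (a coarsening of) the subdivision map $\wt{p}\colon\rksdS\to\Sc$. One first analyzes the fiber $\wt{p}|_{(\wt{Q}\downarrow(0,+))}^{-1}((\tau,R))$ over a Salvetti cell $(\tau,R)$: it is the string of interior slices and slabs of the $R$-rank subdivision of $(\tau,R)$ that survive the removal of the type-$(0,-)$ slabs, and for $\dim\tau\ge 2$ its connected components are exactly the three configurations $(\mathrm A),(\mathrm B),(\mathrm C)$ of Figure~\ref{fig:FibersProofPQF}, while for $\dim\tau\le 1$ one gets a single slice or a single slab. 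On components of type $(\mathrm A)$ and $(\mathrm B)$ the obvious single edge match (the $(0,+)$-slab with the appropriate adjacent slice) is acyclic and leaves precisely the intended critical cell(s); these are routine and account for most of $\mtc_\epsilon$.

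The hard part is the boundary configuration $(\mathrm C)$, together with the degenerate one-slab fibers: there the $(0,+)$-slab $(\tau^R_{[k_{\max}-1,k_{\max}]},R)$ has inside its own $\wt{p}$-fiber only a slice of the wrong type, and its one admissible partner is the extreme vertex $\tau\circ(-R)$ --- a $(-,-)$-cell that $\wt{p}$ places in a strictly smaller Salvetti cell and that is a boundary face of many such slabs at once. The proposed remedy is to fix a base tope $B$, assign to each $(-,-)$-vertex $A$ one canonical slab above it (say the one on the edge $\{A,R\}$ with $R$ the $\leq_B$-smallest neighbour of $A$), match the remaining boundary slabs downward along their $\partial\tau$-faces by induction on $\dim\tau$, and run the Patchwork over the map obtained from $\wt{p}$ by collapsing each boundary slab onto its target vertex. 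The decisive and most delicate point will be to verify that the combined matching is acyclic --- no gradient cycle can leave and re-enter a Salvetti cell since $\wt{p}$ is weakly decreasing along gradient paths and a secondary rank statistic strictly decreases on the finitely many boundary matches --- and that its critical set is exactly $(\wt{Q}\downarrow(\epsilon,\epsilon))$. The case $\epsilon=-$ is the mirror image of $\epsilon=+$, with the two ends of every cell interchanged.
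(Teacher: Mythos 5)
Your setup agrees with the paper's: you reduce to showing that $(\wt{Q}\downarrow(\epsilon,\epsilon))\hookrightarrow(\wt{Q}\downarrow(0,+))$ is a homotopy equivalence, analyze the cells of $(\wt{Q}\downarrow(0,+))$ correctly, and sort the connected components of the $\wt{p}$-fibers into the three types (A), (B), (C) of Figure~\ref{fig:FibersProofPQF}; your matchings on (A)- and (B)-components are the paper's. The divergence, and where a genuine gap appears, is in (C). You insist on a single acyclic matching whose critical set is \emph{exactly} $(\wt{Q}\downarrow(\epsilon,\epsilon))$, which forces you to perfectly match, for every $(-,-)$-vertex $A$, the set $\{(A,A)\}$ together with \emph{all} boundary slabs $(\tau^R_{[r-1,r]},R)$ with $\tau\circ(-R)=A$. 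Two concrete obstructions block your proposed remedy. First, the ``coarsened'' map that reroutes a boundary slab to its target vertex $(A,A)$ is not order preserving: the slab $(\tau^R_{[r-1,r]},R)$ has the slice $(\tau^R_{r-1},R)$ as a face, and $\wt{p}$ sends that slice to $(\tau,R)$, which is \emph{not} $\le(A,A)$ in $\Sc$ unless $\tau$ is already a tope. Hence Theorem~\ref{thm:Patchwork} does not apply to your coarsened map, and you cannot inherit acyclicity for free. Second, even setting acyclicity aside, ``matching the remaining boundary slabs downward along their $\partial\tau$-faces'' with no critical cell left over amounts to a perfect acyclic matching on $\{(A,A)\}\cup\{\text{boundary slabs at }A\}$; under the bijection slab $\leftrightarrow$ bottom slice this is equivalent to collapsing the closed cell $\overline{(\Zero^{-A}_{n-1},-A)}$ to a chosen vertex. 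That this PL ball is combinatorially collapsible (and to the particular vertex you chose via $B$) is a nontrivial claim that you neither prove nor reduce to a known fact; it is exactly the sort of delicate combinatorics the two-step argument is designed to avoid.

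The paper resolves (C) by \emph{not} matching anything there: the resulting critical complex $C(\mtc)$ is $(\wt{Q}\downarrow a)$ together with, for each such $A=-T$, the closed cone $(\Zero^{T}_{[n-1,n]},T)$ with apex $(A,A)$ over the contractible base $\overline{(\Zero^{T}_{n-1},T)}\subseteq(\wt{Q}\downarrow a)$. These cones are subcomplexes, pairwise intersecting only inside $(\wt{Q}\downarrow a)$, and a second, purely topological step (retracting each cone to its base simultaneously) finishes the proof without requiring any perfect matching or collapsibility of the bases. Your proof would be repaired by replacing the single-matching strategy for (C) with this ``leave critical, then push in cones'' step, or alternatively by actually establishing the collapsibility and acyclicity claims you flag as delicate.
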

	\begin{proof}
		Let $b \in \{(0,+),(0,-)\}$ and $a \in \{(+,+),(-,-)\}$. 
		Without loss of generality, it suffices to show that $(\wt{Q}\downarrow a) \hookrightarrow (\wt{Q}\downarrow b)$ is a homotopy equivalence
		for $a = (+,+)$ and $b = (0,+)$.
		
		The proof is in two steps. 
		In the first step, we construct an acyclic matching $\mtc$ on $(\wt{Q}\downarrow b)$ with Theorem \ref{thm:Patchwork}
		such that its critical cells $C(\mtc)$ constitute a subcomplex of $(\wt{Q}\downarrow b)$ which still properly contains
		$(\wt{Q}\downarrow a)$, but with the property that all cells in $C(\mtc) \cap (\wt{Q}\downarrow (-,-))$ are $0$-dimensional.
		In the second step, we then notice that each such vertex in $C(\mtc) \cap (\wt{Q}\downarrow (-,-))$ is the apex of
		a cone over a contractible subcomplex of $(\wt{Q}\downarrow a)$.
		These two steps together yield two homotopy equivalences $(\wt{Q}\downarrow a) \hookrightarrow C(\mtc) \hookrightarrow (\wt{Q}\downarrow b)$
		which concludes our argument (cf.\ Example \ref{ex:combMFibPQF} and Figure \ref{fig:combMFibPQF}).
		
		We begin with the construction of the matching $\mtc$.
		We construct matchings on each of
		the fibers of the poset map $\wt{p}|_{(\wt{Q}\downarrow b)}:(\wt{Q}\downarrow b) \to \wt{p}((\wt{Q}\downarrow b))$
		and apply Theorem \ref{thm:Patchwork} to obtain a matching $\mtc$ on the whole complex $(\wt{Q}\downarrow b)$.
		So, let $(\tau,R) \in \wt{p}((\wt{Q}\downarrow b)) \subseteq \Sc$ be a Salvetti cell of dimension at least $2$
		which is in the image of our complex $(\wt{Q}\downarrow b)$ under the projection map $\wt{p}:\rksdS \to \Sc$
		(note that all fibers of $\wt{p}$ of cells of dimension at most one are singletons). 
		Then a connected component of $\wt{p}|_{(\wt{Q}\downarrow b)}^{-1}((\tau,R)) \subseteq (\wt{Q}\downarrow b)$ has one of the following
		three forms (\textbf{A}), (\textbf{B}), or (\textbf{C}) depicted in Figure \ref{fig:FibersProofPQF}, 
		by Definitions \ref{def:TopeRankSubdivCovectors}, \ref{def:TopeRankSubdivSalvetti} and \ref{def:MilnorFibrationOM}.
		In case (\textbf{A}), we either have $\wt{Q}((\tau^R_{k},R))=(+,+), \wt{Q}((\tau^R_{k+1},R))=(-,-)$ or the other way around;
		in both situations we can match those two cells in this connected component which are not contained in $(\wt{Q}\downarrow a)$.
		By the Definition \ref{def:MilnorFibrationOM} of the map $\wt{Q}$, in case (\textbf{B}), we must have $Q(R)=+$, so
		$\wt{Q}((\tau^R_{1},R)) = (-,-)$ and both cells are in $(\wt{Q}\downarrow b) \setminus (\wt{Q}\downarrow a)$ and can be matched for $\mtc$.
		In the third case (\textbf{C}), we then have $\wt{Q}((\tau^R_{r-1},R)) = (+,+)$, so in this situation we can not match anything,
		and are left with a cell $(\tau^R_{[r-1,r]},R) \in (\wt{Q}\downarrow b) \setminus C(\mtc)$ which has an adjacent vertex 
		$(\tau\circ(-R),\tau\circ(-R)) \in (\wt{Q}\downarrow b) \setminus C(\mtc)$.
		
		After, for each such fiber, all cells are matched in the above way, by Theorem \ref{thm:Patchwork} we obtain our desired acyclic matching $\mtc$
		whose critical cells $C(\mtc)$ constitute a subcomplex of $(\wt{Q}\downarrow b)$,
		and by Theorem \ref{thm:MainThmDMTRegularSubcomplex} we get a first homotopy equivalence $C(\mtc) \hookrightarrow (\wt{Q}\downarrow b)$.
		By construction, our matching has the following critical cells:
		%	$C(\mtc) = (\wt{Q}\downarrow a) \cup \{(\tau^R_{[|z(\tau)|-1,|z(\tau)|]},R) \mid Q(\tau\circ(-R))=-\} \cup \{(T,T) \mid Q(T)=-\}$
		\begin{align*}
			C(\mtc) = & \quad(\wt{Q}\downarrow a)\\
			&\cup  \{(\tau^R_{[|z(\tau)|-1,|z(\tau)|]},R) \mid Q(\tau\circ(-R))=-, \tau \in \Lc^\dual_{\dim \geq 1}\}\\
			&\cup  \{(T,T) \mid Q(T)=-\}.
		\end{align*}
		But it is clear that each $(\tau^R_{[|z(\tau)|-1,|z(\tau)|]},R)$ in the second set is
		contained in a unique maximal cell $(\Zero^T_{[n-1,n]},T)$ with $Q(-T) = -$,
		which is moreover a cone with apex $(-T,-T)$ over the contractible subcomplex of $(\wt{Q}\downarrow a)$
		given by the closure of $(\Zero^T_{n-1},T)$.
		Finally, ``pushing in'' all these cones yields 
		the second homotopy equivalence $$(\wt{Q}\downarrow a) \hookrightarrow C(\mtc)$$ and finishes the proof.
	\end{proof}

	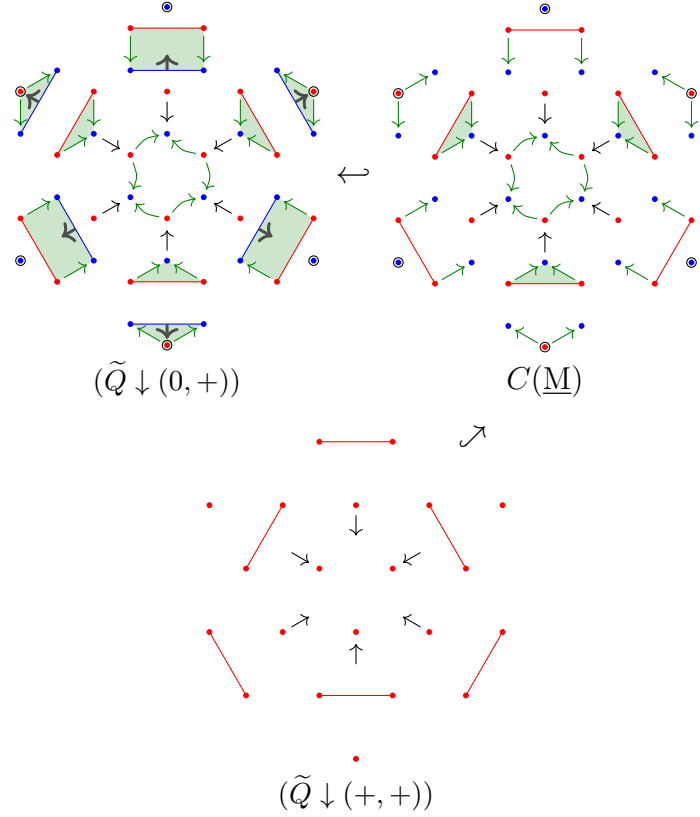
\begin{figure}
		\def\sc{0.45}
		\begin{tikzpicture}[scale=\sc]		
			\node (l1) at (1.7320508075688774,-1.) {};
			\node (l2) at (1.7320508075688774,1.) {};
			\node (l3) at (0,2.) {};
			\node (l4) at (-1.7320508075688774,1.) {};
			\node (l5) at (-1.7320508075688774,-1.) {};
			\node (l6) at (0,-2.) {};
			\node (zero) at (0,0) {};
			
			%%%%%%%%%%%%%%%%%%%%%%%%%%%%%%%%%%%%%%%%%%%%%%%%
			
			\node (l1_0) at ($0.5*(1.7320508075688774,-1.)$) {};
			\node (l2_0) at ($0.5*(1.7320508075688774,1.)$) {};
			\node (l3_0) at ($0.5*(0,2.)$) {};
			\node (l4_0) at ($0.5*(-1.7320508075688774,1.)$){};
			\node (l5_0) at ($0.5*(-1.7320508075688774,-1.)$) {};
			\node (l6_0) at ($0.5*(0,-2.)$) {};
			
			%		\draw[->,shorten >=3pt, shorten <=3pt] ($1.25*(l1_0)$) .. controls ($0.5*(l1_0)+0.5*(l2_0)$)  .. ($1.25*(l2_0)$);;
			\draw[->,shorten >=3pt, shorten <=3pt, color=black!50!green] ($1.25*(l2_0)$) .. controls ($0.75*(l1_0)+0.75*(l2_0)$)  .. ($1.25*(l1_0)$);;
			\draw[->,shorten >=3pt, shorten <=3pt, color=black!50!green] ($1.25*(l2_0)$) .. controls ($0.5*(l2_0)+0.5*(l3_0)$)  .. ($1.25*(l3_0)$);;
			%		\draw[->,shorten >=3pt, shorten <=3pt] ($1.25*(l3_0)$) .. controls ($0.75*(l2_0)+0.75*(l3_0)$)  .. ($1.25*(l2_0)$);;
			%		\draw[->,shorten >=3pt, shorten <=3pt] ($1.25*(l3_0)$) .. controls ($0.5*(l3_0)+0.5*(l4_0)$)  .. ($1.25*(l4_0)$);;
			\draw[->,shorten >=3pt, shorten <=3pt, color=black!50!green] ($1.25*(l4_0)$) .. controls ($0.75*(l3_0)+0.75*(l4_0)$)  .. ($1.25*(l3_0)$);;
			\draw[->,shorten >=3pt, shorten <=3pt, color=black!50!green] ($1.25*(l4_0)$) .. controls ($0.5*(l4_0)+0.5*(l5_0)$)  .. ($1.25*(l5_0)$);;
			%		\draw[->,shorten >=3pt, shorten <=3pt] ($1.25*(l5_0)$) .. controls ($0.75*(l4_0)+0.75*(l5_0)$)  .. ($1.25*(l4_0)$);;
			%		\draw[->,shorten >=3pt, shorten <=3pt] ($1.25*(l5_0)$) .. controls ($0.5*(l5_0)+0.5*(l6_0)$)  .. ($1.25*(l6_0)$);;
			\draw[->,shorten >=3pt, shorten <=3pt, color=black!50!green] ($1.25*(l6_0)$) .. controls ($0.75*(l5_0)+0.75*(l6_0)$)  .. ($1.25*(l5_0)$);;
			\draw[->,shorten >=3pt, shorten <=3pt, color=black!50!green] ($1.25*(l6_0)$) .. controls ($0.5*(l6_0)+0.5*(l1_0)$)  .. ($1.25*(l1_0)$);;
			%		\draw[->,shorten >=3pt, shorten <=3pt] ($1.25*(l1_0)$) .. controls ($0.75*(l6_0)+0.75*(l1_0)$)  .. ($1.25*(l6_0)$);;
			\filldraw[color=blue] ($1.25*(l1_0)$) circle[radius=2pt];
			\filldraw[color=red] ($1.25*(l2_0)$) circle[radius=2pt];
			\filldraw[color=blue] ($1.25*(l3_0)$) circle[radius=2pt];
			\filldraw[color=red] ($1.25*(l4_0)$) circle[radius=2pt];
			\filldraw[color=blue] ($1.25*(l5_0)$) circle[radius=2pt];
			\filldraw[color=red] ($1.25*(l6_0)$) circle[radius=2pt];
			
			%%%%%%%%%%%%%%%%%%%%%%%%%%%%%%%%%%%%%%%%%%%%%%%%
			
			\node (l1_1) at ($1.5*(l1)+0.5*(1.7320508075688774,-1.)$) {};
			\node (l2_1) at ($1.5*(l1)+0.5*(1.7320508075688774,1.)$) {};
			\node (l3_1) at ($1.5*(l1)+0.5*(0,2.)$) {};
			\node (l4_1) at ($1.5*(l1)+0.5*(-1.7320508075688774,1.)$){};
			\node (l5_1) at ($1.5*(l1)+0.5*(-1.7320508075688774,-1.)$) {};
			\node (l6_1) at ($1.5*(l1)+0.5*(0,-2.)$) {};
			
			%		\draw[->,shorten >=3pt, shorten <=3pt] ($1.25*(l1_1)$) -- ($1.25*(l2_1)$);
			\draw[->,shorten >=3pt, shorten <=3pt, color=black!50!green] ($1.25*(l2_1)$) -- ($1.25*(l3_1)$);
			%		\draw[->,shorten >=3pt, shorten <=3pt] ($1.25*(l3_1)$) -- ($1.25*(l4_1)$);
			%		\draw[->,shorten >=3pt, shorten <=3pt] ($1.25*(l1_1)$) -- ($1.25*(l6_1)$);
			\draw[->,shorten >=3pt, shorten <=3pt, color=black!50!green] ($1.25*(l6_1)$) -- ($1.25*(l5_1)$);
			%		\draw[->,shorten >=3pt, shorten <=3pt] ($1.25*(l5_1)$) -- ($1.25*(l4_1)$);
			\fill[color=black!50!green, opacity=0.2] ($1.25*(l2_1)$) -- ($1.25*(l3_1)$) -- ($1.25*(l5_1)$) -- ($1.25*(l6_1)$);
			%		\fill[pattern=north west lines, pattern color=black] ($1.25*(l1_1)$) -- ($1.25*(l2_1)$) -- ($1.25*(l6_1)$);
			%		\fill[pattern=north west lines, pattern color=black] ($1.25*(l3_1)$) -- ($1.25*(l4_1)$) -- ($1.25*(l5_1)$);
			\filldraw[color=blue] ($1.25*(l1_1)$) circle[radius=2pt];
			\draw ($1.25*(l1_1)$) circle[radius=4pt];
			\filldraw[color=red] ($1.25*(l2_1)$) circle[radius=2pt];
			\filldraw[color=blue] ($1.25*(l3_1)$) circle[radius=2pt];
			\filldraw[color=red] ($1.25*(l4_1)$) circle[radius=2pt];
			\filldraw[color=blue] ($1.25*(l5_1)$) circle[radius=2pt];
			\filldraw[color=red] ($1.25*(l6_1)$) circle[radius=2pt];
			\draw[color=red] ($1.25*(l6_1)$) -- ($1.25*(l2_1)$);
			\draw[color=blue] ($1.25*(l5_1)$) -- ($1.25*(l3_1)$);
			\node (a) at ($0.5*1.25*(l5_1)+0.5*1.25*(l3_1)$) {};;
			\draw[color=black!70, line width=0.4mm, ->] ($(a)$) -- ($1.15*(a)$);		
			\draw[->] ($1.1*(l1)$) -- ($0.8*(l1)$);
			
			%%%%%%%%%%%%%%%%%%%%%%%%%%%%%%%%%%%%%%%%%%%%%%%%
			
			\node (l1_2) at ($1.5*(l2)+0.5*(1.7320508075688774,-1.)$) {};
			\node (l2_2) at ($1.5*(l2)+0.5*(1.7320508075688774,1.)$) {};
			\node (l3_2) at ($1.5*(l2)+0.5*(0,2.)$) {};
			\node (l4_2) at ($1.5*(l2)+0.5*(-1.7320508075688774,1.)$){};
			\node (l5_2) at ($1.5*(l2)+0.5*(-1.7320508075688774,-1.)$) {};
			\node (l6_2) at ($1.5*(l2)+0.5*(0,-2.)$) {};
			
			\draw[->,shorten >=3pt, shorten <=3pt, color=black!50!green] ($1.25*(l2_2)$) -- ($1.25*(l3_2)$);
			%		\draw[->,shorten >=3pt, shorten <=3pt] ($1.25*(l3_2)$) -- ($1.25*(l4_2)$);
			\draw[->,shorten >=3pt, shorten <=3pt, color=black!50!green] ($1.25*(l4_2)$) -- ($1.25*(l5_2)$);
			\draw[->,shorten >=3pt, shorten <=3pt, color=black!50!green] ($1.25*(l2_2)$) -- ($1.25*(l1_2)$);
			%		\draw[->,shorten >=3pt, shorten <=3pt] ($1.25*(l1_2)$) -- ($1.25*(l6_2)$);
			\draw[->,shorten >=3pt, shorten <=3pt, color=black!50!green] ($1.25*(l6_2)$) -- ($1.25*(l5_2)$);
			\fill[color=black!50!green, opacity=0.2] ($1.25*(l2_2)$) -- ($1.25*(l3_2)$) -- ($1.25*(l1_2)$);
			\fill[color=black!50!green, opacity=0.2] ($1.25*(l4_2)$) -- ($1.25*(l5_2)$) -- ($1.25*(l6_2)$);
			%		\fill[pattern=north west lines, pattern color=black] ($1.25*(l3_2)$) -- ($1.25*(l4_2)$) -- ($1.25*(l6_2)$) -- ($1.25*(l1_2)$);
			\filldraw[color=blue] ($1.25*(l1_2)$) circle[radius=2pt];
			\filldraw[color=red] ($1.25*(l2_2)$) circle[radius=2pt];
			\draw ($1.25*(l2_2)$) circle[radius=4pt];
			\filldraw[color=blue] ($1.25*(l3_2)$) circle[radius=2pt];
			\filldraw[color=red] ($1.25*(l4_2)$) circle[radius=2pt];
			\filldraw[color=blue] ($1.25*(l5_2)$) circle[radius=2pt];
			\filldraw[color=red] ($1.25*(l6_2)$) circle[radius=2pt];
			\draw[color=blue] ($1.25*(l1_2)$) -- ($1.25*(l3_2)$);
			\node (a) at ($0.5*1.25*(l1_2)+0.5*1.25*(l3_2)$) {};;
			\draw[color=black!70, line width=0.4mm, ->] ($(a)$) -- ($1.1*(a)$);	
			\draw[color=red] ($1.25*(l6_2)$) -- ($1.25*(l4_2)$);
			
			\draw[->] ($1.1*(l2)$) -- ($0.8*(l2)$);
			
			%%%%%%%%%%%%%%%%%%%%%%%%%%%%%%%%%%%%%%%%%%%%%%%%
			
			\node (l1_3) at ($1.5*(l3)+0.5*(1.7320508075688774,-1.)$) {};
			\node (l2_3) at ($1.5*(l3)+0.5*(1.7320508075688774,1.)$) {};
			\node (l3_3) at ($1.5*(l3)+0.5*(0,2.)$) {};
			\node (l4_3) at ($1.5*(l3)+0.5*(-1.7320508075688774,1.)$){};
			\node (l5_3) at ($1.5*(l3)+0.5*(-1.7320508075688774,-1.)$) {};
			\node (l6_3) at ($1.5*(l3)+0.5*(0,-2.)$) {};
			
			%		\draw[->,shorten >=3pt, shorten <=3pt] ($1.25*(l3_3)$) -- ($1.25*(l4_3)$);
			\draw[->,shorten >=3pt, shorten <=3pt, color=black!50!green] ($1.25*(l4_3)$) -- ($1.25*(l5_3)$);
			%		\draw[->,shorten >=3pt, shorten <=3pt] ($1.25*(l5_3)$) -- ($1.25*(l6_3)$);
			%		\draw[->,shorten >=3pt, shorten <=3pt] ($1.25*(l3_3)$) -- ($1.25*(l2_3)$);
			\draw[->,shorten >=3pt, shorten <=3pt, color=black!50!green] ($1.25*(l2_3)$) -- ($1.25*(l1_3)$);
			%		\draw[->,shorten >=3pt, shorten <=3pt] ($1.25*(l1_3)$) -- ($1.25*(l6_3)$);
			%		\fill[pattern=north west lines, pattern color=black] ($1.25*(l3_3)$) -- ($1.25*(l4_3)$) -- ($1.25*(l2_3)$);
			%		\fill[pattern=north west lines, pattern color=black] ($1.25*(l5_3)$) -- ($1.25*(l6_3)$) -- ($1.25*(l1_3)$);
			\fill[color=black!50!green, opacity=0.2] ($1.25*(l4_3)$) -- ($1.25*(l5_3)$) -- ($1.25*(l1_3)$) -- ($1.25*(l2_3)$);
			\filldraw[color=blue] ($1.25*(l1_3)$) circle[radius=2pt];
			\filldraw[color=red] ($1.25*(l2_3)$) circle[radius=2pt];
			\filldraw[color=blue] ($1.25*(l3_3)$) circle[radius=2pt];
			\draw ($1.25*(l3_3)$) circle[radius=4pt];
			\filldraw[color=red] ($1.25*(l4_3)$) circle[radius=2pt];
			\filldraw[color=blue] ($1.25*(l5_3)$) circle[radius=2pt];
			\filldraw[color=red] ($1.25*(l6_3)$) circle[radius=2pt];
			\draw[color=red] ($1.25*(l2_3)$) -- ($1.25*(l4_3)$);
			\draw[color=blue] ($1.25*(l1_3)$) -- ($1.25*(l5_3)$);
			\node (a) at ($0.5*1.25*(l1_3)+0.5*1.25*(l5_3)$) {};;
			\draw[color=black!70, line width=0.4mm, ->] ($(a)$) -- ($1.15*(a)$);	
			
			\draw[->] ($1.1*(l3)$) -- ($0.8*(l3)$);
			
			%%%%%%%%%%%%%%%%%%%%%%%%%%%%%%%%%%%%%%%%%%%%%%%%
			
			\node (l1_4) at ($1.5*(l4)+0.5*(1.7320508075688774,-1.)$) {};
			\node (l2_4) at ($1.5*(l4)+0.5*(1.7320508075688774,1.)$) {};
			\node (l3_4) at ($1.5*(l4)+0.5*(0,2.)$) {};
			\node (l4_4) at ($1.5*(l4)+0.5*(-1.7320508075688774,1.)$){};
			\node (l5_4) at ($1.5*(l4)+0.5*(-1.7320508075688774,-1.)$) {};
			\node (l6_4) at ($1.5*(l4)+0.5*(0,-2.)$) {};

			\draw[->,shorten >=3pt, shorten <=3pt, color=black!50!green] ($1.25*(l4_4)$) -- ($1.25*(l5_4)$);
			%		\draw[->,shorten >=3pt, shorten <=3pt] ($1.25*(l5_4)$) -- ($1.25*(l6_4)$);
			\draw[->,shorten >=3pt, shorten <=3pt, color=black!50!green] ($1.25*(l6_4)$) -- ($1.25*(l1_4)$);
			\draw[->,shorten >=3pt, shorten <=3pt, color=black!50!green] ($1.25*(l4_4)$) -- ($1.25*(l3_4)$);
			%		\draw[->,shorten >=3pt, shorten <=3pt] ($1.25*(l3_4)$) -- ($1.25*(l2_4)$);
			\draw[->,shorten >=3pt, shorten <=3pt, color=black!50!green] ($1.25*(l2_4)$) -- ($1.25*(l1_4)$);
			\fill[color=black!50!green, opacity=0.2] ($1.25*(l4_4)$) -- ($1.25*(l5_4)$) -- ($1.25*(l3_4)$);
			\fill[color=black!50!green, opacity=0.2] ($1.25*(l6_4)$) -- ($1.25*(l1_4)$) -- ($1.25*(l2_4)$);
			%		\fill[pattern=north west lines, pattern color=black] ($1.25*(l5_4)$) -- ($1.25*(l6_4)$) -- ($1.25*(l2_4)$) -- ($1.25*(l3_4)$);
			\filldraw[color=blue] ($1.25*(l1_4)$) circle[radius=2pt];
			\filldraw[color=red] ($1.25*(l2_4)$) circle[radius=2pt];
			\filldraw[color=blue] ($1.25*(l3_4)$) circle[radius=2pt];
			\filldraw[color=red] ($1.25*(l4_4)$) circle[radius=2pt];
			\draw ($1.25*(l4_4)$) circle[radius=4pt];
			\filldraw[color=blue] ($1.25*(l5_4)$) circle[radius=2pt];
			\filldraw[color=red] ($1.25*(l6_4)$) circle[radius=2pt];
			\draw[color=blue] ($1.25*(l3_4)$) -- ($1.25*(l5_4)$);
			\node (a) at ($0.5*1.25*(l3_4)+0.5*1.25*(l5_4)$) {};;
			\draw[color=black!70, line width=0.4mm, ->] ($(a)$) -- ($1.1*(a)$);	
			\draw[color=red] ($1.25*(l2_4)$) -- ($1.25*(l6_4)$);
			
			\draw[->] ($1.1*(l4)$) -- ($0.8*(l4)$);
			
			%%%%%%%%%%%%%%%%%%%%%%%%%%%%%%%%%%%%%%%%%%%%%%%%
			
			\node (l1_5) at ($1.5*(l5)+0.5*(1.7320508075688774,-1.)$) {};
			\node (l2_5) at ($1.5*(l5)+0.5*(1.7320508075688774,1.)$) {};
			\node (l3_5) at ($1.5*(l5)+0.5*(0,2.)$) {};
			\node (l4_5) at ($1.5*(l5)+0.5*(-1.7320508075688774,1.)$){};
			\node (l5_5) at ($1.5*(l5)+0.5*(-1.7320508075688774,-1.)$) {};
			\node (l6_5) at ($1.5*(l5)+0.5*(0,-2.)$) {};
			
			%		\draw[->,shorten >=3pt, shorten <=3pt] ($1.25*(l5_5)$) -- ($1.25*(l6_5)$);
			\draw[->,shorten >=3pt, shorten <=3pt, color=black!50!green] ($1.25*(l6_5)$) -- ($1.25*(l1_5)$);
			%		\draw[->,shorten >=3pt, shorten <=3pt] ($1.25*(l1_5)$) -- ($1.25*(l2_5)$);
			%		\draw[->,shorten >=3pt, shorten <=3pt] ($1.25*(l5_5)$) -- ($1.25*(l4_5)$);
			\draw[->,shorten >=3pt, shorten <=3pt, color=black!50!green] ($1.25*(l4_5)$) -- ($1.25*(l3_5)$);
			%		\draw[->,shorten >=3pt, shorten <=3pt] ($1.25*(l3_5)$) -- ($1.25*(l2_5)$);
			%		\fill[pattern=north west lines, pattern color=black] ($1.25*(l5_5)$) -- ($1.25*(l6_5)$) -- ($1.25*(l4_5)$);
			%		\fill[pattern=north west lines, pattern color=black] ($1.25*(l1_5)$) -- ($1.25*(l2_5)$) -- ($1.25*(l3_5)$);
			\fill[color=black!50!green, opacity=0.2] ($1.25*(l6_5)$) -- ($1.25*(l1_5)$) -- ($1.25*(l3_5)$) -- ($1.25*(l4_5)$);
			\filldraw[color=blue] ($1.25*(l1_5)$) circle[radius=2pt];
			\filldraw[color=red] ($1.25*(l2_5)$) circle[radius=2pt];
			\filldraw[color=blue] ($1.25*(l3_5)$) circle[radius=2pt];
			\filldraw[color=red] ($1.25*(l4_5)$) circle[radius=2pt];
			\filldraw[color=blue] ($1.25*(l5_5)$) circle[radius=2pt];
			\draw ($1.25*(l5_5)$) circle[radius=4pt];
			\filldraw[color=red] ($1.25*(l6_5)$) circle[radius=2pt];
			\draw[color=red] ($1.25*(l4_5)$) -- ($1.25*(l6_5)$);
			\draw[color=blue] ($1.25*(l3_5)$) -- ($1.25*(l1_5)$);
			\node (a) at ($0.5*1.25*(l3_5)+0.5*1.25*(l1_5)$) {};;
			\draw[color=black!70, line width=0.4mm, ->] ($(a)$) -- ($1.15*(a)$);	
			
			\draw[->] ($1.1*(l5)$) -- ($0.8*(l5)$);
			
			%%%%%%%%%%%%%%%%%%%%%%%%%%%%%%%%%%%%%%%%%%%%%%%%
			
			\node (l1_6) at ($1.5*(l6)+0.5*(1.7320508075688774,-1.)$) {};
			\node (l2_6) at ($1.5*(l6)+0.5*(1.7320508075688774,1.)$) {};
			\node (l3_6) at ($1.5*(l6)+0.5*(0,2.)$) {};
			\node (l4_6) at ($1.5*(l6)+0.5*(-1.7320508075688774,1.)$){};
			\node (l5_6) at ($1.5*(l6)+0.5*(-1.7320508075688774,-1.)$) {};
			\node (l6_6) at ($1.5*(l6)+0.5*(0,-2.)$) {};
			
			\draw[->,shorten >=3pt, shorten <=3pt, color=black!50!green] ($1.25*(l6_6)$) -- ($1.25*(l1_6)$);
			%		\draw[->,shorten >=3pt, shorten <=3pt] ($1.25*(l1_6)$) -- ($1.25*(l2_6)$);
			\draw[->,shorten >=3pt, shorten <=3pt, color=black!50!green] ($1.25*(l2_6)$) -- ($1.25*(l3_6)$);
			\draw[->,shorten >=3pt, shorten <=3pt, color=black!50!green] ($1.25*(l6_6)$) -- ($1.25*(l5_6)$);
			%		\draw[->,shorten >=3pt, shorten <=3pt] ($1.25*(l5_6)$) -- ($1.25*(l4_6)$);
			\draw[->,shorten >=3pt, shorten <=3pt, color=black!50!green] ($1.25*(l4_6)$) -- ($1.25*(l3_6)$);
			\fill[color=black!50!green, opacity=0.2] ($1.25*(l6_6)$) -- ($1.25*(l1_6)$) -- ($1.25*(l5_6)$);
			\fill[color=black!50!green, opacity=0.2] ($1.25*(l2_6)$) -- ($1.25*(l3_6)$) -- ($1.25*(l4_6)$);
			%		\fill[pattern=north west lines, pattern color=black] ($1.25*(l1_6)$) -- ($1.25*(l2_6)$) -- ($1.25*(l4_6)$) -- ($1.25*(l5_6)$);
			\filldraw[color=blue] ($1.25*(l1_6)$) circle[radius=2pt];
			\filldraw[color=red] ($1.25*(l2_6)$) circle[radius=2pt];
			\filldraw[color=blue] ($1.25*(l3_6)$) circle[radius=2pt];
			\filldraw[color=red] ($1.25*(l4_6)$) circle[radius=2pt];
			\filldraw[color=blue] ($1.25*(l5_6)$) circle[radius=2pt];
			\filldraw[color=red] ($1.25*(l6_6)$) circle[radius=2pt];
			\draw ($1.25*(l6_6)$) circle[radius=4pt];
			\draw[color=blue] ($1.25*(l5_6)$) -- ($1.25*(l1_6)$);
			\node (a) at ($0.5*1.25*(l5_6)+0.5*1.25*(l1_6)$) {};;
			\draw[color=black!70, line width=0.4mm, ->] ($(a)$) -- ($1.1*(a)$);	
			\draw[color=red] ($1.25*(l4_6)$) -- ($1.25*(l2_6)$);
			
			\draw[->] ($1.1*(l6)$) -- ($0.8*(l6)$); 
			\node at (5.5,0) {$\hookleftarrow$};
			
			\node at (0,-6) {\small$(\wt{Q}\downarrow (0,+))$};
		\end{tikzpicture}
		\begin{tikzpicture}[scale=\sc]		
			\node (l1) at (1.7320508075688774,-1.) {};
			\node (l2) at (1.7320508075688774,1.) {};
			\node (l3) at (0,2.) {};
			\node (l4) at (-1.7320508075688774,1.) {};
			\node (l5) at (-1.7320508075688774,-1.) {};
			\node (l6) at (0,-2.) {};
			\node (zero) at (0,0) {};
			
			%%%%%%%%%%%%%%%%%%%%%%%%%%%%%%%%%%%%%%%%%%%%%%%%
			
			\node (l1_0) at ($0.5*(1.7320508075688774,-1.)$) {};
			\node (l2_0) at ($0.5*(1.7320508075688774,1.)$) {};
			\node (l3_0) at ($0.5*(0,2.)$) {};
			\node (l4_0) at ($0.5*(-1.7320508075688774,1.)$){};
			\node (l5_0) at ($0.5*(-1.7320508075688774,-1.)$) {};
			\node (l6_0) at ($0.5*(0,-2.)$) {};
			
			%		\draw[->,shorten >=3pt, shorten <=3pt] ($1.25*(l1_0)$) .. controls ($0.5*(l1_0)+0.5*(l2_0)$)  .. ($1.25*(l2_0)$);;
			\draw[->,shorten >=3pt, shorten <=3pt, color=black!50!green] ($1.25*(l2_0)$) .. controls ($0.75*(l1_0)+0.75*(l2_0)$)  .. ($1.25*(l1_0)$);;
			\draw[->,shorten >=3pt, shorten <=3pt, color=black!50!green] ($1.25*(l2_0)$) .. controls ($0.5*(l2_0)+0.5*(l3_0)$)  .. ($1.25*(l3_0)$);;
			%		\draw[->,shorten >=3pt, shorten <=3pt] ($1.25*(l3_0)$) .. controls ($0.75*(l2_0)+0.75*(l3_0)$)  .. ($1.25*(l2_0)$);;
			%		\draw[->,shorten >=3pt, shorten <=3pt] ($1.25*(l3_0)$) .. controls ($0.5*(l3_0)+0.5*(l4_0)$)  .. ($1.25*(l4_0)$);;
			\draw[->,shorten >=3pt, shorten <=3pt, color=black!50!green] ($1.25*(l4_0)$) .. controls ($0.75*(l3_0)+0.75*(l4_0)$)  .. ($1.25*(l3_0)$);;
			\draw[->,shorten >=3pt, shorten <=3pt, color=black!50!green] ($1.25*(l4_0)$) .. controls ($0.5*(l4_0)+0.5*(l5_0)$)  .. ($1.25*(l5_0)$);;
			%		\draw[->,shorten >=3pt, shorten <=3pt] ($1.25*(l5_0)$) .. controls ($0.75*(l4_0)+0.75*(l5_0)$)  .. ($1.25*(l4_0)$);;
			%		\draw[->,shorten >=3pt, shorten <=3pt] ($1.25*(l5_0)$) .. controls ($0.5*(l5_0)+0.5*(l6_0)$)  .. ($1.25*(l6_0)$);;
			\draw[->,shorten >=3pt, shorten <=3pt, color=black!50!green] ($1.25*(l6_0)$) .. controls ($0.75*(l5_0)+0.75*(l6_0)$)  .. ($1.25*(l5_0)$);;
			\draw[->,shorten >=3pt, shorten <=3pt, color=black!50!green] ($1.25*(l6_0)$) .. controls ($0.5*(l6_0)+0.5*(l1_0)$)  .. ($1.25*(l1_0)$);;
			%		\draw[->,shorten >=3pt, shorten <=3pt] ($1.25*(l1_0)$) .. controls ($0.75*(l6_0)+0.75*(l1_0)$)  .. ($1.25*(l6_0)$);;
			\filldraw[color=blue] ($1.25*(l1_0)$) circle[radius=2pt];
			\filldraw[color=red] ($1.25*(l2_0)$) circle[radius=2pt];
			\filldraw[color=blue] ($1.25*(l3_0)$) circle[radius=2pt];
			\filldraw[color=red] ($1.25*(l4_0)$) circle[radius=2pt];
			\filldraw[color=blue] ($1.25*(l5_0)$) circle[radius=2pt];
			\filldraw[color=red] ($1.25*(l6_0)$) circle[radius=2pt];
			
			%%%%%%%%%%%%%%%%%%%%%%%%%%%%%%%%%%%%%%%%%%%%%%%%
			
			\node (l1_1) at ($1.5*(l1)+0.5*(1.7320508075688774,-1.)$) {};
			\node (l2_1) at ($1.5*(l1)+0.5*(1.7320508075688774,1.)$) {};
			\node (l3_1) at ($1.5*(l1)+0.5*(0,2.)$) {};
			\node (l4_1) at ($1.5*(l1)+0.5*(-1.7320508075688774,1.)$){};
			\node (l5_1) at ($1.5*(l1)+0.5*(-1.7320508075688774,-1.)$) {};
			\node (l6_1) at ($1.5*(l1)+0.5*(0,-2.)$) {};
			
			%		\draw ($1.25*(l1_1)$) -- ($1.25*(l2_1)$) -- ($1.25*(l3_1)$) -- ($1.25*(l4_1)$) -- ($1.25*(l5_1)$) -- ($1.25*(l6_1)$) -- ($1.25*(l1_1)$);
			%		\draw[->,shorten >=3pt, shorten <=3pt] ($1.25*(l1_1)$) -- ($1.25*(l2_1)$);
			\draw[->,shorten >=3pt, shorten <=3pt, color=black!50!green] ($1.25*(l2_1)$) -- ($1.25*(l3_1)$);
			%		\draw[->,shorten >=3pt, shorten <=3pt] ($1.25*(l3_1)$) -- ($1.25*(l4_1)$);
			%		\draw[->,shorten >=3pt, shorten <=3pt] ($1.25*(l1_1)$) -- ($1.25*(l6_1)$);
			\draw[->,shorten >=3pt, shorten <=3pt, color=black!50!green] ($1.25*(l6_1)$) -- ($1.25*(l5_1)$);
			%		\draw[->,shorten >=3pt, shorten <=3pt] ($1.25*(l5_1)$) -- ($1.25*(l4_1)$);
			%	\fill[color=black!50!green, opacity=0.2] ($1.25*(l2_1)$) -- ($1.25*(l3_1)$) -- ($1.25*(l5_1)$) -- ($1.25*(l6_1)$);
			%		\fill[pattern=north west lines, pattern color=black] ($1.25*(l1_1)$) -- ($1.25*(l2_1)$) -- ($1.25*(l6_1)$);
			%		\fill[pattern=north west lines, pattern color=black] ($1.25*(l3_1)$) -- ($1.25*(l4_1)$) -- ($1.25*(l5_1)$);
			\filldraw[color=blue] ($1.25*(l1_1)$) circle[radius=2pt];
			\draw ($1.25*(l1_1)$) circle[radius=4pt];
			\filldraw[color=red] ($1.25*(l2_1)$) circle[radius=2pt];
			\filldraw[color=blue] ($1.25*(l3_1)$) circle[radius=2pt];
			\filldraw[color=red] ($1.25*(l4_1)$) circle[radius=2pt];
			\filldraw[color=blue] ($1.25*(l5_1)$) circle[radius=2pt];
			\filldraw[color=red] ($1.25*(l6_1)$) circle[radius=2pt];
			\draw[color=red] ($1.25*(l6_1)$) -- ($1.25*(l2_1)$);
			%	\draw[color=blue] ($1.25*(l5_1)$) -- ($1.25*(l3_1)$);		
			\draw[->] ($1.1*(l1)$) -- ($0.8*(l1)$);
			
			%%%%%%%%%%%%%%%%%%%%%%%%%%%%%%%%%%%%%%%%%%%%%%%%
			
			\node (l1_2) at ($1.5*(l2)+0.5*(1.7320508075688774,-1.)$) {};
			\node (l2_2) at ($1.5*(l2)+0.5*(1.7320508075688774,1.)$) {};
			\node (l3_2) at ($1.5*(l2)+0.5*(0,2.)$) {};
			\node (l4_2) at ($1.5*(l2)+0.5*(-1.7320508075688774,1.)$){};
			\node (l5_2) at ($1.5*(l2)+0.5*(-1.7320508075688774,-1.)$) {};
			\node (l6_2) at ($1.5*(l2)+0.5*(0,-2.)$) {};
			\
			\draw[->,shorten >=3pt, shorten <=3pt, color=black!50!green] ($1.25*(l2_2)$) -- ($1.25*(l3_2)$);
			%		\draw[->,shorten >=3pt, shorten <=3pt] ($1.25*(l3_2)$) -- ($1.25*(l4_2)$);
			\draw[->,shorten >=3pt, shorten <=3pt, color=black!50!green] ($1.25*(l4_2)$) -- ($1.25*(l5_2)$);
			\draw[->,shorten >=3pt, shorten <=3pt, color=black!50!green] ($1.25*(l2_2)$) -- ($1.25*(l1_2)$);
			%		\draw[->,shorten >=3pt, shorten <=3pt] ($1.25*(l1_2)$) -- ($1.25*(l6_2)$);
			\draw[->,shorten >=3pt, shorten <=3pt, color=black!50!green] ($1.25*(l6_2)$) -- ($1.25*(l5_2)$);
			%	\fill[color=black!50!green, opacity=0.2] ($1.25*(l2_2)$) -- ($1.25*(l3_2)$) -- ($1.25*(l1_2)$);
			\fill[color=black!50!green, opacity=0.2] ($1.25*(l4_2)$) -- ($1.25*(l5_2)$) -- ($1.25*(l6_2)$);
			%		\fill[pattern=north west lines, pattern color=black] ($1.25*(l3_2)$) -- ($1.25*(l4_2)$) -- ($1.25*(l6_2)$) -- ($1.25*(l1_2)$);
			\filldraw[color=blue] ($1.25*(l1_2)$) circle[radius=2pt];
			\filldraw[color=red] ($1.25*(l2_2)$) circle[radius=2pt];
			\draw ($1.25*(l2_2)$) circle[radius=4pt];
			\filldraw[color=blue] ($1.25*(l3_2)$) circle[radius=2pt];
			\filldraw[color=red] ($1.25*(l4_2)$) circle[radius=2pt];
			\filldraw[color=blue] ($1.25*(l5_2)$) circle[radius=2pt];
			\filldraw[color=red] ($1.25*(l6_2)$) circle[radius=2pt];
			%	\draw[color=blue] ($1.25*(l1_2)$) -- ($1.25*(l3_2)$);
			\draw[color=red] ($1.25*(l6_2)$) -- ($1.25*(l4_2)$);
			
			\draw[->] ($1.1*(l2)$) -- ($0.8*(l2)$);
			
			%%%%%%%%%%%%%%%%%%%%%%%%%%%%%%%%%%%%%%%%%%%%%%%%
			
			\node (l1_3) at ($1.5*(l3)+0.5*(1.7320508075688774,-1.)$) {};
			\node (l2_3) at ($1.5*(l3)+0.5*(1.7320508075688774,1.)$) {};
			\node (l3_3) at ($1.5*(l3)+0.5*(0,2.)$) {};
			\node (l4_3) at ($1.5*(l3)+0.5*(-1.7320508075688774,1.)$){};
			\node (l5_3) at ($1.5*(l3)+0.5*(-1.7320508075688774,-1.)$) {};
			\node (l6_3) at ($1.5*(l3)+0.5*(0,-2.)$) {};
			\
			%		\draw[->,shorten >=3pt, shorten <=3pt] ($1.25*(l3_3)$) -- ($1.25*(l4_3)$);
			\draw[->,shorten >=3pt, shorten <=3pt, color=black!50!green] ($1.25*(l4_3)$) -- ($1.25*(l5_3)$);
			%		\draw[->,shorten >=3pt, shorten <=3pt] ($1.25*(l5_3)$) -- ($1.25*(l6_3)$);
			%		\draw[->,shorten >=3pt, shorten <=3pt] ($1.25*(l3_3)$) -- ($1.25*(l2_3)$);
			\draw[->,shorten >=3pt, shorten <=3pt, color=black!50!green] ($1.25*(l2_3)$) -- ($1.25*(l1_3)$);
			%		\draw[->,shorten >=3pt, shorten <=3pt] ($1.25*(l1_3)$) -- ($1.25*(l6_3)$);
			%		\fill[pattern=north west lines, pattern color=black] ($1.25*(l3_3)$) -- ($1.25*(l4_3)$) -- ($1.25*(l2_3)$);
			%		\fill[pattern=north west lines, pattern color=black] ($1.25*(l5_3)$) -- ($1.25*(l6_3)$) -- ($1.25*(l1_3)$);
			%	\fill[color=black!50!green, opacity=0.2] ($1.25*(l4_3)$) -- ($1.25*(l5_3)$) -- ($1.25*(l1_3)$) -- ($1.25*(l2_3)$);
			\filldraw[color=blue] ($1.25*(l1_3)$) circle[radius=2pt];
			\filldraw[color=red] ($1.25*(l2_3)$) circle[radius=2pt];
			\filldraw[color=blue] ($1.25*(l3_3)$) circle[radius=2pt];
			\draw ($1.25*(l3_3)$) circle[radius=4pt];
			\filldraw[color=red] ($1.25*(l4_3)$) circle[radius=2pt];
			\filldraw[color=blue] ($1.25*(l5_3)$) circle[radius=2pt];
			\filldraw[color=red] ($1.25*(l6_3)$) circle[radius=2pt];
			\draw[color=red] ($1.25*(l2_3)$) -- ($1.25*(l4_3)$);
			%	\draw[color=blue] ($1.25*(l1_3)$) -- ($1.25*(l5_3)$);
			
			\draw[->] ($1.1*(l3)$) -- ($0.8*(l3)$);
			
			%%%%%%%%%%%%%%%%%%%%%%%%%%%%%%%%%%%%%%%%%%%%%%%%
			
			\node (l1_4) at ($1.5*(l4)+0.5*(1.7320508075688774,-1.)$) {};
			\node (l2_4) at ($1.5*(l4)+0.5*(1.7320508075688774,1.)$) {};
			\node (l3_4) at ($1.5*(l4)+0.5*(0,2.)$) {};
			\node (l4_4) at ($1.5*(l4)+0.5*(-1.7320508075688774,1.)$){};
			\node (l5_4) at ($1.5*(l4)+0.5*(-1.7320508075688774,-1.)$) {};
			\node (l6_4) at ($1.5*(l4)+0.5*(0,-2.)$) {};

			\draw[->,shorten >=3pt, shorten <=3pt, color=black!50!green] ($1.25*(l4_4)$) -- ($1.25*(l5_4)$);
			%		\draw[->,shorten >=3pt, shorten <=3pt] ($1.25*(l5_4)$) -- ($1.25*(l6_4)$);
			\draw[->,shorten >=3pt, shorten <=3pt, color=black!50!green] ($1.25*(l6_4)$) -- ($1.25*(l1_4)$);
			\draw[->,shorten >=3pt, shorten <=3pt, color=black!50!green] ($1.25*(l4_4)$) -- ($1.25*(l3_4)$);
			%		\draw[->,shorten >=3pt, shorten <=3pt] ($1.25*(l3_4)$) -- ($1.25*(l2_4)$);
			\draw[->,shorten >=3pt, shorten <=3pt, color=black!50!green] ($1.25*(l2_4)$) -- ($1.25*(l1_4)$);
			%	\fill[color=black!50!green, opacity=0.2] ($1.25*(l4_4)$) -- ($1.25*(l5_4)$) -- ($1.25*(l3_4)$);
			\fill[color=black!50!green, opacity=0.2] ($1.25*(l6_4)$) -- ($1.25*(l1_4)$) -- ($1.25*(l2_4)$);
			%		\fill[pattern=north west lines, pattern color=black] ($1.25*(l5_4)$) -- ($1.25*(l6_4)$) -- ($1.25*(l2_4)$) -- ($1.25*(l3_4)$);
			\filldraw[color=blue] ($1.25*(l1_4)$) circle[radius=2pt];
			\filldraw[color=red] ($1.25*(l2_4)$) circle[radius=2pt];
			\filldraw[color=blue] ($1.25*(l3_4)$) circle[radius=2pt];
			\filldraw[color=red] ($1.25*(l4_4)$) circle[radius=2pt];
			\draw ($1.25*(l4_4)$) circle[radius=4pt];
			\filldraw[color=blue] ($1.25*(l5_4)$) circle[radius=2pt];
			\filldraw[color=red] ($1.25*(l6_4)$) circle[radius=2pt];
			%	\draw[color=blue] ($1.25*(l3_4)$) -- ($1.25*(l5_4)$);
			\draw[color=red] ($1.25*(l2_4)$) -- ($1.25*(l6_4)$);
			
			\draw[->] ($1.1*(l4)$) -- ($0.8*(l4)$);
			
			%%%%%%%%%%%%%%%%%%%%%%%%%%%%%%%%%%%%%%%%%%%%%%%%
			
			\node (l1_5) at ($1.5*(l5)+0.5*(1.7320508075688774,-1.)$) {};
			\node (l2_5) at ($1.5*(l5)+0.5*(1.7320508075688774,1.)$) {};
			\node (l3_5) at ($1.5*(l5)+0.5*(0,2.)$) {};
			\node (l4_5) at ($1.5*(l5)+0.5*(-1.7320508075688774,1.)$){};
			\node (l5_5) at ($1.5*(l5)+0.5*(-1.7320508075688774,-1.)$) {};
			\node (l6_5) at ($1.5*(l5)+0.5*(0,-2.)$) {};
			
			%		\draw[->,shorten >=3pt, shorten <=3pt] ($1.25*(l5_5)$) -- ($1.25*(l6_5)$);
			\draw[->,shorten >=3pt, shorten <=3pt, color=black!50!green] ($1.25*(l6_5)$) -- ($1.25*(l1_5)$);
			%		\draw[->,shorten >=3pt, shorten <=3pt] ($1.25*(l1_5)$) -- ($1.25*(l2_5)$);
			%		\draw[->,shorten >=3pt, shorten <=3pt] ($1.25*(l5_5)$) -- ($1.25*(l4_5)$);
			\draw[->,shorten >=3pt, shorten <=3pt, color=black!50!green] ($1.25*(l4_5)$) -- ($1.25*(l3_5)$);
			%		\draw[->,shorten >=3pt, shorten <=3pt] ($1.25*(l3_5)$) -- ($1.25*(l2_5)$);
			%		\fill[pattern=north west lines, pattern color=black] ($1.25*(l5_5)$) -- ($1.25*(l6_5)$) -- ($1.25*(l4_5)$);
			%		\fill[pattern=north west lines, pattern color=black] ($1.25*(l1_5)$) -- ($1.25*(l2_5)$) -- ($1.25*(l3_5)$);
			%	\fill[color=black!50!green, opacity=0.2] ($1.25*(l6_5)$) -- ($1.25*(l1_5)$) -- ($1.25*(l3_5)$) -- ($1.25*(l4_5)$);
			\filldraw[color=blue] ($1.25*(l1_5)$) circle[radius=2pt];
			\filldraw[color=red] ($1.25*(l2_5)$) circle[radius=2pt];
			\filldraw[color=blue] ($1.25*(l3_5)$) circle[radius=2pt];
			\filldraw[color=red] ($1.25*(l4_5)$) circle[radius=2pt];
			\filldraw[color=blue] ($1.25*(l5_5)$) circle[radius=2pt];
			\draw ($1.25*(l5_5)$) circle[radius=4pt];
			\filldraw[color=red] ($1.25*(l6_5)$) circle[radius=2pt];
			\draw[color=red] ($1.25*(l4_5)$) -- ($1.25*(l6_5)$);
			%	\draw[color=blue] ($1.25*(l3_5)$) -- ($1.25*(l1_5)$);
			
			\draw[->] ($1.1*(l5)$) -- ($0.8*(l5)$);
			
			%%%%%%%%%%%%%%%%%%%%%%%%%%%%%%%%%%%%%%%%%%%%%%%%
			
			\node (l1_6) at ($1.5*(l6)+0.5*(1.7320508075688774,-1.)$) {};
			\node (l2_6) at ($1.5*(l6)+0.5*(1.7320508075688774,1.)$) {};
			\node (l3_6) at ($1.5*(l6)+0.5*(0,2.)$) {};
			\node (l4_6) at ($1.5*(l6)+0.5*(-1.7320508075688774,1.)$){};
			\node (l5_6) at ($1.5*(l6)+0.5*(-1.7320508075688774,-1.)$) {};
			\node (l6_6) at ($1.5*(l6)+0.5*(0,-2.)$) {};
			
			\draw[->,shorten >=3pt, shorten <=3pt, color=black!50!green] ($1.25*(l6_6)$) -- ($1.25*(l1_6)$);
			%		\draw[->,shorten >=3pt, shorten <=3pt] ($1.25*(l1_6)$) -- ($1.25*(l2_6)$);
			\draw[->,shorten >=3pt, shorten <=3pt, color=black!50!green] ($1.25*(l2_6)$) -- ($1.25*(l3_6)$);
			\draw[->,shorten >=3pt, shorten <=3pt, color=black!50!green] ($1.25*(l6_6)$) -- ($1.25*(l5_6)$);
			%		\draw[->,shorten >=3pt, shorten <=3pt] ($1.25*(l5_6)$) -- ($1.25*(l4_6)$);
			\draw[->,shorten >=3pt, shorten <=3pt, color=black!50!green] ($1.25*(l4_6)$) -- ($1.25*(l3_6)$);
			%	\fill[color=black!50!green, opacity=0.2] ($1.25*(l6_6)$) -- ($1.25*(l1_6)$) -- ($1.25*(l5_6)$);
			\fill[color=black!50!green, opacity=0.2] ($1.25*(l2_6)$) -- ($1.25*(l3_6)$) -- ($1.25*(l4_6)$);
			%		\fill[pattern=north west lines, pattern color=black] ($1.25*(l1_6)$) -- ($1.25*(l2_6)$) -- ($1.25*(l4_6)$) -- ($1.25*(l5_6)$);
			\filldraw[color=blue] ($1.25*(l1_6)$) circle[radius=2pt];
			\filldraw[color=red] ($1.25*(l2_6)$) circle[radius=2pt];
			\filldraw[color=blue] ($1.25*(l3_6)$) circle[radius=2pt];
			\filldraw[color=red] ($1.25*(l4_6)$) circle[radius=2pt];
			\filldraw[color=blue] ($1.25*(l5_6)$) circle[radius=2pt];
			\filldraw[color=red] ($1.25*(l6_6)$) circle[radius=2pt];
			\draw ($1.25*(l6_6)$) circle[radius=4pt];
			%	\draw[color=blue] ($1.25*(l5_6)$) -- ($1.25*(l1_6)$);
			\draw[color=red] ($1.25*(l4_6)$) -- ($1.25*(l2_6)$);
			
			\draw[->] ($1.1*(l6)$) -- ($0.8*(l6)$); 
			%		
			%		\node at (5.5,0) {$\hookleftarrow$};
			
			\node at (0,-6) {$C(\mtc)$};
		\end{tikzpicture}
		
		\begin{tikzpicture}[scale=\sc]	
			
			\node at (3.5,4.5) {\rotatebox[origin=c]{45}{$\hookrightarrow$}};	
			\node (l1) at (1.7320508075688774,-1.) {};
			\node (l2) at (1.7320508075688774,1.) {};
			\node (l3) at (0,2.) {};
			\node (l4) at (-1.7320508075688774,1.) {};
			\node (l5) at (-1.7320508075688774,-1.) {};
			\node (l6) at (0,-2.) {};
			\node (zero) at (0,0) {};
			
			%%%%%%%%%%%%%%%%%%%%%%%%%%%%%%%%%%%%%%%%%%%%%%%%
			
			\node (l1_0) at ($0.5*(1.7320508075688774,-1.)$) {};
			\node (l2_0) at ($0.5*(1.7320508075688774,1.)$) {};
			\node (l3_0) at ($0.5*(0,2.)$) {};
			\node (l4_0) at ($0.5*(-1.7320508075688774,1.)$){};
			\node (l5_0) at ($0.5*(-1.7320508075688774,-1.)$) {};
			\node (l6_0) at ($0.5*(0,-2.)$) {};

			\filldraw[color=red] ($1.25*(l2_0)$) circle[radius=2pt];
			%	\filldraw[color=blue] ($1.25*(l3_0)$) circle[radius=2pt];
			\filldraw[color=red] ($1.25*(l4_0)$) circle[radius=2pt];
			%	\filldraw[color=blue] ($1.25*(l5_0)$) circle[radius=2pt];
			\filldraw[color=red] ($1.25*(l6_0)$) circle[radius=2pt];
			
			%%%%%%%%%%%%%%%%%%%%%%%%%%%%%%%%%%%%%%%%%%%%%%%%
			
			\node (l1_1) at ($1.5*(l1)+0.5*(1.7320508075688774,-1.)$) {};
			\node (l2_1) at ($1.5*(l1)+0.5*(1.7320508075688774,1.)$) {};
			\node (l3_1) at ($1.5*(l1)+0.5*(0,2.)$) {};
			\node (l4_1) at ($1.5*(l1)+0.5*(-1.7320508075688774,1.)$){};
			\node (l5_1) at ($1.5*(l1)+0.5*(-1.7320508075688774,-1.)$) {};
			\node (l6_1) at ($1.5*(l1)+0.5*(0,-2.)$) {};
			
			%		\draw ($1.25*(l1_1)$) -- ($1.25*(l2_1)$) -- ($1.25*(l3_1)$) -- ($1.25*(l4_1)$) -- ($1.25*(l5_1)$) -- ($1.25*(l6_1)$) -- ($1.25*(l1_1)$);
			%		\draw[->,shorten >=3pt, shorten <=3pt] ($1.25*(l1_1)$) -- ($1.25*(l2_1)$);
			%	\draw[->,shorten >=3pt, shorten <=3pt, color=black!50!green] ($1.25*(l2_1)$) -- ($1.25*(l3_1)$);
			%		\draw[->,shorten >=3pt, shorten <=3pt] ($1.25*(l3_1)$) -- ($1.25*(l4_1)$);
			%		\draw[->,shorten >=3pt, shorten <=3pt] ($1.25*(l1_1)$) -- ($1.25*(l6_1)$);
			%	\draw[->,shorten >=3pt, shorten <=3pt, color=black!50!green] ($1.25*(l6_1)$) -- ($1.25*(l5_1)$);
			%		\draw[->,shorten >=3pt, shorten <=3pt] ($1.25*(l5_1)$) -- ($1.25*(l4_1)$);
			%	\fill[color=black!50!green, opacity=0.2] ($1.25*(l2_1)$) -- ($1.25*(l3_1)$) -- ($1.25*(l5_1)$) -- ($1.25*(l6_1)$);
			%			\fill[pattern=north west lines, pattern color=black] ($1.25*(l1_1)$) -- ($1.25*(l2_1)$) -- ($1.25*(l6_1)$);
			%		\fill[pattern=north west lines, pattern color=black] ($1.25*(l3_1)$) -- ($1.25*(l4_1)$) -- ($1.25*(l5_1)$);
			%	\filldraw[color=blue] ($1.25*(l1_1)$) circle[radius=2pt];
			%	\draw ($1.25*(l1_1)$) circle[radius=4pt];
			\filldraw[color=red] ($1.25*(l2_1)$) circle[radius=2pt];
			%	\filldraw[color=blue] ($1.25*(l3_1)$) circle[radius=2pt];
			\filldraw[color=red] ($1.25*(l4_1)$) circle[radius=2pt];
			%	\filldraw[color=blue] ($1.25*(l5_1)$) circle[radius=2pt];
			\filldraw[color=red] ($1.25*(l6_1)$) circle[radius=2pt];
			\draw[color=red] ($1.25*(l6_1)$) -- ($1.25*(l2_1)$);
			%	\draw[color=blue] ($1.25*(l5_1)$) -- ($1.25*(l3_1)$);		
			\draw[->] ($1.1*(l1)$) -- ($0.8*(l1)$);
			
			%%%%%%%%%%%%%%%%%%%%%%%%%%%%%%%%%%%%%%%%%%%%%%%%
			
			\node (l1_2) at ($1.5*(l2)+0.5*(1.7320508075688774,-1.)$) {};
			\node (l2_2) at ($1.5*(l2)+0.5*(1.7320508075688774,1.)$) {};
			\node (l3_2) at ($1.5*(l2)+0.5*(0,2.)$) {};
			\node (l4_2) at ($1.5*(l2)+0.5*(-1.7320508075688774,1.)$){};
			\node (l5_2) at ($1.5*(l2)+0.5*(-1.7320508075688774,-1.)$) {};
			\node (l6_2) at ($1.5*(l2)+0.5*(0,-2.)$) {};
			
			%		\draw ($1.25*(l1_2)$) -- ($1.25*(l2_2)$) -- ($1.25*(l3_2)$) -- ($1.25*(l4_2)$) -- ($1.25*(l5_2)$) -- ($1.25*(l6_2)$) -- ($1.25*(l1_2)$);
			%	\draw[->,shorten >=3pt, shorten <=3pt, color=black!50!green] ($1.25*(l2_2)$) -- ($1.25*(l3_2)$);
			%		\draw[->,shorten >=3pt, shorten <=3pt] ($1.25*(l3_2)$) -- ($1.25*(l4_2)$);
			%	\draw[->,shorten >=3pt, shorten <=3pt, color=black!50!green] ($1.25*(l4_2)$) -- ($1.25*(l5_2)$);
			%	\draw[->,shorten >=3pt, shorten <=3pt, color=black!50!green] ($1.25*(l2_2)$) -- ($1.25*(l1_2)$);
			%		\draw[->,shorten >=3pt, shorten <=3pt] ($1.25*(l1_2)$) -- ($1.25*(l6_2)$);
			%	\draw[->,shorten >=3pt, shorten <=3pt, color=black!50!green] ($1.25*(l6_2)$) -- ($1.25*(l5_2)$);
			%	\fill[color=black!50!green, opacity=0.2] ($1.25*(l2_2)$) -- ($1.25*(l3_2)$) -- ($1.25*(l1_2)$);
			%	\fill[color=black!50!green, opacity=0.2] ($1.25*(l4_2)$) -- ($1.25*(l5_2)$) -- ($1.25*(l6_2)$);
			%		\fill[pattern=north west lines, pattern color=black] ($1.25*(l3_2)$) -- ($1.25*(l4_2)$) -- ($1.25*(l6_2)$) -- ($1.25*(l1_2)$);
			%	\filldraw[color=blue] ($1.25*(l1_2)$) circle[radius=2pt];
			\filldraw[color=red] ($1.25*(l2_2)$) circle[radius=2pt];
			%	\draw ($1.25*(l2_2)$) circle[radius=4pt];
			%	\filldraw[color=blue] ($1.25*(l3_2)$) circle[radius=2pt];
			\filldraw[color=red] ($1.25*(l4_2)$) circle[radius=2pt];
			%	\filldraw[color=blue] ($1.25*(l5_2)$) circle[radius=2pt];
			\filldraw[color=red] ($1.25*(l6_2)$) circle[radius=2pt];
			%	\draw[color=blue] ($1.25*(l1_2)$) -- ($1.25*(l3_2)$);
			\draw[color=red] ($1.25*(l6_2)$) -- ($1.25*(l4_2)$);
			
			\draw[->] ($1.1*(l2)$) -- ($0.8*(l2)$);
			
			%%%%%%%%%%%%%%%%%%%%%%%%%%%%%%%%%%%%%%%%%%%%%%%%
			
			\node (l1_3) at ($1.5*(l3)+0.5*(1.7320508075688774,-1.)$) {};
			\node (l2_3) at ($1.5*(l3)+0.5*(1.7320508075688774,1.)$) {};
			\node (l3_3) at ($1.5*(l3)+0.5*(0,2.)$) {};
			\node (l4_3) at ($1.5*(l3)+0.5*(-1.7320508075688774,1.)$){};
			\node (l5_3) at ($1.5*(l3)+0.5*(-1.7320508075688774,-1.)$) {};
			\node (l6_3) at ($1.5*(l3)+0.5*(0,-2.)$) {};
			
			%		\draw ($1.25*(l1_3)$) -- ($1.25*(l2_3)$) -- ($1.25*(l3_3)$) -- ($1.25*(l4_3)$) -- ($1.25*(l5_3)$) -- ($1.25*(l6_3)$) -- ($1.25*(l1_3)$);
			%		\draw[->,shorten >=3pt, shorten <=3pt] ($1.25*(l3_3)$) -- ($1.25*(l4_3)$);
			%	\draw[->,shorten >=3pt, shorten <=3pt, color=black!50!green] ($1.25*(l4_3)$) -- ($1.25*(l5_3)$);
			%		\draw[->,shorten >=3pt, shorten <=3pt] ($1.25*(l5_3)$) -- ($1.25*(l6_3)$);
			%		\draw[->,shorten >=3pt, shorten <=3pt] ($1.25*(l3_3)$) -- ($1.25*(l2_3)$);
			%	\draw[->,shorten >=3pt, shorten <=3pt, color=black!50!green] ($1.25*(l2_3)$) -- ($1.25*(l1_3)$);
			%		\draw[->,shorten >=3pt, shorten <=3pt] ($1.25*(l1_3)$) -- ($1.25*(l6_3)$);
			%		\fill[pattern=north west lines, pattern color=black] ($1.25*(l3_3)$) -- ($1.25*(l4_3)$) -- ($1.25*(l2_3)$);
			%		\fill[pattern=north west lines, pattern color=black] ($1.25*(l5_3)$) -- ($1.25*(l6_3)$) -- ($1.25*(l1_3)$);
			%	\fill[color=black!50!green, opacity=0.2] ($1.25*(l4_3)$) -- ($1.25*(l5_3)$) -- ($1.25*(l1_3)$) -- ($1.25*(l2_3)$);
			%	\filldraw[color=blue] ($1.25*(l1_3)$) circle[radius=2pt];
			\filldraw[color=red] ($1.25*(l2_3)$) circle[radius=2pt];
			%	\filldraw[color=blue] ($1.25*(l3_3)$) circle[radius=2pt];
			%	\draw ($1.25*(l3_3)$) circle[radius=4pt];
			\filldraw[color=red] ($1.25*(l4_3)$) circle[radius=2pt];
			%	\filldraw[color=blue] ($1.25*(l5_3)$) circle[radius=2pt];
			\filldraw[color=red] ($1.25*(l6_3)$) circle[radius=2pt];
			\draw[color=red] ($1.25*(l2_3)$) -- ($1.25*(l4_3)$);
			%	\draw[color=blue] ($1.25*(l1_3)$) -- ($1.25*(l5_3)$);
			
			\draw[->] ($1.1*(l3)$) -- ($0.8*(l3)$);
			
			%%%%%%%%%%%%%%%%%%%%%%%%%%%%%%%%%%%%%%%%%%%%%%%%
			
			\node (l1_4) at ($1.5*(l4)+0.5*(1.7320508075688774,-1.)$) {};
			\node (l2_4) at ($1.5*(l4)+0.5*(1.7320508075688774,1.)$) {};
			\node (l3_4) at ($1.5*(l4)+0.5*(0,2.)$) {};
			\node (l4_4) at ($1.5*(l4)+0.5*(-1.7320508075688774,1.)$){};
			\node (l5_4) at ($1.5*(l4)+0.5*(-1.7320508075688774,-1.)$) {};
			\node (l6_4) at ($1.5*(l4)+0.5*(0,-2.)$) {};

			%	\draw[->,shorten >=3pt, shorten <=3pt, color=black!50!green] ($1.25*(l4_4)$) -- ($1.25*(l5_4)$);
			%		\draw[->,shorten >=3pt, shorten <=3pt] ($1.25*(l5_4)$) -- ($1.25*(l6_4)$);
			%	\draw[->,shorten >=3pt, shorten <=3pt, color=black!50!green] ($1.25*(l6_4)$) -- ($1.25*(l1_4)$);
			%	\draw[->,shorten >=3pt, shorten <=3pt, color=black!50!green] ($1.25*(l4_4)$) -- ($1.25*(l3_4)$);
			%		\draw[->,shorten >=3pt, shorten <=3pt] ($1.25*(l3_4)$) -- ($1.25*(l2_4)$);
			%	\draw[->,shorten >=3pt, shorten <=3pt, color=black!50!green] ($1.25*(l2_4)$) -- ($1.25*(l1_4)$);
			%	\fill[color=black!50!green, opacity=0.2] ($1.25*(l4_4)$) -- ($1.25*(l5_4)$) -- ($1.25*(l3_4)$);
			%	\fill[color=black!50!green, opacity=0.2] ($1.25*(l6_4)$) -- ($1.25*(l1_4)$) -- ($1.25*(l2_4)$);
			%		\fill[pattern=north west lines, pattern color=black] ($1.25*(l5_4)$) -- ($1.25*(l6_4)$) -- ($1.25*(l2_4)$) -- ($1.25*(l3_4)$);
			%	\filldraw[color=blue] ($1.25*(l1_4)$) circle[radius=2pt];
			\filldraw[color=red] ($1.25*(l2_4)$) circle[radius=2pt];
			%	\filldraw[color=blue] ($1.25*(l3_4)$) circle[radius=2pt];
			\filldraw[color=red] ($1.25*(l4_4)$) circle[radius=2pt];
			%	\draw ($1.25*(l4_4)$) circle[radius=4pt];
			%	\filldraw[color=blue] ($1.25*(l5_4)$) circle[radius=2pt];
			\filldraw[color=red] ($1.25*(l6_4)$) circle[radius=2pt];
			%	\draw[color=blue] ($1.25*(l3_4)$) -- ($1.25*(l5_4)$);
			\draw[color=red] ($1.25*(l2_4)$) -- ($1.25*(l6_4)$);
			
			\draw[->] ($1.1*(l4)$) -- ($0.8*(l4)$);
			
			%%%%%%%%%%%%%%%%%%%%%%%%%%%%%%%%%%%%%%%%%%%%%%%%
			
			\node (l1_5) at ($1.5*(l5)+0.5*(1.7320508075688774,-1.)$) {};
			\node (l2_5) at ($1.5*(l5)+0.5*(1.7320508075688774,1.)$) {};
			\node (l3_5) at ($1.5*(l5)+0.5*(0,2.)$) {};
			\node (l4_5) at ($1.5*(l5)+0.5*(-1.7320508075688774,1.)$){};
			\node (l5_5) at ($1.5*(l5)+0.5*(-1.7320508075688774,-1.)$) {};
			\node (l6_5) at ($1.5*(l5)+0.5*(0,-2.)$) {};
			
			%		\draw[->,shorten >=3pt, shorten <=3pt] ($1.25*(l5_5)$) -- ($1.25*(l6_5)$);
			%	\draw[->,shorten >=3pt, shorten <=3pt, color=black!50!green] ($1.25*(l6_5)$) -- ($1.25*(l1_5)$);
			%		\draw[->,shorten >=3pt, shorten <=3pt] ($1.25*(l1_5)$) -- ($1.25*(l2_5)$);
			%		\draw[->,shorten >=3pt, shorten <=3pt] ($1.25*(l5_5)$) -- ($1.25*(l4_5)$);
			%	\draw[->,shorten >=3pt, shorten <=3pt, color=black!50!green] ($1.25*(l4_5)$) -- ($1.25*(l3_5)$);
			%		\draw[->,shorten >=3pt, shorten <=3pt] ($1.25*(l3_5)$) -- ($1.25*(l2_5)$);
			%		\fill[pattern=north west lines, pattern color=black] ($1.25*(l5_5)$) -- ($1.25*(l6_5)$) -- ($1.25*(l4_5)$);
			%		\fill[pattern=north west lines, pattern color=black] ($1.25*(l1_5)$) -- ($1.25*(l2_5)$) -- ($1.25*(l3_5)$);
			%	\fill[color=black!50!green, opacity=0.2] ($1.25*(l6_5)$) -- ($1.25*(l1_5)$) -- ($1.25*(l3_5)$) -- ($1.25*(l4_5)$);
			%	\filldraw[color=blue] ($1.25*(l1_5)$) circle[radius=2pt];
			\filldraw[color=red] ($1.25*(l2_5)$) circle[radius=2pt];
			%	\filldraw[color=blue] ($1.25*(l3_5)$) circle[radius=2pt];
			\filldraw[color=red] ($1.25*(l4_5)$) circle[radius=2pt];
			%	\filldraw[color=blue] ($1.25*(l5_5)$) circle[radius=2pt];
			%	\draw ($1.25*(l5_5)$) circle[radius=4pt];
			\filldraw[color=red] ($1.25*(l6_5)$) circle[radius=2pt];
			\draw[color=red] ($1.25*(l4_5)$) -- ($1.25*(l6_5)$);
			%	\draw[color=blue] ($1.25*(l3_5)$) -- ($1.25*(l1_5)$);
			
			\draw[->] ($1.1*(l5)$) -- ($0.8*(l5)$);
			
			%%%%%%%%%%%%%%%%%%%%%%%%%%%%%%%%%%%%%%%%%%%%%%%%
			
			\node (l1_6) at ($1.5*(l6)+0.5*(1.7320508075688774,-1.)$) {};
			\node (l2_6) at ($1.5*(l6)+0.5*(1.7320508075688774,1.)$) {};
			\node (l3_6) at ($1.5*(l6)+0.5*(0,2.)$) {};
			\node (l4_6) at ($1.5*(l6)+0.5*(-1.7320508075688774,1.)$){};
			\node (l5_6) at ($1.5*(l6)+0.5*(-1.7320508075688774,-1.)$) {};
			\node (l6_6) at ($1.5*(l6)+0.5*(0,-2.)$) {};
			
			%	\draw[->,shorten >=3pt, shorten <=3pt, color=black!50!green] ($1.25*(l6_6)$) -- ($1.25*(l1_6)$);
			%		\draw[->,shorten >=3pt, shorten <=3pt] ($1.25*(l1_6)$) -- ($1.25*(l2_6)$);
			%	\draw[->,shorten >=3pt, shorten <=3pt, color=black!50!green] ($1.25*(l2_6)$) -- ($1.25*(l3_6)$);
			%	\draw[->,shorten >=3pt, shorten <=3pt, color=black!50!green] ($1.25*(l6_6)$) -- ($1.25*(l5_6)$);
			%		\draw[->,shorten >=3pt, shorten <=3pt] ($1.25*(l5_6)$) -- ($1.25*(l4_6)$);
			%	\draw[->,shorten >=3pt, shorten <=3pt, color=black!50!green] ($1.25*(l4_6)$) -- ($1.25*(l3_6)$);
			%	\fill[color=black!50!green, opacity=0.2] ($1.25*(l6_6)$) -- ($1.25*(l1_6)$) -- ($1.25*(l5_6)$);
			%	\fill[color=black!50!green, opacity=0.2] ($1.25*(l2_6)$) -- ($1.25*(l3_6)$) -- ($1.25*(l4_6)$);
			%		\fill[pattern=north west lines, pattern color=black] ($1.25*(l1_6)$) -- ($1.25*(l2_6)$) -- ($1.25*(l4_6)$) -- ($1.25*(l5_6)$);
			%	\filldraw[color=blue] ($1.25*(l1_6)$) circle[radius=2pt];
			\filldraw[color=red] ($1.25*(l2_6)$) circle[radius=2pt];
			%	\filldraw[color=blue] ($1.25*(l3_6)$) circle[radius=2pt];
			\filldraw[color=red] ($1.25*(l4_6)$) circle[radius=2pt];
			%	\filldraw[color=blue] ($1.25*(l5_6)$) circle[radius=2pt];
			\filldraw[color=red] ($1.25*(l6_6)$) circle[radius=2pt];
			%	\draw ($1.25*(l6_6)$) circle[radius=4pt];
			%	\draw[color=blue] ($1.25*(l5_6)$) -- ($1.25*(l1_6)$);
			\draw[color=red] ($1.25*(l4_6)$) -- ($1.25*(l2_6)$);
			
			\draw[->] ($1.1*(l6)$) -- ($0.8*(l6)$);

			\node at (0,-6) {\small$(\wt{Q}\downarrow (+,+))$};
		\end{tikzpicture}
		\caption{The two homotopy equivalences from the proof of Theorem \ref{thm:MilnorOMPosetQuasiFib}.}\label{fig:combMFibPQF}
		
	\end{figure}
	
	We illustrate the steps in the previous proof with the following example.
	\begin{example}
		\label{ex:combMFibPQF}
		Let $\Ac$ be the arrangement in $\RR^2$ with defining polynomial $Q = Q(\Ac) = xy(x-y)$.
		Now the poset fiber over $(0,+) \in \Cc$ is displayed on the right hand side in Figure \ref{fig:combMFibPQF} and the acyclic matching $\mtc$ on
		its face poset constructed in the proof of Theorem \ref{thm:MilnorOMPosetQuasiFib} is illustrated by the gray arrows. This gives the homotopy equivalence
		with the complex displayed in the middle.
		Now, three cones over blue vertices in $(\wt{Q}\downarrow(0,-))$ remain and pushing them in yields the second homotopy equivalence with $(\wt{Q}\downarrow(+,+))$
		displayed on the left hand side.
	\end{example}
	
	Now, let $\Ac$ be a real hyperplane arrangement, $\OM = (\Ac,\Lc(\Ac))$ the associated simple oriented matroid,
	$\Xf(\Ac)$ the complex complement, and $Q:\Xf(\Ac) \to \CC^\times := \CC \setminus \{0\}, z \mapsto \prod_{H \in \Ac} \alpha_H(z)$ the Milnor fibration.
	
	Our following central theorem yields our combinatorial map $\wt{Q}$ as a concrete model for
	the Milnor fibration of the complexified arrangement.
	\begin{theorem}
		\label{thm:MilnorOMHomotopyEquivToMilnorGeom}
		%	The homotopy equivalences (deformation retractions) $\rksdS \to \Xf(\Ac)$ and $\Cc \to \CC^*$ fit into a commutative square:
		There is a (homotopy) commutative square
		\begin{center}
			\begin{tikzcd}[column sep=16mm, scale cd=1]
				\vert\rksdS(\Ac)\vert \ar[r, "|\wt{Q}|"]\ar[d,"\isom", swap] &\vert\Cc\vert \ar[d,"\isom"]\\
				\Xf(\Ac) \ar[r,"Q"] &\CC^\times,
			\end{tikzcd}
		\end{center}
		where the vertical maps are homotopy equivalences, i.e.\ $\wt{Q}$ is a combinatorial model for the Milnor fibration of $\Ac$.
	\end{theorem}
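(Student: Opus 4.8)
The plan is to first pin down the two vertical homotopy equivalences and then reduce the homotopy commutativity of the square to a computation on first homology. For the left-hand map I take the composite $|\rksdS(\Ac)|\cong|\Sc(\Ac)|\isom\Xf(\Ac)$, the first map being the homeomorphism of underlying spaces provided by Theorem \ref{thm:rksdSubdivMap} and the second being Salvetti's homotopy equivalence (Theorem \ref{thm:SalvettiHoEquiv}); for the right-hand map I take the inclusion $|\Cc|\hookrightarrow\CC^\times$ of the unit circle, which is a homotopy equivalence (concretely $\Cc=\Sc(\Bc_1)$, the case $n=1$ of Example \ref{ex:SalBooleanParam}). Since $\Xf(\Ac)$, hence $|\rksdS(\Ac)|$, is connected and $\CC^\times\simeq S^1=K(\ZZ,1)$, the free homotopy classes of maps $|\rksdS(\Ac)|\to\CC^\times$ are classified by $H^1(|\rksdS(\Ac)|;\ZZ)=\operatorname{Hom}(H_1(|\rksdS(\Ac)|;\ZZ),\ZZ)$. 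Thus it suffices to prove that the two composites $|\rksdS(\Ac)|\to\CC^\times$ obtained by traversing the square induce the same homomorphism on $H_1$.

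I would then work with a convenient generating set of $H_1(|\rksdS(\Ac)|;\ZZ)=H_1(|\Sc(\Ac)|;\ZZ)=H_1(\Xf(\Ac);\ZZ)$: for each $H\in\Ac$, fix a pair of topes $T,T'$ adjacent across $H$ and let $\gamma_H$ be the concatenation of the two Salvetti $1$-cells joining the vertices $(T,T)$ and $(T',T')$. These loops are freely homotopic to meridians of the hyperplanes and therefore generate $\pi_1$, hence $H_1$, of the complement (classically, e.g.\ via Zariski--van Kampen; dually they pair with the Orlik--Solomon generators of $H^1$). The point that makes the comparison feasible is that $\gamma_H$ is essentially unaffected by the rank subdivision: the covector $\sigma_H$ with $z(\sigma_H)=\{H\}$ and $\sigma_H$ agreeing with $T$ off $H$ has a singleton zero set, so $\rk_T\sd(\sigma_H)$ consists only of the two vertices $\{T\},\{T'\}$ and the undivided edge $\{T,T'\}=\sigma_{H,[0,1]}^T$; thus inside $\rksdS(\Ac)$ the loop $\gamma_H$ is still the concatenation of the two $1$-cells $(\sigma_{H,[0,1]}^T,T)$ and $(\sigma_{H,[0,1]}^{T'},T')$.

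Now I evaluate the two composites on $[\gamma_H]$. On the combinatorial side, Definition \ref{def:MilnorFibrationOM} gives $\wt Q((\sigma_{H,[0,1]}^T,T))=(0,\epsilon)$ and $\wt Q((\sigma_{H,[0,1]}^{T'},T'))=(0,-\epsilon)$ with $\epsilon=Q(T)$ (note $Q(T')=-Q(T)$ since $T,T'$ differ in exactly one coordinate), while $\wt Q$ sends the endpoints $(\sigma_{H,0}^T,T),(\sigma_{H,0}^{T'},T')$ to the two vertices $(\epsilon,\epsilon),(-\epsilon,-\epsilon)$ of $\Cc$. Hence $|\wt Q|$ carries $\gamma_H$ once around $|\Cc|=S^1$, so the first composite sends $[\gamma_H]$ to a generator of $H_1(\CC^\times)=\ZZ$ with a sign independent of $H$. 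On the geometric side, under the Salvetti equivalence $\gamma_H$ corresponds to a meridian $\mu_H$ of $H$; this can be verified directly and functorially by pushing forward along the map $\Xf(\Ac)\to\Xf(\Bc_n)=(\CC^\times)^n$, $z\mapsto(\alpha_{H'}(z))_{H'\in\Ac}$, which is modeled by the sign-vector inclusion $\Sc(\Ac)\hookrightarrow\Sc(\Bc_n)$, and then using that in the Boolean case the explicit parametrization of Example \ref{ex:SalBooleanParam} exhibits the corresponding double-edge loop as winding exactly once around the $H$-th $\CC^\times$-factor. Since $Q=\prod_{H'\in\Ac}\alpha_{H'}$ has winding number one along $\mu_H$ (the factors $\alpha_{H'}$ with $H'\neq H$ being nowhere zero near $H$), the second composite sends $[\gamma_H]$ to $1\in\ZZ$. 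After reversing, if necessary, the orientation of $|\Cc|\hookrightarrow\CC^\times$ so that these two signs agree, the composites coincide on every $\gamma_H$, hence induce the same map on $H_1$, hence are homotopic; this produces the homotopy commutative square and shows $\wt Q$ is a combinatorial model for the Milnor fibration of $\Ac$.

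I expect the main obstacle to be precisely the general, non-Boolean, case. In contrast with the complement itself, there is no cellular map $\rksdS(\Ac)\to\rksdS(\Bc_n)$: the sign-vector inclusion $\Lc(\Ac)\hookrightarrow\Lc(\Bc_n)=\{+,-,0\}^n$ enlarges the tope sets $\Tc(\sigma)$ and therefore changes the rank subdivision of individual cells, so one cannot simply transport the Boolean statement cellwise. The remedy above -- passing to $\pi_1$, where only the generating loops $\gamma_H$ matter and these lie in the part of the Salvetti complex left untouched by the subdivision -- is, I think, the crux of the proof. A secondary technical point, standard but worth isolating, is the identification of Salvetti's double-edge loops with geometric meridians together with the compatibility of Salvetti's homotopy equivalence with the map to the Boolean arrangement complement; the cleanest self-contained route to this uses the explicit model for $\Sc(\Bc_n)$ recorded in Example \ref{ex:SalBooleanParam}.
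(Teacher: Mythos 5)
Your approach is correct and genuinely different from the paper's. The paper proves commutativity constructively, by factoring both horizontal maps through the Boolean arrangement $\Bc_n$ and building (strictly or up-to-homotopy) commutative subsquares: the key technical input is Proposition \ref{prop:RelSalvetti}, which realizes the inclusion $\Sc(\Ac)\hookrightarrow\Sc(\Bc_n)$ compatibly with the geometric inclusion $\Xf(\Ac)\hookrightarrow(\CC^\times)^n$ via a Bj\"orner--Ziegler complex sign-vector stratification, followed by an explicit computation with the parametrization of Example \ref{ex:SalBooleanParam}. You instead observe that $\CC^\times\simeq K(\ZZ,1)$, so homotopy commutativity reduces to agreement of the two composites on $H_1$, and you check this on the explicit generating double-edge loops $\gamma_H$. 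The observation that makes this tractable --- that each $\gamma_H$ lives in a $1$-cell of $\Sc$ corresponding to a rank-$1$ covector and is therefore left \emph{untouched} by the rank subdivision (since $\rk_T\sd(\sigma_H)$ has a single edge when $|z(\sigma_H)|=1$) --- is a neat and genuinely new reduction that cleanly bypasses the absence of a cellular map $\rksdS(\Ac)\to\rksdS(\Bc_n)$, an obstruction you correctly flag and which the paper also has to contend with. What your approach buys is conceptual economy and robustness (no need to exhibit a geometric realization of the whole subdivided complex); what the paper's buys is a concrete, explicit model for the equivalence. The one place where your proposal leans on an unproved assertion is the ``standard but worth isolating'' compatibility of Salvetti's homotopy equivalence with the embedding into the Boolean complement --- this is exactly the content of the paper's Proposition \ref{prop:RelSalvetti}, and a fully self-contained version of your argument would still need that statement (or at least its effect on $H_1$, which is strictly weaker); your appeal to the Boolean parametrization of Example \ref{ex:SalBooleanParam} is the right tool for supplying it.
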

	
	To proof the theorem, firstly assume that $|\Ac|=n$ and let $\Bc_n$ be the Boolean arrangement of rank $n$. 
	We split up the diagram into smaller parts as follows, each of which will be shown
	to commute (up to homotopy).
	\begin{center}
		\begin{tikzcd}[column sep=16mm, scale cd=1]
			\vert\rksdS(\Ac)\vert \ar[r,hook, "i'"] \ar[rr, bend left, "\vert\wt{Q}(\Ac)\vert"]\ar[d,"|\wt{p}_\Ac|"]\ar[dd,bend right = 40] 
			&\vert\rksdS(\Bc_n)\vert \ar[r,"\vert\wt{Q}(\Bc_n)\vert"] \ar[d, "|\wt{p}_{\Bc_n}|", swap] \ar[dd, bend left = 40, "\wt{\varphi}"] &\vert\Cc\vert \ar[dd,"\isom"]\\
			\vert\Sc(\Ac)\vert \ar[r, hook, "i"] \ar[d, "\isom"] &\vert\Sc(\Bc_n)\vert \ar[d, "\isom", swap] & \\
			\Xf(\Ac) \ar[r, hook]\ar[rr,bend right, "Q(\Ac)"] & (\CC^\times)^n \ar[r,"Q(\Bc_n)"] &\CC^\times.
		\end{tikzcd}
	\end{center}
	Firstly, we note that the upper left hand square commutes by the definition of the maps $\wt{p}_\Ac$ respectively $\wt{p}_{\Bc_n}$ (see Definition \ref{def:TopeRankSubdivSalvetti}),
	where $i$ and $i'$ are the realizations of corresponding poset-inclusions.
	
	Now, we consider the lower left hand square, i.e. a relative version of Theorem \ref{thm:SalvettiHoEquiv}.
	
	\begin{proposition}
		\label{prop:RelSalvetti}
		Assume that $\Ac$ is essential, i.e.\ we have $\bigcap_{H \in \Ac}H = \{0\}$.
		We have natural inclusions $\alpha: |\Sc(\Ac)| \hookrightarrow \Xf(\Ac)$ and 
		$\beta: |\Sc(\Bc_n)| \hookrightarrow (\CC^\times)^n$
		which are homotopy equivalences and make the following diagram commute:
		\begin{center}
			\begin{tikzcd}[column sep=16mm, scale cd=1]
				\vert\Sc(\Ac)\vert \ar[r, "i" hook] \ar[d, "\alpha", hook] &\vert\Sc(\Bc_n)\vert \ar[d, "\beta", hook] \\
				\Xf(\Ac) \ar[r, "j" hook] & (\CC^\times)^n,
			\end{tikzcd}
		\end{center}
		where $i$ is the realization of the natural inclusion of the posets $\Sc(\Ac) \hookrightarrow \Sc(\Bc_n)$
		and $j$ is the injective linear map 
		\begin{eqnarray*}
			& \CC^\ell\isom V &\to \CC^n \\
			& v &\mapsto (\alpha_1(v),\ldots,\alpha_n(v)),
		\end{eqnarray*}
		given by defining linear forms $\alpha_i \in V^*$ of $\Ac$, restricted to $\Xf(\Ac)$. 
	\end{proposition}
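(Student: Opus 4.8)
\emph{Plan overview.} I will realize $\alpha$ and $\beta$ as Salvetti's geometric spines of the two Salvetti complexes inside their complements, built by one and the same recipe — one point chosen in each stratum of a canonical stratification, then extended affinely — with the choices for $\Bc_n$ taken to be the $j$-images of those for $\Ac$. Then the square commutes strictly by construction and the vertical maps are deformation retracts. For the setup: since $\Ac$ is essential, $\alpha_1,\dots,\alpha_n$ span $V^*$, so $j\colon V_\CC\to\CC^n$, $v\mapsto(\alpha_1(v),\dots,\alpha_n(v))$, is injective; as $v\notin\bigcup_H H_\CC$ iff $\alpha_k(v)\neq 0$ for all $k$ iff $j(v)\in(\CC^\times)^n$, the map $j$ restricts to a closed embedding $\Xf(\Ac)\hookrightarrow(\CC^\times)^n$ onto $j(V_\CC)\cap(\CC^\times)^n$. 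For $v\in V_\RR$ the $\Ac$-covector $(\sgn\alpha_k(v))_k$ is literally the $\Bc_n$-covector of $j(v)$; hence $\Lc(\Ac)\subseteq\{+,-,0\}^n=\Lc(\Bc_n)$, $\Tc(\Ac)\subseteq\Tc(\Bc_n)$, and — the order and the composition on the Salvetti poset being componentwise in both cases — $\Sc(\Ac)$ is a subposet of $\Sc(\Bc_n)$. The map $i$ of the statement is $|\Delta(-)|$ of this inclusion; it identifies $|\Sc(\Ac)|$ with the full subcomplex of $|\Sc(\Bc_n)|$ spanned by $\Sc(\Ac)$.

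\emph{Salvetti's spine via strata.} For a complexified real arrangement $\Cc$ with forms $\alpha_H$, recall from Salvetti \cite{Salvetti1987_SalCpx} (see also \cite{BjoeZie1992_CombStrat}) the decomposition of $\Xf(\Cc)$ into the strata
\[
\Xi^\Cc_{(\sigma,\tau)}:=\{\,x+\ii y\in V_\CC \mid x\in F_\sigma,\ \sgn\alpha_H(y)=\tau_H\ \text{for all}\ H\in z(\sigma)\,\},\qquad (\sigma,\tau)\in\Sc(\Cc),
\]
where $F_\sigma$ is the open face of $\Cc$ with covector $\sigma$; each stratum is a nonempty contractible cone, the union of the strata indexed by a principal order ideal $\Sc(\Cc)_{\leq(\sigma,\tau)}$ is again contractible, and $\Xi^\Cc_{(\sigma',\tau')}$ lies in the closure of $\Xi^\Cc_{(\sigma,\tau)}$ in $\Xf(\Cc)$ exactly when $(\sigma',\tau')\leq(\sigma,\tau)$. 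Choosing one point $p^\Cc_{(\sigma,\tau)}\in\Xi^\Cc_{(\sigma,\tau)}$ for each cell and extending affinely over the simplices of $\Delta(\Sc(\Cc))$ produces a PL embedding $s_\Cc\colon|\Sc(\Cc)|\hookrightarrow\Xf(\Cc)$ which is a strong deformation retract of $\Xf(\Cc)$; this is precisely the geometric content of Theorem~\ref{thm:SalvettiHoEquiv} (the embedded spine of such a conical stratification is a deformation retract). For $\Cc=\Bc_n$ one admissible choice is the one underlying Example~\ref{ex:SalBooleanParam}, but we shall instead pick $s_{\Bc_n}$ adapted to $\Ac$.

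\emph{Compatible choices and conclusion.} The map $j$ is affine and respects strata: for $v=x+\ii y\in\Xf(\Ac)$ one has $\sigma_{\Bc_n}(j(x))=\sigma_\Ac(x)$ and $\sgn\!\bigl((j(y))_k\bigr)=\sgn\alpha_k(y)$, so $j\bigl(\Xi^\Ac_{(\sigma,\tau)}\bigr)\subseteq\Xi^{\Bc_n}_{(\sigma,\tau)}$ for every $(\sigma,\tau)\in\Sc(\Ac)$, the right-hand cell being read inside $\Sc(\Bc_n)$ via $i$. Accordingly, choose the base points $p^\Ac_{(\sigma,\tau)}$ arbitrarily and set $\alpha:=s_\Ac$; then set $p^{\Bc_n}_{(\sigma,\tau)}:=j\bigl(p^\Ac_{(\sigma,\tau)}\bigr)$ whenever $(\sigma,\tau)\in\Sc(\Ac)$ (a legitimate choice, as the point lies in $\Xi^{\Bc_n}_{(\sigma,\tau)}$ by the above), choose the remaining base points of $\Bc_n$ arbitrarily, and set $\beta:=s_{\Bc_n}$. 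On each simplex of the full subcomplex $\Delta(\Sc(\Ac))\subseteq\Delta(\Sc(\Bc_n))$ the map $\beta$ is the affine extension of $(\sigma_l,\tau_l)\mapsto j\bigl(p^\Ac_{(\sigma_l,\tau_l)}\bigr)$, hence equals $j$ post-composed with the affine extension of $(\sigma_l,\tau_l)\mapsto p^\Ac_{(\sigma_l,\tau_l)}$, i.e.\ equals $j\circ\alpha$ there; these agree on overlaps and glue to $\beta\circ i=j\circ\alpha$. Finally $\alpha$ and $\beta$ are homotopy equivalences because they are deformation retracts, which completes the argument.

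\emph{Main obstacle.} The first and third paragraphs are formal once the second is in place, and the genuine input is the second: the precise properties of the stratification (contractibility of unions over principal order ideals, the closure order reproducing $\Sc(\Cc)$, and the fact that the affinely extended spine is an embedded deformation retract). I would cite \cite{Salvetti1987_SalCpx,BjoeZie1992_CombStrat} for this; for a self-contained treatment one checks that after a single barycentric subdivision the stratification is a regular CW complex and applies the standard deformation-retract theorem, the only delicate point being injectivity of the affine extension over $\Delta(\Sc(\Cc))$, which holds for a generic choice of the points $p^\Cc_{(\sigma,\tau)}$ — and can be arranged simultaneously for $\Ac$ and $\Bc_n$, since the imposed $\Bc_n$-points are $j$ of a generic choice and hence themselves generic.
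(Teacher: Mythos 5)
Your argument is essentially correct, but it takes a genuinely different route from the paper. You build both spines directly in the ambient complex vector spaces via Salvetti's stratification of the complement by the sets $\Xi_{(\sigma,\tau)}$, and you force \emph{strict} commutativity of the square by declaring the $\Bc_n$ base points on cells coming from $\Sc(\Ac)$ to be the $j$-images of the $\Ac$ base points; this is a clean trick, and your verification that $j$ maps $\Xi^{\Ac}_{(\sigma,\tau)}$ into $\Xi^{\Bc_n}_{(\sigma,\tau)}$ is exactly right. The paper instead first deformation retracts $\Xf(\Ac)$ and $(\CC^\times)^n$ onto their intersections with unit spheres (arranging $j(S^{2\ell-1})\subseteq S^{2n-1}$ by rescaling the $\alpha_k$), then uses the Bj\"orner--Ziegler complex sign-vector stratification $s_\Ac(v)=(s(\alpha_1(v)),\ldots,s(\alpha_n(v)))$ and realizes each Salvetti complex as the \emph{dual} of the induced regular cell decomposition; there the compatibility $\beta(|\Sc(\Bc_n)|)\cap j(V)=j(\alpha(|\Sc(\Ac)|))$ is automatic because the sign vector of $j(v)$ for $\Bc_n$ is literally $s_\Ac(v)$, so no choice of base points has to be engineered. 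Two remarks on your version. First, your fallback justification that injectivity of the affine extension holds ``for a generic choice'' of base points is not sound as stated: $\Delta(\Sc(\Bc_n))$ is $n$-dimensional and maps into $\RR^{2n}$, and general position only guarantees embeddings in dimension $2n+1$; you should instead rely on the cited embedding-and-retraction statement itself (Salvetti's inductive argument, which works for any admissible choice of points in the convex strata), or note that injectivity is not actually needed for the way the proposition is used downstream. Second, be aware that the paper's particular $\beta$ is chosen to agree on the nose with the explicit parametrization $\varphi$ of Example \ref{ex:SalBooleanParam} (see Remark \ref{rem:BZexpParam}), which is used to make the right-hand square of the big diagram commute; with your $\beta$ that later step would need an additional homotopy.
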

	\begin{proof}
		First, we note that $\Xf(\Ac)$ deformation retracts to $\Xf(\Ac) \cap S^{2\ell-1}$, its intersection with the unit sphere
		$S^{2\ell-1} \subseteq V$, as does $\Xf(\Bc_n) = (\CC^\times)^n$ to $(\CC^\times)^n\cap S^{2n-1}$ (up to a scaling factor).
		Moreover, after a suitable scaling of the defining linear forms $\alpha_i$ $(1\leq i \leq n)$, we may assume that $j(S^{2\ell-1}) \subseteq S^{2n-1}$.
		Note further that we have $j(\Xf(\Ac)) = j(V)\cap (\CC^\times)^n$ respectively 
		$j(\Xf(\Ac)\cap S^{2\ell-1}) = j(V) \cap (\CC^\times)^n\cap S^{2n-1}$.
		
		To construct the inclusions $\alpha, \beta$, we consider the description of the Salvetti complex via complex sign vectors 
		by Bj\"orner and Ziegler \cite{BjoeZie1992_CombStrat}. 
		In the following, we use a twisted version of their $s^{(1)}$-stratification
		to later ensure compatibility with our parametrization in Example \ref{ex:SalBooleanParam}.
		Let $s:\CC \to \{i,j,+,-,0\}$ be the map defined by
		\[
		s(z = x+iy) := \begin{cases}
			0, \text{ if } z=0,\\
			i, \text{ if } x=0 \text{ and }y>0,\\
			j, \text{ if } x=0 \text{ and }y<0,\\
			+, \text{ if } x>0,\\
			-, \text{ if } x<0.
		\end{cases}
		\]
		Then $s_\Ac:V \to \{i,j,+,-,0\}^n, v \mapsto (s(\alpha_1(v)),\ldots,s(\alpha_n(v))$ induces a PL regular cell decompositions
		of $S^{2\ell-1} \subseteq V$ as $\Sigma_\Ac = \bigcup_{\sigma \in s_\Ac(S^{2\ell-1})}s_\Ac^{-1}(\sigma)$
		whose face poset is isomorphic to the subposet of $\{i,j,+,-,0\}^n$ where vectors are ordered component-wise according to
		\begin{center}
			\begin{tikzpicture}[scale=0.75]
				\node (0) at (0,0) {$0$};
				\node (i) at (-1,1) {$i$};
				\node (j) at (1,1) {$j$};
				\node (+) at (-1,2) {$+$};
				\node (-) at (1,2) {$-$};
				\draw (0) -- (i);			
				\draw (0) -- (j);			
				\draw (i) -- (+);			
				\draw (i) -- (-);			
				\draw (j) -- (+);			
				\draw (j) -- (-);
			\end{tikzpicture}.
		\end{center}
		Now, $(s_\Ac^{-1}(\Xf(\Ac)\cap S^{2\ell-1}))^\dual \isom |\Sc(\Ac)|$ and yields $|\Sc(\Ac)|$ as a deformation retract of $\Xf(\Ac)$
		which provides the map $\alpha$, cf.\ \cite[Sec.~5]{BjoeZie1992_CombStrat}.
		Moreover, the same is true for the boolean arrangement $\Bc_n$ and for the corresponding cell complex $|\Sc(\Bc_n)|$ with its inclusion
		$\beta$ into $(\CC^\times)^n$ and we thus have $\beta(|S(\Bc_n)|) \cap j(V) = j(\alpha(|S(\Ac)|))$ which makes our diagram of inclusions commutative.	
	\end{proof}
	
	\begin{remark}
		\label{rem:BZexpParam}
		Recall the inclusion map $\beta:|S(\Bc_n)| = (S^1)^n \hookrightarrow \CC^n$ from Proposition \ref{prop:RelSalvetti}.
		and the parametrization map $\varphi: S(\Bc_n) \to (S^1)^n$ of the cells from Example \ref{ex:SalBooleanParam}.
		Then one easily sees that we have $\beta(|\sigma|) = |\varphi(\sigma)|$ for all $\sigma \in S(\Bc_n)$.
	\end{remark}
	
	To establish the commutativity of the right-hand square amounts to verifying the statement of Theorem \ref{thm:MilnorOMHomotopyEquivToMilnorGeom} for the Boolean arrangement as follows.
	\begin{proposition}
		We have a diagram
		\begin{center}
			\begin{tikzcd}[column sep=16mm, scale cd=1]
				\vert\rksdS(\Bc_n)\vert \ar[r,"\wt{Q}(\Bc_n)"] \ar[d, "|\wt{p}|"] \ar[dd, bend left=40, "\wt{\varphi}" right, hook] &\vert\Cc\vert \ar[dd,"\isom", hook]\\
				\vert\Sc(\Bc_n)\vert \ar[d, "\varphi" , hook] & \\
				(\CC^\times)^n \ar[r,"f_{\Bc_n}"] &\CC^\times.
			\end{tikzcd}
		\end{center}
		where the vertical maps are homotopy equivalences, $\wt{\varphi}$ is homotopic to $\varphi\circ|\wt{p}|$
		and the right square including $\wt{\varphi}$ commutes.
	\end{proposition}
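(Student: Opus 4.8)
The plan is to establish the three assertions of the proposition in turn — that the three vertical maps are homotopy equivalences, that $\wt{\varphi}\simeq\varphi\circ|\wt{p}|$, and that the right square commutes — the last one carrying the real content. For the first, the left vertical map $|\wt{p}|$ is a homotopy equivalence by Theorem~\ref{thm:rksdSubdivMap} (it realizes the tope-rank subdivision), the middle one $\varphi\colon|\Sc(\Bc_n)|\hookrightarrow(\CC^\times)^n$ is a homotopy equivalence by Theorem~\ref{thm:SalvettiHoEquiv} together with the explicit parametrization of Example~\ref{ex:SalBooleanParam}, and the right one is the standard inclusion $\iota\colon|\Cc|=S^1\hookrightarrow\CC^\times$, which is a homotopy equivalence. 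The map $\wt{\varphi}$ of Example~\ref{ex:TopeRkSubdivBoolean} presents $|\rksdS(\Bc_n)|$ as the \emph{same} point set $(S^1)^n\subseteq(\CC^\times)^n$ that $\varphi$ carries, only with the refined regular cell structure whose face poset is $\rksdS(\Bc_n)$ (that the cellwise formulas glue into a well-defined embedding is the explicit counterpart of Theorem~\ref{thm:rksdSubdivMap} and Corollary~\ref{coro:HomeoRkBSubdiv}); in particular $\wt{\varphi}$ is a homotopy equivalence. I record the key compatibility for reuse: by construction $\wt{\psi}(\afr)\subseteq\psi(p_T(\afr))$, so $\wt{\varphi}$ sends the closed cell of $(\afr,T)$ into the closed Salvetti cell $\varphi\bigl(\overline{(p_T(\afr),T)}\bigr)$.

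To get $\wt{\varphi}\simeq\varphi\circ|\wt{p}|$, I would argue by a carrier/skeleton induction. Since $\wt{p}$ is order preserving, $\wt{p}\bigl(\rksdS(\Bc_n)_{\leq(\afr,T)}\bigr)\subseteq\Sc(\Bc_n)_{\leq(p_T(\afr),T)}$, so $|\Delta(\wt{p})|$ also maps the closed cell of $(\afr,T)$ into $\overline{(p_T(\afr),T)}$, and hence $\varphi\circ|\wt{p}|$ maps it into $\varphi\bigl(\overline{(p_T(\afr),T)}\bigr)$. Thus $\wt{\varphi}$ and $\varphi\circ|\wt{p}|$ are carried cell by cell by the same carrier $(\afr,T)\mapsto\varphi\bigl(\overline{(p_T(\afr),T)}\bigr)$, whose values are balls and hence contractible; a routine induction over the skeleta of $\rksdS(\Bc_n)$ then builds a homotopy between the two maps. (This is the standard statement that the order-complex realization of a subdivision poset map is homotopic to the geometric subdivision homeomorphism, and could instead be cited.)

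For the commutativity of the right square I would identify the topes of $\Bc_n$ with $\{+1,-1\}^n$ and with the vertices $\{0,1\}^n$ of the parametrizing cube as in Example~\ref{ex:SalBooleanParam}. Put $N_T:=\{i\mid T_i=-\}$ and let $\rho_T\colon[0,1]^n\to[0,n]$, $\rho_T(t)=\sum_{i\in N_T}t_i+\sum_{i\notin N_T}(1-t_i)$, be the affine extension of $\rk_T$, so that $\wt{\psi}(\sigma^T_k)=\psi(\sigma)\cap\rho_T^{-1}(k)$ and $\wt{\psi}(\sigma^T_{[k,k+1]})=\psi(\sigma)\cap\rho_T^{-1}([k,k+1])$. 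The Milnor fibration of $\Bc_n$ is $f_{\Bc_n}(z)=z_1\cdots z_n$, so a point of $\wt{\varphi}((\afr,T))$ with parameter $t$ maps to
\[
\prod_i\exp\bigl(T_i(t_i-1)\pi\ii\bigr)=\exp\Bigl(\pi\ii\sum_i T_i(t_i-1)\Bigr)=\exp\bigl(\pi\ii(|N_T|-\rho_T(t))\bigr),
\]
using the elementary identity $\sum_i T_i(t_i-1)=|N_T|-\rho_T(t)$. Hence $f_{\Bc_n}\circ\wt{\varphi}$ is the constant $(-1)^{|N_T|+k}$ on $(\sigma^T_k,T)$ and the closed semicircular arc from $(-1)^{|N_T|+k}$ to $(-1)^{|N_T|+k+1}$ through $\exp\bigl(\pi\ii(|N_T|-k-\tfrac12)\bigr)$ on $(\sigma^T_{[k,k+1]},T)$. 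On the combinatorial side, the parity identity $\#\{i\mid R_i=-\}\equiv\rk_T(R)+|N_T|\pmod 2$ for all $R\in\Tc$ — immediate from the fact that $R_i=-$ iff $R_i\neq T_i$ when $i\notin N_T$ and iff $R_i=T_i$ when $i\in N_T$ — gives $Q(R)=(-1)^{\rk_T(R)+|N_T|}$, so $Q(\sigma^T_k)=\{(-1)^{k+|N_T|}\}$ is a singleton (this is tacitly used already in Definition~\ref{def:MilnorFibrationOM}), and by that definition $\wt{Q}((\sigma^T_k,T))$ is $(+,+)$ or $(-,-)$ and $\wt{Q}((\sigma^T_{[k,k+1]},T))$ is $(0,+)$ or $(0,-)$ exactly according to the sign of $(-1)^{k+|N_T|}$. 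Comparing with the rank-$1$ parametrization $\varphi_1$ of $\Cc$, which sends $(+,+)\mapsto 1$, $(-,-)\mapsto -1$, $(0,+)\mapsto$ the lower semicircle through $-\ii$, and $(0,-)\mapsto$ the upper semicircle through $\ii$, I would read off $f_{\Bc_n}(\wt{\varphi}(c))=\varphi_1(\wt{Q}(c))$ for every cell $c$, compatibly with faces; since $f_{\Bc_n}\circ\wt{\varphi}$ and $\iota\circ|\wt{Q}(\Bc_n)|$ are both cellular maps into $S^1\subseteq\CC^\times$ inducing the poset map $\wt{Q}(\Bc_n)$ and agreeing cell by cell, they coincide, which is the asserted commutativity.

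The hard part is the sign bookkeeping in the third paragraph: one must simultaneously keep straight three conventions — topes as $\pm1$-vectors versus cube vertices in $\varphi$, the base tope $T$ used to subdivide each Salvetti cell, and the definition of $\wt{Q}$ through the restriction of $Q$ to the vertex set $\sigma^T_k$ — so that $\wt{Q}$ matches $z_1\cdots z_n$ on the nose rather than up to an overall sign. The identity $\sum_i T_i(t_i-1)=|N_T|-\rho_T(t)$ and the parity identity are the load-bearing computations; everything past them is a direct comparison of explicitly parametrized points and arcs, and the remaining homotopy in the second paragraph needs only the elementary carrier induction indicated (or a citation).
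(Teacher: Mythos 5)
Your proof is correct and takes essentially the same route as the paper's: verify commutativity of the right square from the explicit parametrizations of Examples~\ref{ex:SalBooleanParam} and~\ref{ex:TopeRkSubdivBoolean}, and produce the homotopy $\wt{\varphi}\simeq\varphi\circ|\wt{p}|$ by an acyclic-carrier argument, which the paper obtains by writing $\wt{\varphi}=\varphi\circ\psi$ with $\psi$ the subdivision homeomorphism of Theorem~\ref{thm:rksdSubdivMap} and citing \cite[Thm.~9.2]{LW1969_TopCWcpxs} for $\psi\simeq|\wt{p}|$ --- an inessential repackaging of your direct carrier comparison. Where you genuinely add content is your third paragraph: the identity $\sum_i T_i(t_i-1)=|N_T|-\rho_T(t)$ together with the parity identity is the actual substance behind the paper's ``clearly, the right hand square commutes on the nose,'' and in carrying it out you have also silently repaired a slip in Example~\ref{ex:TopeRkSubdivBoolean}. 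There the subdividing level set is printed as the $T$-independent $\{\sum_i x_i=k\}$, but since Definition~\ref{def:TopeRankSubdivSalvetti} subdivides the Salvetti cell $(\sigma,T)$ using base tope $B=T$, the correct functional is your $T$-dependent $\rho_T$; the printed $\sum_i x_i$ evaluates on the vertex $\psi(R)$ to $|\{i:R_i=+\}|$ rather than to $\rk_T(R)=|S(T,R)|$, so the two agree only for $T=(-,\dots,-)$. This is not cosmetic: without $\rho_T$ the formula for $\wt{\varphi}$ would not restrict to $\varphi$ on the vertices of a general maximal cell, and the on-the-nose commutativity would fail. So your computation supplies both the missing verification and the needed correction.
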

	\begin{proof}
		Recall the explicit parametrization of the maps $\varphi$ and $\wt{\varphi}$ from examples \ref{ex:SalBooleanParam} and \ref{ex:TopeRkSubdivBoolean}.
		Then clearly, the right hand square commutes on the nose.
		
		But $\wt{\varphi} = \varphi \circ \psi$ where $\psi$ is the homeomorphism given by Theorem \ref{thm:rksdSubdivMap}.
		Further, $|\wt{p}|$ is homotopic to $\psi$ by the classical Carrier-Theorem, see \cite[Thm.~9.2]{LW1969_TopCWcpxs}.
	\end{proof}
	
	Combining all of the above yields the proof for Theorem \ref{thm:MilnorOMHomotopyEquivToMilnorGeom}.
	
	Moreover, by Lemma \ref{lem:CombFiberHtEquiv}, we directly obtain the following.
	
	\begin{theorem}
		\label{thm:MilnorFiberHtEquiv}
		The combinatorial Milnor fiber $\wt{\Ff}(\OM)$ is homotopy equivalent to the geometric Milnor fiber $\Ff(\Ac)$.
	\end{theorem}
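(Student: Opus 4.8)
The plan is to deduce the statement directly from Lemma~\ref{lem:CombFiberHtEquiv} by assembling its three hypotheses for the map $\wt{Q}$. First, by Milnor's theorem \cite{Milnor1968_SingularPoints} the restriction $Q:\Xf(\Ac)\to\CC^\times$ is a locally trivial fibration; since $\CC^\times$ is a manifold, hence paracompact, it is in particular a topological fibration in the sense required by Lemma~\ref{lem:CombFiberHtEquiv}, and its typical fiber is by definition the geometric Milnor fiber $\Ff(\Ac)=Q^{-1}(1)$. Second, Theorem~\ref{thm:MilnorOMHomotopyEquivToMilnorGeom} provides the homotopy commutative square with vertical homotopy equivalences $|\rksdS(\Ac)|\isom\Xf(\Ac)$ and $|\Cc|\isom\CC^\times$, i.e.\ it exhibits $\wt{Q}:\rksdS(\Ac)\to\Cc$ as a combinatorial model for $Q$ in the sense of Definition~\ref{def:ModelFibration}. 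Third, Theorem~\ref{thm:MilnorOMPosetQuasiFib} asserts that $\wt{Q}$ is a poset quasi-fibration. Hence Lemma~\ref{lem:CombFiberHtEquiv} applies verbatim and yields that $\Ff(\Ac)$ is homotopy equivalent to any poset fiber $|\Delta(\wt{Q}\downarrow q)|$, $q\in\Cc$; since $\CC^\times$ is path connected this does not depend on the choice of fiber or of $q$.

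It remains only to recognize a suitable poset fiber as the combinatorial Milnor fiber. Recall that the face poset of $\Cc$ is $\{(+,+),(-,-),(0,+),(0,-)\}$ with the two vertices $(+,+),(-,-)$ minimal and the two edges $(0,+),(0,-)$ maximal, so that $\Cc_{\leq(+,+)}=\{(+,+)\}$. Consequently $(\wt{Q}\downarrow(+,+))=\wt{Q}^{-1}(\Cc_{\leq(+,+)})=\wt{Q}^{-1}((+,+))=\wt{\Ff}(\OM)$ by Definition~\ref{def:MilnorFibrationOM}. Taking $q=(+,+)$ above therefore gives the asserted homotopy equivalence $\Ff(\Ac)\simeq|\Delta(\wt{\Ff}(\OM))|$, that is, $\wt{\Ff}(\OM)\simeq\Ff(\Ac)$. (The same reasoning with $q=(-,-)$ identifies $\wt{Q}^{-1}((-,-))$ as another model of the fiber, which is the homotopy equivalence alluded to parenthetically after Definition~\ref{def:MilnorFibrationOM}.)

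One formal point to address is that Theorem~\ref{thm:MilnorOMHomotopyEquivToMilnorGeom} is proved via the essential case (Proposition~\ref{prop:RelSalvetti} assumes $\bigcap_{H\in\Ac}H=\{0\}$), so the general statement needs the routine reduction: writing $V=V_0\oplus W$ with $V_0=\bigcap_{H\in\Ac}H$ and letting $\Ac^{\mathrm{ess}}$ be the induced essential arrangement in $W$, one has $\Xf(\Ac)\cong\Xf(\Ac^{\mathrm{ess}})\times(V_0\otimes\CC)$ compatibly with $Q$, whence $\Ff(\Ac)\cong\Ff(\Ac^{\mathrm{ess}})\times(V_0\otimes\CC)\simeq\Ff(\Ac^{\mathrm{ess}})$, while $\OM(\Ac)=\OM(\Ac^{\mathrm{ess}})$ and hence $\wt{\Ff}(\OM(\Ac))=\wt{\Ff}(\OM(\Ac^{\mathrm{ess}}))$; the essential case then gives the claim. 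Finally, since $\wt{\Ff}(\OM)$ is defined purely in terms of the covector data of $\OM$, the homotopy type of $\Ff(\Ac)$ depends only on the oriented matroid of $\Ac$. I do not expect a genuine obstacle in this proof: all the substantive work has already been carried out in Theorems~\ref{thm:MilnorOMPosetQuasiFib} and~\ref{thm:MilnorOMHomotopyEquivToMilnorGeom} and Lemma~\ref{lem:CombFiberHtEquiv}, and the only thing to be careful about is that the \emph{same} map $\wt{Q}$ simultaneously carries all three properties (fibration model, poset quasi-fibration) — which is precisely what those results guarantee.
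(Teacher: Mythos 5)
Your proof is correct and follows exactly the paper's route: the paper's own argument is a one‑line application of Lemma~\ref{lem:CombFiberHtEquiv}, and you simply unpack the three hypotheses (fibration, combinatorial model via Theorem~\ref{thm:MilnorOMHomotopyEquivToMilnorGeom}, poset quasi-fibration via Theorem~\ref{thm:MilnorOMPosetQuasiFib}) and the identification $(\wt{Q}\downarrow(+,+))=\wt{\Ff}(\OM)$. Your additional remark about reducing to the essential case is a small but genuine point that the paper leaves implicit (Proposition~\ref{prop:RelSalvetti} assumes $\Ac$ essential), and your reduction is the standard correct one.
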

	
	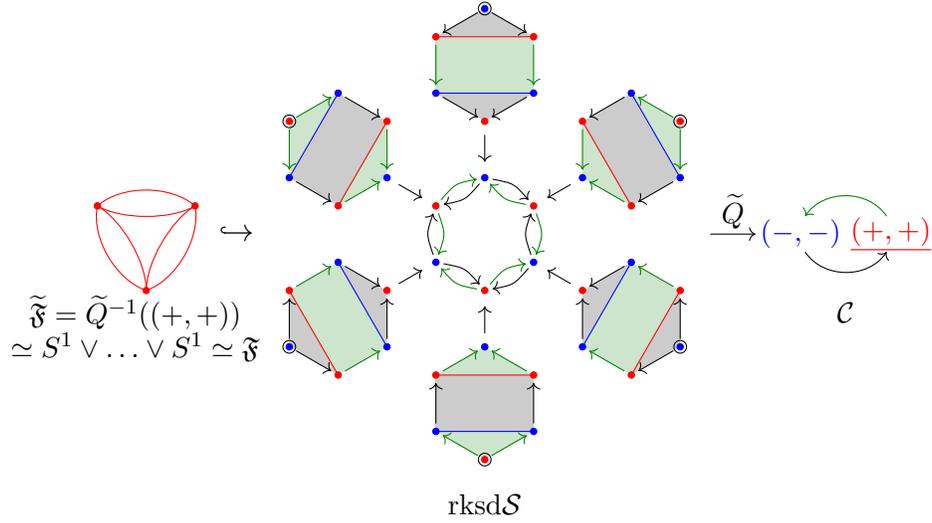
\begin{figure}
		\begin{tikzpicture}[scale=0.6]		
			
			\node (l1) at (1.7320508075688774,-1.) {};
			\node (l2) at (1.7320508075688774,1.) {};
			\node (l3) at (0,2.) {};
			\node (l4) at (-1.7320508075688774,1.) {};
			\node (l5) at (-1.7320508075688774,-1.) {};
			\node (l6) at (0,-2.) {};
			\node (zero) at (0,0) {};
			
			%%%%%%%%%%%%%%%%%%%%%%%%%%%%%%%%%%%%%%%%%%%%%%%%
			
			\node (l1_0) at ($0.5*(1.7320508075688774,-1.)$) {};
			\node (l2_0) at ($0.5*(1.7320508075688774,1.)$) {};
			\node (l3_0) at ($0.5*(0,2.)$) {};
			\node (l4_0) at ($0.5*(-1.7320508075688774,1.)$){};
			\node (l5_0) at ($0.5*(-1.7320508075688774,-1.)$) {};
			\node (l6_0) at ($0.5*(0,-2.)$) {};
			
			%		\draw ($1.25*(l1_0)$) -- ($1.25*(l2_0)$) -- ($1.25*(l3_0)$) -- ($1.25*(l4_0)$) -- ($1.25*(l5_0)$) -- ($1.25*(l6_0)$) -- ($1.25*(l1_0)$);
			\draw[->,shorten >=3pt, shorten <=3pt] ($1.25*(l1_0)$) .. controls ($0.5*(l1_0)+0.5*(l2_0)$)  .. ($1.25*(l2_0)$);;
			\draw[->,shorten >=3pt, shorten <=3pt, color=black!50!green] ($1.25*(l2_0)$) .. controls ($0.75*(l1_0)+0.75*(l2_0)$)  .. ($1.25*(l1_0)$);;
			\draw[->,shorten >=3pt, shorten <=3pt, color=black!50!green] ($1.25*(l2_0)$) .. controls ($0.5*(l2_0)+0.5*(l3_0)$)  .. ($1.25*(l3_0)$);;
			\draw[->,shorten >=3pt, shorten <=3pt] ($1.25*(l3_0)$) .. controls ($0.75*(l2_0)+0.75*(l3_0)$)  .. ($1.25*(l2_0)$);;
			\draw[->,shorten >=3pt, shorten <=3pt] ($1.25*(l3_0)$) .. controls ($0.5*(l3_0)+0.5*(l4_0)$)  .. ($1.25*(l4_0)$);;
			\draw[->,shorten >=3pt, shorten <=3pt, color=black!50!green] ($1.25*(l4_0)$) .. controls ($0.75*(l3_0)+0.75*(l4_0)$)  .. ($1.25*(l3_0)$);;
			\draw[->,shorten >=3pt, shorten <=3pt, color=black!50!green] ($1.25*(l4_0)$) .. controls ($0.5*(l4_0)+0.5*(l5_0)$)  .. ($1.25*(l5_0)$);;
			\draw[->,shorten >=3pt, shorten <=3pt] ($1.25*(l5_0)$) .. controls ($0.75*(l4_0)+0.75*(l5_0)$)  .. ($1.25*(l4_0)$);;
			\draw[->,shorten >=3pt, shorten <=3pt] ($1.25*(l5_0)$) .. controls ($0.5*(l5_0)+0.5*(l6_0)$)  .. ($1.25*(l6_0)$);;
			\draw[->,shorten >=3pt, shorten <=3pt, color=black!50!green] ($1.25*(l6_0)$) .. controls ($0.75*(l5_0)+0.75*(l6_0)$)  .. ($1.25*(l5_0)$);;
			\draw[->,shorten >=3pt, shorten <=3pt, color=black!50!green] ($1.25*(l6_0)$) .. controls ($0.5*(l6_0)+0.5*(l1_0)$)  .. ($1.25*(l1_0)$);;
			\draw[->,shorten >=3pt, shorten <=3pt] ($1.25*(l1_0)$) .. controls ($0.75*(l6_0)+0.75*(l1_0)$)  .. ($1.25*(l6_0)$);;
			\filldraw[color=blue] ($1.25*(l1_0)$) circle[radius=2pt];
			\filldraw[color=red] ($1.25*(l2_0)$) circle[radius=2pt];
			\filldraw[color=blue] ($1.25*(l3_0)$) circle[radius=2pt];
			\filldraw[color=red] ($1.25*(l4_0)$) circle[radius=2pt];
			\filldraw[color=blue] ($1.25*(l5_0)$) circle[radius=2pt];
			\filldraw[color=red] ($1.25*(l6_0)$) circle[radius=2pt];
			
			%%%%%%%%%%%%%%%%%%%%%%%%%%%%%%%%%%%%%%%%%%%%%%%%
			
			\node (l1_1) at ($1.5*(l1)+0.5*(1.7320508075688774,-1.)$) {};
			\node (l2_1) at ($1.5*(l1)+0.5*(1.7320508075688774,1.)$) {};
			\node (l3_1) at ($1.5*(l1)+0.5*(0,2.)$) {};
			\node (l4_1) at ($1.5*(l1)+0.5*(-1.7320508075688774,1.)$){};
			\node (l5_1) at ($1.5*(l1)+0.5*(-1.7320508075688774,-1.)$) {};
			\node (l6_1) at ($1.5*(l1)+0.5*(0,-2.)$) {};
			
			%		\draw ($1.25*(l1_1)$) -- ($1.25*(l2_1)$) -- ($1.25*(l3_1)$) -- ($1.25*(l4_1)$) -- ($1.25*(l5_1)$) -- ($1.25*(l6_1)$) -- ($1.25*(l1_1)$);
			\draw[->,shorten >=3pt, shorten <=3pt] ($1.25*(l1_1)$) -- ($1.25*(l2_1)$);
			\draw[->,shorten >=3pt, shorten <=3pt, color=black!50!green] ($1.25*(l2_1)$) -- ($1.25*(l3_1)$);
			\draw[->,shorten >=3pt, shorten <=3pt] ($1.25*(l3_1)$) -- ($1.25*(l4_1)$);
			\draw[->,shorten >=3pt, shorten <=3pt] ($1.25*(l1_1)$) -- ($1.25*(l6_1)$);
			\draw[->,shorten >=3pt, shorten <=3pt, color=black!50!green] ($1.25*(l6_1)$) -- ($1.25*(l5_1)$);
			\draw[->,shorten >=3pt, shorten <=3pt] ($1.25*(l5_1)$) -- ($1.25*(l4_1)$);
			\fill[color=black!50!green, opacity=0.2] ($1.25*(l2_1)$) -- ($1.25*(l3_1)$) -- ($1.25*(l5_1)$) -- ($1.25*(l6_1)$);
			\fill[color=black, opacity=0.2] ($1.25*(l1_1)$) -- ($1.25*(l2_1)$) -- ($1.25*(l6_1)$);
			\fill[color=black, opacity=0.2] ($1.25*(l3_1)$) -- ($1.25*(l4_1)$) -- ($1.25*(l5_1)$);
			\filldraw[color=blue] ($1.25*(l1_1)$) circle[radius=2pt];
			\draw ($1.25*(l1_1)$) circle[radius=4pt];
			\filldraw[color=red] ($1.25*(l2_1)$) circle[radius=2pt];
			\filldraw[color=blue] ($1.25*(l3_1)$) circle[radius=2pt];
			\filldraw[color=red] ($1.25*(l4_1)$) circle[radius=2pt];
			\filldraw[color=blue] ($1.25*(l5_1)$) circle[radius=2pt];
			\filldraw[color=red] ($1.25*(l6_1)$) circle[radius=2pt];
			\draw[color=red] ($1.25*(l6_1)$) -- ($1.25*(l2_1)$);
			\draw[color=blue] ($1.25*(l5_1)$) -- ($1.25*(l3_1)$);		
			
			%		
			%		\fill[color=black!50!green, opacity=0.2] ($1.25*(l2_1)$) -- ($1.25*(l3_1)$) -- ($1.25*(l5_1)$) -- ($1.25*(l6_1)$);
			%		\fill[color=black!50!green, opacity=0.2] ($0.9*1.25*(l2_1)+0.033*1.25*(l3_1)+0.033*1.25*(l5_1)+0.033*1.25*(l6_1)$) %
			%			-- ($0.033*1.25*(l2_1)+0.9*1.25*(l3_1)+0.033*1.25*(l5_1)+0.033*1.25*(l6_1)$) 
			%			-- ($0.033*1.25*(l2_1)+0.033*1.25*(l3_1)+0.9*1.25*(l5_1)+0.033*1.25*(l6_1)$) 
			%			-- ($0.033*1.25*(l2_1)+0.033*1.25*(l3_1)+0.033*1.25*(l5_1)+0.9*1.25*(l6_1)$);
			%%		
			\draw[->] ($1.1*(l1)$) -- ($0.8*(l1)$);
			
			%%%%%%%%%%%%%%%%%%%%%%%%%%%%%%%%%%%%%%%%%%%%%%%%
			
			\node (l1_2) at ($1.5*(l2)+0.5*(1.7320508075688774,-1.)$) {};
			\node (l2_2) at ($1.5*(l2)+0.5*(1.7320508075688774,1.)$) {};
			\node (l3_2) at ($1.5*(l2)+0.5*(0,2.)$) {};
			\node (l4_2) at ($1.5*(l2)+0.5*(-1.7320508075688774,1.)$){};
			\node (l5_2) at ($1.5*(l2)+0.5*(-1.7320508075688774,-1.)$) {};
			\node (l6_2) at ($1.5*(l2)+0.5*(0,-2.)$) {};
			
			%		\draw ($1.25*(l1_2)$) -- ($1.25*(l2_2)$) -- ($1.25*(l3_2)$) -- ($1.25*(l4_2)$) -- ($1.25*(l5_2)$) -- ($1.25*(l6_2)$) -- ($1.25*(l1_2)$);
			\draw[->,shorten >=3pt, shorten <=3pt, color=black!50!green] ($1.25*(l2_2)$) -- ($1.25*(l3_2)$);
			\draw[->,shorten >=3pt, shorten <=3pt] ($1.25*(l3_2)$) -- ($1.25*(l4_2)$);
			\draw[->,shorten >=3pt, shorten <=3pt, color=black!50!green] ($1.25*(l4_2)$) -- ($1.25*(l5_2)$);
			\draw[->,shorten >=3pt, shorten <=3pt, color=black!50!green] ($1.25*(l2_2)$) -- ($1.25*(l1_2)$);
			\draw[->,shorten >=3pt, shorten <=3pt] ($1.25*(l1_2)$) -- ($1.25*(l6_2)$);
			\draw[->,shorten >=3pt, shorten <=3pt, color=black!50!green] ($1.25*(l6_2)$) -- ($1.25*(l5_2)$);
			\fill[color=black!50!green, opacity=0.2] ($1.25*(l2_2)$) -- ($1.25*(l3_2)$) -- ($1.25*(l1_2)$);
			\fill[color=black!50!green, opacity=0.2] ($1.25*(l4_2)$) -- ($1.25*(l5_2)$) -- ($1.25*(l6_2)$);
			\fill[color=black, opacity=0.2] ($1.25*(l3_2)$) -- ($1.25*(l4_2)$) -- ($1.25*(l6_2)$) -- ($1.25*(l1_2)$);
			\filldraw[color=blue] ($1.25*(l1_2)$) circle[radius=2pt];
			\filldraw[color=red] ($1.25*(l2_2)$) circle[radius=2pt];
			\draw ($1.25*(l2_2)$) circle[radius=4pt];
			\filldraw[color=blue] ($1.25*(l3_2)$) circle[radius=2pt];
			\filldraw[color=red] ($1.25*(l4_2)$) circle[radius=2pt];
			\filldraw[color=blue] ($1.25*(l5_2)$) circle[radius=2pt];
			\filldraw[color=red] ($1.25*(l6_2)$) circle[radius=2pt];
			\draw[color=blue] ($1.25*(l1_2)$) -- ($1.25*(l3_2)$);
			\draw[color=red] ($1.25*(l6_2)$) -- ($1.25*(l4_2)$);
			
			\draw[->] ($1.1*(l2)$) -- ($0.8*(l2)$);
			
			%%%%%%%%%%%%%%%%%%%%%%%%%%%%%%%%%%%%%%%%%%%%%%%%
			
			\node (l1_3) at ($1.5*(l3)+0.5*(1.7320508075688774,-1.)$) {};
			\node (l2_3) at ($1.5*(l3)+0.5*(1.7320508075688774,1.)$) {};
			\node (l3_3) at ($1.5*(l3)+0.5*(0,2.)$) {};
			\node (l4_3) at ($1.5*(l3)+0.5*(-1.7320508075688774,1.)$){};
			\node (l5_3) at ($1.5*(l3)+0.5*(-1.7320508075688774,-1.)$) {};
			\node (l6_3) at ($1.5*(l3)+0.5*(0,-2.)$) {};
			
			%		\draw ($1.25*(l1_3)$) -- ($1.25*(l2_3)$) -- ($1.25*(l3_3)$) -- ($1.25*(l4_3)$) -- ($1.25*(l5_3)$) -- ($1.25*(l6_3)$) -- ($1.25*(l1_3)$);
			\draw[->,shorten >=3pt, shorten <=3pt] ($1.25*(l3_3)$) -- ($1.25*(l4_3)$);
			\draw[->,shorten >=3pt, shorten <=3pt, color=black!50!green] ($1.25*(l4_3)$) -- ($1.25*(l5_3)$);
			\draw[->,shorten >=3pt, shorten <=3pt] ($1.25*(l5_3)$) -- ($1.25*(l6_3)$);
			\draw[->,shorten >=3pt, shorten <=3pt] ($1.25*(l3_3)$) -- ($1.25*(l2_3)$);
			\draw[->,shorten >=3pt, shorten <=3pt, color=black!50!green] ($1.25*(l2_3)$) -- ($1.25*(l1_3)$);
			\draw[->,shorten >=3pt, shorten <=3pt] ($1.25*(l1_3)$) -- ($1.25*(l6_3)$);
			\fill[color=black, opacity=0.2] ($1.25*(l3_3)$) -- ($1.25*(l4_3)$) -- ($1.25*(l2_3)$);
			\fill[color=black, opacity=0.2] ($1.25*(l5_3)$) -- ($1.25*(l6_3)$) -- ($1.25*(l1_3)$);
			\fill[color=black!50!green, opacity=0.2] ($1.25*(l4_3)$) -- ($1.25*(l5_3)$) -- ($1.25*(l1_3)$) -- ($1.25*(l2_3)$);
			\filldraw[color=blue] ($1.25*(l1_3)$) circle[radius=2pt];
			\filldraw[color=red] ($1.25*(l2_3)$) circle[radius=2pt];
			\filldraw[color=blue] ($1.25*(l3_3)$) circle[radius=2pt];
			\draw ($1.25*(l3_3)$) circle[radius=4pt];
			\filldraw[color=red] ($1.25*(l4_3)$) circle[radius=2pt];
			\filldraw[color=blue] ($1.25*(l5_3)$) circle[radius=2pt];
			\filldraw[color=red] ($1.25*(l6_3)$) circle[radius=2pt];
			\draw[color=red] ($1.25*(l2_3)$) -- ($1.25*(l4_3)$);
			\draw[color=blue] ($1.25*(l1_3)$) -- ($1.25*(l5_3)$);
			
			\draw[->] ($1.1*(l3)$) -- ($0.8*(l3)$);
			
			%%%%%%%%%%%%%%%%%%%%%%%%%%%%%%%%%%%%%%%%%%%%%%%%
			
			\node (l1_4) at ($1.5*(l4)+0.5*(1.7320508075688774,-1.)$) {};
			\node (l2_4) at ($1.5*(l4)+0.5*(1.7320508075688774,1.)$) {};
			\node (l3_4) at ($1.5*(l4)+0.5*(0,2.)$) {};
			\node (l4_4) at ($1.5*(l4)+0.5*(-1.7320508075688774,1.)$){};
			\node (l5_4) at ($1.5*(l4)+0.5*(-1.7320508075688774,-1.)$) {};
			\node (l6_4) at ($1.5*(l4)+0.5*(0,-2.)$) {};
			
			%		\draw ($1.25*(l1_4)$) -- ($1.25*(l2_4)$) -- ($1.25*(l3_4)$) -- ($1.25*(l4_4)$) -- ($1.25*(l5_4)$) -- ($1.25*(l6_4)$) -- ($1.25*(l1_4)$);
			
			\draw[->,shorten >=3pt, shorten <=3pt, color=black!50!green] ($1.25*(l4_4)$) -- ($1.25*(l5_4)$);
			\draw[->,shorten >=3pt, shorten <=3pt] ($1.25*(l5_4)$) -- ($1.25*(l6_4)$);
			\draw[->,shorten >=3pt, shorten <=3pt, color=black!50!green] ($1.25*(l6_4)$) -- ($1.25*(l1_4)$);
			\draw[->,shorten >=3pt, shorten <=3pt, color=black!50!green] ($1.25*(l4_4)$) -- ($1.25*(l3_4)$);
			\draw[->,shorten >=3pt, shorten <=3pt] ($1.25*(l3_4)$) -- ($1.25*(l2_4)$);
			\draw[->,shorten >=3pt, shorten <=3pt, color=black!50!green] ($1.25*(l2_4)$) -- ($1.25*(l1_4)$);
			\fill[color=black!50!green, opacity=0.2] ($1.25*(l4_4)$) -- ($1.25*(l5_4)$) -- ($1.25*(l3_4)$);
			\fill[color=black!50!green, opacity=0.2] ($1.25*(l6_4)$) -- ($1.25*(l1_4)$) -- ($1.25*(l2_4)$);
			\fill[color=black, opacity=0.2] ($1.25*(l5_4)$) -- ($1.25*(l6_4)$) -- ($1.25*(l2_4)$) -- ($1.25*(l3_4)$);
			\filldraw[color=blue] ($1.25*(l1_4)$) circle[radius=2pt];
			\filldraw[color=red] ($1.25*(l2_4)$) circle[radius=2pt];
			\filldraw[color=blue] ($1.25*(l3_4)$) circle[radius=2pt];
			\filldraw[color=red] ($1.25*(l4_4)$) circle[radius=2pt];
			\draw ($1.25*(l4_4)$) circle[radius=4pt];
			\filldraw[color=blue] ($1.25*(l5_4)$) circle[radius=2pt];
			\filldraw[color=red] ($1.25*(l6_4)$) circle[radius=2pt];
			\draw[color=blue] ($1.25*(l3_4)$) -- ($1.25*(l5_4)$);
			\draw[color=red] ($1.25*(l2_4)$) -- ($1.25*(l6_4)$);
			
			\draw[->] ($1.1*(l4)$) -- ($0.8*(l4)$);
			
			%%%%%%%%%%%%%%%%%%%%%%%%%%%%%%%%%%%%%%%%%%%%%%%%
			
			\node (l1_5) at ($1.5*(l5)+0.5*(1.7320508075688774,-1.)$) {};
			\node (l2_5) at ($1.5*(l5)+0.5*(1.7320508075688774,1.)$) {};
			\node (l3_5) at ($1.5*(l5)+0.5*(0,2.)$) {};
			\node (l4_5) at ($1.5*(l5)+0.5*(-1.7320508075688774,1.)$){};
			\node (l5_5) at ($1.5*(l5)+0.5*(-1.7320508075688774,-1.)$) {};
			\node (l6_5) at ($1.5*(l5)+0.5*(0,-2.)$) {};
			
			%		\draw ($1.25*(l1_5)$) -- ($1.25*(l2_5)$) -- ($1.25*(l3_5)$) -- ($1.25*(l4_5)$) -- ($1.25*(l5_5)$) -- ($1.25*(l6_5)$) -- ($1.25*(l1_5)$);
			\draw[->,shorten >=3pt, shorten <=3pt] ($1.25*(l5_5)$) -- ($1.25*(l6_5)$);
			\draw[->,shorten >=3pt, shorten <=3pt, color=black!50!green] ($1.25*(l6_5)$) -- ($1.25*(l1_5)$);
			\draw[->,shorten >=3pt, shorten <=3pt] ($1.25*(l1_5)$) -- ($1.25*(l2_5)$);
			\draw[->,shorten >=3pt, shorten <=3pt] ($1.25*(l5_5)$) -- ($1.25*(l4_5)$);
			\draw[->,shorten >=3pt, shorten <=3pt, color=black!50!green] ($1.25*(l4_5)$) -- ($1.25*(l3_5)$);
			\draw[->,shorten >=3pt, shorten <=3pt] ($1.25*(l3_5)$) -- ($1.25*(l2_5)$);
			\fill[color=black, opacity=0.2] ($1.25*(l5_5)$) -- ($1.25*(l6_5)$) -- ($1.25*(l4_5)$);
			\fill[color=black, opacity=0.2] ($1.25*(l1_5)$) -- ($1.25*(l2_5)$) -- ($1.25*(l3_5)$);
			\fill[color=black!50!green, opacity=0.2] ($1.25*(l6_5)$) -- ($1.25*(l1_5)$) -- ($1.25*(l3_5)$) -- ($1.25*(l4_5)$);
			\filldraw[color=blue] ($1.25*(l1_5)$) circle[radius=2pt];
			\filldraw[color=red] ($1.25*(l2_5)$) circle[radius=2pt];
			\filldraw[color=blue] ($1.25*(l3_5)$) circle[radius=2pt];
			\filldraw[color=red] ($1.25*(l4_5)$) circle[radius=2pt];
			\filldraw[color=blue] ($1.25*(l5_5)$) circle[radius=2pt];
			\draw ($1.25*(l5_5)$) circle[radius=4pt];
			\filldraw[color=red] ($1.25*(l6_5)$) circle[radius=2pt];
			\draw[color=red] ($1.25*(l4_5)$) -- ($1.25*(l6_5)$);
			\draw[color=blue] ($1.25*(l3_5)$) -- ($1.25*(l1_5)$);
			
			\draw[->] ($1.1*(l5)$) -- ($0.8*(l5)$);
			
			%%%%%%%%%%%%%%%%%%%%%%%%%%%%%%%%%%%%%%%%%%%%%%%%
			
			\node (l1_6) at ($1.5*(l6)+0.5*(1.7320508075688774,-1.)$) {};
			\node (l2_6) at ($1.5*(l6)+0.5*(1.7320508075688774,1.)$) {};
			\node (l3_6) at ($1.5*(l6)+0.5*(0,2.)$) {};
			\node (l4_6) at ($1.5*(l6)+0.5*(-1.7320508075688774,1.)$){};
			\node (l5_6) at ($1.5*(l6)+0.5*(-1.7320508075688774,-1.)$) {};
			\node (l6_6) at ($1.5*(l6)+0.5*(0,-2.)$) {};
			
			%		\draw ($1.25*(l1_6)$) -- ($1.25*(l2_6)$) -- ($1.25*(l3_6)$) -- ($1.25*(l4_6)$) -- ($1.25*(l5_6)$) -- ($1.25*(l6_6)$) -- ($1.25*(l1_6)$);
			\draw[->,shorten >=3pt, shorten <=3pt, color=black!50!green] ($1.25*(l6_6)$) -- ($1.25*(l1_6)$);
			\draw[->,shorten >=3pt, shorten <=3pt] ($1.25*(l1_6)$) -- ($1.25*(l2_6)$);
			\draw[->,shorten >=3pt, shorten <=3pt, color=black!50!green] ($1.25*(l2_6)$) -- ($1.25*(l3_6)$);
			\draw[->,shorten >=3pt, shorten <=3pt, color=black!50!green] ($1.25*(l6_6)$) -- ($1.25*(l5_6)$);
			\draw[->,shorten >=3pt, shorten <=3pt] ($1.25*(l5_6)$) -- ($1.25*(l4_6)$);
			\draw[->,shorten >=3pt, shorten <=3pt, color=black!50!green] ($1.25*(l4_6)$) -- ($1.25*(l3_6)$);
			\fill[color=black!50!green, opacity=0.2] ($1.25*(l6_6)$) -- ($1.25*(l1_6)$) -- ($1.25*(l5_6)$);
			\fill[color=black!50!green, opacity=0.2] ($1.25*(l2_6)$) -- ($1.25*(l3_6)$) -- ($1.25*(l4_6)$);
			\fill[color=black, opacity=0.2] ($1.25*(l1_6)$) -- ($1.25*(l2_6)$) -- ($1.25*(l4_6)$) -- ($1.25*(l5_6)$);
			\filldraw[color=blue] ($1.25*(l1_6)$) circle[radius=2pt];
			\filldraw[color=red] ($1.25*(l2_6)$) circle[radius=2pt];
			\filldraw[color=blue] ($1.25*(l3_6)$) circle[radius=2pt];
			\filldraw[color=red] ($1.25*(l4_6)$) circle[radius=2pt];
			\filldraw[color=blue] ($1.25*(l5_6)$) circle[radius=2pt];
			\filldraw[color=red] ($1.25*(l6_6)$) circle[radius=2pt];
			\draw ($1.25*(l6_6)$) circle[radius=4pt];
			\draw[color=blue] ($1.25*(l5_6)$) -- ($1.25*(l1_6)$);
			\draw[color=red] ($1.25*(l4_6)$) -- ($1.25*(l2_6)$);
			
			\draw[->] ($1.1*(l6)$) -- ($0.8*(l6)$);
			
			\node at (0,-6) {\small$\rksdS$};
			
			\node (t) at (8,0) {};;
			\node[color=blue] (m) at ($(-1,0)+(t)$) {\small$(-,-)$};
			\node[color=red] (p) at ($(1,0)+(t)$) {\small\underline{$(+,+)$}};
			
			\draw[->,shorten >=3pt, shorten <=3pt, color=black!50!green] ($(p)+(0,0.25)$) .. controls ($0.75*(p)+(0,0.5)+0.25*(m)+(0,0.5)$) and ($0.25*(p)+(0,0.5)+0.75*(m)+(0,0.5)$)  .. ($(m)+(0,0.25)$);;
			\draw[->,shorten >=3pt, shorten <=3pt] ($(m)-(0,0.25)$) .. controls ($0.75*(m)-(0,0.5)+0.25*(p)-(0,0.5)$) and ($0.25*(m)-(0,0.5)+0.75*(p)-(0,0.5)$)  .. ($(p)-(0,0.25)$);;
			
			\node at ($(t)+(0,-1.75)$) {\small$\Cc$};
			
			\draw[->] (5,0) -- (6,0);
			\node at (5.5,0.5) {\small$\wt{Q}$};
			
			\node (l1_F) at ($0.5*(l1)-(6,0)$) {};
			\node (l2_F) at ($0.5*(l2)-(6,0)$) {};
			\node (l3_F) at ($0.5*(l3)-(6,0)$) {};
			\node (l4_F) at ($0.5*(l4)-(6,0)$){};
			\node (l5_F) at ($0.5*(l5)-(6,0)$) {};
			\node (l6_F) at ($0.5*(l6)-(6,0)$) {};
			
			%		\filldraw[color=blue] ($1.25*(l1_6)$) circle[radius=2pt];
			\filldraw[color=red] ($1.25*(l2_F)$) circle[radius=2pt];
			%		\filldraw[color=blue] ($1.25*(l3_6)$) circle[radius=2pt];
			\filldraw[color=red] ($1.25*(l4_F)$) circle[radius=2pt];
			%		\filldraw[color=blue] ($1.25*(l5_6)$) circle[radius=2pt];
			\filldraw[color=red] ($1.25*(l6_F)$) circle[radius=2pt];
			
			\node (c) at  ($0.333*1.25*(l2_F) + 0.333*1.25*(l4_F) + 0.333*1.25*(l6_F)$) {};;
			%		\filldraw($(c)$) circle[radius=2pt];
			\node (v1) at ($0.5*1.25*(l2_F) + 0.5*1.25*(l4_F) - (c)$) {};;
			\node (v2) at ($0.5*1.25*(l4_F) + 0.5*1.25*(l6_F) - (c)$) {};;
			\node (v3) at ($0.5*1.25*(l2_F) + 0.5*1.25*(l6_F) - (c)$) {};;
			
			%		\draw[->] ($(c)$) -- ($(c)+1.5*(v1)$);
			%		
			\draw[color=red] ($1.25*(l2_F)$) .. controls ($0.75*1.25*(l2_F)+0.25*1.25*(l4_F)+0.75*(v1)$) and ($0.25*1.25*(l2_F)+0.75*1.25*(l4_F)+0.75*(v1)$) .. ($1.25*(l4_F)$);;
			\draw[color=red] ($1.25*(l4_F)$) .. controls ($0.25*1.25*(l2_F)+0.75*1.25*(l4_F)-0.5*(v1)$) and ($0.75*1.25*(l2_F)+0.25*1.25*(l4_F)-0.5*(v1)$) .. ($1.25*(l2_F)$);;
			\draw[color=red] ($1.25*(l4_F)$) .. controls ($0.75*1.25*(l4_F)+0.25*1.25*(l6_F)+0.75*(v2)$) and ($0.25*1.25*(l4_F)+0.75*1.25*(l6_F)+0.75*(v2)$) .. ($1.25*(l6_F)$);;
			\draw[color=red] ($1.25*(l6_F)$) .. controls ($0.25*1.25*(l4_F)+0.75*1.25*(l6_F)-0.5*(v2)$) and ($0.75*1.25*(l4_F)+0.25*1.25*(l6_F)-0.5*(v2)$) .. ($1.25*(l4_F)$);;
			\draw[color=red] ($1.25*(l6_F)$) .. controls ($0.75*1.25*(l6_F)+0.25*1.25*(l2_F)+0.75*(v3)$) and ($0.25*1.25*(l6_F)+0.75*1.25*(l2_F)+0.75*(v3)$) .. ($1.25*(l2_F)$);;
			\draw[color=red] ($1.25*(l2_F)$) .. controls ($0.25*1.25*(l6_F)+0.75*1.25*(l2_F)-0.5*(v3)$) and ($0.75*1.25*(l6_F)+0.25*1.25*(l2_F)-0.5*(v3)$) .. ($1.25*(l6_F)$);;
			%		\draw[color=red] ($1.25*(l4_F)$) .. controls ($0.75*1.25*(l4_F)+0.25*1.25*(l2_F)+0.5*(l6_F)$) and ($0.25*(l4_F)+0.75*(l2_F)+0.5*(l6_F)$)  .. ($1.25*(l2_F)$);;
			%		\draw[color=red] ($1.25*(l2_F)$) .. controls ($0.75*(l2_F)+0.25*(l4_F)+0.25*(l6_F)$) and ($0.25*(l2_F)+0.75*(l4_F)+0.25*(l6_F)$)  .. ($1.25*(l4_F)$);;
			
			\node at ($(c) +(-0.25,-1.75)$) {\small$\wt{\Ff} = \wt{Q}^{-1}((+,+))$};
			\node at ($(c) +(-0.25,-2.5)$) {\small$\isom S^1\vee\ldots\vee S^1 \isom \Ff$};  
			
			\node at (-5.5,0) {$\hookrightarrow$};
			
			%		\draw[->] (-6,0) -- (-5,0);
		\end{tikzpicture}
		\caption{The combinatorial Minor fibration of the arrangement $\Ac$ in $\RR^2$ with defining polynomial $xy(x-y)$
			and its fiber.}\label{fig:ExCombMilnorA2}
	\end{figure}
	
	To conclude, we illustrate this central result with the following example.
	\begin{example}
		\label{ex:CombMilnorFibA2}
		Let $\Ac$ be again the arrangement in $\RR^2$ with defining polynomial $Q = Q(\Ac) = xy(x-y)$.
		Then it is not hard to see that the Milnor fiber $\Ff = Q^{-1}(1)$ is homotopy equivalent
		to a wedge of $4$ circles.
		Figure \ref{fig:ExCombMilnorA2} displays the tope-rank subdivision of the Salvetti complex
		of $\Ac$ and the combinatorial Milnor fibration map $\wt{Q}$ where the preimages of cells in $\Cc$ are
		colored accordingly.
		We clearly see that the combinatorial Milnor fiber $\wt{\Ff}$ is homotopy equivalent to
		a wedge of $4$ circles as well, in accordance with Theorem \ref{thm:MilnorFiberHtEquiv}.
	\end{example}

	%%%%%%%%%%%%%%%%%%%%%%%%%%%%%%%%%%%%%%%%%%%%%%%%%%%%%%%%%%%%%%%%%%%%%%%%%%%%%%%%%%%%%%%
	%%%%%%%%%%%%%%%%%%%%%%%%%%%%%%%%%%%%%%%%%%%%%%%%%%%%%%%%%%%%%%%%%%%%%%%%%%%%%%%%%%%%%%
	
	\section{Conclusion}
	
	As seen in the previous sections, the finite regular cell complex $\wt{\Ff}$ can be constructed
	from oriented matroid data and thus be used for computations. 
	We implemented the complex in the computer algebra system GAP4 \cite{GAP4}. 
	It is available as part of the
	\emph{HypArr package} \cite{GAP_hyparr_pkg} for computations with hyperplane arrangements, oriented matroids and their invariants.
	Using our implementation and the further functionality of the \emph{HAP package} by G.\ Ellis \cite{HAP}, 
	we can readily confirm previous known examples, e.g.,
	the computation of Milnor fiber Betti numbers for some reflection arrangements
	in \cite{CS1995_MilnorFib} and the appearance of $2$-torsion in $H_1(F,\ZZ)$
	for the \emph{Icosidodecahedral arrangement} first discovered in \cite{Yoshinaga2020_Milnor2Torsion},
	providing a counterexample to a conjecture by Papadima--Suciu \cite{PS2017_modular}.
	
	Moreover, our complex is defined for any oriented matroid disregarding realizability.
	To this end, the above implementation may now be used for a systematic study
	of more examples, in particular considering also non-realizable oriented matroids
	with respect to the behavior of topological invariants of the complex $\wt{\Ff}$.
	We propose the following problems.
	\begin{problem}\hfill
		\label{prob:MFnon-real}
		\begin{enumerate}[1)]
			\item Do homotopy invariants of $\wt{\Ff}$ behave differently for non-realizable oriented matroids?
			
			\item Are there pairs of oriented matroids $\OM_1,\OM_2$ with the same underlying matroid
			such that the Betti numbers of $\wt{\Ff}(\OM_1)$ and $\wt{\Ff}(\OM_2)$ differ?
			
			\item Are there pairs of oriented matroids $\OM_1,\OM_2$ with the same underlying matroid
			such that $H_\bullet(\wt{\Ff}(\OM_1),\ZZ)$ and $H_\bullet(\wt{\Ff}(\OM_2))$ 
			have different torsion subgroups?
		\end{enumerate}
		
	\end{problem}

	%%%%%%%%%%%%%%%%%%%%%%%%%%%%%%%%%%%%%%%%%%%%%%%%%%%%%%%%%%%%%%%%%%%%%%%%%%%%%%%%%%%%%%%
	%%%%%%%%%%%%%%%%%%%%%%%%%%%%%%%%%%%%%%%%%%%%%%%%%%%%%%%%%%%%%%%%%%%%%%%%%%%%%%%%%%%%%%
	
	\section*{Acknowledgments}
	We would like to thank Emanuele Delucchi for helpful discussions.
	We also thank James F.~Davis for pointing out the references \cite{AD2002_CombGrassmanians} and \cite{Bab1993_CombFlagSpace}.
	The first author was
	funded by the Deutsche Forschungsgemeinschaft (DFG, German Research Foundation), DFG Grant \#MU 5286/1-1 (project number: 539874788) to P.~M\"ucksch. 
	The second author was supported by JSPS KAKENHI JP23H00081.
	The first author is also thankful to the JSPS for a JSPS fellowship for foreign researchers in Japan during 2022/23
	where parts of this project first startet.
	
	%%%%%%%%%%%%%%%%%%%%%%%%%%%%%%%%%%%%%%%%%%%%%%%%%%%%%%%%%%%%%%%%%%%%%%%%%%%%%%%%%%%%%%%
	%%%%%%%%%%%%%%%%%%%%%%%%%%%%%%%%%%%%%%%%%%%%%%%%%%%%%%%%%%%%%%%%%%%%%%%%%%%%%%%%%%%%%%
	
	\newcommand{\etalchar}[1]{$^{#1}$}
	\providecommand{\bysame}{\leavevmode\hbox to3em{\hrulefill}\thinspace}
	\providecommand{\MR}{\relax\ifhmode\unskip\space\fi MR }
	% \MRhref is called by the amsart/book/proc definition of \MR.
	\providecommand{\MRhref}[2]{%
		\href{http://www.ams.org/mathscinet-getitem?mr=#1}{#2}
	}
	\providecommand{\href}[2]{#2}

	%%%%%%%%%%%%%%%%%%%%%%%%%%%%%%%%%%%%%%%%%%%%%%%%%%%%%%%%%%%%%%%%%%%%%%%%%%%%%%%%%%%%%%%
	%%%%%%%%%%%%%%%%%%%%%%%%%%%%%%%%%%%%%%%%%%%%%%%%%%%%%%%%%%%%%%%%%%%%%%%%%%%%%%%%%%%%%%

\end{document}